\newtheorem {lemma}[equation]{Lemma}
\newtheorem {corollary}[equation]{Corollary}
\newtheorem {theorem}[equation]{Theorem}
\newtheorem {proposition}[equation]{Proposition}
\theoremstyle {definition}
\newtheorem {definition}[equation]{Definition}
\theoremstyle{remark}
\newtheorem{remark}[equation]{Remark}
\numberwithin{equation}{subsection}
\newcommand{\iprod}[2]{\langle#1 , #2\rangle}
\DeclareMathAlphabet{\mathcalligra}{T1}{calligra}{m}{n}
\newcommand{\algb}{\kern-2pt\mathcalligra{Alg}\kern1pt(\mathcal{B})}
\DeclareMathOperator{\Span}{span}
\DeclareMathOperator{\supp}{supp}
\DeclareMathOperator{\orb}{Orb}
\DeclareMathOperator{\orbG}{\mathbf{Orb}}
\DeclareMathOperator{\ind}{Ind}
\DeclareMathOperator{\Mod}{mod}
\newcommand\bool[1]{[{\scriptstyle #1}]\,} %boolean value
\newcommand\Lc[1]{\mathscr{L}_c({#1})} %locally constant compactly supported funcions on ...
\newcommand\ran[1]{X_{{#1}{#1}^{\ast}}} %range of theta_
\newcommand\dom[1]{X_{{#1}^{\ast}{#1}}} %domain of theta_
\newcommand\germ[2]{\left[ {#1} , {#2} \right]} %[s,x]
\newcommand\germo[1]{\left[ {#1} , x \right]} %[s,x_0]
\newcommand{\G}{\mathcal{G}} %groupoid
\newcommand{\SG}{\mathcal{S}(\mathcal{G})} %semigroup of bissections
\newcommand\Set[2]{\left\{ {#1} \text{ : } {#2} \right\}} %set
\newcommand\A[1]{A_{K}({#1})} %Steinberg algebra
\newcommand\Ind[2]{\ind_{#1}(#2)}
\newcommand{\balpha}{\bar{\alpha}}
\newcommand{\discr}{\Pi \times U} %discretization
\newcommand{\till}{\tilde{L}}
\newcommand{\tilh}{\tilde{G}}
\newcommand{\mL}{\mathbf{L}}
\newcommand{\mH}{\mathbf{G}}
\newcommand{\mM}{\mathbf{M}}
\newcommand{\mGamma}{\mathbf{\Gamma}}
\newcommand{\Lcs}{\Lc{X}\rtimes S}
\newcommand{\bmdelta}{\bm{\updelta}}
\newcommand{\Triple}{
\mathcal{B}^{\alpha} = \left(
\left\{ B_s \right\}_{s \in S},
\left\{ \mu_{s,t} \right\}_{s,t \in S},
\left\{ j_{t,s} \right\}_{s,t \in S, s \leq t}
\right)
}
\newcommand\restr[2]{{% we make the whole thing an ordinary symbol
		\left.\kern-\nulldelimiterspace % automatically resize the bar with \right
		#1 % the function
		\vphantom{\big|} % pretend it's a little taller at normal size
		\right|_{#2} % this is the delimiter
}}
\begin{document}

\title{The Ideal Structure of Steinberg Algebras}
\author{Paulinho Demeneghi}
\date{}

\maketitle

\abstract{Given an ample action of an inverse semigroup on a locally compact
	and Hausdorff topological space, we study the ideal structure
	of the crossed product algebra associated with it. By
	developing a theory of
	induced ideals, we manage to prove that every ideal in the crossed
	product algebra may be obtained as the intersection of ideals induced
	from isotropy group algebras. This can be interpreted as an algebraic
	version of the Effros-Hahn conjecture. Finally, as an application of
	our result, we study the ideal structure of a Steinberg algebra associated
	with an ample groupoid by interpreting it as an inverse semigroup crossed
	product algebra.}

\section{Introduction}

There is a celebrated conjecture which has motivated most of the works
in the study of ideals in crossed product C*-algebras since about fifty
years ago, namely the Effros-Hahn conjecture \cite{effros1967}.
The original conjecture states that every primitive ideal
in the crossed product of a commutative C*-algebra by a locally compact
group should be induced from a primitive ideal in the C*-algebra of some
isotropy group.

It was proved by Sauvageout in \cite{Sauvageot} for the case of discrete
amenable groups and, since then, it has been extended to various others
contexts. Gootman and Rosenberg \cite{GootmanRosenberg} have proved
a version for locally compact groups acting on non-commutative C*-algebras.
Renault has also introduced a version of the Effros-Hahn conjecture in
\cite{Renault1987} for groupoid C*-algebras, an entirely different setting.
And Renault's results were later refined by Ionescu and Williams in
\cite{IonescuWilliams}.

We should also mention Dokuchaev and Exel work in \cite{DokuchaevExel}
and Steinberg work in \cite{Steinberg} and \cite{Steinberg2}.
Dokuchaev and Exel have
introduced the conjecture in an algebraic fashion,
the algebraic partial crossed
product $\Lc{X} \rtimes G$, where $G$ is a discrete group partially
acting on a locally compact and totally disconnected topological space
$X$, and $\Lc{X}$ is the algebra consisting of all locally constant,
compactly supported functions on $X$, taking values in a given field $K$.
Steinberg introduced a notion of an algebra associated with an
ample groupoid $\G$ over a given field $K$, known as Steinberg algebras
nowadays. He obtained a remarkable number of results
for these algebras and,
among them, a theory of induction of modules from isotropy groups.

It is well known that the main object of study in \cite{DokuchaevExel},
the algebra $\Lc{X} \rtimes G$, may also be described as the Steinberg
algebra \cite{Steinberg} for the transformation groupoid associated with
the partial action of $G$ on $X$. Hence, Steinberg results may be
applied to $\Lc{X} \rtimes G$ as well. 

The question that arises in this moment is: could Dokuchaev and Exel
results be generalized for Steinberg algebras? The answer is affirmative
and, in this paper we focus in showing this.

For that task, as our main object of interest, we first concentrate
on crossed product algebras of the form
$\Lcs$, where $S$ is an inverse semigroup.
In fact, in the last part of this
paper, we show that every Steinberg algebra associated with an ample
groupoid (not necessarily Hausdorff) over a given field $K$ can be
realized as an inverse semigroup crossed product of the form $\Lcs$.
Similar results may be found in \cite{Beuter} and
\cite{Hazrat2018}.

This paper is structured in four main parts. First, we introduce
an algebraic notion of a Fell Bundle over an inverse semigroup, inspired by
\cite{Exel2011}, and then build the cross-sectional algebra associated with it.
Next, we show that an ample action
of an inverse semigroup $S$ over a locally compact and Hausdorff
topological space $X$ induces a Fell Bundle $\mathcal{B}^{\theta}$,
referred
as the semi-direct product bundle, and define the crossed product
algebra
$\Lc{X} \rtimes S$ as the cross-sectional algebra of the semi-direct
product bundle.
We then develop a theory linking representations of the crossed product
algebra and
covariant representations of the ample system $(\theta,S,X)$,
obtaining results of integration and disintegration.

The second part is dedicated to present the theory of induction
of ideals from isotropy groups algebras and to present the basics
of the induction process.
Based on Dokuchaev and Exel work, we study the relationship between the
\emph{input} ideal in the isotropy group algebra and its corresponding
\emph{output} induced ideal. It turns out that, for a point $x$ in $X$,
when inducing from the isotropy group $G_x$, not all
ideals in $KG_x$ play a relevant role. Those which do, we call
\emph{admissible}, inspired in Dokuchaev and Exel terminology.
We show in (\ref{AdmissibleInjectivity}) that, for every ideal
$I \trianglelefteq KG_x$, there exists a unique admissible ideal
$I' \subseteq I$, which induces the same ideal of $\Lcs$
as $I$ does. Thus, the correspondence $I \mapsto \Ind{x}{I}$
is seen to be an one-to-one mapping from the set of admissible ideals
in $KG_x$ to the set of ideals in $\Lcs$.

In the third part, we generalize Dokuchaev and Exel version of the
Effros-Hahn conjecture (\cite[Theorem 6.3]{DokuchaevExel}) 
for $\Lcs$, namely Theorem (\ref{MainResult}) which states that
every ideal of $\Lcs$ is given as the intersection of ideals induced from
isotropy groups. The method of the proof is inspired in \cite{DokuchaevExel}
and does not rely on measure theoretical or analytical tools. The strategy
adopted is as follows: given an ideal $J$ of $\Lcs$, we first choose a
representation $\pi$ of $\Lcs$ whose null space coincides with $J$. Through
the theory of integration and disintegration constructed before, we
then build another representation, which we call the \emph{discretization}
of $\pi$, as done in \cite{DokuchaevExel}, whose null space coincides
with that of $\pi$, and hence also with $J$. The discretized
representation is seen to decompose as a direct sum of sub-representations,
which are finally shown to be equivalent to an induced representation,
and hence the initially given ideal $J$ is seen to coincide with the
intersection of the null spaces of the various induced representations
involved, each of which is then an induced ideal.

Finally, in the last part of this paper, as a consequence of
Theorem (\ref{MainResult})
we show that every Steinberg algebra over a given field $K$ is isomorphic
to an inverse semigroup crossed product of the form $\Lcs$ and
the induction theory introduced by Steinberg in
\cite{Steinberg} and \cite{Steinberg2} is compatible
with our theory through the given isomorphism. So, our results can
be all applied to Steinberg algebras.

This paper has resulted from the author's work as a PhD student in Florian\'opolis
under the advice of professor Ruy Exel.

\markboth{}{Fell Bundle}
\section{Inverse semigroup crossed products}

In this section we explore inverse semigroup crossed
product algebras and its
universal property. Actually, this algebras have already came to
light meanwhile
this work was in progress in
\cite{Beuter} and \cite{Hazrat2018}, for example.
However, we prefer to build
it in a slightly different fashion.

We introduce an intermediary step, namely, an algebraic notion of a Fell
bundle over an inverse semigroup, based on Exel's paper \cite{Exel2011}.
With this in hand we then build the cross-sectional algebra associated
with a Fell bundle and discuss an universal property with relation to
representations. Finally, from an action of an inverse semigroup, when
possible, we construct a Fell Bundle, named the semi-direct product bundle
associated with the action. Turns out that the cross-sectional algebra of a
semi-direct product bundle ``coincides" with the crossed product algebra
in the sense of \cite{Beuter} and \cite{Hazrat2018}.

From our context, we see that the crossed product algebra arising from
actions of inverse semigroups on locally compact, totally disconnected
and Hausdorff spaces inherits an universal property with relation to
representations.

\subsection{Fell Bundles over Inverse Semigroups}

We assume that the reader is familiar with the notion of an inverse
semigroup and its basics notations: the semigroup is denoted by $S$,
the involutive anti-homomorphism by $\ast$, and the set of all
idempotent elements by $E(S)$.

\textbf{Throughout this paper we fix a field $K$.}

Most results in this section are still valid in the more general case
obtained by replacing $K$ by a commutative ring with identity.
However, we prefer to maintain $K$ as a field all long this paper since
for the main results, this assumption is needed.

\begin{definition} \label{FellBundle}
	A \emph{Fell Bundle} over an inverse semigroup $S$ is a triple
	\begin{equation*}
	\mathcal{B} = \left(
	\left\{ B_s \right\}_{s \in S}, 
	\left\{ \mu_{s,t} \right\}_{s,t \in S},
	\left\{ j_{t,s} \right\}_{s,t \in S, s \leq t}
	\right)
	\end{equation*}
	such that, for each $s,t \in S$
	\begin{enumerate}[(a)]
		\item $B_s$ is a $K$-vector space;
		\item $\mu_{s,t}:B_s \otimes B_t \to B_{st}$ is a $K$-linear map;
		\item $j_{t,s}: B_s \to B_t$ is a $K$-linear injective map for every
		$s \leq t$.
	\end{enumerate}
	It is moreover required that for every $r,s,t\in S$,
	\begin{enumerate}[(i)]
		\item \label{Associativity} $\mu_{rs, t}\left(\mu_{r, s}(a \otimes b) \otimes c \right) =
		\mu_{r,st} \left( a \otimes \mu_{s,t}(b \otimes c) \right)$ for every
		$a \in B_r$, $b \in B_s$ and $c \in B_t$;
		\item \label{NoPatology}
		$\Span \left\{ \mu_{ss^*, s} \left(\mu_{s,s^*}(B_s \otimes B_{s^*}) \otimes B_s \right)  \right\}
		= B_s$;
		\item \label{Funtorial} $j_{t,r}= j_{t,s} \circ j_{s,r}$ if $r\leq s\leq t$;
		\item \label{MultIndepend} if $r\leq r'$ and $s \leq s'$, then the diagram
		\[ \begin{tikzcd}
		B_r \otimes B_s \arrow{r}{\mu_{r, s}} \arrow[swap]{d}{j_{r',r} \otimes j_{s',s}} & B_{rs} \arrow{d}{j_{r's',rs}} \\
		B_{r'} \otimes B_{s'} \arrow[swap]{r}{\mu_{r',s'}} & B_{r's'}
		\end{tikzcd}
		\]
		commutes.
	\end{enumerate}
\end{definition}

If $s \leq t$, we shall use the map $j_{t,s}$ to identify $B_s$ as a subspace of $B_t$.
The last axiom then says that the
multiplication operation is compatible with such an identification.

There are some immediate consequences of the definition.

\begin{proposition}\hspace{1cm}
	\begin{enumerate}[(a)]
		\item If $e \in E(S)$, then $B_e$ is an associative $K$-algebra.
		\item For every $s \in S$, the map $j_{s,s}$ is the identity map on $B_s$.
		\item If $e,f \in E(S)$ and $e \leq f$, then $j_{f,e}(B_e)$ is a two-sided ideal in $B_f$.
	\end{enumerate}
\end{proposition}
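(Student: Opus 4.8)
The plan is to verify each of the three statements directly from the Fell bundle axioms, treating them in order since the later parts reuse the earlier ones.

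For part (a), I would set $e \in E(S)$, so that $ee = e$, and consider the multiplication $\mu_{e,e} \colon B_e \otimes B_e \to B_e$; writing $a \cdot b := \mu_{e,e}(a \otimes b)$ gives a $K$-bilinear product on $B_e$. Bilinearity is immediate from the $K$-linearity of $\mu_{e,e}$ required in axiom (b) of Definition~\ref{FellBundle}. The only real content is associativity, which should follow from the associativity axiom (\ref{Associativity}) specialized to $r = s = t = e$: that axiom reads $\mu_{ee,e}(\mu_{e,e}(a \otimes b) \otimes c) = \mu_{e,et}(a \otimes \mu_{e,e}(b \otimes c))$, and since $ee = e$ and $et = e$ the indices all collapse to $e$, yielding $(a \cdot b)\cdot c = a \cdot (b \cdot c)$. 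Thus $B_e$ is an associative $K$-algebra.

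For part (b), I want to show $j_{s,s} = \mathrm{id}_{B_s}$. The natural approach is to apply the functoriality axiom (\ref{Funtorial}), $j_{t,r} = j_{t,s}\circ j_{s,r}$, in the degenerate case where the chain $r \leq s \leq t$ collapses. Taking $r = s = t$ gives $j_{s,s} = j_{s,s}\circ j_{s,s}$, so $j_{s,s}$ is an idempotent $K$-linear endomorphism of $B_s$. This alone does not force it to be the identity, so here is where I expect the main obstacle: I must additionally use that $j_{s,s}$ is \emph{injective} by axiom (c). An injective idempotent linear map must be the identity, since $j_{s,s}^2 = j_{s,s}$ gives $j_{s,s}(j_{s,s}(x) - x) = 0$, and injectivity then forces $j_{s,s}(x) = x$ for all $x \in B_s$. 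This is the one place where the injectivity hypothesis on the structure maps is genuinely needed, rather than being used merely to regard $B_s$ as a subspace of $B_t$.

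For part (c), let $e, f \in E(S)$ with $e \leq f$; I must show $j_{f,e}(B_e)$ is a two-sided ideal of the algebra $B_f$ (which is an algebra by part (a)). Since $e \leq f$ and $e \leq e$, the compatibility diagram (\ref{MultIndepend}) with $r = e, r' = f, s = e, s' = f$ relates $\mu_{e,e}$ and $\mu_{f,f}$ through the maps $j_{f,e}$, giving $\mu_{f,f}(j_{f,e}(a) \otimes j_{f,e}(b)) = j_{f,e}(\mu_{e,e}(a \otimes b))$, so the image is closed under multiplication. To get a genuine ideal I need to absorb arbitrary elements of $B_f$ on both sides; for the left side I would consider the diagram with $r = r' = f$ and $s = e \leq f = s'$, which yields $\mu_{f,f}(x \otimes j_{f,e}(b)) = j_{f,f}(\mu_{f,e}(x \otimes b))$ for $x \in B_f$, and then invoke part (b) to drop $j_{f,f}$, leaving $j_{f,e}$ applied to something in $B_{fe} = B_e$ (using $fe = e$). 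The symmetric diagram with $r = e \leq f = r'$ and $s = s' = f$ handles right absorption. The key point I would be careful about is checking that the subscripts simplify correctly, namely that $fe = ef = e$ since $e \leq f$ forces $e = fe = ef$; once these index identifications are made, each absorption statement lands in $j_{f,e}(B_e)$ as required, and the subspace is a two-sided ideal.
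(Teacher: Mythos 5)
Your proposal is correct and follows essentially the same route as the paper: associativity of $B_e$ from axiom (i) specialized to $r=s=t=e$, the identity $j_{s,s}=\mathrm{id}_{B_s}$ from idempotency (via axiom (iii)) together with injectivity, and the ideal property from the compatibility diagram (iv) combined with part (b) and the identification $ef=fe=e$. The only blemishes are notational slips that do not affect validity: in (a) the index $et$ should read $ee$, and in (c) the map produced by the diagram is $j_{ff,fe}=j_{f,e}$ rather than $j_{f,f}$ (part (b) is used to replace $x$ by $j_{f,f}(x)$ on the left-hand side, not to remove a $j_{f,f}$ on the right), exactly as your closing remark about the subscript simplification indicates.
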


\begin{proof}
The first item is obvious. For the second item, let $s \in S$ and notice that
$j_{s,s}$ is an injective linear map from $B_s$ to itself, which is idempotent
by (\ref{FellBundle}.\ref{Funtorial}). Therefore, $j_{s,s}$ must be the identity map on $B_s$, as stated.
Finally, with respect to (c), let $a \in B_e$, $b \in B_f$ and notice that
\begin{equation*}
j_{f,e}(a) \cdot b
= \mu_{f,f} \Big( j_{f,e}(a) \otimes j_{f,f}(b) \Big)
\overset{(\ref{FellBundle}.\ref{MultIndepend})}{=} j_{ff,ef}
	\Big(\mu_{e,f}(a \otimes b) \Big)
= j_{f,e} \Big( \mu_{e,f}(a \otimes b) \Big)
\in j_{f,e}(B_e),
\end{equation*}
and similarly $b \cdot j_{f,e}(a) \in j_{f,e}(B_e)$. This shows that $j_{f,e}(B_e)$ is
a two-sided ideal in $B_f$, as desired.
\end{proof}

\begin{definition}
	A \emph{pre-representation} of a Fell bundle $\mathcal{B}=\left\{ B_s \right\}_{s \in S}$
	in an algebra $A$
	is a family
	$\Pi = \left\{ \pi_s \right\}_{s \in S}$ of linear maps
	\begin{equation*}
	\pi_s: B_s \to A
	\end{equation*}
	such that, for all $s,t \in S$ and all $a \in B_s$ and $b \in B_t$, we have
	\begin{enumerate}[(i)]
		\item $\pi_{st}\left(\mu_{s,t}(a \otimes b)\right) = \pi_s(a) \pi_t(b)$.
		
		Furthermore, $\Pi$ is a representation if it satisfies
		\item $\pi_t \circ j_{t,s} = \pi_s$, whenever $s \leq t$.
	\end{enumerate}
	
	In this context, if $V$ is a $K$-vector space and $A=L(V)$, then we shall
	say that $\Pi$ is a representation of $\mathcal{B}$ on $V$.
\end{definition}

\begin{definition}
	The \emph{cross-sectional} algebra of $\mathcal{B}$, denoted by $\algb$,
	is the universal algebra generated by the disjoint union
	\begin{equation*}
	\dot{\bigcup_{s \in S}} B_s,
	\end{equation*}
	subject to the relations stating that the natural maps
	\begin{equation*}
	\pi_s^u: B_s \to \algb
	\end{equation*}
	form a representation of $\mathcal{B}$ in $\algb$.
\end{definition}

The existence of $\algb$ is clear, as its uniqueness, up to isomorphism. For
convenience, we spell out its universal property.

\begin{proposition}\label{UniversalProp}
	The cross-sectional algebra $\algb$ is an algebra and
	$\Pi^u= \left\{ \pi_s^u \right\}_{s \in S}$
	is a representation of $\mathcal{B}$ in $\algb$. Furthermore, given any
	representation $\Pi= \left\{ \pi_s \right\}_{s \in S}$ of the Fell Bundle
	$\mathcal{B}$ in an algebra $A$, there exists a unique homomorphism
	$\Phi: \algb \to A$ such that $\Phi \circ \pi_s^u = \pi_s$ for all $s \in S$.
\end{proposition}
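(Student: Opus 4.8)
The plan is to realize $\algb$ concretely via the standard ``generators and relations'' construction and then read off the universal property directly. First I would form the $K$-vector space $V = \bigoplus_{s \in S} B_s$, with canonical inclusions $\iota_s \colon B_s \hookrightarrow V$, and pass to the non-unital tensor algebra $T(V) = \bigoplus_{n \geq 1} V^{\otimes n}$, whose product is concatenation of tensors. Composing each $\iota_s$ with the inclusion $V = V^{\otimes 1} \hookrightarrow T(V)$ produces $K$-linear maps $\tilde{\pi}_s \colon B_s \to T(V)$. Using the tensor algebra on $\bigoplus_s B_s$ here, rather than a free algebra on the underlying set of $\dot{\bigcup_{s \in S}} B_s$, is precisely what absorbs the internal $K$-linear structure of each fibre into the construction at no extra cost.

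Next I would cut down by the defining relations. Let $R$ be the two-sided ideal of $T(V)$ generated by the elements
\begin{equation*}
\tilde{\pi}_{st}\big(\mu_{s,t}(a \otimes b)\big) - \tilde{\pi}_s(a)\,\tilde{\pi}_t(b), \qquad s,t \in S,\ a \in B_s,\ b \in B_t,
\end{equation*}
together with those of the form
\begin{equation*}
\tilde{\pi}_t\big(j_{t,s}(a)\big) - \tilde{\pi}_s(a), \qquad s \leq t,\ a \in B_s.
\end{equation*}
I then set $\algb = T(V)/R$, write $q \colon T(V) \to \algb$ for the quotient map, and define $\pi_s^u = q \circ \tilde{\pi}_s$. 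Each $\pi_s^u$ is $K$-linear as a composite of linear maps, and the two families of generators of $R$ are exactly conditions (i) and (ii) in the definition of a representation; since these generators lie in $R = \ker q$, the corresponding relations hold in $\algb$, so $\Pi^u = \{\pi_s^u\}_{s \in S}$ is a representation. This settles the first assertion.

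For the universal property, given a representation $\Pi = \{\pi_s\}$ of $\mathcal{B}$ in an algebra $A$, I would invoke the universal property of the tensor algebra: the single $K$-linear map $f \colon V \to A$ determined by $f \circ \iota_s = \pi_s$ extends uniquely to an algebra homomorphism $\hat{\Phi} \colon T(V) \to A$ satisfying $\hat{\Phi} \circ \tilde{\pi}_s = \pi_s$. Because $\Pi$ satisfies (i) and (ii), $\hat{\Phi}$ sends every generator of $R$ to $0$; being a homomorphism, it then vanishes on all of $R$, and hence descends to a homomorphism $\Phi \colon \algb \to A$ with $\Phi \circ q = \hat{\Phi}$, so that $\Phi \circ \pi_s^u = \pi_s$ for every $s$. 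Uniqueness is immediate: since $q$ is surjective and $T(V)$ is generated by $V$, the algebra $\algb$ is generated by $\bigcup_{s} \pi_s^u(B_s)$, and any homomorphism out of $\algb$ is pinned down by its values on these generators.

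I do not expect a genuine obstacle here, as everything reduces to the universal property of $T(V)$. The only point demanding care is the very first one, namely choosing the tensor algebra on $\bigoplus_s B_s$ so that the maps $\pi_s^u$ are automatically $K$-linear and one is spared from imposing and bookkeeping separate linearity relations. It is worth noting that neither the associativity axiom nor any of the remaining axioms of Definition \ref{FellBundle} is needed for this proposition; those axioms govern the finer internal structure of $\algb$, not its universal property.
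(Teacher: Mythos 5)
Your proof is correct. The paper itself offers essentially no argument for this proposition: it defines $\algb$ by generators and relations and simply remarks that ``the existence of $\algb$ is clear, as its uniqueness, up to isomorphism,'' so your tensor-algebra construction is precisely the standard substantiation of that claim, and all of your steps (using the non-unital tensor algebra on $\bigoplus_{s} B_s$ so that linearity of the $\pi^u_s$ comes for free, generating the ideal $R$ by exactly the two defining conditions of a representation, descending $\hat{\Phi}$ along $q$, and pinning down uniqueness via generation) are sound. It is worth contrasting your model with the concrete realization the paper develops right after the proposition: there one equips $\mathcal{L}(\mathcal{B}) = \bigoplus_{s \in S} B_s$ directly with the product $(b_s \bmdelta_s)(b_t \bmdelta_t) = \mu_{s,t}(b_s \otimes b_t)\bmdelta_{st}$, forms the ideal $\mathcal{N}$ of (\ref{RedundanceIdeal}), and shows that $\mathcal{L}(\mathcal{B})/\mathcal{N}$ has the universal property (\ref{correspondence})--(\ref{concretedescription}), hence is isomorphic to $\algb$. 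That realization is leaner (no tensor powers), but proving that $\mathcal{L}(\mathcal{B})$ is an associative algebra genuinely requires axiom (\ref{FellBundle}.\ref{Associativity}) of the Fell bundle, whereas your $T(V)/R$ is associative automatically, being a quotient of an associative algebra by a two-sided ideal. This corroborates your closing observation that the universal property itself uses none of the Fell bundle axioms; those axioms only enter when one wants the smaller, fiber-by-fiber description of $\algb$ on which the rest of the paper relies.
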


It will be useful to have a more concrete description of $\algb$ as follows.
Let
\begin{equation*}
\mathcal{L}(\mathcal{B})=\bigoplus_{s \in S} B_s.
\end{equation*}
For each $s \in S$ and $b_s \in B_s$, we denote by $b \bmdelta_s$
the element of
$\mathcal{L}(\mathcal{B})$ whose coordinates are equal to zero, except for the
coordinate corresponding to $s$, which is equal to $b$.
Then, it is clear that any element $b \in \mathcal{L}(\mathcal{B})$ can be represented uniquely in the form
\footnote{All sums considered in this paper are finite. Either
because the summands are indexed on a finite set, or all but a
finitely many summands are zero.}
\begin{equation*}
b = \sum_{s \in S} b_s \bmdelta_s.
\end{equation*}
Define a multiplication on $\mathcal{L}(\mathcal{B})$ such that
\begin{equation*}
(b_s \bmdelta_s)(b_t \bmdelta_t)=\mu_{s,t}(b_s \otimes b_t) \bmdelta_{st}
\end{equation*}
for all $s,t \in S$, $b_s \in B_s$ and $b_t \in B_t$.

Then, with (\ref{FellBundle}.\ref{Associativity}), we can prove that
$\mathcal{L}(\mathcal{B})$ is an associative
$K$-algebra.

\begin{definition}\label{PiZero}
	Let $\Pi^{0} = \left\{ \pi_s^{0} \right\}_{s \in S}$ be
	the collection of maps such that,
	for each $s \in S$, $\pi_s^{0}: B_s \to \mathcal{L}(\mathcal{B})$ is given by
	\begin{equation*}
	\pi_s^{0}(b_s) = b_s \bmdelta_s.
	\end{equation*}
\end{definition}

In this fashion, $\Pi_{0}$ is a pre-representation of $\mathcal{B}$ in $\mathcal{L}(\mathcal{B})$ which
is universal in the following sense.

\begin{proposition}\label{correspondence}
	Let $A$ be an algebra. If $\Pi=\left\{ \pi_s \right\}_{s \in S}$ is a pre-representation of $\mathcal{B}$
	in $A$, then the map $\Phi: \mathcal{L}(\mathcal{B}) \to A$, given by
	\begin{equation*}
	\Phi \left( \sum_{s \in S} b_s \bmdelta_s \right) = \sum_{s \in S} \pi_s(b_s)
	\end{equation*}
	is a homomorphism. Conversely, given any homomorphism $\Phi: \mathcal{L}(\mathcal{B}) \to A$, consider
	for each $s \in S$, the map $\pi_s: B_s \to A$ given by
	\begin{equation*}
	\pi_s=\Phi \circ \pi_s^{0}.
	\end{equation*}
	Then, $\Pi = \left\{ \pi_s \right\}_{s \in S}$ is a pre-representation of $\mathcal{B}$ in $A$.
	Furthermore, the correspondences $\Pi \mapsto \Phi$ and $\Phi \mapsto \Pi$ are each other inverses,
	giving bijections between the set of all homomorphisms from $\mathcal{L}(\mathcal{B}) \to A$ and the
	set of all pre-representations of $\mathcal{B}$ in $A$.
\end{proposition}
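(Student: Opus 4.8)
The plan is to read off both maps from the universal property of the direct sum $\mathcal{L}(\mathcal{B}) = \bigoplus_{s \in S} B_s$: a $K$-linear map out of $\mathcal{L}(\mathcal{B})$ is the same thing as an arbitrary family of $K$-linear maps out of the summands $B_s$, with no compatibility constraints among them. Consequently, linearity of either assignment is automatic, and the only genuine content of the statement is that the homomorphism condition on $\Phi$ corresponds exactly to property (i) in the definition of a pre-representation.

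For the forward direction, given a pre-representation $\Pi = \{\pi_s\}_{s \in S}$, I would first note that since every element of $\mathcal{L}(\mathcal{B})$ is uniquely of the form $\sum_s b_s \bmdelta_s$, the prescription $\Phi(\sum_s b_s \bmdelta_s) = \sum_s \pi_s(b_s)$ is well-defined and $K$-linear; equivalently, $\Phi$ is the unique linear map with $\Phi(b_s \bmdelta_s) = \pi_s(b_s)$ for each $s$. To check multiplicativity it suffices, by bilinearity of the product on $\mathcal{L}(\mathcal{B})$ and of composition, to test it on generators $b_s \bmdelta_s$ and $b_t \bmdelta_t$. There, using the definition of the product, $\Phi((b_s \bmdelta_s)(b_t \bmdelta_t)) = \Phi(\mu_{s,t}(b_s \otimes b_t)\bmdelta_{st}) = \pi_{st}(\mu_{s,t}(b_s \otimes b_t))$, while $\Phi(b_s \bmdelta_s)\Phi(b_t \bmdelta_t) = \pi_s(b_s)\pi_t(b_t)$; these agree precisely by pre-representation property (i). Extending over finite sums yields $\Phi$ multiplicative, hence a homomorphism.

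For the converse, given a homomorphism $\Phi$, I set $\pi_s = \Phi \circ \pi_s^{0}$. Each $\pi_s$ is linear, being a composite of the linear maps $\pi_s^{0}$ and $\Phi$. Property (i) is then a one-line computation: $\pi_{st}(\mu_{s,t}(a \otimes b)) = \Phi(\mu_{s,t}(a \otimes b)\bmdelta_{st}) = \Phi((a\bmdelta_s)(b\bmdelta_t)) = \Phi(a\bmdelta_s)\Phi(b\bmdelta_t) = \pi_s(a)\pi_t(b)$, where the third equality is multiplicativity of $\Phi$. So $\Pi = \{\pi_s\}$ is a pre-representation.

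Finally, the two assignments are mutually inverse by direct substitution: starting from $\Pi$, forming $\Phi$, and then $\pi_s' = \Phi \circ \pi_s^{0}$ gives $\pi_s'(b_s) = \Phi(b_s \bmdelta_s) = \pi_s(b_s)$; and starting from $\Phi$, forming $\{\pi_s\}$, the reconstructed map sends $\sum_s b_s \bmdelta_s$ to $\sum_s \pi_s(b_s) = \sum_s \Phi(b_s \bmdelta_s) = \Phi(\sum_s b_s \bmdelta_s)$ by linearity of $\Phi$. I expect no real obstacle here; the only point demanding a little care is the reduction of multiplicativity to generators, which relies on the bilinearity of the product on $\mathcal{L}(\mathcal{B})$, itself guaranteed by the linearity of the maps $\mu_{s,t}$ built into Definition \ref{FellBundle}.
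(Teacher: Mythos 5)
Your proof is correct: the paper actually states Proposition (\ref{correspondence}) without any proof, treating it as a routine consequence of the direct sum structure of $\mathcal{L}(\mathcal{B})$ and the definition of its multiplication on generators, and your verification is precisely that routine argument spelled out. The reduction of multiplicativity to generators via bilinearity, the one-line check of property (i) in the converse, and the substitution check that the two assignments are mutually inverse are all exactly what the author intends the reader to supply.
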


\begin{proposition}\label{RedundanceIdeal}
	Let $\mathcal{N}$ be the linear subspace of $\mathcal{L}(\mathcal{B})$ spanned by the set
	\begin{equation*}
	\left\{ b_s \bmdelta_s - j_{t,s}(b_s) \bmdelta_t : s,t \in S, s \leq t, b_s \in B_s \right\}.
	\end{equation*}
	Then, $\mathcal{N}$ is a two-sided ideal of $\mathcal{L}(\mathcal{B})$.
\end{proposition}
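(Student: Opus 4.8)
The plan is to exploit that $\mathcal{N}$ is by construction a linear subspace of $\mathcal{L}(\mathcal{B})$, so that to prove it is a two-sided ideal it suffices to verify that the product of a spanning element of $\mathcal{N}$ with an arbitrary homogeneous element $c\bmdelta_u$ (for $u \in S$ and $c \in B_u$) again lies in $\mathcal{N}$, on both the left and the right. Indeed, since every element of $\mathcal{L}(\mathcal{B})$ is a finite sum of such homogeneous elements and multiplication is bilinear, closure under these products extends to closure under multiplication by arbitrary elements. So I would fix a generator $n = b_s\bmdelta_s - j_{t,s}(b_s)\bmdelta_t$ with $s \le t$ and $b_s \in B_s$.

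For left multiplication, I would compute directly from the definition of the product that
\[
(c\bmdelta_u)\, n = \mu_{u,s}(c \otimes b_s)\,\bmdelta_{us} - \mu_{u,t}\big(c \otimes j_{t,s}(b_s)\big)\,\bmdelta_{ut}.
\]
The key structural fact is that the natural partial order on $S$ is compatible with multiplication, so $s \le t$ forces $us \le ut$. This suggests comparing the expression above with the spanning element of $\mathcal{N}$ attached to the relation $us \le ut$ and the element $\mu_{u,s}(c \otimes b_s) \in B_{us}$, namely $\mu_{u,s}(c \otimes b_s)\,\bmdelta_{us} - j_{ut,us}\big(\mu_{u,s}(c \otimes b_s)\big)\,\bmdelta_{ut}$. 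The two coincide once I check that their second coordinates agree, that is, $\mu_{u,t}\big(c \otimes j_{t,s}(b_s)\big) = j_{ut,us}\big(\mu_{u,s}(c \otimes b_s)\big)$. This is exactly the commutativity of the diagram in axiom (\ref{FellBundle}.\ref{MultIndepend}) applied with $u \le u$ in place of $r \le r'$ and $s \le t$ in place of $s \le s'$: since $j_{u,u}$ is the identity by the previous proposition, that diagram reads $j_{ut,us}\circ \mu_{u,s} = \mu_{u,t}\circ(\mathrm{id}\otimes j_{t,s})$, giving precisely the required identity. Hence $(c\bmdelta_u)\,n \in \mathcal{N}$.

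Right multiplication is handled by the mirror-image argument: from $n\,(c\bmdelta_u) = \mu_{s,u}(b_s \otimes c)\,\bmdelta_{su} - \mu_{t,u}\big(j_{t,s}(b_s) \otimes c\big)\,\bmdelta_{tu}$ and the compatibility $s\le t \Rightarrow su \le tu$, I would compare with the spanning element of $\mathcal{N}$ for $su \le tu$ and $\mu_{s,u}(b_s \otimes c) \in B_{su}$, reducing to the identity $\mu_{t,u}\big(j_{t,s}(b_s)\otimes c\big) = j_{tu,su}\big(\mu_{s,u}(b_s \otimes c)\big)$. This is again axiom (\ref{FellBundle}.\ref{MultIndepend}), now with $s\le t$ in place of $r \le r'$ and $u \le u$ in place of $s \le s'$, so that $j_{u,u}=\mathrm{id}$. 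This places $n\,(c\bmdelta_u)$ in $\mathcal{N}$ as well, and the proof is complete.

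The computations themselves are routine; the only point that requires care---and the step I expect to be the crux---is recognizing that the multiplicative compatibility of the order ($s \le t \Rightarrow us \le ut$ and $su \le tu$) is precisely what makes axiom (\ref{FellBundle}.\ref{MultIndepend}) applicable, with one of its two order-relations taken to be trivial so that a map $j_{u,u}$ collapses to the identity. Identifying which spanning element of $\mathcal{N}$ the product should match is the substance of the argument.
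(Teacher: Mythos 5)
Your proof is correct and follows essentially the same route as the paper's: both reduce to multiplying a generator $b_s\bmdelta_s - j_{t,s}(b_s)\bmdelta_t$ by a homogeneous element $c\bmdelta_u$ and invoke axiom (\ref{FellBundle}.\ref{MultIndepend}) with one of the two order relations taken to be trivial (so that $j_{u,u}$ is the identity), together with the compatibility $s\le t\Rightarrow us\le ut$ and $su\le tu$. The only cosmetic difference is that the paper writes out one side (right multiplication) and dismisses the other as similar, whereas you carry out both sides explicitly.
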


\begin{proof}
Given $r,s,t \in S$ such that $s \leq t$, let $b_s \in B_s$ and $b_r \in B_r$. Notice that, by
(\ref{FellBundle}.\ref{MultIndepend}), we have
$\mu_{t,r} \circ \left( j_{t,s} \otimes j_{r,r} \right)
= j_{tr,sr} \circ \mu_{s,r}$ and so
\begin{align*}
\left( b_s \bmdelta_s - j_{t,s}(b_s) \bmdelta_t \right) b_r \bmdelta_r
& = \mu_{s,r}(b_s \otimes b_r) \bmdelta_{sr} - \mu_{t,r}\left( j_{t,s}(b_s) \otimes b_r \right) \bmdelta_{tr} \\
& =\mu_{s,r} (b_s \otimes b_r) \bmdelta_{sr} - j_{tr,sr} \left( \mu_{s,r}(b_s \otimes b_r) \right) \bmdelta_{tr}
\in \mathcal{N}.
\end{align*}
Therefore, we conclude that $\mathcal{N}$ is a right ideal and, similarly, we can show that
$\mathcal{N}$ is a left ideal.
\end{proof}

Notice that, in the context of Proposition (\ref{correspondence}),
$\Phi$ vanishes on $\mathcal{N}$ if and only if $\pi_t \circ j_{t,s} = \pi_s$, whenever $s \leq t$. Then, we
immediately have the following proposition.

\begin{proposition}\label{RepvsVanish}
	In the context of the correspondence $\Phi \leftrightarrow \Pi$ of
	(\ref{correspondence}), $\Phi$ vanishes on $\mathcal{N}$ if, and
	only if,
	$\Pi$ is a representation.
\end{proposition}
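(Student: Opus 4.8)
The plan is to exploit the linearity of $\Phi$ together with the fact that $\mathcal{N}$ is, by its very definition, the linear span of the elements $b_s \bmdelta_s - j_{t,s}(b_s) \bmdelta_t$ with $s \leq t$ and $b_s \in B_s$. Since $\Phi$ is a homomorphism, and in particular $K$-linear, it vanishes on all of $\mathcal{N}$ if and only if it vanishes on each of these spanning generators. Thus the whole statement reduces to computing $\Phi$ on a single generator.

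The key step is precisely that computation. Recalling from (\ref{correspondence}) that $\Phi(b_s \bmdelta_s) = \pi_s(b_s)$, I would evaluate
\begin{equation*}
\Phi\big( b_s \bmdelta_s - j_{t,s}(b_s) \bmdelta_t \big) = \pi_s(b_s) - \pi_t\big( j_{t,s}(b_s) \big).
\end{equation*}
This expression vanishes for every $s \leq t$ and every $b_s \in B_s$ exactly when $\pi_s = \pi_t \circ j_{t,s}$ for all $s \leq t$, which is literally condition (ii) in the definition of a representation. Assembling the two implications is then immediate: if $\Phi$ vanishes on $\mathcal{N}$, it vanishes on each generator, so the display forces $\pi_s = \pi_t \circ j_{t,s}$; and since $\Pi$ is already a pre-representation by the correspondence of (\ref{correspondence}) (hence satisfies (i)), condition (ii) upgrades it to a representation. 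Conversely, if $\Pi$ is a representation, condition (ii) makes the displayed difference zero on every generator, and linearity of $\Phi$ propagates this vanishing to all of $\mathcal{N}$.

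I do not anticipate any genuine obstacle here: the argument is a direct verification, and the only point needing (mild) care is the reduction to spanning generators, which is legitimate precisely because $\Phi$ is linear and $\mathcal{N}$ is a linear span. This is also exactly the content recorded in the remark immediately preceding the statement, so the proposition is in effect a formal repackaging of that observation through the bijection of (\ref{correspondence}).
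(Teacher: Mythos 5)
Your proof is correct and follows exactly the paper's reasoning: the paper dispenses with a formal proof by noting, immediately before the statement, that $\Phi$ vanishes on $\mathcal{N}$ if and only if $\pi_t \circ j_{t,s} = \pi_s$ whenever $s \leq t$, which is your generator computation combined with the fact that the correspondence of (\ref{correspondence}) already guarantees $\Pi$ is a pre-representation. You have simply made explicit the linearity reduction and the evaluation $\Phi\big(b_s \bmdelta_s - j_{t,s}(b_s)\bmdelta_t\big) = \pi_s(b_s) - \pi_t(j_{t,s}(b_s))$ that the paper leaves implicit.
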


We now establish a very important representation of $\mathcal{B}$.

\begin{corollary}
	For each $s \in S$, let $\pi_s^{+} = q \circ \pi_s^{0}$,
	where $\pi_s^0$ is like in
	(\ref{PiZero}) and
	$q: \mathcal{L}(\mathcal{B}) \to \mathcal{L}(\mathcal{B})/ \mathcal{N}$
	is the quotient map. Then,
	$\Pi^{+} = \left\{ \pi^{+}_s \right\}_{s \in S}$
	is a representation of $\mathcal{B}$ in $\mathcal{L}(\mathcal{B})/ \mathcal{N}$.
\end{corollary}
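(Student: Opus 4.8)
The plan is to combine the two preceding results into a short chain of implications. The statement to prove is that $\Pi^{+} = \{\pi_s^{+}\}_{s \in S}$, where $\pi_s^{+} = q \circ \pi_s^{0}$, is a representation of $\mathcal{B}$ in $\mathcal{L}(\mathcal{B})/\mathcal{N}$. The natural strategy is: first observe that $\Pi^{+}$ is a pre-representation arising from a homomorphism in the sense of Proposition (\ref{correspondence}), and then invoke Proposition (\ref{RepvsVanish}) to upgrade ``pre-representation'' to ``representation'' by checking that the associated homomorphism vanishes on $\mathcal{N}$.

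First I would note that, by Proposition (\ref{RedundanceIdeal}), $\mathcal{N}$ is a two-sided ideal of the associative $K$-algebra $\mathcal{L}(\mathcal{B})$, so the quotient $\mathcal{L}(\mathcal{B})/\mathcal{N}$ is again an associative $K$-algebra and the quotient map $q : \mathcal{L}(\mathcal{B}) \to \mathcal{L}(\mathcal{B})/\mathcal{N}$ is a well-defined algebra homomorphism. Applying the ``homomorphism $\mapsto$ pre-representation'' direction of Proposition (\ref{correspondence}) to the homomorphism $\Phi = q$ and the algebra $A = \mathcal{L}(\mathcal{B})/\mathcal{N}$, the family $\{\Phi \circ \pi_s^{0}\}_{s \in S} = \{q \circ \pi_s^{0}\}_{s \in S} = \{\pi_s^{+}\}_{s \in S} = \Pi^{+}$ is automatically a pre-representation of $\mathcal{B}$ in $\mathcal{L}(\mathcal{B})/\mathcal{N}$. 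This identifies $\Pi^{+}$ as exactly the pre-representation corresponding to $\Phi = q$ under the bijection of (\ref{correspondence}).

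Next I would verify that the homomorphism $\Phi = q$ vanishes on $\mathcal{N}$, which is immediate since $q$ is precisely the quotient map modulo $\mathcal{N}$, so $\mathcal{N} \subseteq \ker q$ by construction. By Proposition (\ref{RepvsVanish}), which states that under the correspondence $\Phi \leftrightarrow \Pi$ the homomorphism $\Phi$ vanishes on $\mathcal{N}$ if and only if $\Pi$ is a representation, we conclude directly that $\Pi^{+}$ is a representation of $\mathcal{B}$, as claimed. There is no real obstacle here: the entire content has already been packaged into the earlier propositions, and the proof amounts to feeding the canonical homomorphism $q$ through the established machinery. The only point requiring any care is to be explicit that $\Pi^{+}$ is the pre-representation associated with $q$ rather than some other homomorphism, so that (\ref{RepvsVanish}) applies to the correct pair; once that matching is stated, the conclusion follows at once.
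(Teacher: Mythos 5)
Your proof is correct and follows exactly the route the paper intends: the corollary is stated without proof precisely because it is the instance $\Phi = q$ of the correspondence in (\ref{correspondence}) combined with (\ref{RepvsVanish}), which is what you spell out. Your only addition is making explicit that $\mathcal{N}$ being an ideal (Proposition (\ref{RedundanceIdeal})) guarantees $q$ is an algebra homomorphism, which is a reasonable point to record.
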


\begin{remark}\label{remarkDelta}
	We shall denote by $b \Delta_s$ the image of $b \bmdelta_s$ in
	$\mathcal{L}(\mathcal{B})/ \mathcal{N}$ by the quotient map
	$q: \mathcal{L}(\mathcal{B}) \to \mathcal{L}(\mathcal{B})/ \mathcal{N}$.
\end{remark}

The importance of the representation $\Pi^{+}$ resides in the following result.

\begin{proposition}\label{concretedescription}
	The algebra  $\mathcal{L}(\mathcal{B})/ \mathcal{N}$ possesses the universal property described
	in (\ref{UniversalProp}) with respect to the representation $\Pi^{+}$.
\end{proposition}

\begin{proof}
Let $\Pi = \left\{ \pi_s \right\}_{s \in S}$ be any representation of $\mathcal{B}$ in an algebra $A$
and $\Psi: \mathcal{L}(\mathcal{B}) \to A$ be given as in (\ref{correspondence}) in terms of $\Pi$.
By (\ref{RepvsVanish}), $\Psi$ vanishes at
$\mathcal{N}$ and hence it factors through $\mathcal{L}(\mathcal{B})/ \mathcal{N}$ giving a
homomorphism $\Phi: \mathcal{L}(\mathcal{B})/ \mathcal{N} \to A$ such that
\begin{equation*}
\Phi(b_s \Delta_s) = \Phi(q(b_s \bmdelta_s))=\pi_s(b_s)
\end{equation*}
whenever $b_s \in B_s$. Furthermore, notice that
\begin{equation*}
\pi_s (b_s) = \Phi(q(b_s \bmdelta_s )) = \Phi(q(\pi^{0}_s(b_s)))=\Phi(\pi^{+}_s(b_s))
\end{equation*}
for every $s \in S$, as desired. It is also clear that such
$\Phi$ must be unique.
\end{proof}

We then have an immediate corollary.

\begin{corollary}
	There exists an isomorphism  $\Theta: \mathcal{L}(\mathcal{B})/ \mathcal{N} \to \algb$,
	such that $\Theta \circ \pi^{+}_s = \pi^{u}_s$, for every $s \in S$.
\end{corollary}

We shall henceforth identify $\mathcal{L}(\mathcal{B})/ \mathcal{N}$ and $\algb$, keeping in
mind that this identification caries $\pi^{+}_s$ to $\pi^{u}_s$, for every $s \in S$.

Before we end this section, we introduce some important ingredients.

\begin{definition}
	A representation $\Pi = \left\{ \pi_s \right\}_{s \in S}$ of a Fell bundle $\mathcal{B}$
	on a $K$-vector space $V$ is \emph{non-degenerate} if
	$$\Span \Set{\pi_s(b)\xi}{s \in S, b \in B_s, \xi \in V} = V.$$
\end{definition}

Notice that, if $b \in B_{ss^*}$ and $c \in B_s$, then setting
$a=\mu_{ss^{\ast},s}(b \otimes c)$ we have
$$\pi_s(a)=\pi_{ss^\ast}(b)\pi_s(c).$$
Hence, by (\ref{FellBundle}.\ref{NoPatology}), a representation of $\mathcal{B}$ on $V$ is
non-degenerate if and only if
$$\Span \Set{\pi_e(b)\xi}{e \in E(S), b \in B_e, \xi \in V} = V.$$

\begin{proposition}\label{degVSdeg}
	In the context of Proposition (\ref{UniversalProp}), let $A=L(V)$ for some vector space
	$V$. Then, $\Pi$ is non-degenerate if and only if $\Phi$ is non-degenerate.
\end{proposition}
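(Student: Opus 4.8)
The plan is to reduce the whole statement to a single identity between two spanning subspaces of $V$, after which the equivalence is immediate. Recall that a homomorphism $\Phi \colon \algb \to L(V)$ is non-degenerate precisely when $\Span \Set{\Phi(a)\xi}{a \in \algb, \xi \in V} = V$ (span equal to $V$, with no topological closure, since we are in a purely algebraic setting), while $\Pi$ is non-degenerate precisely when $\Span \Set{\pi_s(b)\xi}{s \in S, b \in B_s, \xi \in V} = V$. I would therefore show that these two subspaces of $V$ coincide; the two non-degeneracy conditions then hold, or fail, simultaneously.

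First I would record the structural fact that, as a $K$-vector space, $\algb$ is linearly spanned by the elements $b_s \Delta_s = \pi_s^u(b_s)$ with $s \in S$ and $b_s \in B_s$. This is exactly what the concrete model $\algb \cong \mathcal{L}(\mathcal{B})/\mathcal{N}$ established above provides: every element of $\mathcal{L}(\mathcal{B})$ is a finite sum $\sum_{s} b_s \bmdelta_s$, and under the quotient map these project onto the $b_s \Delta_s$, which we have identified with $\pi_s^u(b_s)$.

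Next I would combine this with the defining relation $\Phi \circ \pi_s^u = \pi_s$ from (\ref{UniversalProp}) together with the linearity of $\Phi$. On one hand, any $a \in \algb$ is a finite linear combination of terms of the form $\pi_s^u(b_s)$, so $\Phi(a)\xi$ is a corresponding linear combination of vectors $\Phi(\pi_s^u(b_s))\xi = \pi_s(b_s)\xi$; this yields the inclusion $\Span \Set{\Phi(a)\xi}{a \in \algb, \xi \in V} \subseteq \Span \Set{\pi_s(b)\xi}{s \in S, b \in B_s, \xi \in V}$. On the other hand, each generator $\pi_s(b)\xi = \Phi(\pi_s^u(b))\xi$ lies in the image span of $\Phi$, giving the reverse inclusion. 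Hence the two spans are equal, and the proposition follows at once.

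The argument is essentially bookkeeping, and I do not expect a genuine obstacle. The only point requiring care is to fix the correct algebraic notion of non-degeneracy for the homomorphism $\Phi$ and to justify cleanly that $\algb$ is linearly spanned by the images $\pi_s^u(B_s)$ — precisely the content of the concrete description $\mathcal{L}(\mathcal{B})/\mathcal{N}$. Once that spanning statement is in hand, the equivalence is forced by linearity of $\Phi$ and the intertwining relation $\Phi \circ \pi_s^u = \pi_s$.
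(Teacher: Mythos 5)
Your proof is correct and follows essentially the same route as the paper: both arguments rest on the fact that $\algb$ is linearly spanned by the elements $\pi_s^u(b_s)$ (via the concrete description $\mathcal{L}(\mathcal{B})/\mathcal{N}$) together with the relation $\Phi \circ \pi_s^u = \pi_s$. The only cosmetic difference is that you phrase it as a single equality of the two spanning subspaces, while the paper proves the two implications separately; the content is identical.
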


\begin{proof}
Suppose $\Pi$ is non-degenerate and let $\xi \in V$ be such that $\xi=\pi_s(b_s)\eta$
for some $b_s \in B_s$ and $\eta \in V$.
Then
$$ \xi = \pi_s(b_s) = \Phi(\pi_s^{u}(b_s)) \eta.$$
Since the vectors $\xi$ of the above form spans $V$, $\Phi$ is non-degenerate.
Conversely, suppose $\Phi$ is non-degenerate and let $\xi \in V$ be such that
$\xi = \Phi(b)\eta$ where $b=q(\sum_{s \in S}b_s\bmdelta_s) \in \algb$ and $\eta \in V$.
Then
$$ \sum_{s \in S} \pi_s(b_s)\eta = \sum_{s \in S} \Phi (\pi_s^u(b_s)) \eta = \Phi(b)\eta = \xi.$$
Since the vectors $\xi$ of the above form spans $V$, $\Pi$ is non-degenerate.
\end{proof}

\subsection{Inverse semigroup actions and algebraic crossed products}

The aim of this subsection is to construct a Fell bundle from an action
of an inverse semigroup on an algebra. Unfortunately, this is not always
possible, the problem in the construction will appear in the axioms
(\ref{Associativity}) and (\ref{NoPatology}) of Definition (\ref{FellBundle}),
as we shall see.

Let $X$ be any set, we denote by $\mathcal{I}(X)$ the inverse semigroup
formed by all bijections between subsets of $X$, under the operation
given by composition of functions in the largest domain in which the
composition may be defined. We now present the definition of an action
of an inverse semigroup on an algebra.

\begin{definition}\label{algaction}
	Let $S$ be an inverse semigroup and let $A$ be an algebra. An \emph{action} of $S$ on $A$
	is a semigroup homomorphism
	\begin{equation*}
	\alpha: S \to \mathcal{I}(A)
	\end{equation*}
	such that
	\begin{enumerate}[(i)]
		\item for every $s \in S$, the domain (and hence also the range) of $\alpha_s$ is a two sided
		ideal of $A$ and $\alpha_s$ is a homomorphism;
		\item the linear span of the union of the domains of all the $\alpha_s$ coincides with $A$.
	\end{enumerate}
	The triple $(\alpha,S,A)$ is called an (algebraic) \emph{dynamical system}.
\end{definition}

For every $e \in E(S)$, we denote by $A_e$ the domain of $\alpha_e$. Therefore, for each $s \in S$,
we have that $\alpha_s$ is a homomorphism from $A_{s^{*}s}$ to $A_{ss^{*}}$.

\textbf{Throughout this subsection we fix an algebraic
dynamical system $(\alpha, S, A)$,
in order to describe the construction of the Fell bundle.}

We begin the construction defining, for each $s \in S$, the
``fiber" $B_s = \left\{ (a,s) \in A \times S \text{ : } a \in A_{ss^{*}} \right\}$.
To avoid excessive use of parentheses, 
we shall write $a \bmdelta_s$ to refer to $(a,s)$ whenever $a \in A_{ss^{*}}$.

The linear structure of $B_s$ is borrowed from $A_{ss^{*}}$, while the
multiplication operation is defined on elementary tensors by
\begin{equation*}
\begin{array}{cccc}
\mu_{s,t}: & B_s \otimes B_t & \to & B_{st} \\
& a \bmdelta_s \otimes b \bmdelta_t & \mapsto
& \alpha_s(\alpha_{s^*}(a)b) \bmdelta_{st}.
\end{array}
\end{equation*}
We then define the inclusion maps naturally
\begin{equation*}
\begin{array}{cccc}
j_{t,s}: & B_s & \to & B_t \\
& a \bmdelta_s & \mapsto & a \bmdelta_t
\end{array}
\end{equation*}
whenever $s,t \in S$ with $s \leq t$, which finally leads to a triple
\begin{equation}\label{Triple}
\Triple.
\end{equation}
In order to the triple $\Triple$ to be a Fell Bundle over $S$, we must
worry about axioms (\ref{FellBundle}.\ref{Associativity}-\ref{MultIndepend}).
Axioms (\ref{Funtorial}) and (\ref{MultIndepend}) are easy to see, but
as previous commented, axioms (\ref{Associativity}) and (\ref{NoPatology})
may not hold. To identify the origin of the problem with axiom
(\ref{Associativity}), let $a \bmdelta_r \in B_r$, $b \bmdelta_s \in B_s$
and $c \bmdelta_t \in B_t$, for $r,s,t\in S$, and notice that, computing
initially the left hand side of (\ref{FellBundle}.\ref{Associativity}),
we obtain:
\begin{align}\label{lefthandside}
\mu_{rs, t}\bigg(\mu_{r, s}(a \bmdelta_r \otimes b \bmdelta_s) \otimes c \bmdelta_t \bigg)
& = \mu_{rs, t}\bigg( \alpha_r \Big(\alpha_{r^{\ast}}(a)b \Big) \bmdelta_{rs} \otimes c \bmdelta_t \bigg) \nonumber \\
& = \alpha_{rs} \bigg( \alpha_{s^{\ast}r^{\ast}}
\Big(\alpha_r(\alpha_{r^{\ast}}(a)b) \Big) c \bigg) \bmdelta_{rst} \nonumber \\
& = \alpha_{rs} \bigg( \alpha_{s^{\ast}} \Big(\alpha_{r^{\ast}}(a)b \Big)c \bigg) \bmdelta_{rst}.
\end{align}
Additionally, computing the right hand side of (\ref{FellBundle}.\ref{Associativity}), we have:
\begin{align}\label{righthandside}
\mu_{r,st} \bigg( a \bmdelta_r \otimes \mu_{s,t}( b \bmdelta_s \otimes c \bmdelta_t) \bigg)
& = \mu_{r,st} \bigg(
a \bmdelta_r \otimes \alpha_s \Big( \alpha_{s^{\ast}} (b) c \Big) \bmdelta_{st} \bigg) \nonumber \\
& = \alpha_r \bigg( \alpha_{r^{\ast}}(a) \alpha_s \Big( \alpha_{s^{\ast}}(b)c \Big) \bigg) \bmdelta_{rst}.
\end{align}
By these computations, wee see that (\ref{FellBundle}.\ref{Associativity}) holds if and only if
\begin{equation}\label{condition1}
\alpha_{rs} \bigg( \alpha_{s^{\ast}} \Big(\alpha_{r^{\ast}}(a)b \Big)c \bigg)
=
\alpha_r \bigg( \alpha_{r^{\ast}}(a) \alpha_s \Big( \alpha_{s^{\ast}}(b)c \Big) \bigg).
\end{equation}
Therefore, up to applying $\alpha_{r^{\ast}}$ in both sides of (\ref{condition1}), we have proven:

\begin{lemma}\label{Condition}
	A necessary and sufficient condition for the triple
	\begin{equation*}
	\mathcal{B}^{\alpha} = \left(
	\left\{ B_s \right\}_{s \in S},
	\left\{ \mu_{s,t} \right\}_{s,t \in S},
	\left\{ j_{t,s} \right\}_{s,t \in S, s \leq t}
	\right),
	\end{equation*}
	as defined in (\ref{Triple}), to satisfy axiom (\ref{FellBundle}.\ref{Associativity}) is that
	the equality 
	\begin{equation}\label{Equalitycondition}
	\alpha_{s} \bigg( \alpha_{s^{\ast}} \big(\alpha_{r^{\ast}}(a)b \big)c \bigg)
	=
	\alpha_{r^{\ast}}(a) \alpha_s \Big( \alpha_{s^{\ast}}(b)c \Big)
	\end{equation}
	holds for all $a \in A_{rr^{\ast}}$, $b \in A_{ss^{\ast}}$ and $c \in A_{tt^{\ast}}$,
	with $r,s,t\in S$.
\end{lemma}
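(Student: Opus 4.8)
The plan is to leverage the fact that the computations already displayed in (\ref{lefthandside}) and (\ref{righthandside}) reduce axiom (\ref{FellBundle}.\ref{Associativity}) to the single identity (\ref{condition1}). Thus nothing remains except to check that (\ref{condition1}) is equivalent to (\ref{Equalitycondition}), and the whole idea is that these two equations differ only by one application of the map $\alpha_{r^*}$, which is a bijection from $A_{rr^*}$ onto $A_{r^*r}$. First I would observe that both sides of (\ref{condition1}) in fact lie in $A_{rr^*}$: the right-hand side is visibly of the form $\alpha_r(\,\cdot\,)$, while the left-hand side is of the form $\alpha_{rs}(\,\cdot\,)$ with range contained in $A_{rs(rs)^*} = A_{rss^*r^*} \subseteq A_{rr^*}$, since $rss^*r^* \leq rr^*$. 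Because $\alpha_{r^*}$ is injective on $A_{rr^*}$, the equation (\ref{condition1}) holds if and only if the two images under $\alpha_{r^*}$ agree; this delivers the equivalence in both directions simultaneously, with no need to invert by hand.

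It then remains only to simplify each image. For the right-hand side, the homomorphism property gives $\alpha_{r^*}\alpha_r = \alpha_{r^*r}$, and since the argument $\alpha_{r^*}(a)\,\alpha_s(\alpha_{s^*}(b)c)$ lies in the ideal $A_{r^*r}$ (its leftmost factor already does, and $A_{r^*r}$ is two-sided), the idempotent map $\alpha_{r^*r}$ restricts there to the identity; so the right-hand side of (\ref{condition1}) is carried precisely to the right-hand side of (\ref{Equalitycondition}). For the left-hand side, the homomorphism property gives $\alpha_{r^*}\alpha_{rs} = \alpha_{r^*rs}$, and here I would invoke the order relation $r^*rs \leq s$ together with the compatibility of $\alpha$ with $\leq$: the map $\alpha_{r^*rs}$ is the restriction of $\alpha_s$ to the subdomain $A_{(r^*rs)^*(r^*rs)} = A_{s^*r^*rs}$. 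Once one checks that the argument $\alpha_{s^*}(\alpha_{r^*}(a)b)c$ actually lands in that subdomain, the two maps agree on it, and the left-hand side of (\ref{condition1}) is carried precisely to the left-hand side of (\ref{Equalitycondition}).

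I expect the only delicate point to be this domain bookkeeping, namely verifying that $\alpha_{s^*}(\alpha_{r^*}(a)b)c \in A_{s^*r^*rs}$. This rests on the standard features of inverse semigroup actions: the $A_e$ are two-sided ideals, products of elements from $A_e$ and $A_f$ land in $A_{ef}$ for commuting idempotents, and $\alpha_{s^*}$ carries $A_f$ into $A_{s^*fs}$. Tracing $\alpha_{r^*}(a) \in A_{r^*r}$ and $b \in A_{ss^*}$ through the product and then through $\alpha_{s^*}$ gives membership in $A_{s^*r^*rs}$, and the final multiplication by $c$ preserves this since $A_{s^*r^*rs}$ is an ideal. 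With these routine facts in hand the remainder is purely formal manipulation using that $\alpha$ is a homomorphism and that idempotents act as identities on their domains, so the proof is quite short.
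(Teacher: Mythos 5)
Your proof is correct and takes essentially the same route as the paper: the paper likewise treats the computations (\ref{lefthandside}) and (\ref{righthandside}) as reducing axiom (\ref{FellBundle}.\ref{Associativity}) to (\ref{condition1}), and then obtains (\ref{Equalitycondition}) by ``applying $\alpha_{r^{\ast}}$ in both sides.'' Your injectivity observation and domain bookkeeping simply make explicit the details that the paper leaves to the reader in that final step.
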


We will know exploit sufficient conditions on the ideals $A_{ss^{\ast}}$ in order to the triple
$\Triple$ to satisfy (\ref{FellBundle}.\ref{Associativity}).

\begin{proposition}\label{Idempotent1}
	Given an action of an inverse semigroup $S$ on an algebra $A$, a sufficient condition
	for the triple $\Triple$ as defined in (\ref{Triple}) to satisfy (\ref{FellBundle}.\ref{Associativity})
	is that, for each $s \in S$, the ideal $A_{ss^{\ast}}$ is idempotent.
\end{proposition}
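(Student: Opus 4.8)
By Lemma (\ref{Condition}), the triple satisfies axiom (\ref{FellBundle}.\ref{Associativity}) precisely when the identity (\ref{Equalitycondition}) holds, so the plan is to deduce (\ref{Equalitycondition}) from the idempotency of each $A_{ss^{\ast}}$. Writing $p = \alpha_{r^{\ast}}(a)$, so that $p \in A_{r^{\ast}r} \subseteq A$, the goal becomes the identity
\[
\alpha_{s}\big(\alpha_{s^{\ast}}(pb)c\big) = p\,\alpha_s\big(\alpha_{s^{\ast}}(b)c\big),
\]
to be verified for all $b \in A_{ss^{\ast}}$ and all $c \in A_{tt^{\ast}}$ (in fact any $c \in A$ will do, since $A_{s^{\ast}s}$ is an ideal).

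The first observation I would make is that both sides of this identity are $K$-linear in $b$: the left side is the composite of the linear maps $b \mapsto pb$, $\alpha_{s^{\ast}}$, right multiplication by $c$, and $\alpha_s$, and the right side is linear for the same reason. Invoking the hypothesis that $A_{ss^{\ast}}$ is idempotent, it is spanned by products $uv$ with $u,v \in A_{ss^{\ast}}$, so by linearity it suffices to verify the identity in the special case $b = uv$.

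For $b = uv$, I would compute each side separately, using repeatedly that (1) $A_{ss^{\ast}}$ and $A_{s^{\ast}s}$ are two-sided ideals of $A$ (so products such as $pu$ and $\alpha_{s^{\ast}}(v)c$ stay inside the relevant ideal), that (2) $\alpha_{s^{\ast}}$ restricts to a homomorphism on $A_{ss^{\ast}}$ while $\alpha_s$ restricts to a homomorphism on $A_{s^{\ast}s}$, and that (3) $\alpha_s$ and $\alpha_{s^{\ast}}$ are mutually inverse bijections between $A_{s^{\ast}s}$ and $A_{ss^{\ast}}$, whence $\alpha_s\big(\alpha_{s^{\ast}}(w)\big) = w$ for $w \in A_{ss^{\ast}}$. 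On the left I would group $pb = (pu)v$ with $pu \in A_{ss^{\ast}}$, split $\alpha_{s^{\ast}}\big((pu)v\big) = \alpha_{s^{\ast}}(pu)\alpha_{s^{\ast}}(v)$, apply the homomorphism $\alpha_s$, and collapse $\alpha_s\big(\alpha_{s^{\ast}}(pu)\big) = pu$, arriving at $pu\,\alpha_s\big(\alpha_{s^{\ast}}(v)c\big)$. The analogous manipulation on the right yields $p\,u\,\alpha_s\big(\alpha_{s^{\ast}}(v)c\big)$, and the two expressions coincide.

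The crucial point — and the reason idempotency is exactly the right hypothesis — is the factorization step. The obstruction to establishing the identity directly is that $\alpha_{s^{\ast}}(pb)$ cannot be split as $\alpha_{s^{\ast}}(p)\alpha_{s^{\ast}}(b)$, since $p = \alpha_{r^{\ast}}(a)$ need not lie in the domain $A_{ss^{\ast}}$ of $\alpha_{s^{\ast}}$. Expressing $b$ as a sum of products $uv$ inside the idempotent ideal $A_{ss^{\ast}}$ circumvents this obstruction: one instead groups $puv = (pu)v$, and now \emph{both} factors $pu$ and $v$ lie in $A_{ss^{\ast}}$, so the homomorphism property of $\alpha_{s^{\ast}}$ applies. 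I expect this to be the only genuine subtlety; once the factorization is available, the remainder is routine bookkeeping of which ideal each intermediate product inhabits.
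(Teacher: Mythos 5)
Your proposal is correct and matches the paper's own argument essentially step for step: the paper likewise writes $b=b_1b_2$ (your $uv$) using idempotency of $A_{ss^{\ast}}$, and both computations collapse the two sides of (\ref{Equalitycondition}) to the common expression $\alpha_{r^{\ast}}(a)\,b_1\,\alpha_s\big(\alpha_{s^{\ast}}(b_2)c\big)$, exactly your $pu\,\alpha_s\big(\alpha_{s^{\ast}}(v)c\big)$. Your added remarks (linearity in $b$, and the identification of the factorization $pb=(pu)v$ as the precise point where idempotency is needed) are sound and simply make explicit what the paper leaves implicit.
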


\begin{proof}
Fix $s \in S$ and assume $A_{ss^{\ast}}$ is idempotent.
For $r,t\in S$, let $a \in A_{rr^{\ast}}$, $b_1,b_2 \in A_{ss^{\ast}}$, $c \in A_{tt^{\ast}}$
and $b=b_1 b_2$. Notice that
\begin{align*}
\alpha_{s} \bigg( \alpha_{s^{\ast}} \big(\alpha_{r^{\ast}}(a)b \big) c \bigg)
& = \alpha_{s} \bigg( \alpha_{s^{\ast}} \big(\alpha_{r^{\ast}}(a) b_1 b_2 \big) c \bigg)
= \alpha_{r^{\ast}}(a) b_1 \alpha_{s} \Big(\alpha_{s^{\ast}}(b_2) c \Big) \\
& = \alpha_{r^{\ast}}(a) \alpha_{s} \Big( \alpha_{s^{\ast}}(b_1 b_2) c \Big)
= \alpha_{r^{\ast}}(a) \alpha_{s} \Big( \alpha_{s^{\ast}}(b) c \Big).
\end{align*}
Since every element of $A_{ss^{\ast}}$ is a sum of terms of the form $b_1 b_2$, we verify
the equality (\ref{Equalitycondition}) and, hence, by Lemma (\ref{Condition}), we conclude
(\ref{FellBundle}.\ref{Associativity}).
\end{proof}

Finally, there is only one more axiom to worry about in our construction, namely
(\ref{FellBundle}.\ref{NoPatology}).
As already mentioned, this may not hold as well.

Let $a,c \in B_s$ and $b \in B_{s^{\ast}}$ and notice, by the computation made in (\ref{lefthandside}),
that
\begin{align*}
\mu_{ss^{\ast}, s}\bigg(\mu_{s, s^{\ast}}(a \bmdelta_s \otimes b \bmdelta_{s^{\ast}}) \otimes c \bmdelta_s \bigg) 
= \alpha_{ss^{\ast}} \bigg( \alpha_s \Big(\alpha_{s^{\ast}}(a)b \Big)c \bigg) \bmdelta_s
= a \alpha_s(b) c \bmdelta_s.
\end{align*}
So, we immediately have:

\begin{lemma}\label{Condition2}
	A necessary and sufficient condition for the triple
	$$\Triple,$$
	as defined in (\ref{Triple}), to satisfy axiom (\ref{FellBundle}.\ref{NoPatology})
	is that for each $s \in S$ the ideal $A_{ss^{\ast}}$
	satisfies
	$$\Span{A_{ss^{\ast}} A_{ss^{\ast}} A_{ss^{\ast}}} = A_{ss^{\ast}}.$$
\end{lemma}

Notice that, since
$\Span{A_{ss^{\ast}} A_{ss^{\ast}} A_{ss^{\ast}}}
\subseteq \Span{A_{ss^{\ast}} A_{ss^{\ast}}} \subseteq A_{ss^{\ast}}$,
the equality in the Lemma above is equivalent
to $A_{ss^{\ast}}$ being idempotent.
This, combined with Proposition (\ref{Idempotent1}), leads to the following result:

\begin{theorem}\label{sdproductbundle}
	Given an action of an inverse semigroup $S$ on an algebra $A$,
	the triple
	$$\Triple,$$
	as defined in (\ref{Triple}), is a Fell bundle over $S$ if and only if, for each $s \in S$,
	the ideal $A_{ss^{\ast}}$ is idempotent. In this case, it will be henceforth called the
	\emph{semi-direct product bundle}
	relative to the system $(\alpha, S, A)$.
\end{theorem}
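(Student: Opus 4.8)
The plan is to assemble the theorem from the necessary-and-sufficient conditions already isolated for the two delicate axioms, while observing that the remaining two axioms hold for $\mathcal{B}^\alpha$ with no hypothesis at all. The four requirements of Definition (\ref{FellBundle}) split naturally: axioms (\ref{FellBundle}.\ref{Funtorial}) and (\ref{FellBundle}.\ref{MultIndepend}) are structural facts about the inclusion maps $j_{t,s}$ and their compatibility with $\mu$, independent of any idempotency assumption, whereas axioms (\ref{FellBundle}.\ref{Associativity}) and (\ref{FellBundle}.\ref{NoPatology}) are precisely the ones that the idempotency of the ideals $A_{ss^*}$ governs, as recorded in Lemma (\ref{Condition}), Proposition (\ref{Idempotent1}) and Lemma (\ref{Condition2}).

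For the implication that idempotency suffices, I would assume each $A_{ss^*}$ is idempotent and verify the four axioms one at a time. Functoriality (\ref{FellBundle}.\ref{Funtorial}) is immediate from the formula $j_{t,s}(a\bmdelta_s)=a\bmdelta_t$, once one notes that $s\leq t$ forces $ss^*\leq tt^*$ and hence $A_{ss^*}\subseteq A_{tt^*}$, so that the maps are well defined; compatibility of the multiplication with the inclusions (\ref{FellBundle}.\ref{MultIndepend}) is the routine unwinding of the definitions of $\mu$ and $j$ already signalled as easy. Axiom (\ref{FellBundle}.\ref{Associativity}) is then delivered verbatim by Proposition (\ref{Idempotent1}). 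Finally, idempotency gives $\Span\{A_{ss^*}A_{ss^*}A_{ss^*}\}=\Span\{A_{ss^*}A_{ss^*}\}=A_{ss^*}$, which is exactly the condition that Lemma (\ref{Condition2}) identifies as equivalent to (\ref{FellBundle}.\ref{NoPatology}); so all axioms hold and $\mathcal{B}^\alpha$ is a Fell bundle.

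For the converse, I would assume $\mathcal{B}^\alpha$ is a Fell bundle and recover idempotency from axiom (\ref{FellBundle}.\ref{NoPatology}) alone. By Lemma (\ref{Condition2}) this axiom is equivalent to $\Span\{A_{ss^*}A_{ss^*}A_{ss^*}\}=A_{ss^*}$, and combining this with the chain $\Span\{A_{ss^*}A_{ss^*}A_{ss^*}\}\subseteq\Span\{A_{ss^*}A_{ss^*}\}\subseteq A_{ss^*}$ observed just before the theorem yields $A_{ss^*}\subseteq\Span\{A_{ss^*}A_{ss^*}\}\subseteq A_{ss^*}$, whence $\Span\{A_{ss^*}A_{ss^*}\}=A_{ss^*}$; that is, $A_{ss^*}$ is idempotent.

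Since every analytic ingredient has already been isolated in the preceding lemmas, I expect no genuine obstacle here: the argument is essentially a synthesis. The only points demanding a little care are bookkeeping, namely confirming that the inclusions $j_{t,s}$ are well defined through the order relation $ss^*\leq tt^*$, and checking that idempotency does upgrade the two-fold product identity to the three-fold identity appearing in Lemma (\ref{Condition2}). One feature worth flagging is that the characterization is driven entirely by the single axiom (\ref{FellBundle}.\ref{NoPatology}); axiom (\ref{FellBundle}.\ref{Associativity}) turns out to follow from the same hypothesis but is not what the idempotency condition actually pins down.
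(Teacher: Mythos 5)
Your proposal is correct and follows essentially the same route as the paper, which obtains the theorem precisely by combining Lemma (\ref{Condition2}) (together with the chain of inclusions showing the triple-product condition is equivalent to idempotency) with Proposition (\ref{Idempotent1}), the remaining axioms (\ref{FellBundle}.\ref{Funtorial}) and (\ref{FellBundle}.\ref{MultIndepend}) being noted as immediate. Your closing observation that the necessity direction rests on axiom (\ref{FellBundle}.\ref{NoPatology}) alone matches the logic implicit in the paper's synthesis.
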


\begin{definition}\label{StandingNotation}
	Let $\alpha$ be an action of an inverse semigroup $S$ on an algebra $A$
	satisfying the equivalent conditions of Theorem (\ref{sdproductbundle}). The \emph{crossed product}
	algebra $A \rtimes_{\alpha} S$ is defined to be the cross-sectional
	algebra of the semi-direct product bundle
	$\mathcal{B}^{\alpha}$ associated with $(\alpha, S, A)$.
	We shall denote by $\mathcal{J}_{\alpha}$ the ideal defined
	in (\ref{RedundanceIdeal}) and $q_{\mathcal{J}_{\alpha}}$ the
	quotient map from $\mathcal{L}(\mathcal{B}^{\alpha})$
	to $A \rtimes_{\alpha} S$.
\end{definition}

\begin{remark}\label{RemarkStandingNotation}
	We shall use the notation $a \bmdelta_s$ to denote an element
	of $\mathcal{L}(\mathcal{B}^{\alpha})$ as well, instead of the
	awkward double notation $a \bmdelta_s \bmdelta_s$. The context
	should bring no confusion. Furthermore, according to Remark
	(\ref{remarkDelta}), we shall use the notation $a \Delta_s$ to
	denote the class of $a \bmdelta_s$ in $A \rtimes_{\alpha} S$.
\end{remark}

Notice that, when it is possible to construct the semi-direct product bundle
related to the action of an inverse semigroup on an algebra, the
cross-sectional algebra of the semi-direct product bundle coincides with the
definitions of crossed product algebras in \cite{Beuter} and \cite{Hazrat2018}.
Furthermore, notice still that the cross-sectional construction always leads
to an associative $K$-algebra. Indeed, the problems faced in the construction
of the semi-direct product bundle are related to the problems one would face
to show that the crossed product algebra definitions in \cite{Beuter} and
\cite{Hazrat2018} are associative.

%%%%%%%%%%%%%%%%%%%%%%%%%%%%%%%%%%%%%%%%%%%%%%%%%%%%%%%%%%%%%%%%%%%%%%%%%%%%%%%%%%%%%
%%%%%%%%%%%%%%%%%%%%%%%%%%%%%%%%%%%%%%%%%%%%%%%%%%%%%%%%%%%%%%%%%%%%%%%%%%%%%%%%%%%%%
%%%%%%%%%%%%%%%%%%%%%%%%%%%%%%%%%%%%%%%%%%%%%%%%%%%%%%%%%%%%%%%%%%%%%%%%%%%%%%%%%%%%%

\subsection{Crossed products of interest}

A special case of actions of inverse semigroups on algebras that leads to
a Fell bundle will be investigated now and will be of high interest for
us from now on.

\begin{definition}\label{topaction}
	Let $S$ be an inverse semigroup and let $X$ be a locally compact and
	Hausdorff topological space.
	An \emph{action} of $S$ on $X$ is a semigroup homomorphism
	\begin{equation*}
	\theta: S \to \mathcal{I}(X)
	\end{equation*}
	such that,
	\begin{enumerate}[(i)]
		\item for every $s \in S$, $\theta_s$ is continuous and its domain is
		open in $X$;
		\item\label{nondegenerate} the union of the domains of all the $\theta_s$
		coincides with $X$.
	\end{enumerate}
	The triple $(\theta, S, X)$ is called a \emph{(topological) dynamical system}.
	Furthermore, if $X$ is totally disconnected and the domains are \emph{clopen}
	(closed and open), we say that $\theta$ is an
	\emph{ample action} and $(\theta, S, X)$
	is an \emph{ample dynamical system}.
\end{definition}

The reader is invited to compare the definition of ample action
above with the Definition 4.2 of ample action in \cite{Steinberg}.
Is his definition, Steinberg requires the domains to be
compact-open instead of just clopen. As we shall see, there is
no need to require the domains to be compact in order to
obtain an ample groupoid of germs.

For every $e \in E(S)$, we denote by $X_e$ the domain of
$\theta_e$. Therefore,
for each $s \in S$, $\theta_s$ is a homeomorphism
from $\dom{s}$ to $\ran{s}$.

\textbf{From now on, we fix a Hausdorff, locally compact,
totally disconnected topological
space $X$ and an ample system $(\theta, S, X)$.}

We will henceforth denote by $\Lc{X}$ the set of all locally constant,
compactly supported, $K$-valued
functions on $X$ and denote by $\supp(f)$ the support of $f \in \Lc{X}$. With pointwise
multiplication, $\Lc{X}$ is a commutative $K$-algebra, which is unital
if and only if $X$ is compact.

For each $s \in S$, we may also consider the $K$-algebra $\Lc{\dom{s}}$,
which we will identify with the set formed by all $f \in \Lc{X}$ vanishing on
$X \setminus \dom{s}$. Under this identification $\Lc{\dom{s}}$ becomes an ideal
in $\Lc{X}$.

So, from the ample action of $S$ on a locally compact Hausdorff
space $X$ in the sense
of (\ref{topaction}), it is easy to construct an action of $S$
on $\Lc{X}$ in the
sense of (\ref{algaction}). Regarding the homeomorphism
$\theta_s: \dom{s} \to \ran{s}$, we may define an isomorphism
\begin{equation*}
\alpha_s: \Lc{\dom{s}} \to \Lc{\ran{s}},
\end{equation*}
by setting
\begin{equation}\label{inducedaction}
\alpha_s(f) = f \circ \theta_{s^{\ast}},
\end{equation}
for all $f \in \Lc{\dom{s}}$.
This said, $\alpha: S \to \mathcal{I}(\Lc{X})$ is a semigroup homomorphism
which is then easily seen to be an (algebraic) action of $S$ on $\Lc{X}$.

Furthermore, it is clear that, for every $s \in S$, the ideal $\Lc{\ran{s}}$
is idempotent. So, we may now construct the semi-direct product bundle
associated with $(\alpha,S,\Lc{X})$, which we will denote by
$\mathcal{B}^{\theta}$ in this case. This leads to the cross-sectional
algebra $\Lcs$, which is our object of interest.

Notice that, since we are assuming that $X_e$ is clopen for every $e \in E(S)$,
its characteristic function $1_{X_e}$ is locally constant, but not necessarily
compactly supported. However, for any $f \in \Lc{X}$, the product $f1_{X_e}$
is compactly supported and so, it lies in $\Lc{X_e}$. Then, we may globally
define an endomorphism $\bar{\alpha}_s: \Lc{X} \to \Lc{X}$ given by
$$\bar{\alpha}_s(f):=\alpha_s(f1_{s^\ast s})$$
where $1_{s s^{\ast}}$ stands for $1_{X_{ss^\ast}}$.

In this context, notice that, for any $s,t \in S$, $f \in \Lc{\ran{s}}$ and
$g \in \Lc{\ran{t}}$
$$
\alpha_s(\alpha_{s^\ast}(f)g)
= \alpha_s(\alpha_{s^\ast}(f)1_{s^\ast s}g)
= \alpha_s(\alpha_{s^\ast}(f)) \alpha_s(1_{s^\ast s}g)
= f \bar{\alpha}_s(g).
$$
Hence, we get a simpler formula for the product
$$f \Delta_s \cdot g \Delta_t = f \bar{\alpha}_s(g) \Delta_{st}.$$

Now we begin the preparations to obtain an universal property for $\Lcs$.

\begin{definition}\label{covrep}
	A \emph{covariant representation}
	of the system $(\theta, S, X)$ on a
	$K$-vector space $V$ is a pair $(\pi, \sigma)$, where
	$\pi: \Lc{X} \to L(V)$ is a non-degenerate representation and
	$\sigma: S \to L(V)$ is a semigroup homomorphism such that:
	\begin{enumerate}[(i)]
		\item \label{identity} $\pi(\alpha_s(f)) = \sigma_s \pi(f) \sigma_{s^{\ast}}$ for
		$s \in S$ and $f \in \Lc{\dom{s}}$;
		\item \label{essentialspace} $\Span \Set{\pi(f)\xi}{f \in \Lc{X_e}, \xi \in V}=\sigma_e(V)$
		for every $e \in E(S)$.
	\end{enumerate} 
\end{definition}

Notice that, if the domains of all $\theta_s$ are compact, then condition
(\ref{essentialspace}) of Definition (\ref{covrep}) may be replaced
equivalently by:

\begin{enumerate}[(ii')]
	\item $\pi(1_{X_e})=\sigma_e$ for $e \in E(S)$.
\end{enumerate}

The next lemma is a very helpful tool

\begin{lemma}\label{helpful}
	Let $(\pi,\sigma)$ be a covariant representation of
	the system $(\theta,S,X)$
	on a vector space $V$. If $f \in \Lc{X_e}$ for some $e \in E(S)$, then
	$$\sigma_e\pi(f)=\pi(f) = \pi(f) \sigma_e.$$
\end{lemma}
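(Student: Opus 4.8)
The plan is to reduce the whole statement to the covariance identity (\ref{covrep}.\ref{identity}) applied at the idempotent $s=e$, combined with the observation that $\sigma_e$ is an idempotent operator. First I would record that since $e\in E(S)$ we have $e^\ast=e$ and $e^2=e$; because $\sigma$ is a semigroup homomorphism, this immediately gives $\sigma_e^2=\sigma_{e^2}=\sigma_e$, so $\sigma_e$ is idempotent in $L(V)$. Next I would compute the action $\alpha_e$ on $\Lc{X_e}$: since $\theta$ is a homomorphism into $\mathcal{I}(X)$, the element $\theta_e$ is an idempotent of $\mathcal{I}(X)$ and is therefore a partial identity, namely the identity map on its domain $\dom{e}=X_e$. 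Consequently $\theta_{e^\ast}=\theta_e$ is the identity on $X_e$, and by (\ref{inducedaction}) we obtain $\alpha_e(f)=f\circ\theta_{e^\ast}=f$ for every $f\in\Lc{X_e}$.

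With these two facts available, I would apply (\ref{covrep}.\ref{identity}) with $s=e$ (noting $s^\ast=e$ and $\dom{e}=X_e$) to get
$$\pi(f)=\pi(\alpha_e(f))=\sigma_e\,\pi(f)\,\sigma_e$$
for all $f\in\Lc{X_e}$. Left-multiplying this identity by $\sigma_e$ and using $\sigma_e^2=\sigma_e$ yields $\sigma_e\pi(f)=\sigma_e\pi(f)\sigma_e=\pi(f)$; right-multiplying instead by $\sigma_e$ yields $\pi(f)\sigma_e=\sigma_e\pi(f)\sigma_e=\pi(f)$. Combining the two equalities gives exactly the claim $\sigma_e\pi(f)=\pi(f)=\pi(f)\sigma_e$.

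I do not expect a genuine obstacle here; the one point deserving care is the identification of $\theta_e$ with a partial identity, and hence of $\alpha_e$ with the identity map on $\Lc{X_e}$, which is where the idempotency of $e$ is really used. It is worth noting that only condition (\ref{covrep}.\ref{identity}) enters the argument; the essential-space condition (\ref{covrep}.\ref{essentialspace}) is not needed for this lemma, and non-degeneracy of $\pi$ plays no role.
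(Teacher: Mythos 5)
Your proof is correct, but it follows a genuinely different route from the paper's. The paper proves the first equality $\sigma_e\pi(f)=\pi(f)$ using the essential-space condition (\ref{covrep}.\ref{essentialspace}): it writes $\pi(f)\xi=\sigma_e(\eta)$ for some $\eta\in V$ and uses idempotency of $\sigma_e$ to get $\sigma_e\pi(f)\xi=\pi(f)\xi$; only the second equality comes from the covariance identity, via $\pi(f)\sigma_e=\sigma_e\pi(f)\sigma_e=\pi(\alpha_e(f))=\pi(f)$. You instead derive $\pi(f)=\sigma_e\pi(f)\sigma_e$ from (\ref{covrep}.\ref{identity}) alone (using $e^\ast=e$ and the fact that $\alpha_e$ is the identity on $\Lc{X_e}$, which you correctly justify by identifying $\theta_e$ with a partial identity) and then obtain both equalities by multiplying on the left and on the right by the idempotent $\sigma_e$. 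Your route is strictly more economical in hypotheses: it never invokes (\ref{covrep}.\ref{essentialspace}) nor non-degeneracy of $\pi$, only that $\sigma$ is a semigroup homomorphism satisfying the covariance identity. This extra generality is not idle: later in the paper, in the proof that the pair $(\Pi,U)$ integrates to a representation of $\Lcs$, the author faces exactly a pair satisfying condition (i) but possibly not condition (ii), and must insert an ad hoc substitute argument (``we can replace the use of condition (ii) of Definition (\ref{covrep}) by this argument in the proof of Lemma (\ref{helpful})'') to recover $U_e\Pi(f)=\Pi(f)=\Pi(f)U_e$. With your proof, the lemma would apply verbatim to $(\Pi,U)$ and that workaround would be unnecessary; the paper's proof, by contrast, stays closer to the definition of covariant representation as stated, at the cost of that later patch.
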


\begin{proof}
Let $\xi \in V$. By (\ref{covrep}.\ref{essentialspace}), there exist $\eta \in V$
such that $\pi(f)\xi = \sigma_e(\eta)$. Notice that
$$\sigma_e \pi(f) \xi = \sigma_e(\sigma_e(\eta)) = \sigma_e(\eta) = \pi(f)\xi.$$
Hence, $\sigma_e\pi(f) = \pi (f)$ and the first equality is proved.
For the second, observe that
$$
\pi(f) \sigma_e
= \sigma_e \pi(f) \sigma_e
\overset{(\ref{covrep}.\ref{identity})}{=} \pi(\alpha_e(f)) = \pi(f),
$$
proving the second equality.
\end{proof}

The next proposition shows that covariant representations of the system can
be integrated to non-degenerate representations of the semi-direct product
bundle associated and, hence, to a non-degenerate representation of the
crossed product algebra.

\begin{proposition}\label{integration}
	Let $(\pi,\sigma)$ be a covariant representation of the system $(\theta,S,X)$
	on a vector space $V$. For each $s \in S$, consider the map $\pi_s: B_s \to L(V)$
	given by $\pi_s(f \bmdelta_s) = \pi(f) \sigma_s$ for $f \in \Lc{\ran{s}}$.
	Then, the collection $\Pi=\left\{ \pi_s \right\}_{s \in S}$ is a non-degenerate
	representation of the semi-direct product bundle
	$\mathcal{B}^{\theta}$ on $V$.
\end{proposition}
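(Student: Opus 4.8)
The plan is to verify directly that the family $\Pi = \{\pi_s\}_{s \in S}$ defined by $\pi_s(f\bmdelta_s) = \pi(f)\sigma_s$ satisfies the two axioms of a representation of the Fell bundle $\mathcal{B}^\theta$, and then to establish non-degeneracy. First I would check that each $\pi_s$ is a well-defined $K$-linear map; this is immediate since $\pi$ is linear and $\sigma_s$ is a fixed operator, and the linear structure of $B_s$ is that of $\Lc{\ran{s}}$. The substantive content lies in verifying the multiplicativity axiom~(\ref{FellBundle}.\ref{Associativity}-type relation stated in the pre-representation definition), namely that $\pi_{st}(\mu_{s,t}(f\bmdelta_s \otimes g\bmdelta_t)) = \pi_s(f\bmdelta_s)\,\pi_t(g\bmdelta_t)$ for $f \in \Lc{\ran{s}}$ and $g \in \Lc{\ran{t}}$.

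For this computation I would use the simplified product formula derived just before the statement, $f\Delta_s \cdot g\Delta_t = f\,\bar\alpha_s(g)\,\Delta_{st}$, which tells us $\mu_{s,t}(f\bmdelta_s \otimes g\bmdelta_t) = f\,\bar\alpha_s(g)\,\bmdelta_{st}$. Thus the left-hand side equals $\pi(f\,\bar\alpha_s(g))\,\sigma_{st}$. Expanding $\bar\alpha_s(g) = \alpha_s(g\,1_{s^\ast s})$ and using that $\pi$ is a (multiplicative) representation of the commutative algebra $\Lc{X}$, together with the covariance identity~(\ref{covrep}.\ref{identity}) to rewrite $\pi(\alpha_s(\cdot)) = \sigma_s\,\pi(\cdot)\,\sigma_{s^\ast}$, I expect to obtain $\pi(f)\,\sigma_s\,\pi(g)\,\sigma_{s^\ast}\,\sigma_{st}$. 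The right-hand side is $\pi(f)\sigma_s\pi(g)\sigma_t$, so the two agree precisely when $\sigma_{s^\ast}\sigma_{st} = \sigma_t$; since $\sigma$ is a semigroup homomorphism this reduces to $\sigma_{s^\ast s t} = \sigma_t$, and Lemma~(\ref{helpful}) together with the idempotent absorption $s^\ast s t$ versus $t$ should supply the needed equality after inserting the appropriate idempotent factor. This bookkeeping with idempotents, characteristic functions $1_{s^\ast s}$, and Lemma~(\ref{helpful}) is the step I expect to be the main obstacle, since one must be careful that $g \in \Lc{\ran{t}}$ need not lie in $\Lc{X_{s^\ast s}}$, so the factor $1_{s^\ast s}$ genuinely alters $g$ and the covariance relation must be applied to exactly the right function.

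Next I would verify the representation axiom $\pi_t \circ j_{t,s} = \pi_s$ whenever $s \leq t$. Since $j_{t,s}(f\bmdelta_s) = f\bmdelta_t$, the left-hand side is $\pi(f)\sigma_t$ while the right-hand side is $\pi(f)\sigma_s$, for $f \in \Lc{\ran{s}}$. These agree provided $\pi(f)\sigma_s = \pi(f)\sigma_t$; here I would again invoke Lemma~(\ref{helpful}), noting that $\pi(f)\sigma_e = \pi(f)$ for the relevant idempotent, and use the order relation $s \leq t$ (which entails $s = t\,s^\ast s = s s^\ast t$ and hence a compatibility of the idempotents $ss^\ast$ and $tt^\ast$) to reduce both sides to $\pi(f)$ after absorbing the idempotent attached to the domain of $f$. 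The fact that $f$ vanishes outside $\ran{s} \subseteq \ran{t}$ is what makes this go through.

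Finally, for non-degeneracy I would use the reformulation recorded just after the definition of non-degeneracy: it suffices to check that $\Span\Set{\pi_e(f\bmdelta_e)\xi}{e \in E(S),\, f \in \Lc{X_e},\, \xi \in V} = V$. For idempotent $e$ we have $\sigma_e$ acting and $\pi_e(f\bmdelta_e)\xi = \pi(f)\sigma_e\xi$, but since $\pi$ is assumed \emph{non-degenerate} as a representation of $\Lc{X}$ and $\Lc{X}$ is the span of the ideals $\Lc{X_e}$ (by the nondegeneracy condition~(\ref{nondegenerate}) of the action together with $X = \bigcup_e X_e$), the vectors $\pi(f)\xi$ with $f \in \Lc{X_e}$ already span $V$; applying Lemma~(\ref{helpful}) in the form $\pi(f)\sigma_e = \pi(f)$ then shows $\pi_e(f\bmdelta_e)\xi = \pi(f)\xi$, so these span $V$ and non-degeneracy follows.
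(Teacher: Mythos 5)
Your treatment of the axiom $\pi_t \circ j_{t,s} = \pi_s$ and of non-degeneracy follows the paper's proof in substance (though ``reduce both sides to $\pi(f)$'' is a misstatement: the computation is $\pi(f)\sigma_t = \pi(f)\sigma_{ss^{\ast}}\sigma_t = \pi(f)\sigma_{ss^{\ast}t} = \pi(f)\sigma_s$, using Lemma~(\ref{helpful}) and $s = ss^{\ast}t$). The genuine gap is in the multiplicativity step, which is the heart of the proof. Starting from the simplified product formula, the correct expansion is
\begin{equation*}
\pi\big(f\bar{\alpha}_s(g)\big)\sigma_{st}
= \pi(f)\,\pi\big(\alpha_s(g1_{s^{\ast}s})\big)\,\sigma_{st}
= \pi(f)\,\sigma_s\,\pi(g1_{s^{\ast}s})\,\sigma_{s^{\ast}}\sigma_{st},
\end{equation*}
with $\pi(g1_{s^{\ast}s})$, \emph{not} $\pi(g)$, in the middle: the covariance identity (\ref{covrep}.\ref{identity}) can only be applied to the function $g1_{s^{\ast}s} \in \Lc{\dom{s}}$. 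Lemma~(\ref{helpful}) then absorbs the idempotent on the right, $\pi(g1_{s^{\ast}s})\sigma_{s^{\ast}}\sigma_{st} = \pi(g1_{s^{\ast}s})\sigma_{s^{\ast}s}\sigma_t = \pi(g1_{s^{\ast}s})\sigma_t$, and you are left needing
\begin{equation*}
\sigma_s\,\pi(g1_{s^{\ast}s}) = \sigma_s\,\pi(g),
\qquad\text{equivalently}\qquad
\sigma_{s^{\ast}s}\,\pi(g) = \pi(g1_{s^{\ast}s})
\ \text{ for arbitrary } g \in \Lc{X}.
\end{equation*}
This is exactly the point your sketch flags as ``the main obstacle'' but never closes, and your proposed resolution fails: the discrepancy does not reduce to ``$\sigma_{s^{\ast}st} = \sigma_t$'' — that operator identity is simply false for a general semigroup homomorphism $\sigma$ — and Lemma~(\ref{helpful}) cannot supply the needed equality because it only applies to functions already supported in $X_{s^{\ast}s}$, which $g \in \Lc{\ran{t}}$ need not be.

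The missing identity $\sigma_e\pi(g) = \pi(g1_{X_e})$ is true, but it is essentially Lemma~(\ref{helpful2}) specialized to idempotents, and the paper proves that lemma only \emph{after}, and \emph{by means of}, Propositions (\ref{integration}) and (\ref{disintegration}); invoking it here would be circular. One can prove it directly: both $\sigma_e\pi(g)\xi$ and $\pi(g1_{X_e})\xi$ lie in $\sigma_e(V)$ by (\ref{covrep}.\ref{essentialspace}); the operator $T = \sigma_e\pi(g) - \pi(g1_{X_e})$ satisfies $\pi(h)T = 0$ for every $h \in \Lc{X_e}$ (both terms collapse to $\pi(hg)$ by Lemma~(\ref{helpful})); and every vector of $\sigma_e(V)$ is fixed by $\pi(1_C)$ for a suitable compact open $C \subseteq X_e$, whence $T\xi = \pi(1_C)T\xi = 0$. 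But the cleanest repair is the paper's own route: use the unsimplified formula $\mu_{s,t}(f\bmdelta_s \otimes g\bmdelta_t) = \alpha_s\big(\alpha_{s^{\ast}}(f)g\big)\bmdelta_{st}$. There the inner function $\alpha_{s^{\ast}}(f)g$ automatically lies in $\Lc{\dom{s}}$, so (\ref{covrep}.\ref{identity}) and Lemma~(\ref{helpful}) apply exactly where needed and no auxiliary lemma is required.
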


\begin{proof}
For each $s \in S$, the map $\pi_s$ is clearly linear and, if $s,t \in S$, $f \in \Lc{\ran{s}}$
and $g \in \Lc{\ran{t}}$, we have
\begin{align*}
\pi_{st} \Big( \mu_{s,t}(f \bmdelta_s \otimes g \bmdelta_t) \Big)
& = \pi_{st} \Big(\alpha_s \big( \alpha_{s^{\ast}}(f)g \big)\bmdelta_{st} \Big)
= \pi \Big( \alpha_s \big( \alpha_{s^{\ast}}(f)g \big) \Big) \sigma_{st} \\
& \overset{(\ref{covrep}.\ref{identity})}{=}
		\sigma_s \pi \big( \alpha_{s^{\ast}}(f)g \big) \sigma_{s^{\ast}} \sigma_{st}
\overset{(\ref{helpful})}{=}
		\sigma_s \pi \big( \alpha_{s^{\ast}}(f) \big) \pi(g) \sigma_t \\
& \overset{(\ref{covrep}.\ref{identity})}{=} \sigma_s \sigma_{s^{\ast}} \pi(f) \sigma_s \pi(g)\sigma_t
\overset{(\ref{helpful})}{=} \pi(f) \sigma_s \pi(g) \sigma_t \\
& = \pi_s(f \bmdelta_s) \pi_t(g \bmdelta_t).
\end{align*}
Furthermore, if $s \leq t$, then
$$
\pi_t(j_{t,s}(f \bmdelta_s)) = \pi_t(f \bmdelta_t) = \pi(f) \sigma_t
\overset{(\ref{helpful})}{=} \pi(f) \sigma_{ss^{\ast}} \sigma_t
= \pi(f) \sigma_s = \pi_s(f \bmdelta_s),
$$
concluding that $\Pi$ is indeed a representation of
$\mathcal{B}^{\theta}$ on $V$.

Finally, let $\xi \in V$ and write
$$\xi = \sum_{i=1}^n \pi(f_i) \xi_i,$$
by the non-degenerateness of $\pi$.
Since $\Lc{X} = \sum_{e \in E(S)} \Lc{X_e}$, we may assume that each $f_i$
lies in $\Lc{X_{e_i}}$ for some $e_i \in E(S)$.
Hence,
$$
\xi
= \sum_{i=1}^n \pi(f_i) \xi_i
\overset{(\ref{helpful})}{=} \sum_{i=1}^n \pi(f_i) \sigma_{e_i} (\xi_i)
= \sum_{i=1}^n \pi_{e_i}(f_i \bmdelta_{e_i}) \xi_i,
$$
concluding the proof.
\end{proof}

Combining this result with (\ref{UniversalProp}), we get:

\begin{corollary}\label{integration2}
	Let $(\pi,\sigma)$ be a covariant representation of the system $(\theta,S,X)$
	on a vector space $V$. Then, there exists a non-degenerate
	representation
	$\pi \times \sigma: \Lcs \to L(V)$ such that
	$(\pi \times \sigma)(f \Delta_s) = \pi(f) \sigma_s$ for
	$f \in \Lc{\ran{s}}$.
\end{corollary}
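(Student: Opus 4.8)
The plan is simply to chain together Proposition \ref{integration} and the universal property \ref{UniversalProp}; no genuinely new work is required, only careful tracking of the identifications already set up. First I would apply Proposition \ref{integration} to the covariant representation $(\pi,\sigma)$, producing the non-degenerate representation $\Pi = \left\{ \pi_s \right\}_{s \in S}$ of the semi-direct product bundle $\mathcal{B}^{\theta}$ on $V$, characterized by $\pi_s(f\bmdelta_s) = \pi(f)\sigma_s$ for $f \in \Lc{\ran{s}}$.

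Next, recalling that $\Lcs$ is by definition the cross-sectional algebra of $\mathcal{B}^{\theta}$, I would invoke Proposition \ref{UniversalProp} with $A = L(V)$. Since $\Pi$ is a representation of $\mathcal{B}^{\theta}$ in the algebra $L(V)$, there exists a unique homomorphism $\Phi: \Lcs \to L(V)$ with $\Phi \circ \pi_s^{u} = \pi_s$ for every $s \in S$; I would then set $\pi\times\sigma := \Phi$.

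It remains to verify the two asserted properties. For the formula, under the standing identification of $\Lcs$ with $\mathcal{L}(\mathcal{B}^{\theta})/\mathcal{N}$ (carrying $\pi_s^{+}$ to $\pi_s^{u}$), the element $f\Delta_s$ is exactly $\pi_s^{u}(f\bmdelta_s)$, so
\[
(\pi\times\sigma)(f\Delta_s) = \Phi(\pi_s^{u}(f\bmdelta_s)) = \pi_s(f\bmdelta_s) = \pi(f)\sigma_s,
\]
as claimed. For non-degeneracy, I would apply Proposition \ref{degVSdeg} (with $A = L(V)$): since $\Pi$ is non-degenerate, the associated homomorphism $\Phi = \pi\times\sigma$ is non-degenerate as well.

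The only point demanding care — and the closest thing to an obstacle — is the bookkeeping of the several identifications (between $\algb$, $\mathcal{L}(\mathcal{B}^{\theta})/\mathcal{N}$ and $\Lcs$, and between the symbols $\pi_s^{u}$, $\pi_s^{+}$ and the notation $f\Delta_s$), so that the generator formula is read off correctly. Uniqueness of $\pi\times\sigma$ with the stated property, should one wish to record it, is automatic: the elements $f\Delta_s$ generate $\Lcs$, and the homomorphism supplied by \ref{UniversalProp} is already unique.
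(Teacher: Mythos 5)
Your proposal is correct and follows exactly the paper's route: the paper obtains Corollary (\ref{integration2}) precisely by combining Proposition (\ref{integration}) with the universal property (\ref{UniversalProp}), with non-degeneracy of the integrated homomorphism coming from (\ref{degVSdeg}). Your write-up merely makes explicit the identification bookkeeping (between $\pi_s^{u}$, $\pi_s^{+}$ and the notation $f\Delta_s$) that the paper leaves implicit, which is fine.
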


Now we proceed the other way around. The goal is to prove that every non-degenerate
representation $\Pi$ of the semi-direct product bundle on a vector space $V$ is given
as above for a covariant representation $(\pi,\sigma)$ of $(\theta,S,X)$.
We begin with the following lemma.

\begin{lemma}\label{disintegration1}
	Given a non-degenerate representation $\Pi = \left\{ \pi_s \right\}_{s \in S}$
	of the  semi-direct
	product bundle $\mathcal{B}^{\theta}$ on a vector space $V$,
	there exists a non-degenerate representation $\pi$ of $\Lc{X}$ on $V$
	such that
	$$\pi(f) = \pi_e(f \bmdelta_e)$$
	for all $e \in E(S)$ and all $f \in \Lc{X_e}$.
\end{lemma}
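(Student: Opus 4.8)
The plan is to define $\pi$ on each ideal $\Lc{X_e}$ by the formula $\pi(f) := \pi_e(f\bmdelta_e)$ and then verify that these local definitions glue together into a well-defined, multiplicative, non-degenerate representation of the whole algebra $\Lc{X}$. The starting observation is that since $\Lc{X} = \sum_{e \in E(S)} \Lc{X_e}$ (every compactly supported locally constant function is supported on finitely many of the clopen sets $X_e$), it suffices to define $\pi$ on each piece and check compatibility on overlaps.

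First I would establish well-definedness. The potential problem is that a single $f$ may lie in $\Lc{X_e} \cap \Lc{X_f}$ for distinct idempotents $e,f$, and we must check that $\pi_e(f\bmdelta_e) = \pi_f(f\bmdelta_f)$. The key tool here is the representation property (\ref{FellBundle}.\ref{Funtorial})/(ii) of $\Pi$, namely $\pi_t \circ j_{t,s} = \pi_s$ for $s \leq t$. If $f$ is supported on both $X_e$ and $X_f$, then $f$ is supported on $X_{ef} = X_e \cap X_f$, and $ef \leq e$, $ef \leq f$ in $E(S)$. Writing $f\bmdelta_{ef}$ and applying the inclusion maps $j_{e,ef}$ and $j_{f,ef}$, the compatibility $\pi_e \circ j_{e,ef} = \pi_{ef} = \pi_f \circ j_{f,ef}$ forces $\pi_e(f\bmdelta_e) = \pi_{ef}(f\bmdelta_{ef}) = \pi_f(f\bmdelta_f)$. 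This is exactly where the representation axiom (as opposed to mere pre-representation) is essential, and I expect this gluing/well-definedness step to be the main obstacle — everything else is comparatively routine.

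Next I would check multiplicativity. For $f \in \Lc{X_e}$ and $g \in \Lc{X_f}$, both $\pi(f)$ and $\pi(g)$ are defined via the idempotent fibers, and I compute $\pi(f)\pi(g) = \pi_e(f\bmdelta_e)\pi_f(g\bmdelta_f) = \pi_{ef}(\mu_{e,f}(f\bmdelta_e \otimes g\bmdelta_f))$ using property (i) of the representation. Unwinding the multiplication formula for the semi-direct product bundle, $\mu_{e,f}(f\bmdelta_e \otimes g\bmdelta_f) = \alpha_e(\alpha_e(f)g)\bmdelta_{ef} = (fg)\bmdelta_{ef}$ since $\alpha_e$ is the identity on its domain for idempotent $e$, and $fg$ is supported on $X_{ef}$. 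Hence $\pi(f)\pi(g) = \pi_{ef}((fg)\bmdelta_{ef}) = \pi(fg)$, as required. Linearity on each piece is immediate from the linearity of each $\pi_e$, and linearity across pieces follows from well-definedness together with $\Lc{X} = \sum_e \Lc{X_e}$.

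Finally, non-degeneracy of $\pi$ follows directly from the non-degeneracy of $\Pi$: by the remark preceding (\ref{degVSdeg}), a representation of $\mathcal{B}^{\theta}$ is non-degenerate if and only if the vectors $\pi_e(b)\xi$ with $e \in E(S)$, $b \in B_e$, $\xi \in V$ span $V$; but these are precisely the vectors $\pi(f)\xi$ for $f \in \Lc{X_e}$, so the spanning condition transfers verbatim to $\pi$. This gives a non-degenerate representation $\pi$ of $\Lc{X}$ on $V$ satisfying $\pi(f) = \pi_e(f\bmdelta_e)$ for all $e \in E(S)$ and $f \in \Lc{X_e}$, completing the proof.
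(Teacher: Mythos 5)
Your pairwise gluing step does not suffice, and this is a genuine gap. You correctly prove that if a \emph{single} function $f$ lies in $\Lc{X_e} \cap \Lc{X_{e'}}$ then $\pi_e(f\bmdelta_e) = \pi_{e'}(f\bmdelta_{e'})$ (via $j_{e,ee'}$, $j_{e',ee'}$ and the representation axiom). But $\Lc{X} = \sum_{e \in E(S)} \Lc{X_e}$ is not a direct sum: a general $f$ admits many decompositions $f = \sum_e f_e$ with $f_e \in \Lc{X_e}$, and the definition $\pi(f) := \sum_e \pi_e(f_e\bmdelta_e)$ is only legitimate once you show this sum is independent of the decomposition, i.e.\ that $\sum_e \pi_e(f_e\bmdelta_e) = 0$ whenever $\sum_e f_e = 0$. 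Pairwise agreement on intersections does not imply this: for linear maps defined on a family of subspaces, consistency on pairwise intersections is in general strictly weaker than extendability to the sum (take three distinct lines in $K^2$, whose pairwise intersections are trivial, with maps $0$, $0$, and something nonzero). Your sentence ``linearity across pieces follows from well-definedness together with $\Lc{X} = \sum_e \Lc{X_e}$'' is exactly the point that needs proof, and it is where the paper does real work.

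The paper's proof handles this by invoking non-degeneracy of $\Pi$ at the well-definedness stage (not only at the end, as you do): assuming $f = \sum_e f_e = 0$, it pairs the candidate operator against arbitrary $\pi_s(g\bmdelta_s)$ and computes
\begin{equation*}
\sum_{e} \pi_e(f_e\bmdelta_e)\,\pi_s(g\bmdelta_s)
= \sum_{e} \pi_{es}(f_e g\,\bmdelta_{es})
= \sum_{e} \pi_s(f_e g\,\bmdelta_s)
= \pi_s(fg\,\bmdelta_s) = 0,
\end{equation*}
where the middle equality uses $es \leq s$ and the representation (rather than pre-representation) property; non-degeneracy of $\Pi$ then forces $\sum_e \pi_e(f_e\bmdelta_e) = 0$. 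Alternatively, in this particular setting your pairwise consistency \emph{can} be upgraded to full well-definedness by a partition argument (refine the compact supports of the finitely many $f_e$ into clopen pieces on which all of them are constant and which are contained in every relevant $X_e$, so that any linear relation decomposes into scalar relations over common pieces), but that argument uses local constancy, compactness, and clopenness in an essential way and must be written out. The rest of your proposal is sound: the multiplicativity computation on single fibers matches the paper's, and your non-degeneracy argument via the remark preceding (\ref{degVSdeg}) is valid and in fact slightly more direct than the paper's factorization $f = gh$ in $\Lc{\ran{s}}$.
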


\begin{proof}
Let $f \in \Lc{X}$. Since $\Lc{X}=\sum_{e \in E(S)} \Lc{X_e}$, we may write it as
finite sum
$f = \sum_{e \in E(S)} f_e$ where $f_e \in \Lc{X_e}$.
We claim initially that $\sum_{e \in E(S)} \pi_e(f_e \bmdelta_e)$ vanishes when $f=0$.
In fact, since $\Pi$ is non-degenerate, it is enough to prove that
$$\sum_{e \in E(S)} \pi_e(f_e \bmdelta_e) \pi_s(g \bmdelta_s) = 0$$
for all $s \in S$ and $g \in \Lc{\ran{s}}$. Notice that
\begin{align*}
\sum_{e \in E(S)} \pi_e(f_e \bmdelta_e) \pi_s(g \bmdelta_s)
& = \sum_{e \in E(S)} \pi_{es}(f_e g \bmdelta_{es})
\overset{es \leq s}{=} \sum_{e \in E(S)} \pi_s(f_e g \bmdelta_s) \\
& = \pi_s \bigg( \sum_{e \in E(S)} f_e g \bmdelta_s \bigg)
= \pi_s (fg \bmdelta_s)=0,
\end{align*}
proving the claim. Hence, the map $\pi: \Lc{X} \to L(V)$ defined by
$$ \pi (f) = \sum_{e \in E(S)} \pi_e(f_e \bmdelta_e)$$
does not depend of the choice of the $f_e$'s.
Furthermore, notice that, for $e,e' \in E(S)$, $f \in \Lc{X_e}$ and $g \in \Lc{X_{e'}}$,
we have
$$
\pi(f)\pi(g) = \pi_e(f \bmdelta_e) \pi_{e'}(g \bmdelta_{e'})
= \pi_{e e'}(fg \bmdelta_{ee'}) = \pi(fg).
$$
By linearity, $\pi$ is a representation of $\Lc{X}$
on $V$.

Finally, the non-degenerateness of $\pi$ comes from the non-degenerateness of $\Pi$.
In fact, let $s \in S$, $f \in \Lc{\ran{s}}$ and
choose $g, h \in \Lc{\ran{s}}$ such that $f=gh$. Hence
$$
\pi_s( f \bmdelta_s)
= \pi_{ss^{\ast}} (g \bmdelta_{ss^{\ast}}) \pi_s(h \bmdelta_s)
= \pi(g) \pi_s(h \bmdelta_s),
$$
concluding the argument.
\end{proof}

With this in hands, we proceed to the promised result.

\begin{theorem}\label{disintegration}
	Given a non-degenerate representation
	$\Pi = \left\{ \pi_s \right\}_{s\in S}$ of
	the semi-direct product bundle $\mathcal{B}^{\theta}$ on a vector
	space $V$, there exists a covariant representation $(\pi,\sigma)$
	of the system
	$(\theta,S,X)$ on $V$ such that
	$$\pi_s(f \bmdelta_s) = \pi(f) \sigma_s$$
	for every $s \in S$ and $f \in \Lc{\ran{s}}$.
\end{theorem}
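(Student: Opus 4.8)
The plan is to retain the representation $\pi$ delivered by Lemma (\ref{disintegration1}) and to devote all the effort to building $\sigma$. Since $\pi$ is non-degenerate, every $\xi\in V$ admits a finite (non-unique) expression $\xi=\sum_i\pi(f_i)\eta_i$ with $f_i\in\Lc{X}$ and $\eta_i\in V$, and the natural candidate is
\[
\sigma_s\Big(\sum_i\pi(f_i)\eta_i\Big):=\sum_i\pi_s\big(\bar{\alpha}_s(f_i)\,\bmdelta_s\big)\eta_i ,
\]
which makes sense because $\bar{\alpha}_s(f_i)=\alpha_s(f_i1_{X_{s^{\ast}s}})\in\Lc{\ran{s}}$, so that $\bar{\alpha}_s(f_i)\bmdelta_s\in B_s$. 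Granting that this is well defined, linearity of each $\sigma_s$ is immediate, and I expect every remaining verification to follow formally from the bundle axioms together with the product rule $\pi_s(f\bmdelta_s)\pi_t(g\bmdelta_t)=\pi_{st}(f\bar{\alpha}_s(g)\bmdelta_{st})$.

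The main obstacle is precisely the independence of the right-hand side from the chosen decomposition of $\xi$. The tool I would use is a \emph{local unit principle}: given finitely many functions in some $\Lc{X_e}$, their supports sit inside a compact-open $C\subseteq X_e$, and then $1_C\in\Lc{X_e}$ acts as a unit on the corresponding vectors through $\pi$; consequently, if $w\in V$ lies in the $ss^{\ast}$-essential subspace $\Span\Set{\pi(g)\zeta}{g\in\Lc{\ran{s}},\ \zeta\in V}$ and satisfies $\pi(g)w=0$ for every $g\in\Lc{\ran{s}}$, then choosing such a $1_C$ forces $w=0$. To apply this, suppose $\sum_i\pi(f_i)\eta_i=0$ and set $w:=\sum_i\pi_s(\bar{\alpha}_s(f_i)\bmdelta_s)\eta_i$; a factorization argument shows $w$ lies in the $ss^{\ast}$-essential subspace, so it suffices to check $\pi(g)w=0$ for all $g\in\Lc{\ran{s}}$. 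Here the computation $g\,\bar{\alpha}_s(f_i)=\alpha_s\big(\alpha_{s^{\ast}}(g)\,f_i1_{X_{s^{\ast}s}}\big)$ lets me peel off a factor and rewrite $\pi(g)w$ as $\pi_s(1_C\bmdelta_s)\,\pi(\alpha_{s^{\ast}}(g))\sum_i\pi(1_{X_{s^{\ast}s}}f_i)\eta_i$; since $1_{X_{s^{\ast}s}}f_i=1_{D'}f_i$ for a common compact-open $D'$, the last sum equals $\pi(1_{D'})\big(\sum_i\pi(f_i)\eta_i\big)=0$, giving $\pi(g)w=0$ and hence $w=0$.

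With well-definedness secured, the integration identity $\pi_s(f\bmdelta_s)=\pi(f)\sigma_s$ for $f\in\Lc{\ran{s}}$ comes out by evaluating both operators on $\xi=\sum_i\pi(f_i)\eta_i$ and observing that each side equals $\sum_i\pi_s\big(f\bar{\alpha}_s(f_i)\bmdelta_s\big)\eta_i$. For the structural properties I would first derive the action formula
\[
\sigma_s\,\pi_t(g\bmdelta_t)\zeta=\pi_{st}\big(\bar{\alpha}_s(g)\,\bmdelta_{st}\big)\zeta\qquad(g\in\Lc{\ran{t}},\ \zeta\in V),
\]
obtained by factoring $g=g_1g_2$, applying the definition of $\sigma_s$ to the vector $\pi(g_1)\big(\pi_t(g_2\bmdelta_t)\zeta\big)$, and using that $\bar{\alpha}_s$ is multiplicative. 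The homomorphism property $\sigma_s\sigma_t=\sigma_{st}$ then reduces, via this formula applied to $\sigma_t\xi=\sum_i\pi_t(\bar{\alpha}_t(f_i)\bmdelta_t)\eta_i$, to the identity $\bar{\alpha}_s\circ\bar{\alpha}_t=\bar{\alpha}_{st}$, a routine pointwise check on the homeomorphisms $\theta_s$. Covariance condition (\ref{covrep}.\ref{essentialspace}) is direct: for $e\in E(S)$ one has $\bar{\alpha}_e(f)=f\,1_{X_e}$, so $\sigma_e$ is the idempotent with range exactly the $e$-essential subspace.

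Finally, covariance condition (\ref{covrep}.\ref{identity}) is handled by computing the product $\pi_s(g\bmdelta_s)\pi_{s^{\ast}}(h\bmdelta_{s^{\ast}})$ in two ways: the bundle multiplication gives $\pi(g\bar{\alpha}_s(h))$, while the integration identity gives $\pi(g)\sigma_s\pi(h)\sigma_{s^{\ast}}$, whence $\pi(g)\sigma_s\pi(h)\sigma_{s^{\ast}}=\pi\big(g\,\alpha_s(h)\big)$ for all $g\in\Lc{\ran{s}}$ and $h\in\Lc{\ran{s^{\ast}}}$. Taking $h=f\in\Lc{\dom{s}}$ yields $\pi(g)\big(\sigma_s\pi(f)\sigma_{s^{\ast}}-\pi(\alpha_s(f))\big)=0$ for every $g\in\Lc{\ran{s}}$; since both $\sigma_s\pi(f)\sigma_{s^{\ast}}$ and $\pi(\alpha_s(f))$ send $V$ into the $ss^{\ast}$-essential subspace, the local unit principle again forces the operator in brackets to vanish, completing the proof. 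The only genuinely delicate step is the well-definedness in the second paragraph; everything else is bookkeeping with the product rule and with the local units that total disconnectedness of $X$ provides.
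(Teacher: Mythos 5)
Your proposal is correct and follows essentially the same route as the paper: you keep the representation $\pi$ from Lemma (\ref{disintegration1}), define $\sigma_s$ on decompositions $\xi=\sum_i\pi(f_i)\eta_i$ by the identical formula, and settle the crucial well-definedness with the same compact-open-support/peeling argument via the bundle product. The only differences are organizational: you establish $\pi_s(f\bmdelta_s)=\pi(f)\sigma_s$ first and then deduce covariance condition (\ref{covrep}.\ref{identity}) from a two-fold computation of $\pi_s(g\bmdelta_s)\pi_{s^{\ast}}(h\bmdelta_{s^{\ast}})$ together with your local-unit principle, whereas the paper verifies the covariance conditions directly on vectors of the form $\pi(\varphi)\eta$ and obtains the displayed identity last.
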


\begin{proof}
Let $\pi$ be the representation of $\Lc{X}$ on $V$ given in the Lemma (\ref{disintegration1}).
Since $\pi$ is non-degenerate, given any $\xi \in V$, we may write
$$\xi = \sum_{i=1}^n \pi(f_i) \xi_i,$$
where each $f_i \in \Lc{X}$ and $\xi_i \in V$.
We then define, for each $s \in S$,
$$\sigma_s(\xi) = \sum_{i=1}^n \pi_s(\bar{\alpha}_s(f_i) \bmdelta_s) \xi_i.$$
To prove that $\sigma_s$ is well defined, we must show that
the right hand side of the equality above vanishes when $\xi=0$.
Hence, suppose $\xi=0$ and let
$$C= \bigcup_{i=1}^n \supp (f_i) \cap \dom{s}.$$
So $C$ is a compact open set and, for each $i=1, \ldots, n$, we have
$1_C f_i 1_{s^\ast s} = f_i 1_{s^\ast s}$.
Therefore,
\begin{align*}
\sum_{i=1}^n \pi_s(\bar{\alpha}_s(f_i) \bmdelta_s) \xi_i
& = \sum_{i=1}^n \pi_s(\alpha_s(1_C f_i 1_{s^\ast s}) \bmdelta_s) \xi_i
= \sum_{i=1}^n \pi_s(1_{\theta_s(C)} \bmdelta_s) \pi_{s^{\ast}s}(f_i 1_{s^\ast s} \bmdelta_{s^{\ast}s}) \xi_i \\
& = \sum_{i=1}^n \pi_s(1_{\theta_s(C)} \bmdelta_s) \pi(f_i 1_{s^\ast s}) \xi_i
= \sum_{i=1}^n \pi_s(1_{\theta_s(C)} \bmdelta_s) \pi(1_C f_i 1_{s^\ast s}) \xi_i \\
& = \pi_s(1_{\theta_s(C)} \bmdelta_s) \pi(1_C 1_{s^\ast s}) \sum_{i=1}^n \pi(f_i)\xi_i
= \pi_s(1_{\theta_s(C)} \bmdelta_s) \pi(1_C 1_{s^\ast s}) \xi = 0,
\end{align*}
concluding that $\sigma_s$ is well defined.
Furthermore, $\sigma:S \to L(V)$ given by $s \mapsto \sigma_s$ is a semigroup homomorphism.
In fact, consider $s,t \in V$ and $\xi \in V$ a vector of the form
$\xi = \pi(\varphi) \eta$ for some $\varphi \in \Lc{X}$
and $\eta \in V$. Additionally, consider $f,g \in \Lc{\ran{t}}$ such that $fg=\bar{\alpha}_t(\varphi)$
and observe that
\begin{align*}
\sigma_s \sigma_t(\xi)
& = \sigma_s \sigma_t \pi(\varphi) \eta
= \sigma_s \pi_t (\bar{\alpha}_t(\varphi) \bmdelta_t) \eta
= \sigma_s \pi_{tt^\ast}(f \bmdelta_{tt^\ast}) \pi_t(g \bmdelta_t) \eta \\
& = \sigma_s \pi(f) \pi_t(g \bmdelta_t) \eta
= \pi_s(\bar{\alpha}_s(f)\bmdelta_s) \pi_t (g \bmdelta_t) \eta
= \pi_{st}(\bar{\alpha}_s(f) \bar{\alpha}_s(g) \bmdelta_{st}) \eta \\
& = \pi_{st}(\bar{\alpha}_s(\bar{\alpha}_t(\varphi)) \bmdelta_{st}) \eta
= \pi_{st}(\bar{\alpha}_{st}(\varphi) \bmdelta_{st}) \eta
= \sigma_{st}(\xi).
\end{align*}
Since the set of vectors of the form $\xi = \pi(\varphi) \eta$ spans $V$, we conclude that
$\sigma$ is a semigroup homomorphism.
With the aim of proving (\ref{covrep}.\ref{identity}), let $\xi = \pi(\varphi) \eta$ as above
and notice that	
\begin{align*}
\sigma_s \pi(f) \sigma_{s^\ast}(\xi)
& = \sigma_s \pi(f) \sigma_{s^\ast} \pi(\varphi) \eta
= \sigma_s \pi(f) \pi_{s^\ast}(\bar{\alpha}_{s^\ast}(\varphi) \bmdelta_{s^\ast}) \eta \\
& = \pi_s(\alpha_s(f) \bmdelta_s) \pi_{s^\ast}(\bar{\alpha}_{s^\ast}(\varphi) \bmdelta_{s^\ast}) \eta
= \pi_{ss^\ast}(\alpha_s(f) \varphi \bmdelta_{ss^\ast}) \eta \\
& = \pi(\alpha_s(f) \varphi) \eta
= \pi(\alpha_s(f)) \xi.
\end{align*}
For the proof of (\ref{covrep}.\ref{essentialspace}), fix $e \in E(S)$ and let
$f \in \Lc{X_e}$, $\xi \in V$ and notice that
$$\pi(f)\xi = \pi_e(f \bmdelta_e) \xi = \sigma_e(\pi(f) \xi)$$
and hence the essential space of $\pi(\Lc{X_e})$ is contained
in the range of $\sigma_e$, that is
$\Span \Set{\pi(f)\xi}{f \in \Lc{X_e}, \xi \in V} \subseteq \sigma_e(V)$.
Conversely, let $\xi \in \sigma_e(V)$. Then, there exists $\eta \in V$ such that
$\sigma_e(\eta) = \xi$. Since $\pi$ is non-degenerate, there exists $f_i \in \Lc{X}$
and $\eta_i \in V$ such that $\eta=\sum_{i=1}^{n} \pi(f_i) \eta_i$. Therefore
$$
\xi
= \sigma_e(\eta)
= \sum_{i=1}^{n} \pi_e(\bar{\alpha}_e(f_i)\bmdelta_e) \eta_i
= \sum_{i=1}^{n} \pi(\bar{\alpha}_e(f_i)) \eta_i,
$$
from where we conclude that the range of $\sigma_e$ is contained in the essential space of
$\pi(\Lc{X_e})$.

Finally, we must prove that $\pi_s(f \bmdelta_s) = \pi(f) \sigma_s$
for every $s \in S$ and $f \in \Lc{\ran{s}}$.
For this task, let $\xi \in V$ and observe that
$$
\pi_s(f \bmdelta_s) \xi
= \sigma_s \pi (\alpha_{s^\ast}(f)) \xi
= \sigma_s \sigma_{s^\ast} \pi(f) \sigma_s (\xi)
\overset{(\ref{helpful})}{=} \pi(f) \sigma_s(\xi),
$$
concluding the proof.
\end{proof}

An immediate result about disintegration of representations of
the crossed product algebra follows. 

\begin{corollary}\label{disintegration0}
	Given a non-degenerate representation $\Phi$ of the
	crossed product $\Lcs$
	on a vector space $V$, there exists a unique covariant representation $(\pi,\sigma)$ of the system $(\theta,S,X)$ on $V$
	such that
	$$\Phi(f \Delta_s) = \pi(f) \sigma_s$$
	for every $s \in S$ and $f \in \Lc{\ran{s}}$.
\end{corollary}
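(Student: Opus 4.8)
The plan is to read this off from the machinery already set up, treating it as the integrated form of Theorem (\ref{disintegration}) transported across the universal property. Recall that $\Lcs$ is by definition the cross-sectional algebra of $\mathcal{B}^{\theta}$, so by the universal property (\ref{UniversalProp}) a homomorphism $\Phi:\Lcs\to L(V)$ carries the same information as a representation $\Pi=\left\{\pi_s\right\}_{s\in S}$ of $\mathcal{B}^{\theta}$ on $V$, the two being related by $\pi_s=\Phi\circ\pi_s^u$, that is, $\pi_s(f\bmdelta_s)=\Phi(f\Delta_s)$ for $f\in\Lc{\ran{s}}$.

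First I would establish existence. Setting $\pi_s:=\Phi\circ\pi_s^u$ produces a representation $\Pi$ of $\mathcal{B}^{\theta}$, and since $\Phi$ is non-degenerate, Proposition (\ref{degVSdeg}) guarantees that $\Pi$ is non-degenerate as well. Theorem (\ref{disintegration}) then supplies a covariant representation $(\pi,\sigma)$ of $(\theta,S,X)$ with $\pi_s(f\bmdelta_s)=\pi(f)\sigma_s$; combining this with the identification above gives $\Phi(f\Delta_s)=\pi_s(f\bmdelta_s)=\pi(f)\sigma_s$, which is exactly the asserted formula. This part is essentially bookkeeping.

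The main obstacle will be uniqueness, and I would handle $\pi$ and $\sigma$ separately. For $\pi$: evaluating the formula at an idempotent $e$ and $f\in\Lc{X_e}$ and invoking Lemma (\ref{helpful}) (so that $\pi(f)\sigma_e=\pi(f)$) forces $\pi(f)=\Phi(f\Delta_e)$; since $\Lc{X}=\sum_{e\in E(S)}\Lc{X_e}$, this determines $\pi$ completely from $\Phi$. For $\sigma$, I would first pin down the idempotent operators $\sigma_e$: if $(\pi,\sigma)$ and $(\pi,\sigma')$ both satisfy the formula, then for every $\xi\in V$ the vector $(\sigma_e-\sigma_e')\xi$ lies in $\sigma_e(V)$ (by (\ref{covrep}.\ref{essentialspace}), which is the same subspace for both) and is annihilated by every $\pi(f)$, $f\in\Lc{X_e}$, because $\pi(f)\sigma_e=\Phi(f\Delta_e)=\pi(f)\sigma_e'$. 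A local-units argument inside $\Lc{X_e}$ shows that the essential space $\Span\left\{\pi(f)\eta:f\in\Lc{X_e},\eta\in V\right\}$ meets $\bigcap_{f}\ker\pi(f)$ trivially, so $\sigma_e=\sigma_e'$. Finally, for a general $s$ I would use $s=s\,(s^{\ast}s)$ to write $\sigma_s=\sigma_s\sigma_{s^{\ast}s}$; on the range of the now-determined projection $\sigma_{s^{\ast}s}$, covariance (\ref{covrep}.\ref{identity}) together with Lemma (\ref{helpful}) gives $\sigma_s\pi(g)=\pi(\alpha_s(g))\sigma_s=\Phi(\alpha_s(g)\Delta_s)$ for $g\in\Lc{\dom{s}}$, expressing $\sigma_s$ entirely in terms of $\Phi$ and thereby forcing $\sigma_s=\sigma_s'$.
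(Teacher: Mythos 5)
Your proposal is correct, and it actually does more than the paper does. For existence you follow precisely the route the paper intends when it calls the statement an ``immediate'' corollary: pull $\Phi$ back along the universal property (\ref{UniversalProp})/(\ref{correspondence}) to the representation $\Pi=\{\Phi\circ\pi_s^u\}_{s\in S}$ of $\mathcal{B}^{\theta}$, invoke (\ref{degVSdeg}) to transfer non-degeneracy, and apply Theorem (\ref{disintegration}). The real difference is uniqueness: the paper asserts it in the statement but never proves it, and your three-step argument supplies a correct proof. Pinning down $\pi$ via $\pi(f)=\pi(f)\sigma_e=\Phi(f\Delta_e)$ on each $\Lc{X_e}$ is right; your identification of the idempotent operators works because $\sigma_e(V)=\sigma'_e(V)$ is the common essential space of $\pi(\Lc{X_e})$ by (\ref{covrep}.\ref{essentialspace}), the difference $(\sigma_e-\sigma'_e)\xi$ is killed by every $\pi(f)$ with $f\in\Lc{X_e}$, and any vector of the essential space killed by all such $\pi(f)$ vanishes --- write it as $\sum_i\pi(f_i)\xi_i$ and apply $\pi(1_C)$ with $C=\bigcup_i\supp(f_i)\subseteq X_e$ compact open, which simultaneously fixes and annihilates it; and for general $s$ the identity $\pi(\alpha_s(g))\sigma_s=\sigma_s\pi(g)\sigma_{s^{\ast}}\sigma_s=\sigma_s\pi(g)\sigma_{s^{\ast}s}=\sigma_s\pi(g)$ for $g\in\Lc{\dom{s}}$, which follows from (\ref{covrep}.\ref{identity}) and Lemma (\ref{helpful}) alone, combined with $\sigma_s=\sigma_s\sigma_{s^{\ast}s}$ and the already-determined projection $\sigma_{s^{\ast}s}$, expresses $\sigma_s$ entirely through $\Phi$. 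One further point in favour of your argument: it uses only Lemma (\ref{helpful}) and the covariance axioms, never Lemma (\ref{helpful2}). This matters, because the paper's proof of (\ref{helpful2}) opens with the reduction ``we may assume $(\pi,\sigma)$ is given as in the proof of (\ref{disintegration}) applied to its own integrated form'', a step that tacitly presupposes exactly the uniqueness being discussed here. Your proof therefore closes a small circularity that the paper leaves implicit.
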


The next lemma is another helpful result.

\begin{lemma}\label{helpful2}
	Let $(\pi,\sigma)$ be a covariant representation of the
	system $(\theta,S,X)$
	on a vector space $V$. Then,
	$$\sigma_s\pi(f)=\pi( \bar{\alpha}_s(f))\sigma_s$$
	for any $s \in S$ and $f \in \Lc{X}$.
\end{lemma}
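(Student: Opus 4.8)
The plan is to show that both $\sigma_s\pi(f)$ and $\pi(\bar{\alpha}_s(f))\sigma_s$ coincide with the single operator $\sigma_s\pi(f1_{s^\ast s})$, where $1_{s^\ast s}$ abbreviates $1_{X_{s^\ast s}}$ and I write $e=s^\ast s$. For the second product this is a short computation: since $f1_{s^\ast s}\in\Lc{\dom{s}}$ and $\bar{\alpha}_s(f)=\alpha_s(f1_{s^\ast s})$ by definition, covariance (\ref{covrep}.\ref{identity}) gives $\pi(\bar{\alpha}_s(f))=\sigma_s\pi(f1_{s^\ast s})\sigma_{s^\ast}$. Multiplying on the right by $\sigma_s$ and using that $\sigma$ is a homomorphism with $\sigma_{s^\ast}\sigma_s=\sigma_e$, then discarding $\sigma_e$ through the identity $\pi(g)\sigma_e=\pi(g)$ of Lemma (\ref{helpful}) (valid as $f1_{s^\ast s}\in\Lc{X_e}$), I obtain $\pi(\bar{\alpha}_s(f))\sigma_s=\sigma_s\pi(f1_{s^\ast s})$.

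For the first product I use the inverse-semigroup identity $s=ss^\ast s$, which yields $\sigma_s=\sigma_s\sigma_e$, so it suffices to prove $\sigma_e\pi(f)=\pi(f1_{s^\ast s})$. Splitting $f=f1_{s^\ast s}+f_0$ with $f_0:=f(1-1_{s^\ast s})$ vanishing on $X_e$, the first summand is again handled by Lemma (\ref{helpful}), giving $\sigma_e\pi(f1_{s^\ast s})=\pi(f1_{s^\ast s})$. Everything therefore reduces to the key claim that $\sigma_e\pi(f_0)=0$ whenever $f_0\in\Lc{X}$ vanishes on $X_e$; granting this, $\sigma_s\pi(f)=\sigma_s\sigma_e\pi(f)=\sigma_s\pi(f1_{s^\ast s})$, matching the computation above and finishing the proof.

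I expect this claim to be the main obstacle, and I would establish it as follows. Fix $\xi\in V$ and set $\eta=\sigma_e\pi(f_0)\xi$, which lies in $\sigma_e(V)$ because $\sigma_e$ is idempotent ($e\in E(S)$). For every $g\in\Lc{X_e}$ the product $gf_0$ is zero, since $g$ is supported in $X_e$ while $f_0$ vanishes there; hence, using $\pi(g)\sigma_e=\pi(g)$ from Lemma (\ref{helpful}),
\[
\pi(g)\eta=\pi(g)\sigma_e\pi(f_0)\xi=\pi(g)\pi(f_0)\xi=\pi(gf_0)\xi=0.
\]
It then remains to argue that a vector of $\sigma_e(V)$ annihilated by all of $\pi(\Lc{X_e})$ must vanish. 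By (\ref{covrep}.\ref{essentialspace}) I may write $\eta=\sum_k\pi(g_k)\zeta_k$ with $g_k\in\Lc{X_e}$; letting $K\subseteq X_e$ be the compact-open union of the supports of the $g_k$, the characteristic function $1_K$ belongs to $\Lc{X_e}$ and acts as a local unit, so $\pi(1_K)\eta=\eta$. Combined with $\pi(1_K)\eta=0$ this forces $\eta=0$, proving the claim and hence the lemma. The delicate point is precisely this local-unit step, which is what converts the non-degeneracy packaged in (\ref{covrep}.\ref{essentialspace}) into the faithfulness of $\pi$ on the essential space $\sigma_e(V)$ that the argument needs.
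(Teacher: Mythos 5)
Your proof is correct, but it takes a genuinely different route from the paper's. The paper proves this lemma by passing through the integration/disintegration machinery: it integrates $(\pi,\sigma)$ to a representation $\{\pi_s\}_{s\in S}$ of the semi-direct product bundle as in (\ref{integration}), then invokes (\ref{disintegration}) so as to \emph{assume} that $\sigma_s$ is given by the explicit formula $\sigma_s(\pi(\varphi)\eta)=\pi_s(\bar{\alpha}_s(\varphi)\bmdelta_s)\eta$ from that proof, after which the identity follows on the spanning vectors $\xi=\pi(\varphi)\eta$ from multiplicativity of the bundle representation. You instead argue entirely within the axioms of Definition (\ref{covrep}) together with Lemma (\ref{helpful}): writing $e=s^\ast s$, the decomposition $f=f1_{s^\ast s}+f_0$ reduces the statement to the key claim that $\sigma_e\pi(f_0)=0$ whenever $f_0$ vanishes on $X_e$, which you prove by a local-unit argument inside the essential space $\sigma_e(V)$ (your use of compact open supports is legitimate here, since locally constant compactly supported functions on the Hausdorff, totally disconnected space $X$ have compact open support, exactly as in the proofs of (\ref{eigenvalue}) and (\ref{localunits})). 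Each approach buys something. The paper's proof is a short computation once the integration/disintegration theory is available; however, its opening reduction ("we may assume $\pi$ and $\sigma$ are given as in the proof of (\ref{disintegration})") tacitly uses the fact that a covariant representation coincides with the disintegration of its own integration, i.e.\ the uniqueness assertion of (\ref{disintegration0}), whose proof is essentially equivalent to the lemma itself; this is harmless for the paper's applications, where the covariant representations in play do arise by disintegration, but it makes the argument delicate for an arbitrary $(\pi,\sigma)$. Your proof is more elementary and self-contained, establishes the lemma in the full generality in which it is stated, and isolates the genuinely nontrivial content in the claim $\sigma_e\pi(f_0)=0$, which expresses precisely that $\sigma_e$ acts as "multiplication by $1_{X_e}$".
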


\begin{proof}
	In the presence of (\ref{integration}) and (\ref{disintegration}),
	we may assume that $\pi$ and $\sigma$ are given as in the
	proof of (\ref{disintegration}) for the non-degenerate
	representation of the semi-direct product bundle
	$\mathcal{B}^{\theta}$ given as in (\ref{integration}). Hence,
	let $\xi \in V$ be a vector of the form $\xi = \pi(\varphi)\eta$
	for some $\varphi \in \Lc{X}$ and $\eta \in V$ and notice that
	\begin{align*}
		\sigma_s \pi(f) \xi
		& = \sigma_s \pi(f) \pi(\varphi) \eta
		= \sigma_s \pi(f \varphi) \eta
		=\pi_s( \balpha_s(f \varphi) \bmdelta_s) \eta \\
		& = \pi_{ss^{\ast}}(\balpha_s(f) \bmdelta_{ss^{\ast}})
				\pi_s(\balpha_s(\varphi) \bmdelta_s) \eta
		=\pi(\balpha_s(f)) \sigma_s( \pi(\varphi)\eta)
		= \pi(\balpha_s(f)) \sigma_s \xi
	\end{align*}
	Since the vectors of the above form spans $V$, by linearity
	we conclude the proof.
\end{proof}

Since we are discussing representations of $\Lcs$, we shall see now that,
for any ideal $J$ of $\Lcs$, there exists a non-degenerate representation
whose kernel coincides with $J$. For that, we resort to
Proposition (5.1) of
\cite{DokuchaevExel}.

\begin{proposition}\label{nondegrep}
	Let $A$ be a $K$-algebra possessing local units
	\footnote{Recall that $A$ is said to have local units if, for every
		$a$ in $A$, there exists an idempotent $e \in A$, such that
		$ea = a = ae$.}.
	Then, for every ideal
	$J \trianglelefteq A$, there exists a vector space $V$ and a non-degenerate representation
	$$\pi: A \to L(V),$$
	such that $J = \ker(\pi)$.
\end{proposition}

To see that that above result applies to our situation, we give the following:

\begin{proposition}\label{localunits}
	$\Lcs$ has local units.
\end{proposition}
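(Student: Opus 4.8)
The plan is to exhibit an explicit local unit for any given element of $\Lcs$. Recall that every element of $\Lcs$ is a finite sum $\sum_{i=1}^n f_i \Delta_{s_i}$ with $f_i \in \Lc{\ran{s_i}}$, so by the definition of local units it suffices, given finitely many such summands, to produce a single idempotent $u \in \Lcs$ satisfying $u \cdot (f_i \Delta_{s_i}) = f_i \Delta_{s_i} = (f_i \Delta_{s_i}) \cdot u$ for each $i$; linearity then handles the whole sum. The natural candidate for $u$ comes from the idempotent part of $S$: I expect to build $u$ out of elements of the form $1_C \Delta_e$ with $e \in E(S)$ and $C$ a compact open subset of $X_e$, exploiting that $\Lc{X}$ itself has local units (given $f \in \Lc{X}$, the characteristic function $1_{\supp f}$ of its compact open support is an idempotent in $\Lc{X}$ fixing $f$).

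First I would reduce to a single summand $f \Delta_s$. Using the product formula $f \Delta_s \cdot g \Delta_t = f \bar\alpha_s(g)\Delta_{st}$ derived earlier, the left action of a candidate $1_C \Delta_e$ gives $1_C \Delta_e \cdot f \Delta_s = 1_C \bar\alpha_e(f) \Delta_{es}$. Since $f \in \Lc{\ran{s}} = \Lc{X_{ss^\ast}}$, I would take $e = ss^\ast$ so that $\bar\alpha_{ss^\ast}(f) = \alpha_{ss^\ast}(f 1_{ss^\ast}) = f$ (as $f$ already vanishes off $X_{ss^\ast}$ and $\alpha_{ss^\ast}$ is the identity on $\Lc{X_{ss^\ast}}$), and $es = ss^\ast s = s$. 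Choosing $C \supseteq \supp(f)$ a compact open subset of $X_{ss^\ast}$ then yields $1_C \Delta_{ss^\ast} \cdot f \Delta_s = f \Delta_s$. For the right action I would instead set $e' = s^\ast s$ and $C' \supseteq \supp(\alpha_{s^\ast}(f))$ compact open in $X_{s^\ast s}$, computing $f \Delta_s \cdot 1_{C'} \Delta_{s^\ast s} = f \bar\alpha_s(1_{C'}) \Delta_{s s^\ast s} = f \Delta_s$ once $C'$ contains the support of the relevant function.

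The remaining task is to combine the various left and right unit candidates, over all summands $f_i \Delta_{s_i}$, into a \emph{single} idempotent $u$. The clean way is to work with a large enough compact open set: let $e_i = s_i s_i^\ast$ and $e_i' = s_i^\ast s_i$, take compact open sets $C_i \subseteq X_{e_i}$, $C_i' \subseteq X_{e_i'}$ large enough for both one-sided computations above, and assemble $u = \sum_{e \in E'} 1_{D_e} \Delta_e$ over the finite set $E'$ of idempotents appearing, where $D_e$ is a suitable compact open subset of $X_e$. Verifying $u^2 = u$ amounts to the identity $1_{D_e} \Delta_e \cdot 1_{D_f} \Delta_f = 1_{D_e} \bar\alpha_e(1_{D_f}) \Delta_{ef}$ and checking that the cross terms reorganize correctly; this is where a little care with overlaps among the $X_e$ is needed. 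The main obstacle I anticipate is precisely this bookkeeping — ensuring that a finite family of local units on different fibers $B_e$ can be merged into one genuine idempotent without the cross terms $(ef \neq e)$ spoiling $u^2 = u$. A reliable workaround, if direct assembly is awkward, is to first prove the single-summand case and then invoke the standard fact that a ring in which every finite subset is contained in a corner $gAg$ for some idempotent $g$ has local units; constructing such a $g$ reduces again to the $\Lc{X}$-level statement that finitely many compactly supported locally constant functions share a common compact open idempotent support.
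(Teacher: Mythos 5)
Your reduction to a single summand and the one-sided computations for $f\Delta_s$ (left unit $1_C\Delta_{ss^\ast}$ with $C\supseteq\supp(f)$, right unit $1_{C'}\Delta_{s^\ast s}$ with $C'\supseteq\supp(\alpha_{s^\ast}(f))=\theta_{s^\ast}(\supp(f))$) are correct, and this is also how the paper's proof begins. The genuine gap is the step you yourself flag as the main obstacle: merging these one-sided units into a single idempotent. As written, your candidate $u=\sum_{e\in E'}1_{D_e}\Delta_e$ fails to be idempotent whenever the sets $D_e$ overlap: since $1_{D_e}\Delta_e\cdot 1_{D_f}\Delta_f=1_{D_e\cap D_f}\Delta_{ef}$, one gets $u^2=u+\sum_{e\neq f}1_{D_e\cap D_f}\Delta_{ef}$, and these cross terms do not ``reorganize'': with just two overlapping sets one finds $u^2=u+2\cdot 1_{D_e\cap D_f}\Delta_{ef}$, a nonzero perturbation whenever $\mathrm{char}(K)\neq 2$ and $D_e\cap D_f\neq\emptyset$ (nonvanishing can be checked with the maps $\Gamma_x$ of Section \ref{induction}, whose construction does not depend on this proposition). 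Your fallback does not close the gap either: the ``standard fact'' about corners $gAg$ is just a restatement of the definition of local units, so it still requires constructing $g$; and the $\Lc{X}$-level observation that the finitely many supports sit inside one compact open set $D$ does not by itself produce an element of $\Lcs$, because $1_D$ only becomes an element of $\Lcs$ after one writes $D$ as a \emph{disjoint} union of pieces $D_e\subseteq X_e$ and forms $\sum_e 1_{D_e}\Delta_e$ --- the very construction that is missing. (Appealing instead to the embedding $\Lc{X}\hookrightarrow\Lcs$ of Proposition \ref{LcImmerse} would be circular: in the paper it is proved via \ref{AllIdealsReps}, which rests on the present proposition.)

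What is missing is precisely a disjointification, and that is how the paper proceeds. Setting $C_s=\supp(f_s)$ and $E=\bigcup_{s\in F}\bigl(C_s\cup\theta_{s^\ast}(C_s)\bigr)$, the paper partitions $E$ into the Boolean atoms $E_{(\varepsilon,\varsigma)}=C_\varepsilon\cap D_\varsigma$, indexed by pairs of subsets $\varepsilon,\varsigma\subseteq F$ (atoms of the algebra of sets generated by the $C_s$ and the $\theta_{s^\ast}(C_s)$), and attaches to each atom the semigroup idempotent $e_{(\varepsilon,\varsigma)}=\prod_{s\in\varepsilon}ss^\ast\prod_{s\in\varsigma}s^\ast s$, whose domain contains that atom by construction. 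The elements $\varphi_{(\varepsilon,\varsigma)}=1_{E_{(\varepsilon,\varsigma)}}\Delta_{e_{(\varepsilon,\varsigma)}}$ are then pairwise \emph{orthogonal} idempotents --- orthogonality is exactly what disjointness of the atoms buys --- so their sum $\varphi$ is idempotent, and the relations $g\Delta_{et}=g\Delta_t$ (valid for $e\in E(S)$, $et\leq t$, and $g$ supported in the range of $et$) yield $\varphi b=b=b\varphi$. Your plan can be repaired in the same spirit with less combinatorics: enumerate $E'=\{e_1,\dots,e_m\}$, set $D_j=E\cap X_{e_j}\setminus(X_{e_1}\cup\cdots\cup X_{e_{j-1}})$, which is compact open since the $X_e$ are clopen, and take $u=\sum_j 1_{D_j}\Delta_{e_j}$; disjointness kills the cross terms, and the rest of your computation goes through. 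Without some such step, however, the proof is incomplete.
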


\begin{proof}
	Let $b=\sum_{s \in F} f_s \Delta_s$, with $F \subseteq S$ finite.
	For each $s \in S$, let $C_s = \supp(f_s) \subseteq \ran{s}$. Then,
	$C_s$ is a compact open set of $X$, as well as
	$\theta_{s^{\ast}}(C_s) \subseteq \dom{s}$.
	Therefore,
	$$E:= \bigcup_{s \in F} \bigg( C_s \cup \theta_{s^{\ast}}(C_s) \bigg),$$
	is also compact open in $X$, since it is a finite union of compact
	open sets.
	
	For each $\varepsilon \in \mathcal{P}(F)$, define
	$$
	C_{\varepsilon}
	:= \bigg( \bigcap_{s \in \varepsilon} C_s\bigg)
	\cap \bigg( \bigcap_{s \in F \setminus \varepsilon} C_s^c\bigg)
	\quad \text{ and } \quad
	D_{\varepsilon}
	:= \bigg( \bigcap_{s \in \varepsilon} \theta_{s^{\ast}}(C_s) \bigg)
	\cap \bigg( \bigcap_{s \in F \setminus \varepsilon} \theta_{s^{\ast}}(C_s)^c\bigg),
	$$
	which are compact open sets as well, since they are closed sets contained
	in a compact set and $X$ is Hausdorff.
	
	Now, for every pair $(\varepsilon, \varsigma) \in
	\mathcal{P}(F) \times \mathcal{P}(F)$, such that
	$|\varepsilon|+|\varsigma|>0$, define
	$$
	E_{(\varepsilon, \varsigma)}
	:= C_{\varepsilon} \cap D_{\varsigma}
	\quad \text{ and } \quad
	e_{(\varepsilon, \varsigma)}
	:= \bigg( \prod_{s \in \varepsilon} ss^{\ast} \bigg)
	\bigg( \prod_{s \in \varsigma} s^{\ast}s \bigg).
	$$
	Since $E_{(\varepsilon, \varsigma)}$ is compact open,
	$\varphi_{(\varepsilon, \varsigma)}
	:= 1_{E_{(\varepsilon, \varsigma)}} \Delta_{e_{(\varepsilon, \varsigma)}}$
	is a well defined element of $\Lcs$. Moreover,
	$\left\{ \varphi_{(\varepsilon, \varsigma)} \right\}_{|\varepsilon|+|\varsigma|>0}$
	is a collection of orthogonal idempotent elements.
	
	Hence,
	$$
	\varphi
	= \sum_{|\varepsilon|+|\varsigma|>0} \varphi_{(\varepsilon, \varsigma)}
	$$
	is an idempotent element of $\Lcs$ and notice that
	\begin{align*}
	\varphi \bigg( \sum_{s \in F} f_s \Delta_s \bigg)
	& = \sum_{s \in F} \bigg( \sum_{|\varepsilon|+|\varsigma|>0}
	\varphi_{(\varepsilon, \varsigma)} \bigg) f_s \Delta_s
	= \sum_{s \in F} \sum_{|\varepsilon|+|\varsigma|>0}
	1_{E_{(\varepsilon, \varsigma)}} f_s \Delta_{e_{(\varepsilon, \varsigma)}s}\\
	& = \sum_{s \in F} \sum_{|\varepsilon|+|\varsigma|>0}
	1_{E_{(\varepsilon, \varsigma)}} f_s \Delta_s
	= \sum_{s \in F} 1_E f_s \Delta_s
	=  \sum_{s \in F} f_s \Delta_s
	\end{align*}
	and
	\begin{align*}
	\bigg( \sum_{s \in F} f_s \Delta_s \bigg) \varphi
	& = \sum_{s \in F} \sum_{|\varepsilon|+|\varsigma|>0} f_s \Delta_s
	\varphi_{(\varepsilon, \varsigma)}
	= \sum_{s \in F} \sum_{|\varepsilon|+|\varsigma|>0} f_s
	\balpha_s(1_{E_{(\varepsilon, \varsigma)}}) \Delta_{se_{(\varepsilon, \varsigma)}}\\
	& = \sum_{s \in F} \sum_{|\varepsilon|+|\varsigma|>0} f_s
	\balpha_s(1_{E_{(\varepsilon, \varsigma)}}) \Delta_s
	= \sum_{s \in F} f_s \balpha_s(1_E) \Delta_s \\
	& =  \sum_{s \in F} f_s 1_{\theta_s(E \cap \dom{s})} \Delta_s
	= \sum_{s \in F} f_s \Delta_s,
	\end{align*}
	where the last equality holds because $C_s \subseteq \theta_s(E \cap \dom{s})$.
\end{proof}

Joining this two results, we immediately have.

\begin{corollary}\label{AllIdealsReps}
	For every ideal $J \trianglelefteq \Lcs$,
	there exists a vector space $V$ and a non-degenerate representation
	$$\pi: \Lcs \to L(V),$$
	such that $J = \ker(\pi)$.
	In particular, $\Lcs$ has a faithful non-degenerate representation.
\end{corollary}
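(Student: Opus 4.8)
The plan is to combine the two preceding propositions directly, since the corollary is asserted to follow immediately from them. First I would invoke Proposition (\ref{localunits}), which guarantees that $\Lcs$ possesses local units; this is precisely the single hypothesis needed to apply Proposition (\ref{nondegrep}). Setting $A = \Lcs$ in Proposition (\ref{nondegrep}), the given ideal $J \trianglelefteq \Lcs$ then yields a vector space $V$ together with a non-degenerate representation $\pi \colon \Lcs \to L(V)$ whose kernel is exactly $J$. This establishes the first assertion verbatim.

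For the final claim, I would specialize the construction to the zero ideal. Applying the first part with $J = \{0\}$ produces a non-degenerate representation $\pi \colon \Lcs \to L(V)$ with $\ker(\pi) = \{0\}$. Injectivity of $\pi$ is then nothing but the statement that $\pi$ is faithful, which delivers the desired faithful non-degenerate representation of $\Lcs$.

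Since both ingredients are already in place, there is no genuine obstacle in the argument itself. The only point demanding attention is confirming that the standing hypothesis of Proposition (\ref{nondegrep}), namely the existence of local units, is actually satisfied by $\Lcs$; but this is exactly the content of Proposition (\ref{localunits}), so the verification is complete and the corollary follows at once.
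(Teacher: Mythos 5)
Your proposal is correct and follows exactly the paper's argument: the paper likewise obtains the corollary by combining Proposition (\ref{localunits}) with Proposition (\ref{nondegrep}), and the faithfulness claim is the specialization to $J=\{0\}$, just as you wrote.
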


With this result in hand, we shall present two interesting consequences for $\Lcs$.

\begin{proposition}\label{LcImmerse}
	Theres is a monomorphism $\phi: \Lc{X} \to \Lcs$ such that
	$$\phi(f)=f \Delta_e,$$
	whenever $e \in E(S)$ and $f \in \Lc{X_e}$.
\end{proposition}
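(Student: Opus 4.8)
The goal is to produce a well-defined, multiplicative, injective linear map $\phi\colon \Lc{X}\to\Lcs$ agreeing with $f\mapsto f\Delta_e$ on each $\Lc{X_e}$. The plan is to build $\phi$ directly from the disintegration/integration machinery already established, rather than by fighting the redundancy ideal $\mathcal{J}_\alpha$ by hand.

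First I would invoke Corollary (\ref{AllIdealsReps}) to fix a \emph{faithful} non-degenerate representation $\Phi\colon\Lcs\to L(V)$ on some vector space $V$. By Corollary (\ref{disintegration0}) this $\Phi$ disintegrates as $\Phi=\pi\times\sigma$ for a (unique) covariant representation $(\pi,\sigma)$ of $(\theta,S,X)$ on $V$, so that $\Phi(f\Delta_s)=\pi(f)\sigma_s$; in particular, for $e\in E(S)$ and $f\in\Lc{X_e}$, Lemma (\ref{helpful}) gives $\Phi(f\Delta_e)=\pi(f)\sigma_e=\pi(f)$. The central observation is then that $\pi\colon\Lc{X}\to L(V)$ is itself \emph{injective}: since $\Lc{X}$ has local units (it is generated by compact-open characteristic functions), if $\pi(f)=0$ then for a compact-open set $U$ containing $\supp(f)$ we have $f=f\cdot 1_U$ and $\pi(f)=\pi(f)\pi(1_U)$, and using that $1_U\in\Lc{X_e}$ for a suitable $e$ (after decomposing $U$ over idempotents) one shows $f$ must vanish. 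Concretely, I would argue: if $\pi(f)=0$, then for every $g\in\Lc{X}$ and $\xi\in V$ one has $\pi(fg)\xi=\pi(f)\pi(g)\xi=0$; by non-degeneracy of $\pi$ the vectors $\pi(g)\xi$ span $V$, and a standard pointwise/local-unit argument forces $fg=0$ for all $g$, whence $f=0$ because $\Lc{X}$ has local units.

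Granting that $\pi$ is injective, I would define $\phi\colon\Lc{X}\to\Lcs$ by decomposing $f=\sum_{e\in E(S)} f_e$ with $f_e\in\Lc{X_e}$ and setting $\phi(f)=\sum_{e\in E(S)} f_e\Delta_e$. Lemma (\ref{disintegration1}), applied to the non-degenerate representation obtained by integrating $(\pi,\sigma)$ via (\ref{integration}), guarantees that $\Phi(\phi(f))=\sum_e\pi(f_e)=\pi(f)$ is \emph{independent of the chosen decomposition}, so $\Phi\circ\phi=\pi$. Since $\Phi$ is faithful (injective) and $\pi$ is injective, $\phi$ is forced to be well-defined and injective: if $\phi(f)=\phi(f')$ then $\pi(f)=\Phi(\phi(f))=\Phi(\phi(f'))=\pi(f')$ hence $f=f'$. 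Multiplicativity of $\phi$ follows from $\Phi\circ\phi=\pi$ together with injectivity of $\Phi$: for $f,g\in\Lc{X}$, $\Phi(\phi(fg))=\pi(fg)=\pi(f)\pi(g)=\Phi(\phi(f))\Phi(\phi(g))=\Phi(\phi(f)\phi(g))$, so $\phi(fg)=\phi(f)\phi(g)$. Thus $\phi$ is a monomorphism with the asserted formula on each $\Lc{X_e}$.

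I expect the main obstacle to be the \emph{well-definedness} of $\phi$, i.e. showing that $\sum_e f_e\Delta_e$ in $\Lcs$ does not depend on how $f$ is split across the (non-disjoint) idempotent ideals $\Lc{X_e}$; this is exactly the content secured by the independence-of-decomposition step in the proof of Lemma (\ref{disintegration1}), so I would route the argument through a faithful representation precisely to borrow that independence for free. The injectivity of the ambient representation $\pi$ of $\Lc{X}$ is the other point requiring care, but it reduces to the elementary local-unit/pointwise argument sketched above and does not need any analytic input.
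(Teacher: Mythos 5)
Your overall architecture coincides with the paper's: fix a faithful non-degenerate representation $\Phi$ of $\Lcs$ via (\ref{AllIdealsReps}), disintegrate it into a covariant pair $(\pi,\sigma)$ via (\ref{disintegration0}), define $\phi(f)=\sum_{e} f_e\Delta_e$ from a decomposition $f=\sum_{e} f_e$ with $f_e\in\Lc{X_e}$, and obtain well-definedness and multiplicativity from the identity $\Phi\circ\phi=\pi$ together with faithfulness of $\Phi$. Those parts are correct and essentially identical to the paper's proof. The problem is your central claim that $\pi$ is injective, which is where the real content of the proposition lies: your argument for it is circular.

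Concretely, from $\pi(f)=0$ you deduce $\pi(fg)=0$ for every $g\in\Lc{X}$, and then assert that a ``standard pointwise/local-unit argument'' forces $fg=0$. But the inference from $\pi(fg)=0$ to $fg=0$ is exactly the injectivity of $\pi$ you are trying to establish; nothing in your argument supplies it. Worse, your reasoning uses only that $\pi$ is a non-degenerate representation of $\Lc{X}$, and no argument at that level of generality can succeed: evaluation at a point, $f\mapsto f(x_0)$, is a non-degenerate representation of $\Lc{X}$ on the one-dimensional space $K$ whose kernel $\Set{f \in \Lc{X}}{f(x_0)=0}$ is nonzero. What makes the present $\pi$ injective is the faithfulness of $\Phi$ on the crossed product, which your injectivity argument never invokes. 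The paper's proof does invoke it: given $x\in X$, choose $e\in E(S)$ with $x\in X_e$ and $\varphi\in\Lc{X_e}$ with $\varphi(x)=1$; then
$$
0=\pi(f)\pi(\varphi)=\pi(f\varphi)\overset{(\ref{helpful})}{=}\pi(f\varphi)\sigma_e=(\pi\times\sigma)(f\varphi\,\Delta_e),
$$
and the faithfulness of $\pi\times\sigma$ forces $f\varphi=0$, whence $f(x)=0$; since $x$ was arbitrary, $f=0$. Replacing your injectivity claim by this pointwise argument repairs the proof, and the rest of your proposal then goes through unchanged.
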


\begin{proof}
	Since $\Lc{X} = \sum_{e \in E(S)} \Lc{X_e}$, we may write any function
	$f \in \Lc{X}$ as a finite sum
	$$f=\sum_{e \in E(S)} f_e,$$
	with $f_e \in \Lc{X_e}$.
	By (\ref{AllIdealsReps}), $\Lcs$ has a faithful non-degenerate representation,
	which is the integrated form $\pi \times \sigma$ of some covariant
	representation $(\pi,\sigma)$ for the system $(\theta, S, X)$, by
	(\ref{disintegration0}).
	
	Notice that,
	\begin{equation*}
		(\pi \times \sigma)\bigg( \sum_{e \in E(S)} f_e \Delta_e \bigg)
		= \sum_{e \in E(S)} \pi(f_e) \sigma_e
		\overset{(\ref{helpful})}{=} \sum_{e \in E(S)} \pi(f_e)
		= \pi(f).
	\end{equation*}
	So, if $f=0$, we must have $\sum_{e \in E(S)} f_e \Delta_e =0$
	by the faithfulness of $\pi \times \sigma$. Hence, the map
	$\phi: \Lc{X} \to \Lcs$ given by
	$$\phi(f) = \sum_{e \in E(S)} f_e \Delta_e$$
	is well defined and it is also a homomorphism.
	
	Furthermore, $\phi$ is injective. Indeed, if $\phi(f)=0$, then
	$\pi(f)=0$. Let $x \in X$ and choose $e \in E(S)$ such that
	$x \in X_e$. Choose $\varphi \in \Lc{X_e}$ such that
	$\varphi(x)=1$. Then,
	$$
	0
	= \pi(f)\pi(\varphi)
	= \pi (f \varphi)
	\overset{(\ref{helpful})}{=} \pi(f \varphi) \sigma_e
	=(\pi \times \sigma)( f \varphi \Delta_e)
	$$
	from where we conclude that $f \varphi=0$ and, hence, $f(x)=0$.
	Since $x$ is arbitrary, $f=0$.
\end{proof}

Relying on this proposition, we may then identify $\Lc{X}$ as a subalgebra
of $\Lcs$. Furthermore, with such an identification, keeping in mind
the definition given in the proof of (\ref{disintegration1}), we may
interpret the map $\pi$ of the covariant representation $(\pi,\sigma)$
obtained by the disintegration of a non-degenerate representation $\phi$
of $\Lcs$ as its restriction to $\Lc{X}$.

The second consequence is an interesting characterization for the
ideal $\mathcal{J}_{\alpha}$ defined in (\ref{StandingNotation}).
Indeed, consider the ideal
\begin{equation}\label{ideal2}
	\mathcal{I}_{\alpha}
	:= \bigcap_{(\pi,\sigma)}
		\ker \bigg(
			(\pi \times \sigma) \circ q_{\mathcal{J}_{\alpha}}
		\bigg)
	\trianglelefteq
	\mathcal{L}(\mathcal{B}^{\theta}),
\end{equation}
in which $(\pi, \sigma)$ ranges over covariant representations of
the system
$(\theta, S, X)$ and
$\pi \times \sigma$ is the integrated form of $(\pi,\sigma)$ for $\Lcs$.

\begin{proposition}
	The ideal $\mathcal{I}_{\alpha}$, as defined in \eqref{ideal2} above,
	coincides with the ideal $\mathcal{J}_{\alpha}$.
\end{proposition}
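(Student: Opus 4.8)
The plan is to show the two ideals coincide by establishing containment in both directions, though one direction is essentially trivial and the content lies in the other. Recall that $\mathcal{J}_{\alpha}$ is, by Definition (\ref{StandingNotation}), the redundancy ideal $\mathcal{N}$ of Proposition (\ref{RedundanceIdeal}) attached to the semi-direct product bundle $\mathcal{B}^{\theta}$. The easy inclusion $\mathcal{J}_{\alpha} \subseteq \mathcal{I}_{\alpha}$ follows immediately from the machinery already built: for every covariant representation $(\pi,\sigma)$, the integrated representation $\pi \times \sigma$ is a genuine representation of $\Lcs = \mathcal{L}(\mathcal{B}^{\theta})/\mathcal{J}_{\alpha}$, so the composite $(\pi\times\sigma)\circ q_{\mathcal{J}_{\alpha}}$ already factors through the quotient by $\mathcal{J}_{\alpha}$; hence $\mathcal{J}_{\alpha}$ lies in the kernel of each such composite, and therefore in their intersection $\mathcal{I}_{\alpha}$.

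The substantive direction is $\mathcal{I}_{\alpha} \subseteq \mathcal{J}_{\alpha}$. The key observation is that every element of $\mathcal{I}_{\alpha}$ is annihilated by $(\pi\times\sigma)\circ q_{\mathcal{J}_{\alpha}}$ for \emph{all} covariant representations $(\pi,\sigma)$. By Corollary (\ref{AllIdealsReps}), the crossed product $\Lcs$ admits a faithful non-degenerate representation, and by the disintegration result Corollary (\ref{disintegration0}) that representation is of the form $\pi\times\sigma$ for some covariant representation $(\pi,\sigma)$ of the system. Let $\Phi = \pi\times\sigma$ denote this faithful representation. Then for any $b \in \mathcal{I}_{\alpha}$ we have in particular $\Phi(q_{\mathcal{J}_{\alpha}}(b)) = 0$, and since $\Phi$ is injective on $\Lcs$, this forces $q_{\mathcal{J}_{\alpha}}(b) = 0$, i.e. $b \in \ker(q_{\mathcal{J}_{\alpha}}) = \mathcal{J}_{\alpha}$. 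Thus $\mathcal{I}_{\alpha} \subseteq \mathcal{J}_{\alpha}$, completing the argument.

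I would carry out the steps in this order: first recall the identification $\mathcal{J}_{\alpha} = \mathcal{N}$ and that $q_{\mathcal{J}_{\alpha}}$ is exactly the quotient map onto $\Lcs$; second dispatch the inclusion $\mathcal{J}_{\alpha}\subseteq\mathcal{I}_{\alpha}$ in one line; third invoke Corollary (\ref{AllIdealsReps}) to produce a faithful non-degenerate representation of $\Lcs$ and Corollary (\ref{disintegration0}) to realize it as an integrated covariant representation; and finally use faithfulness to deduce the reverse inclusion. The only point requiring care, rather than a genuine obstacle, is making explicit that the faithful representation supplied by (\ref{AllIdealsReps}) is one of the representations over which the intersection in \eqref{ideal2} is taken — this is precisely guaranteed by the disintegration Corollary (\ref{disintegration0}), which says every non-degenerate representation of $\Lcs$ arises from a covariant pair. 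Once that identification is made, the equality $\mathcal{I}_{\alpha} = \mathcal{J}_{\alpha}$ is forced, since an element of the intersection must die under a faithful representation and hence already lie in $\mathcal{J}_{\alpha}$.
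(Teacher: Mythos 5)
Your proposal is correct and follows essentially the same route as the paper: the inclusion $\mathcal{J}_{\alpha} \subseteq \mathcal{I}_{\alpha}$ is immediate since each composite $(\pi\times\sigma)\circ q_{\mathcal{J}_{\alpha}}$ factors through the quotient, and the reverse inclusion is obtained exactly as in the paper by invoking Corollary (\ref{AllIdealsReps}) together with Corollary (\ref{disintegration0}) to produce a faithful representation of $\Lcs$ of the integrated form $\pi\times\sigma$, whose faithfulness forces elements of $\mathcal{I}_{\alpha}$ into $\ker(q_{\mathcal{J}_{\alpha}})=\mathcal{J}_{\alpha}$. Your added remark that one must check the faithful representation is among those indexing the intersection is precisely the role the disintegration result plays in the paper's argument as well.
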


\begin{proof}
	It is clear that $\mathcal{J}_{\alpha} \subseteq \mathcal{I}_{\alpha}$.
	It remains to prove the reverse inclusion. Indeed, by (\ref{AllIdealsReps}),
	$\Lcs$ has a faithful representation which is of the form $\pi \times \sigma$
	for some covariant representation $(\pi,\sigma)$ of the system
	$(\theta, S, X)$. Hence,
	$$
	\mathcal{I}_{\alpha}
	\subseteq
	\ker \bigg(
	(\pi \times \sigma) \circ q_{\mathcal{J}_{\alpha}}
	\bigg)
	= \mathcal{J}_{\alpha},
	$$
	concluding the proof.
\end{proof}

At first, there is no apparent reason for this result to be valid
in the general case $A \rtimes_{\alpha} S$.

We end this section with a curious fact.
Let $(\theta, S, X)$ be an ample dynamical system as usual all over this
subsection. It is well known that one can always add a
formal unit to a semigroup $S$, leading to the unitization
$$S^{+} := S \sqcup \left\{ 1 \right\}$$
of $S$. One may also extend
$\theta$ to $\theta^{+}: S^{+} \to \mathcal{I}(X)$
in a natural way by defining $\theta_1$ as the identity map on $X$.
It is clear that $(\theta^{+}, S^{+},X)$ is an ample action as well,
which we shall call the unitization of $(\theta, S ,X)$.
We shall denote by $\alpha^{+}$ the action of $S^{+}$ on $\Lc{X}$ induced
by $\theta^{+}$ in the sense of (\ref{inducedaction}). In this context,
we have:

\begin{proposition}
	Let $(\theta, S, X)$ be an ample dynamical system and
	$(\theta^{+}, S^{+}, X)$ its unitization. Then
	$$\Lc{X} \rtimes_{\alpha} S \simeq \Lc{X} \rtimes_{\alpha^{+}} S^{+}.$$
\end{proposition}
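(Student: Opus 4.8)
The plan is to construct two mutually inverse algebra homomorphisms directly from the universal property of the cross-sectional algebra (\ref{UniversalProp}). The single ingredient that makes everything work is that, since $\theta_1$ is the identity map, we have $\dom{1} = X$, so the new fibre $B_1$ over the adjoined unit is all of $\Lc{X}$; combined with $\Lc{X} = \sum_{e \in E(S)} \Lc{X_e}$ and the order structure of $S^+$, this lets us collapse $B_1$ onto $\bigoplus_{e \in E(S)} B_e$. The relevant order facts are that the relations among elements of $S$ are unchanged in $S^+$ (the idempotents of $S^+$ are $E(S) \cup \{1\}$, so $s \leq_{S^+} t$ agrees with $s \leq_S t$ for $s,t \in S$), while the only elements of $S$ below $1$ are the idempotents (from $s \leq 1$ one gets $s = s^\ast s$); in particular $e \leq 1$ for every $e \in E(S)$. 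Note also that $(\theta^+, S^+, X)$ is ample and $\Lc{X}$ is idempotent, so $\Lc{X} \rtimes_{\alpha^+} S^+$ is indeed defined.

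First I would build the easy direction $\Phi \colon \Lc{X}\rtimes_{\alpha} S \to \Lc{X}\rtimes_{\alpha^+} S^+$. Restricting the universal maps $\{\pi^u_s\}_{s \in S^+}$ of the $S^+$-bundle to $s \in S$ gives a representation of $\mathcal{B}^{\theta}$, because the multiplication rule and the $j$-identifications for $s \leq t$ in $S$ coincide with those in $S^+$; so (\ref{UniversalProp}) yields $\Phi$ with $\Phi(f\Delta_s) = f\Delta_s$ for all $s \in S$. The substantial direction is $\Psi \colon \Lc{X}\rtimes_{\alpha^+} S^+ \to \Lc{X}\rtimes_{\alpha} S$. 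Here I set $\pi_s(f\bmdelta_s) = f\Delta_s$ for $s \in S$, and for the unit fibre I put $\pi_1 := \phi$, the monomorphism of (\ref{LcImmerse}), so that $\pi_1(f) = \sum_e f_e \Delta_e$ whenever $f = \sum_e f_e$ with $f_e \in \Lc{X_e}$. Proposition (\ref{LcImmerse}) is exactly what guarantees that $\pi_1$ is well defined and multiplicative, which disposes of the representation axiom when $s = t = 1$.

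The hard part will be checking that $\{\pi_s\}_{s \in S^+}$ is a genuine representation of $\mathcal{B}^{\theta^+}$, namely the axioms that mix the unit fibre with the fibres over $S$. For the mixed multiplicativity $\pi_t\big(\mu_{1,t}(f\bmdelta_1 \otimes g\bmdelta_t)\big) = \pi_1(f)\,\pi_t(g\bmdelta_t)$, I would expand $\pi_1(f) = \sum_e f_e\Delta_e$ and compute $\sum_e f_e\Delta_e \cdot g\Delta_t = \sum_e \big(f_e\,\balpha_e(g)\big)\Delta_{et}$; then the identities $\balpha_e(g) = g\,1_{X_e}$ (as $\theta_e$ is the identity on $X_e$) and $h\Delta_{et} = h\Delta_t$ (valid since $et \leq t$) collapse this to $\sum_e (f_e g)\Delta_t = (fg)\Delta_t$, matching the left-hand side. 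The $j$-compatibility $\pi_1 \circ j_{1,e} = \pi_e$ for $e \leq 1$ is just $\phi(f) = f\Delta_e$ for $f \in \Lc{X_e}$, which holds by (\ref{LcImmerse}). Granting these verifications, (\ref{UniversalProp}) produces $\Psi$ with $\Psi(f\Delta_s) = f\Delta_s$ for $s \in S$ and $\Psi(f\Delta_1) = \phi(f)$.

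Finally I would confirm that $\Phi$ and $\Psi$ are mutually inverse by checking them on generators. One has $\Psi \circ \Phi = \mathrm{id}$ immediately, since both maps fix $f\Delta_s$ for $s \in S$. For $\Phi \circ \Psi$, the only non-trivial generators are the $f\Delta_1$, and $\Phi(\Psi(f\Delta_1)) = \sum_e f_e\Delta_e$ read inside the $S^+$-algebra; because $e \leq 1$ there gives $f_e\Delta_e = f_e\Delta_1$, this equals $\big(\sum_e f_e\big)\Delta_1 = f\Delta_1$. Hence $\Phi$ is an isomorphism. As indicated, the one place where care is genuinely needed is the representation check for $\{\pi_s\}_{s \in S^+}$, where one must repeatedly invoke $\Lc{X} = \sum_{e} \Lc{X_e}$, the idempotent identifications coming from $et \leq t$, and the homomorphism property of $\phi$ supplied by (\ref{LcImmerse}).
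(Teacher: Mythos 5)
Your proof is correct, but it takes a genuinely different route from the paper's. The paper constructs only the forward map $\phi\colon \Lc{X}\rtimes_{\alpha}S \to \Lc{X}\rtimes_{\alpha^{+}}S^{+}$ via (\ref{UniversalProp}), proves surjectivity by the same $e \leq 1$ collapsing computation you use, and then proves injectivity \emph{without an inverse}: it takes a faithful non-degenerate representation $\pi\times\sigma$ of $\Lc{X}\rtimes_{\alpha}S$ from (\ref{AllIdealsReps}) and (\ref{disintegration0}), extends $\sigma$ to $\sigma^{+}$ by $\sigma_1 = \mathrm{id}_V$, checks $(\pi,\sigma^{+})$ is covariant for $(\theta^{+},S^{+},X)$, and reads injectivity off the commutative triangle $(\pi\times\sigma^{+})\circ\phi = \pi\times\sigma$. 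You instead build an explicit two-sided inverse $\Psi$ by representing $\mathcal{B}^{\theta^{+}}$ in $\Lc{X}\rtimes_{\alpha}S$ with $\pi_1 := \phi$ from (\ref{LcImmerse}), paying for this with the mixed-fibre verifications (multiplicativity against $B_1$ and the $j_{1,e}$-compatibility, which is literally the statement of (\ref{LcImmerse})). Note that the two proofs rest on the same underlying machinery: (\ref{LcImmerse}) is itself proved via the faithful representation of (\ref{AllIdealsReps}) and disintegration, which is exactly what the paper's injectivity argument invokes directly; since (\ref{LcImmerse}) precedes this proposition in the paper, your use of it is not circular. What your packaging buys is an explicit formula for the inverse and a proof confined to bundle-representation checks; what the paper's buys is avoiding those mixed-case verifications at the cost of re-running the representation-extension argument. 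One small gap in your write-up: you verify only the mixed product $\mu_{1,t}$ with the unit fibre on the left; the symmetric case $\mu_{s,1}$ also needs checking, though it goes through the same way, namely
\begin{equation*}
g\Delta_s \cdot \phi(f)
= \sum_{e \in E(S)} g\,\balpha_s(f_e)\,\Delta_{se}
= \sum_{e \in E(S)} g\,\balpha_s(f_e)\,\Delta_{s}
= g\,\balpha_s(f)\,\Delta_s
= \alpha_s\big(\alpha_{s^{\ast}}(g)f\big)\Delta_s,
\end{equation*}
using $se \leq s$ in place of $et \leq t$.
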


\begin{proof}
	Let $\mathcal{B}^{\theta}$ be the semi-direct product
	bundle associated with
	$(\theta, S, X)$. For each $s \in S$, define
	$\pi_s: B_s \to \Lc{X} \rtimes_{\alpha^{+}} S^{+}$
	by $\pi_s(f \bmdelta_s) = f \Delta_s$. It is clear that
	$\Pi = \left\{ \pi_s \right\}_{s \in S}$ is a representation
	of $\mathcal{B}^{\theta}$ in $\Lc{X} \rtimes_{\alpha^{+}} S^{+}$.
	By,
	(\ref{UniversalProp}), there exists a homomorphism
	$$\phi: \Lc{X} \rtimes_{\alpha} S \to \Lc{X} \rtimes_{\alpha^{+}} S^{+}$$
	such that $\phi(f \Delta_s) = f \Delta_s$ for every $s \in S$.
	
	Since every $f \in \Lc{X}$ may be written
	as a finite sum $f = \sum_{e \in E(S)} f_e$, we have
	$$\phi \bigg(\sum_{e \in E(S)} f_e \Delta_e \bigg) = \sum_{e \in E(S)} f_e \Delta_e
	= \sum_{e \in E(S)} f_e \Delta_1 = f \Delta_1.$$
	Thus proving that $\phi$ is onto $\Lc{X} \rtimes_{\alpha^{+}} S^{+}$.
	
	By (\ref{AllIdealsReps}), $\Lc{X} \rtimes_{\alpha} S$ has a
	non-degenerate
	faithful representation which is the integrated form $\pi \times \sigma$
	of some covariant representation $(\pi, \sigma)$ of the system
	$(\theta, S, x)$, by (\ref{disintegration0}).
	We may then extend $\sigma$ to a semigroup
	homomorphism $\sigma^{+}: S^{+} \to L(V)$, by setting $\sigma_1$
	as the identity map on $V$. It is then easy to see that
	$(\pi, \sigma^{+})$ is a covariant representation of the system
	$(\theta^{+}, S^{+}, X)$. By (\ref{integration2}), there exists
	a representation $\pi \times \sigma^{+}$ of
	$\Lc{X} \rtimes_{\alpha^{+}} S^{+}$ on $V$ such that
	$(\pi \times \sigma^{+})(f \Delta_s) = \pi(f) \sigma^{+}_s$
	for every $s \in S^{+}$.
	
	We thus have the following commutative diagram
	\[ \begin{tikzcd}
	\Lc{X} \rtimes_{\alpha} S \arrow{r}{\phi} \arrow[swap]{d}{\pi \times \sigma} &
	\Lc{X} \rtimes_{\alpha^{+}} S^{+} \arrow{dl}{\pi \times \sigma^{+}} \\
	L(V) 
	\end{tikzcd}.
	\]
	Finally, if $b \in \Lc{X} \rtimes_{\alpha} S$ lies in the kernel of $\phi$, then
	it must also lie in the kernel of $\pi \times \sigma$. Hence, $b=0$ and
	$\phi$ is injective.
\end{proof}

\markboth{}{Induction process}
\section{Induction Process}
\label{induction}

In this section we follow the ideas introduced by Dokuchaev and Exel in
\cite{DokuchaevExel}. They study the ideal structure of algebraic partial
crossed products, in the context of a discrete group acting on a Hausdorff,
locally compact, totally disconnected topological space. We shall study the
ideal structure of the crossed product algebra in our context.

\textbf{Throughout this section, we fix an ample dynamical
system $(\theta, S, X)$.}

%%%%%%%%%%%%%%%%%%%%%%%%%%%%%%%%%%%%%%%%%%%%%%%%%%%%%%%%%%%%%%%%%%%%%%%%%%%
%%%%%%%%%%%%%%%%%%%%%%%%%%%%%%%%%%%%%%%%%%%%%%%%%%%%%%%%%%%%%%%%%%%%%%%%%%%
%%%%%%%%%%%%%%%%%%%%%%%%%%%%%%%%%%%%%%%%%%%%%%%%%%%%%%%%%%%%%%%%%%%%%%%%%%%
%%%%%%%%%%%%%%%

\subsection{Induction process}

For any point $x \in X$, as in the case of group actions,
one can speak
about its orbit
$$\orb(x) := \Set{\theta_s(x)}{x \in \dom{s}}.$$

However, by trying to bring the concept of isotropy of a point from the case
of group actions, one should come across the fact that the set
$$\tilde{G}_{x} := \Set{s \in S}{x \in \dom{s}, \, \theta_s(x)=x}$$
does not need to have a group structure at all. In fact, $\tilde{G}_{x}$
as defined above is a $\ast$-subsemigroup of $S$. We shall define the isotropy
group of $x$ as the maximal group image
\footnote{Th maximal group homomorphic image
of an inverse semigroup $S$ is a group $G(S)$ satisfying the following
property:
if $G$ is a group and $\psi: S \to G(S)$ is a surjective homomorphism,
then $\psi$ factors through $G(S)$. See Proposition 2.1.2
of \cite{patersonbook} for
further details.}
of $\tilde{G}_{x}$, but first we
are going to introduce an auxiliary tool
$$\tilde{L}_{x} := \Set{s \in S}{x \in \dom{s}}.$$
We will introduce in $\tilde{L}_{x}$ an equivalence relation that
identifies two
elements $s,t \in \tilde{L}_{x}$ if, and only if, there is an idempotent
element $e \in \tilde{L}_{x}$ such that $se=te$.

The motivation for this process comes from the interpretation of the
well known concept of the isotropy group at a point in the unit space of
a groupoid, for the case of the groupoid of germs for an action of an inverse
semigroup on a space $X$, which we will explore later in this text.
Thus, the class of an element $s \in \till_x$ could be thought out as the
germ of $s$ at $x$, which also motivates the notation.

Summarizing, we have:
\begin{equation}\label{LHOrb0}
\begin{array}{rcl}
L_x & := & \Set{\germ{s}{x}}{x \in \dom{s}}, \\
G_x & := & \Set{\germ{s}{x}}{x \in \dom{s} \text{ and } \theta_s(x)=x}, \\
\orb(x) & := & \Set{\theta_s(x)}{x \in \dom{s}}.
\end{array}
\end{equation}

Notice that, if $s$ lies in $\till_{x}$ and $t$ lies in $\till_{\theta_s(x)}$, then
$ts$ lies in $\till_{x}$. Moreover, we have the following result.

\begin{proposition}\label{operation}
	Let $s \in \till_{x}$ and $t \in \till_{y}$, with $y = \theta_s(x)$.
	Then, $ts$ lies in $\till_{x}$
	and the class of $ts$ in $L_{x}$ depends only on
	the classes of $s$ and $t$ in $L_{x}$ and $L_{y}$, respectively.
\end{proposition}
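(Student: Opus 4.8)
The first assertion is the observation recorded immediately before the statement: since $x \in \dom{s}$ and $\theta_s(x) = y \in \dom{t}$, the homomorphism property $\theta_{ts} = \theta_t \circ \theta_s$ shows $x \in \dom{ts}$, so $ts \in \till_{x}$. For the second assertion, recall that $u \sim_x v$ (for $u,v \in \till_x$) means $ue = ve$ for some $e \in E(S)$ with $x \in X_e$; writing $s' \sim_x s$ and $t' \sim_y t$, I must prove $\germ{ts}{x} = \germ{t's'}{x}$. A preliminary point is that $\theta_s(x)$ depends only on the germ of $s$: since $\theta_e$ is the identity on $X_e$ (it is an idempotent of $\mathcal{I}(X)$, hence a partial identity), from $se=s'e$ with $x \in X_e$ one gets $\theta_s(x)=\theta_{se}(x)=\theta_{s'e}(x)=\theta_{s'}(x)$. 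Thus $y$ is unambiguous and $t,t'$ genuinely both lie in $\till_y$, so the expressions $t's$ and $t's'$ make sense in $\till_x$.

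The plan is to establish the invariance in two independent steps and then combine them by transitivity of $\sim_x$. The easy step varies the right-hand argument: if $s \sim_x s'$ via $se=s'e$ with $x \in X_e$, then $(ts)e = t(se) = t(s'e) = (ts')e$, so $ts \sim_x ts'$ with the very same witnessing idempotent.

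The substantive step varies the left-hand argument: given $t \sim_y t'$ via $tf = t'f$ with $y \in X_f$, I would conjugate the witness back to $x$ by setting $g := s^{\ast} f s$. Standard inverse-semigroup algebra gives $g \in E(S)$, and since $\theta_s(x) = y \in X_f$ while $\theta_{s^{\ast}}\theta_s$ is the identity on $\dom{s}$, one checks $\theta_g(x) = x$, so $x \in X_g$. The decisive computation uses that the idempotents $f$ and $ss^{\ast}$ commute together with $ss^{\ast}s = s$:
\[
(ts)g = t\,s s^{\ast} f\, s = t f (s s^{\ast} s) = t f s,
\qquad
(t's)g = t'\, s s^{\ast} f\, s = t' f s .
\]
Hence $tf = t'f$ forces $(ts)g = (t's)g$, that is $ts \sim_x t's$.

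Finally, chaining the substantive step (passing from $t$ to $t'$) with the easy step (passing from $s$ to $s'$, now with $t'$ fixed) and invoking transitivity of $\sim_x$ yields $ts \sim_x t's'$, i.e.\ $\germ{ts}{x} = \germ{t's'}{x}$, as desired. I expect the only real obstacle to be the substantive step, namely guessing the correct idempotent witness at $x$ — the conjugate $s^{\ast} f s$ — and verifying both that it lies in $\till_x$ and that right multiplication by it collapses $ts$ and $t's$ to $tfs$ and $t'fs$; the commutativity of idempotents and the identities $ss^{\ast}s = s$ and $\theta_{s^{\ast}}\theta_s = \mathrm{id}_{\dom{s}}$ are exactly what make this work.
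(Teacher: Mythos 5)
Your proof is correct and follows essentially the same route as the paper: the decisive move in both is the conjugated idempotent $s^{\ast} f s$, together with commutativity of idempotents and $ss^{\ast}s = s$. The only difference is organizational --- you split the invariance into two steps joined by transitivity of $\sim_x$, whereas the paper combines both witnesses into the single idempotent $d = e s^{\ast} f s$ and verifies $t's'd = tsd$ in one computation chain.
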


\begin{proof}
	The first claim follows by the comment immediately before the proposition.
	For the second claim, notice initially that the fact that $\theta_s(x) = y$
	does not depend on representatives. Indeed, let $s' \in \till_{x}$ and
	$t' \in \till_{y}$ be elements such that
	$\germo{s'} = \germo{s}$ and $\germ{t'}{y} = \germ{t}{y}$.
	Therefore there are idempotents $e \in \till_{x}$ and
	$f \in \till_{y}$ such that $se = s'e$ and $tf = t'f$.
	We then have that
	$$ \theta_s(x) = \theta_s (\theta_e(x)) = \theta_{se}(x)
	= \theta_{s'e}(x) = \theta_{s'}(\theta_e(x)) = \theta_{s'}(x).$$
	It only remains to prove that $\germo{ts} = \germo{t's'}$. For this task, let
	$d$ be the idempotent given by $d = e s^{\ast} f s$ and notice that $d$
	lies in $\till_{x}$. Moreover,
	$$
	t' s' d
	= t' s' e s^{\ast} f s
	= t' s e s^{\ast} f s
	= t' s s^{\ast} f s e
	= t' f s s^{\ast} s e
	= t f s s^{\ast} s e
	= t s s^{\ast} f s e
	= t s e s^{\ast} f s
	= t s d,$$
	concluding the argument.
\end{proof}

Therefore, as long as $y=\theta_s(x)$, we are allowed to operate
the elements
$\germ{t}{y}$ and $\germo{s}$ to obtain $\germo{ts}$. This provides a group structure
on $G_{x}$ such that
$\germo{s}^{-1} = \germo{s^{\ast}}$ and whose identity element is $\germo{e}$ for any
idempotent element $e$ in $\till_{x}$. 

Notice that, whenever there is an idempotent element $e \in \tilde{L}_{x}$ such that $se=te$
for a pair of elements in $\tilde{L}_{x}$, necessarily $e$ lies in $\tilde{G}_{x}$,
since $\theta_e$ is the identity map on its domain. Therefore, $G_{x}$ coincides with
the maximal group image of $\tilde{G}_{x}$ and, from now on, will be called the
\emph{isotropy group} of $x$.

%We will henceforth use the following notations
%
%\begin{equation}\label{LHOrb}
%\begin{array}{rcl}
%\tilde{L}_{x_0} & := & \Set{s \in S}{x_0 \in \dom{s}} \\
%\tilde{H}_{x_0} & := & \Set{s \in \tilde{L}_{x_0}}{\theta_s(x_0)=x_0} \\
%\orb(x_0) & := & \Set{\theta_s(x_0)}{s \in \tilde{L}_{x_0}}
%\end{array}
%\end{equation}
%Whenever the point $x$ we are referring to is implicit in the context, we shall omit
%the subscripts and write $\tilde{L}$, $\tilde{G}$, $L$ and $G$ in place of
%$\tilde{L}_{x}$, $\tilde{G}_{x}$, $L_{x}$ and $G_{x}$, respectively.

Notice that $L_xG_x \subseteq L_x$ and the map
$$\germo{s} \in L_x \mapsto \theta_s(x) \in \orb(x)$$
is well defined (by the proof of (\ref{operation})) and is onto.
Moreover, two elements $\germo{s}$ and $\germo{t}$ in $L_x$ satisfy
$\theta_s(x) = \theta_t(x)$, if and only if, $s^{\ast}t$ lies in $\tilh_x$.
Before we proceed, let us prove a technical result.

\begin{lemma}\label{hereditaryTill}
	Let $s,t \in S$ such that $s \leq t$. If $s$ lies in $\till_x$, then $t$
	must also lie in $\till_x$,
	$\germo{s}=\germo{t}$ and $\theta_s(x)=\theta_t(x)$. In particular,
	$\theta_t(\dom{s}) = \ran{s}$.
\end{lemma}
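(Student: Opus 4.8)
The plan is to exploit the standard characterization of the natural partial order on $S$: the relation $s \leq t$ is equivalent to $s = t\,(s^{\ast}s)$, so the single idempotent $e := s^{\ast}s$ satisfies both $se = s$ and $te = s$. This one idempotent will do essentially all the work, since it reconciles $s$ and $t$ multiplicatively while fixing $x$, and each of the four assertions reduces to a statement about it.

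First I would check that $t \in \till_x$. Writing $e = s^{\ast}s$ and $f = t^{\ast}t$, the relation $s = t s^{\ast}s$ gives, after multiplying on the left by $s^{\ast}$ and using that idempotents commute, the equality $s^{\ast}s = s^{\ast}s\,t^{\ast}t$; that is, $e \leq f$ as idempotents. For an inverse-semigroup action this forces $\dom{s} = X_e \subseteq X_f = \dom{t}$, because $\theta_e$ is the restriction of $\theta_f$ to the smaller domain (both being identity maps). Since $x \in \dom{s}$ by hypothesis, we obtain $x \in \dom{t}$, so $t \in \till_x$ as required.

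Next, for the germ equality, I would verify that $e = s^{\ast}s$ itself lies in $\till_x$ — indeed $\dom{e} = \dom{s} \ni x$ — and observe that $se = s s^{\ast}s = s$ together with $te = t s^{\ast}s = s$. Thus $se = te$ with $e$ an idempotent of $\till_x$, which is precisely the relation defining the germ equivalence, whence $\germo{s} = \germo{t}$. The equality $\theta_s(x) = \theta_t(x)$ then follows either immediately from the well-definedness of the map $\germo{s} \mapsto \theta_s(x)$ established in the proof of (\ref{operation}), or directly from the factorization $\theta_s = \theta_t \circ \theta_e$ combined with the fact that $\theta_e$ is the identity on its domain $\dom{s}$, which contains $x$.

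Finally, for the ``in particular'' clause, I would use the same factorization $\theta_s = \theta_t \circ \theta_{s^{\ast}s}$: since $\theta_{s^{\ast}s}$ fixes $\dom{s}$ pointwise, applying both sides to the whole set $\dom{s}$ yields $\theta_t(\dom{s}) = \theta_s(\dom{s}) = \ran{s}$. I do not anticipate a serious obstacle; the only point demanding care is the bookkeeping with the natural partial order (in particular the passage from $s \leq t$ to $s = t s^{\ast}s$ and to $e \leq f$) and the repeated appeal to the fact that $\theta_e$ acts as the identity on $X_e$ for each idempotent $e$, which is exactly what makes all four conclusions collapse to statements about the single idempotent $s^{\ast}s$.
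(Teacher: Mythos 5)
Your proof is correct and takes essentially the same route as the paper's: both arguments produce an idempotent $e$ lying in $\till_x$ with $se = te$, conclude $\germo{s}=\germo{t}$ from the definition of the germ equivalence, and let the fact that $\theta_e$ acts as the identity on $X_e$ yield $\theta_s(x)=\theta_t(x)$ and then $\theta_t(\dom{s})=\ran{s}$ pointwise. The only cosmetic difference is that you fix the canonical idempotent $e = s^{\ast}s$ and obtain $x \in \dom{t}$ via $s^{\ast}s \leq t^{\ast}t$, whereas the paper takes an arbitrary idempotent $e$ with $s = te$ and reads $t, e \in \till_x$ directly off $x \in \dom{(te)} = \dom{t} \cap X_e$.
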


\begin{proof}
	Since $s \leq t$, there exists $e \in E(S)$ such that $s=te$. By hypothesis, $te=s \in \till_x$,
	which means that
	$$x \in \dom{(te)} = \dom{t} \cap X_e.$$
	Hence, $t$ and $e$ lie in $\till_x$. Moreover, $te=se$ and
	$\theta_s(x)=\theta_{te}(x)=\theta_t(\theta_e(x)) = \theta_t(x)$, as stated.
	
	Finally, if $x \in \dom{s}$, then $s \in \till_x$ and, by the previous part,
	$\theta_t(x)=\theta_s(x)$. This proves that
	$\theta_t(\dom{s}) = \theta_s(\dom{s}) = \ran{s}$.
\end{proof}

A central ingredient in the induction process is the vector space $M_{x}$
%(or simply $M$)
with basis $L_x$. We shall denote a basis element of $M_x$ by
$\delta_{\germo{s}}$ with $\germo{s}$ in $L_x$.
%$\Set{\delta_{\gamma}}{\gamma \in L}.$
Since $L_xG_x \subseteq L_x$, $M_x$ has a natural right $KG_x$-module structure.

Consider the bilinear form
$$\iprod{\cdot}{\cdot}:M_x \times M_x \to KG_x$$
such that
$$
\iprod{ \delta_{\germo{s}}}{ \delta_{\germo{t}}}
=
\left\{
\begin{array}{c@{}l}
\delta_{\germo{s^{\ast}t}}, & \quad \text{if } s^{\ast}t \in \tilh_x, \\
\text{0}, & \quad \text{otherwise.}
\end{array}
\right.
$$

It is important to notice that this bilinear form is well
defined, that is, does not
depend on representatives. This said, we
shall also express this form as
$$
\iprod{\delta_{\germo{s}}}{\delta_{\germo{t}}}
=
\bool{ s^{\ast}t \in \tilh_x} \delta_{\germo{s^{\ast}t}},
$$
where the brackets indicate \textit{boolean value}
\footnote{
We shall often use boolean value, even in a slightly abusive way.
For example, in (\ref{LcsAction}), we have the expression
$\bool{st \in \till_x} f\big(\theta_{st}(x) \big) \delta_{\germo{st}}$.
Indeed, if $st$ does not lie in $\till_x$, then
it is not coherent to write $\theta_{st}(x)$. However, in
this case, we mean that the expression
equals $f\big(\theta_{st}(x) \big) \delta_{\germo{st}}$
if the content in the brackets is true and
the expression equals zero
otherwise.}.

An important property of this form, which may be easily proved, is expressed by the identity
\begin{equation}\label{khbalanced}
	\iprod{m}{na} = \iprod{m}{n}a,
\end{equation}
for all $m,n \in M_x$ and all $a \in KG_x$.

We say that $R_x \subseteq L_x$ is a
\emph{total system of representatives of left $G_x$-classes}
if, for every $\germo{s}$ in $L_x$, there exists precisely one element
$\germo{r}$ in $R_x$ such that $\theta_r(x)=\theta_s(x)$. By the previous
comment, this amounts to say that $r^{\ast}s$ lies in $\tilh_x$.

\begin{proposition}\label{totalset}
	If $R_x \subseteq L_x$ is a total system of representatives of left
	$G_x$-classes,
	then, for all $m \in M_x$, we have
	$$m = \sum_{\germo{r} \in R_x} \delta_{\germo{r}} \iprod{\delta_{\germo{r}}}{m},$$
	where the sum is always finite in the sense that there are only finitely many nonzero summands.
\end{proposition}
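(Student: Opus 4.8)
The plan is to exploit bilinearity to reduce to a single basis vector, then use the defining property of $R_x$ to collapse the sum to one summand, and finally identify that summand with $m$ by means of Lemma (\ref{hereditaryTill}). First, since the right action of $KG_x$ on $M_x$ is $K$-linear in the algebra variable and the form $\iprod{\cdot}{\cdot}$ is linear in its second entry, for each fixed $\germo{r} \in R_x$ the assignment $m \mapsto \delta_{\germo{r}} \iprod{\delta_{\germo{r}}}{m}$ is $K$-linear. Writing an arbitrary $m \in M_x$ as a finite combination of basis vectors $\delta_{\germo{s}}$, it therefore suffices to establish the identity when $m = \delta_{\germo{s}}$ for a single $\germo{s} \in L_x$; this reduction also takes care of the finiteness assertion, since then only the (finitely many) representatives matching the finitely many germs appearing in $m$ can possibly contribute.

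So fix $\germo{s} \in L_x$. For each $\germo{r} \in R_x$ one has $\iprod{\delta_{\germo{r}}}{\delta_{\germo{s}}} = \bool{r^{\ast}s \in \tilh_x}\, \delta_{\germo{r^{\ast}s}}$, which is nonzero precisely when $r^{\ast}s \in \tilh_x$, that is, exactly when $\theta_r(x) = \theta_s(x)$. By the very definition of a total system of representatives, there is a unique $\germo{r_0} \in R_x$ with $\theta_{r_0}(x) = \theta_s(x)$, so the right-hand side collapses to the single term
$$
\delta_{\germo{r_0}} \iprod{\delta_{\germo{r_0}}}{\delta_{\germo{s}}}
= \delta_{\germo{r_0}} \cdot \delta_{\germo{r_0^{\ast}s}},
$$
where $\germo{r_0^{\ast}s}$ is now a genuine element of $G_x$.

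It remains to show that this single term equals $\delta_{\germo{s}}$. Unwinding the right module action coming from $L_x G_x \subseteq L_x$ together with the product of germs described in (\ref{operation}), we get $\delta_{\germo{r_0}} \cdot \delta_{\germo{r_0^{\ast}s}} = \delta_{\germo{r_0 r_0^{\ast}s}}$, so the goal becomes the germ equality $\germo{r_0 r_0^{\ast}s} = \germo{s}$. Since $r_0 r_0^{\ast}$ is idempotent, $r_0 r_0^{\ast}s \leq s$ in the natural order; and because $r_0^{\ast}s \in \tilh_x$ forces $\theta_s(x) \in \ran{r_0}$, a short computation of the domain of $r_0 r_0^{\ast}s$ shows that $r_0 r_0^{\ast}s$ in fact lies in $\till_x$. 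Lemma (\ref{hereditaryTill}), applied to $r_0 r_0^{\ast}s \leq s$, then yields $\germo{r_0 r_0^{\ast}s} = \germo{s}$, completing the argument. The main point to verify carefully is precisely this last identification, namely that $r_0 r_0^{\ast}s$ belongs to $\till_x$ so that Lemma (\ref{hereditaryTill}) is applicable; everything else is bookkeeping with the form and the module action.
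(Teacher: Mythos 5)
Your proof is correct and follows essentially the same route as the paper's: reduce to $m=\delta_{\germo{s}}$ by linearity, collapse the sum using the uniqueness of the representative with $r^{\ast}s\in\tilh_x$, and identify the surviving term $\delta_{\germo{r_0 r_0^{\ast}s}}$ with $\delta_{\germo{s}}$ via Lemma (\ref{hereditaryTill}). The only difference is that you spell out the verification that $r_0 r_0^{\ast}s$ lies in $\till_x$ (so that the lemma applies), a step the paper leaves implicit.
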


\begin{proof}
	Assume initially that $m=\delta_{\germo{s}}$ for some $\germo{s} \in L_x$.
	So, there exists an unique element $\germo{t}$ in $R_x$ such that
	$t^{\ast}s$ lies in $\tilh_x$.
	Hence,
	$$
	\sum_{\germo{r} \in R_x} \delta_{\germo{r}} \iprod{\delta_{\germo{r}}}{\delta_{\germo{s}}}
	= \sum_{\germo{r} \in R_x} \delta_{\germo{r}} \bool{ r^{\ast}s \in \tilh_x } \delta_{\germo{r^{\ast}s}}
	= \delta_{\germo{t}} \delta_{\germo{t^{\ast}s}}
	\overset{(\ref{hereditaryTill})}{=} \delta_{\germo{s}}.
	$$
	By writing $m$ as a combination of elements of the form $m=\delta_{\germo{s}}$
	for $\germo{s} \in L_x$, we may reach the general case.
\end{proof}

We can now derive a very important fact about
$M_x$. It is, there exists a left $\Lcs$-module structure compatible
with its right $KG_x$-module structure.

\begin{proposition}\label{LcsAction}
	There is a left $\Lcs$-module structure on $M_x$ such that
	$$
	(f \Delta_s) . \delta_{\germo{t}}
	= \bool{st \in \till_x} f\big(\theta_{st}(x) \big) \delta_{\germo{st}},
	$$
	for every $f \in \Lc{\ran{s}}$ and every $t \in \till_x$. Furthermore, with this structure $M_x$
	becomes a $\Lcs$-$KG_x$-bimodule.
\end{proposition}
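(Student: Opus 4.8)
The plan is to obtain the module structure from the universal property of the cross-sectional algebra rather than by checking associativity directly. After the identification fixed following Proposition (\ref{concretedescription}), $\Lcs$ is the cross-sectional algebra of the semi-direct product bundle $\mathcal{B}^{\theta}$, so by Proposition (\ref{UniversalProp}) it suffices to exhibit a representation $\Pi = \{\pi_s\}_{s \in S}$ of $\mathcal{B}^{\theta}$ on the vector space $M_x$. The integrated homomorphism $\Phi \colon \Lcs \to L(M_x)$ then defines the action $b.m := \Phi(b)(m)$, and since $\Phi(f\Delta_s) = \pi_s(f\bmdelta_s)$, the formula in the statement is reproduced exactly. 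Concretely, for $s \in S$ and $f \in \Lc{\ran{s}}$ I would set $\pi_s(f\bmdelta_s)\,\delta_{\germo{t}} = \bool{st \in \till_x} f(\theta_{st}(x))\,\delta_{\germo{st}}$ on basis vectors and extend linearly; linearity in $f$ is immediate.

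The first point that needs care is that this prescription descends to germs, i.e. that the right-hand side is independent of the representative $t$ of $\germo{t}$. If $\germo{t} = \germo{t'}$, pick an idempotent $e \in \till_x$ with $te = t'e$. The computation in the proof of Proposition (\ref{operation}) gives $\theta_t(x) = \theta_{t'}(x)$, so $st \in \till_x$ if and only if $st' \in \till_x$; and when these hold, $ste = st'e$ shows $\germo{st} = \germo{st'}$, while $\theta_{st}(x) = \theta_s(\theta_t(x)) = \theta_s(\theta_{t'}(x)) = \theta_{st'}(x)$. Hence each $\pi_s(f\bmdelta_s)$ is a well-defined endomorphism of $M_x$.

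Next I would verify the two representation axioms of Definition (\ref{FellBundle}). For multiplicativity I would use the simplified product $\mu_{s,t}(f\bmdelta_s \otimes g\bmdelta_t) = f\balpha_s(g)\bmdelta_{st}$ established just before Definition (\ref{covrep}) and evaluate both $\pi_{st}(f\balpha_s(g)\bmdelta_{st})$ and $\pi_s(f\bmdelta_s)\pi_t(g\bmdelta_t)$ on a basis vector $\delta_{\germo{u}}$; the computation closes using the identity $\balpha_s(g)(\theta_{stu}(x)) = g(\theta_{tu}(x))$, valid because $\theta_{stu}(x) \in \ran{s}$ and $\theta_{s^{\ast}s}$ fixes $\theta_{tu}(x) \in \dom{s}$, together with the remark that $stu \in \till_x$ already forces $tu \in \till_x$ (both sides vanishing otherwise). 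The hard part will be the order-compatibility axiom $\pi_t \circ j_{t,s} = \pi_s$ for $s \leq t$: writing $j_{t,s}(f\bmdelta_s) = f\bmdelta_t$, I must match $\bool{su \in \till_x}f(\theta_{su}(x))\delta_{\germo{su}}$ against $\bool{tu \in \till_x}f(\theta_{tu}(x))\delta_{\germo{tu}}$. Lemma (\ref{hereditaryTill}) settles the terms with $su \in \till_x$, giving $\germo{su} = \germo{tu}$ and $\theta_{su}(x) = \theta_{tu}(x)$; the role of the hypothesis $f \in \Lc{\ran{s}}$ is precisely to annihilate the rest, for if $tu \in \till_x$ but $su \notin \till_x$ then, using $\ran{s} = \theta_t(\dom{s})$ and the injectivity of $\theta_t$, one finds $\theta_{tu}(x) \notin \ran{s}$, so that $f(\theta_{tu}(x)) = 0$.

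Finally, for the bimodule claim I would check that the left $\Lcs$-action commutes with the right $KG_x$-action on basis elements; associativity of the left action itself is free, as it comes from the homomorphism $\Phi$. Given $\germo{g} \in G_x$ one has $\theta_g(x) = x$, whence $stg \in \till_x$ if and only if $st \in \till_x$, and $\theta_{stg}(x) = \theta_{st}(x)$. Therefore $(f\Delta_s).(\delta_{\germo{t}}\cdot\germo{g}) = (f\Delta_s).\delta_{\germo{tg}} = \bool{st \in \till_x}f(\theta_{st}(x))\delta_{\germo{stg}} = \big((f\Delta_s).\delta_{\germo{t}}\big)\cdot\germo{g}$, which is the desired compatibility.
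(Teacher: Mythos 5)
Your proposal is correct and is in substance the same as the paper's proof: the paper defines the action directly on $\mathcal{L}(\mathcal{B}^{\theta})$ and then shows that the ideal $\mathcal{J}_{\alpha}$ acts trivially, which by Propositions (\ref{correspondence}) and (\ref{RepvsVanish}) is exactly your verification that the maps $\pi_s$ form a representation of $\mathcal{B}^{\theta}$ — in particular, your three-case check of $\pi_t \circ j_{t,s} = \pi_s$ via Lemma (\ref{hereditaryTill}) and the support of $f$ matches the paper's argument that the generators $f\bmdelta_s - f\bmdelta_t$ annihilate $M_x$. The remaining differences are cosmetic: you route through the universal property (\ref{UniversalProp}) instead of factoring through the quotient, and you spell out the multiplicativity and bimodule verifications that the paper leaves as implicit or ``standard.''
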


\begin{proof}
	We shall prove first that there is a well defined left
	$\mathcal{L}(\mathcal{B}^{\theta})$-module structure on $M_x$,
	such that
	$$
	(f \Delta_s) . \delta_{\germo{t}}
	= \bool{st \in \till_x} f\big(\theta_{st}(x) \big) \delta_{\germo{st}}.
	$$
	Indeed, let $t$ and $t'$ in $\till_x$ such that $\germo{t}=\germo{t'}$. Hence,
	there exists $e \in \till_x$ such that $te=t'e$.
	
	Suppose $st \in \till_x$. Since $e \in \till_x$, we have:
	$$
	x \in \dom{(st)} \cap X_e
	= X_{e(st)^{\ast}(st)e}
	= X_{et^{\ast} s^{\ast} ste}
	= X_{e{t'}^{\ast} s^{\ast} st'e}
	= \dom{(st')} \cap X_e.
	$$
	Hence, $st'$ lies in $\till_x$. Moreover, $\germo{st}=\germo{st'}$, since
	$ste=st'e$ and
	$$
	\theta_{st}(x)
	= \theta_{st}(\theta_e(x))
	= \theta_{ste}(x)
	= \theta_{st'e}(x)
	= \theta_{st'}(\theta_e(x))
	= \theta_{st'}(x).
	$$
	Therefore, it is indeed a well defined action of
	$\mathcal{L}(\mathcal{B}^{\theta})$ on
	$M_x$. We shall show now that the ideal $\mathcal{J}_{\alpha}$,
	as in (\ref{StandingNotation}), acts trivially on
	$M_x$. For the task, let $\delta_{\germo{r}}$ in $M_x$ and
	let $b= f\Delta_s - f\Delta_t$ be an element in $\mathcal{J}_{\alpha}$,
	with $s \leq t$. First, notice that $sr \leq tr$, since $s \leq t$. 
	
	If $sr$ lies in $\till_x$, by (\ref{hereditaryTill}), we also have $tr \in \till_x$,
	$\germo{sr}=\germo{tr}$ and $\theta_{sr}(x) = \theta_{tr}(x)$. In this
	case, $b \cdot \delta_{\germo{r}}=0$. The same thing happens if both
	$sr$ and $tr$ do not lie in $\till_x$.
	
	Suppose now that $sr$ does not lie in $\till_x$, but $tr$ does. In this case,
	$\theta_{tr}(x)$ does not lie in $\ran{s}$, which contains the support of $f$.
	Indeed, otherwise,
	we should have $\theta_r(x) \in \theta_{t^{\ast}}(\ran{s})=\dom{s}$, where the last
	equality comes from (\ref{hereditaryTill}), since $s^{\ast} \leq t^{\ast}$.
	This gives $\theta_r(x) \in \dom{s} \cap \ran{r}$, from where we conclude
	that
	$$
	x \in \theta_{r^{\ast}}(\dom{s}\cap \ran{r})
	= X_{r^\ast s^\ast sr}
	= \dom{(sr)},
	$$
	which can not happen by assumption.
	
	Hence, the action factors trough $\mathcal{J}_{\alpha}$,
	giving an action of $\Lcs$
	on $M_x$,
	as desired. It is now standard to verify that $M_x$ is a $\Lcs$-$KG_x$-bimodule.
\end{proof}

We can now induce $\Lcs$-modules from $KG_x$-modules in the following way.
Given any left $KG_x$-module $V$, the tensor product
$$M_x \otimes_{KG_x} V,$$
is a left $\Lcs$-module, henceforth denoted simply by $M_x \otimes V$.

\begin{definition}\label{InducedModule}
	The $\Lcs$-module $M_x \otimes V$ mentioned above is said to be the module
	\emph{induced} by $V$ and, will be denoted by $\ind_{x}(V)$.
\end{definition}

The next lemma is a technical result which will be an important tool
to compute the annihilator
of the induced module in terms of
the annihilator of the original module $V$.

\begin{lemma}
	Let $V$ be a left $KG_x$-module and let $I$ be the annihilator of $V$ in $KG_x$.
	Given $m \in M_x$,
	the following are equivalent:
	\begin{enumerate}[(i)]
		\item $m \otimes v = 0$, for all $v \in V$;
		\item $\iprod{n}{m} \in I$, for all $n \in M_x$.
	\end{enumerate}
\end{lemma}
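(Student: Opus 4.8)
The plan is to exploit that $M_x$ is a \emph{free} right $KG_x$-module, so that the induced module $M_x \otimes_{KG_x} V$ splits as a direct sum of copies of $V$, and then to read both conditions off this splitting. Fix a total system $R_x \subseteq L_x$ of representatives of left $G_x$-classes. I first claim that $\left\{ \delta_{\germo{r}} \right\}_{\germo{r} \in R_x}$ is a basis of $M_x$ over $KG_x$: spanning is exactly Proposition (\ref{totalset}), and for linear independence one supposes $\sum_{\germo{r}} \delta_{\germo{r}} c_r = 0$ with $c_r \in KG_x$ and applies $\iprod{\delta_{\germo{r'}}}{\,\cdot\,}$ for a fixed $\germo{r'} \in R_x$. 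By the definition of the form together with the balancing identity (\ref{khbalanced}) and the criterion that $\theta_r(x) = \theta_{r'}(x)$ if and only if $r'^{\ast}r \in \tilh_x$, every cross term with $r \neq r'$ vanishes because distinct representatives have distinct images, while the $r = r'$ term collapses to $c_{r'}$ (the idempotent germ acting as the identity of $G_x$); hence $c_{r'} = 0$.

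Granting freeness, there is a vector-space isomorphism $M_x \otimes_{KG_x} V \cong \bigoplus_{\germo{r} \in R_x} \left( \delta_{\germo{r}} \otimes V \right)$, under which $\sum_{\germo{r}} \delta_{\germo{r}} \otimes w_r = 0$ forces every $w_r = 0$. Writing $m = \sum_{\germo{r}} \delta_{\germo{r}} a_r$ with $a_r := \iprod{\delta_{\germo{r}}}{m}$ via Proposition (\ref{totalset}), I obtain $m \otimes v = \sum_{\germo{r}} \delta_{\germo{r}} \otimes a_r v$, so the direct-sum decomposition shows that $m \otimes v = 0$ is equivalent to $a_r v = 0$ for every $\germo{r} \in R_x$. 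Letting $v$ range over $V$, condition (i) is therefore equivalent to $a_r \in I$ for every $\germo{r} \in R_x$, where $I$ is the annihilator of $V$.

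Matching this with (ii) is then short. The implication (ii)$\Rightarrow$(i) is immediate: taking $n = \delta_{\germo{r}}$ shows that each $a_r = \iprod{\delta_{\germo{r}}}{m}$ lies in $I$. For (i)$\Rightarrow$(ii), I expand, for an arbitrary $n \in M_x$,
\[
\iprod{n}{m}
= \Big\langle n , \textstyle\sum_{\germo{r}} \delta_{\germo{r}} a_r \Big\rangle
= \sum_{\germo{r}} \iprod{n}{\delta_{\germo{r}}} a_r ,
\]
using bilinearity and the balancing identity (\ref{khbalanced}); since each $a_r \in I$ and $I$ is a two-sided ideal of $KG_x$, every summand $\iprod{n}{\delta_{\germo{r}}} a_r$ lies in $I$, and hence so does $\iprod{n}{m}$.

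I expect the only genuine content to be the freeness of $M_x$ over $KG_x$ — specifically the linear-independence step — since it is what upgrades Proposition (\ref{totalset}) from a spanning statement into an honest decomposition of the tensor product. Without it one could not pass from $m \otimes v = 0$ to the vanishing of each $a_r v$, because $V$ need not be flat over $KG_x$. Everything else is routine bilinearity and ideal bookkeeping.
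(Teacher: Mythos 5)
Your proof is correct, but it takes a genuinely different route from the paper's in the direction (i)$\Rightarrow$(ii). The paper never establishes (nor needs) freeness of $M_x$ over $KG_x$: for (i)$\Rightarrow$(ii) it observes that, for each fixed $n \in M_x$, the map $(m,v) \mapsto \iprod{n}{m}v$ is $KG_x$-balanced by (\ref{khbalanced}), hence induces a well-defined linear map $T_n : M_x \otimes V \to V$ with $T_n(m \otimes v) = \iprod{n}{m}v$; applying $T_n$ to the vanishing tensors $m \otimes v$ immediately gives $\iprod{n}{m}v = 0$ for all $v$, i.e. $\iprod{n}{m} \in I$. The paper's (ii)$\Rightarrow$(i) is the same computation you perform: expand $m$ over a total system of representatives via (\ref{totalset}) and slide the coefficients $\iprod{\delta_{\germo{r}}}{m} \in I$ across the tensor sign. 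So the two arguments coincide on (ii)$\Rightarrow$(i) and diverge on the other implication, where you prove the stronger structural fact that $\left\{ \delta_{\germo{r}} \right\}_{\germo{r} \in R_x}$ is a $KG_x$-basis of $M_x$ — your independence step is sound, since pairing against $\delta_{\germo{r'}}$ kills every term with $r \neq r'$ (distinct representatives have distinct images, so $r'^{\ast}r \notin \tilh_x$) and the diagonal term $\iprod{\delta_{\germo{r'}}}{\delta_{\germo{r'}}} = \delta_{\germo{r'^{\ast}r'}}$ is the identity of $KG_x$ — and then read both conditions off the decomposition $M_x \otimes_{KG_x} V \cong \bigoplus_{\germo{r} \in R_x} V$. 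Your route costs a little more but buys more: freeness of $M_x$ makes the lemma transparent and yields side benefits (for instance, exactness of the induction functor $V \mapsto M_x \otimes V$). The paper's balanced-map trick is lighter, needing only (\ref{khbalanced}), and would survive even if $M_x$ failed to be free; in that sense your closing claim that freeness is ``the only genuine content'' is slightly overstated — the $T_n$ maps show that (i)$\Rightarrow$(ii) follows from balancedness alone, with no flatness or freeness input.
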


\begin{proof}
Let $n \in M_x$ and consider the bilinear map
$$(m,v) \in M_x \times V \mapsto \iprod{n}{m}v \in V.$$
By (\ref{khbalanced}), this is $KG_x$-balanced, so there is a well defined
$K$-linear map
$T_n: M_x \otimes V \to V$, such that
$$T_n(m \otimes v) = \iprod{n}{m}v,$$
So, if (i) is valid for $m \in M_x$, then
$$\iprod{n}{m}v = T_n(m \otimes v) = 0,$$
for all $n \in M_x$ and all $v \in V$.
Hence, $\iprod{n}{m}$ lies in the annihilator of $V$ for all $n \in M_x$, proving that
(ii) is valid for $m$ as well.

Conversely, let $m \in M_x$ and assume (ii) is valid for $m$. Let $R_x \subseteq L_x$ be a total system of representatives of left $G_x$-classes. Then, for every $v \in V$, we have
$$
m \otimes v \overset{(\ref{totalset})}{=}
\sum_{\germo{r} \in R_x} \delta_{\germo{r}} \iprod{\delta_{\germo{r}}}{m} \otimes v
= \sum_{\germo{r} \in R_x} \delta_{\germo{r}} \otimes \iprod{\delta_{\germo{r}}}{m} v = 0,
$$
proving that (i) is valid for $m$.
\end{proof}

We immediately obtain the following description for the annihilator of an induced module.

\begin{corollary}\label{NodependenceonV}
	Let $V$ be a left $KG_x$-module and let $I$ be the annihilator of $V$
	in $KG_x$. Then,
	$$
	\Set{b \in \Lcs}{\iprod{n}{bm} \in I, \forall n,m \in M_x},
	$$
	is the annihilator of $M_x \otimes V$ in $\Lcs$.
	%In particular, the annihilator of $M_x \otimes V$ depends only on $I$.
\end{corollary}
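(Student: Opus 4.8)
The plan is to unwind the definition of the annihilator and reduce everything, via the preceding lemma, to a statement about single elements of $M_x$. Recall that the induced module $M_x \otimes V = M_x \otimes_{KG_x} V$ carries its left $\Lcs$-module structure through the left tensor factor, by the $\Lcs$-$KG_x$-bimodule structure established in Proposition (\ref{LcsAction}); thus $b \cdot (m \otimes v) = (bm) \otimes v$ for all $b \in \Lcs$, $m \in M_x$ and $v \in V$. Since every element of $M_x \otimes V$ is a finite sum of simple tensors $m \otimes v$, an element $b \in \Lcs$ annihilates $M_x \otimes V$ if and only if $(bm) \otimes v = 0$ for all $m \in M_x$ and all $v \in V$.

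First I would fix $m \in M_x$ and note that $bm$ is again an element of $M_x$, so that the preceding lemma applies to it. Applying that lemma with $bm$ playing the role of the element denoted ``$m$'' there, the condition ``$(bm) \otimes v = 0$ for all $v \in V$'' is equivalent to ``$\iprod{n}{bm} \in I$ for all $n \in M_x$''. Quantifying this equivalence over all $m \in M_x$, I obtain that $b$ annihilates $M_x \otimes V$ precisely when $\iprod{n}{bm} \in I$ for all $n, m \in M_x$, which is exactly the displayed set in the statement; this identifies the two sets and finishes the argument.

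Since each step is a direct substitution into the preceding lemma, there is no genuine obstacle here. The only points deserving brief care are confirming that $\Lcs$ acts on the left factor so that $b \cdot (m \otimes v) = (bm) \otimes v$ (which is what makes the reduction to single tensors legitimate), and observing that the pairing $\iprod{n}{bm}$ is well defined because $bm \in M_x$.
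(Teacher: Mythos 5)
Your proposal is correct and follows exactly the route the paper intends: the paper states this corollary as an immediate consequence of the preceding lemma, and the implicit argument is precisely your reduction — use the bimodule structure to write $b\cdot(m\otimes v)=(bm)\otimes v$, reduce to simple tensors, and apply the lemma with $bm$ in place of $m$. Your explicit attention to why the reduction to simple tensors is legitimate is a faithful spelling-out of what the paper leaves unsaid.
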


Notice that, if $I \trianglelefteq KG_x$, then $KG_x/I$ is a
left $KG_x$-module which is annihilated by $I$. Hence, every ideal
of $KG_x$ is the annihilator of a left $KG_x$-module. This motivates
the following definition.

\begin{definition}\label{InducedIdeal}
	Given any ideal $I \trianglelefteq KG_x$, we define
	$$\Ind{x}{I} := \Set{b \in \Lcs}{\iprod{n}{bm} \in I, \forall n,m \in M_x},$$
	and call it the \emph{ideal induced by} $I$.
\end{definition}

Notice that $\ind_x(I)$ is a two-sided ideal.
Moreover, notice that the annihilator of an induced
$\Lcs$-module
is the ideal induced from the annihilator of the original $KG_x$-module.
For further reference, we reinterpret (\ref{NodependenceonV}) from this
new point of view.

\begin{proposition}\label{AnnInducedVsIndIdeal}
	Let $V$ be a left $KG_x$-module and $I$ the annihilator of $V$ in $KG_x$. Then the
	annihilator of $M_x \otimes V$ coincides with the ideal induced by $I$.
\end{proposition}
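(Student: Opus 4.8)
The plan is to recognize that this proposition is nothing more than a translation of Corollary (\ref{NodependenceonV}) into the language introduced in Definition (\ref{InducedIdeal}); all of the genuine work has already been carried out in the lemma preceding that corollary. First I would recall that, by Corollary (\ref{NodependenceonV}), the annihilator of $M_x \otimes V$ in $\Lcs$ is exactly the set
$$\Set{b \in \Lcs}{\iprod{n}{bm} \in I, \forall n,m \in M_x},$$
where $I$ denotes the annihilator of $V$ in $KG_x$.

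Next I would note that $I$, being the annihilator of a module, is automatically a two-sided ideal of $KG_x$, so that Definition (\ref{InducedIdeal}) applies and produces $\Ind{x}{I}$, declared there to be precisely the same set displayed above. Comparing the two descriptions then yields at once that the annihilator of $M_x \otimes V$ coincides with $\Ind{x}{I}$, which is the claimed statement.

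The only thing to verify is therefore that the defining condition of $\Ind{x}{I}$ and the characterization furnished by Corollary (\ref{NodependenceonV}) are the very same condition, which they are verbatim; consequently there is no genuine obstacle at this stage. The conceptual content lies entirely upstream: in the preceding lemma, whose equivalence of its conditions (i) and (ii) reduces the vanishing of $m \otimes v$ for all $v \in V$ to the membership $\iprod{n}{m} \in I$ for all $n \in M_x$, and in Proposition (\ref{totalset}), which supplied the reconstruction $m = \sum_{\germo{r} \in R_x} \delta_{\germo{r}} \iprod{\delta_{\germo{r}}}{m}$ used in that reduction. Once those are in place, the present proposition is an immediate reinterpretation recorded for later reference.
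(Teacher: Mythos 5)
Your proposal is correct and matches the paper exactly: the paper offers no separate proof of this proposition, stating explicitly that it is a reinterpretation of Corollary (\ref{NodependenceonV}) in the language of Definition (\ref{InducedIdeal}), with all the substantive work residing in the preceding lemma and Proposition (\ref{totalset}), just as you observe. Your additional remark that the annihilator $I$ is automatically a two-sided ideal, so that Definition (\ref{InducedIdeal}) applies, is a correct and harmless bit of bookkeeping.
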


We now start to explore the induction process by introducing
a clear fact about the behavior of
the induction process
under inclusion and intersection.

\begin{proposition}\hspace{1cm}
	\begin{enumerate}[(i)]
		\item If $I_1$ and $I_2$ are ideals of $KG_x$ with $I_1 \subseteq I_2$, then
		$\ind_x(I_1) \subseteq \ind_x(I_2)$.
		\item If $\left\{ I_{\lambda} \right\}_{\lambda \in \Lambda}$ is a family
		of ideals of $KG_x$,
		then
		$\ind_x \left(\bigcap_{\lambda \in \Lambda} I_\lambda \right)
		= \bigcap_{\lambda \in \Lambda} \ind_x(I_\lambda)$.
	\end{enumerate}
\end{proposition}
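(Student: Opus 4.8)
The plan is to read both statements directly off the defining formula of Definition (\ref{InducedIdeal}),
\[
\Ind{x}{I} = \Set{b \in \Lcs}{\iprod{n}{bm} \in I, \ \forall n,m \in M_x}.
\]
The crucial structural observation is that whether $b$ belongs to $\ind_x(I)$ is governed entirely by the family of elements $\iprod{n}{bm} \in KG_x$ (with $n,m$ ranging over $M_x$), and the membership condition is simply that this entire family lie inside $I$. Because this is a ``pointwise in $I$'' condition, it respects inclusions and arbitrary intersections of the ideals being induced. Thus both items reduce to unwinding the definition and exchanging the order of two universal quantifiers.

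For item (i), I would argue directly: if $b \in \ind_x(I_1)$, then $\iprod{n}{bm} \in I_1 \subseteq I_2$ for all $n,m \in M_x$, whence $b \in \ind_x(I_2)$.

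For item (ii), the inclusion $\ind_x\!\left(\bigcap_{\lambda \in \Lambda} I_\lambda\right) \subseteq \bigcap_{\lambda \in \Lambda} \ind_x(I_\lambda)$ is immediate from item (i), since $\bigcap_{\lambda \in \Lambda} I_\lambda \subseteq I_\mu$ for each fixed $\mu \in \Lambda$. For the reverse inclusion, I would take $b \in \bigcap_{\lambda \in \Lambda} \ind_x(I_\lambda)$; then for every $\lambda$ and all $n,m \in M_x$ we have $\iprod{n}{bm} \in I_\lambda$, so $\iprod{n}{bm} \in \bigcap_{\lambda \in \Lambda} I_\lambda$, which gives exactly $b \in \ind_x\!\left(\bigcap_{\lambda \in \Lambda} I_\lambda\right)$.

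There is no genuine obstacle here: both assertions follow formally from the definition, the content being merely the harmless swap of the quantifier over $(n,m)$ with the quantifier over $\lambda$. The only point worth recording is that $\ind_x\!\left(\bigcap_{\lambda \in \Lambda} I_\lambda\right)$ is again a two-sided ideal, which is already observed right after Definition (\ref{InducedIdeal}); in particular no appeal to the module-theoretic interpretation of Proposition (\ref{AnnInducedVsIndIdeal}) is required, although one could alternatively phrase (ii) through the annihilator of the module $\bigoplus_{\lambda \in \Lambda} KG_x/I_\lambda$ if a more conceptual proof were preferred.
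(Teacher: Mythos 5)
Your proof is correct and is exactly the argument the paper has in mind: the paper states this proposition as a ``clear fact'' with no written proof, and the intended justification is precisely your unwinding of Definition (\ref{InducedIdeal}) together with the harmless exchange of quantifiers. Nothing is missing.
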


Notice that the task of checking that $\iprod{n}{bm} \in I$ for all $n, m\in M_x$,
as
required by the above definition, may be simplified by considering
$n = \delta_{\germo{s}}$ and
$m = \delta_{\germo{t}}$, for $s, t \in \till_x$, since these generate $M_x$.
So, the next result
is an important tool to use in this situation.

\begin{proposition}\label{ipcriteria}
	Given $b=\sum_{s \in S} f_s \Delta_s \in \Lcs$ and $k,l \in \till_x$,
	we have that
	$$
	\iprod{\delta_{\germo{k}}}{b \delta_{\germo{l}}}
	= \sum_{s \in K_x} f_s \big(\theta_k(x) \big) \delta_{\germo{k^{\ast}sl}},
	$$
	where $K_x$ is the set of all elements $s \in S$ such that $k^{\ast}sl$ lies in $\tilh_x$.
\end{proposition}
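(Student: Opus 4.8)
The plan is to expand both arguments by linearity and then track the Boolean conditions that appear. First I would compute the vector $b\,\delta_{\germo{l}}$ using the left $\Lcs$-action on $M_x$ from Proposition (\ref{LcsAction}): writing $b=\sum_{s\in S} f_s \Delta_s$ gives
$$
b\,\delta_{\germo{l}} = \sum_{s\in S} \bool{sl \in \till_x}\, f_s\big(\theta_{sl}(x)\big)\,\delta_{\germo{sl}}.
$$
Applying $\iprod{\delta_{\germo{k}}}{\cdot}$, using bilinearity, and invoking the defining formula for the form, I would arrive at
$$
\iprod{\delta_{\germo{k}}}{b\,\delta_{\germo{l}}}
= \sum_{s\in S} \bool{sl \in \till_x}\,\bool{k^{\ast}sl \in \tilh_x}\, f_s\big(\theta_{sl}(x)\big)\,\delta_{\germo{k^{\ast}sl}}.
$$

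The core of the argument is to collapse the two Boolean factors into the single condition $k^{\ast}sl \in \tilh_x$ cutting out $K_x$, and to simplify the function evaluation on that set. The key claim is: \emph{if $k^{\ast}sl \in \tilh_x$, then automatically $sl \in \till_x$ and $\theta_{sl}(x) = \theta_k(x)$.} I expect this to be the main obstacle, since it rests on careful bookkeeping of domains for composites under the action. Concretely, $k^{\ast}sl \in \tilh_x$ means $x$ lies in the domain of $\theta_{k^{\ast}sl} = \theta_{k^{\ast}}\circ\theta_{sl}$ and is fixed by it. Unravelling the domain of this composite forces $x \in \dom{(sl)}$, so that $sl \in \till_x$, and also $\theta_{sl}(x) \in \ran{k}$. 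The fixed-point condition then reads $\theta_{k^{\ast}}\big(\theta_{sl}(x)\big) = x$; since $\theta_{k^{\ast}}$ is the inverse of $\theta_k$ on $\ran{k}$ and both $\theta_{sl}(x)$ and $\theta_k(x)$ lie in $\ran{k}$, applying $\theta_k$ yields $\theta_{sl}(x) = \theta_k(x)$.

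With the claim established the rest is immediate. Since $k^{\ast}sl \in \tilh_x$ already implies $sl \in \till_x$, the factor $\bool{sl \in \till_x}$ is redundant whenever $\bool{k^{\ast}sl \in \tilh_x}=1$, so the product of the two Boolean factors equals $\bool{k^{\ast}sl \in \tilh_x}$ and the sum runs exactly over $s \in K_x$. On each surviving term the claim lets me replace $f_s\big(\theta_{sl}(x)\big)$ by $f_s\big(\theta_k(x)\big)$, giving
$$
\iprod{\delta_{\germo{k}}}{b\,\delta_{\germo{l}}}
= \sum_{s\in K_x} f_s\big(\theta_k(x)\big)\,\delta_{\germo{k^{\ast}sl}},
$$
which is the asserted formula.
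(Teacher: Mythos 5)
Your proof is correct and takes essentially the same route as the paper: expand $b\,\delta_{\germo{l}}$ via the module action of (\ref{LcsAction}), apply the bilinear form, and then establish the key claim that $k^{\ast}sl \in \tilh_x$ forces $sl \in \till_x$ and $\theta_{sl}(x)=\theta_k(x)$, which collapses the Boolean factors and rewrites the evaluation point. If anything, your justification that $sl \in \till_x$ (the domain of the composite $\theta_{k^{\ast}}\circ\theta_{sl}$ is contained in $\dom{(sl)}$) is more explicit than the paper's remark that ``if the right side of the last equality is well defined, so is the left side.''
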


\begin{proof}
	By a simple computation, we obtain
	\begin{align*}
	\iprod{\delta_{\germo{k}}}{b \delta_{\germo{l}}}
	& = \sum_{s \in S} \iprod{\delta_{\germo{k}}}{ (f_s \Delta_s) \delta_{\germo{l}}}
	= \sum_{s \in S} \bool{sl \in \till_x} f_s \big(\theta_{sl}(x) \big)
			\iprod{\delta_{\germo{k}}}{\delta_{\germo{sl}} } \\
	& = \sum_{s \in K_x} \bool{sl \in \till_x} f_s \big(\theta_{sl}(x) \big) \delta_{\germo{k^{\ast}sl}}
		= \ldots
	\end{align*}
	Notice that, $k^{\ast}sl \in \tilh_x$ means that $x$ lies in the domains of
	$\theta_{k^{\ast}sl}$ and $\theta_{k^{\ast}sl}(x)=x$. By applying $\theta_k$
	in both sides of the last equality, we obtain $\theta_{sl}(x)=\theta_k(x)$. Nevertheless,
	if the right side of the last equality is well defined, so is the left side, which amounts to say
	that $sl$ lies in $\till_x$.
	Hence, the above equals
	$$
	\ldots
	= \sum_{s \in K_x} f_s \big(\theta_k(x) \big) \delta_{\germo{k^{\ast}sl}},
	$$
	as desired.
\end{proof}

By combining this proposition with the comment that motivated it immediately before, we get
a %very concrete
criteria for membership in $\ind_x(I)$:

\begin{proposition}\label{criteria}
	Given an ideal $I \trianglelefteq KG_x$ and
	$b = \sum_{s \in S} f_s \Delta_s \in \Lcs$, we have that
	$b \in \ind_x(I)$, if and only if,
	$$ \sum_{s \in K_x} f_s \big(\theta_k(x) \big) \delta_{\germo{k^{\ast}sl}} \in I,$$
	for every $k,l \in \till_x$.
\end{proposition}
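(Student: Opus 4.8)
The plan is to unwind Definition~(\ref{InducedIdeal}) and reduce the membership test to the basis of $M_x$. By definition, $b \in \Ind{x}{I}$ holds precisely when $\iprod{n}{bm} \in I$ for all $n,m \in M_x$. Since $M_x$ is the free $K$-vector space on $L_x$, with basis $\Set{\delta_{\germo{s}}}{\germo{s} \in L_x}$, my first move is to argue that it suffices to verify this condition when $n$ and $m$ range over these basis vectors, that is, when $n = \delta_{\germo{k}}$ and $m = \delta_{\germo{l}}$ for $k,l \in \till_x$.

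The justification for this reduction rests on three linearity facts. The pairing $\langle\,\cdot\,,\,\cdot\,\rangle \colon M_x \times M_x \to KG_x$ is $K$-bilinear by construction; the left action $m \mapsto bm$ of the fixed element $b$ is $K$-linear, as $M_x$ is a left $\Lcs$-module; and $I$ is in particular a $K$-subspace of $KG_x$. Writing arbitrary $n = \sum_k a_k \delta_{\germo{k}}$ and $m = \sum_l c_l \delta_{\germo{l}}$ with scalars $a_k, c_l \in K$, bilinearity together with linearity of the action expand $\iprod{n}{bm}$ as the finite $K$-linear combination $\sum_{k,l} a_k c_l \iprod{\delta_{\germo{k}}}{b\delta_{\germo{l}}}$. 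If every term $\iprod{\delta_{\germo{k}}}{b\delta_{\germo{l}}}$ lies in $I$, then so does the whole combination, since $I$ is closed under $K$-linear combinations; this gives one direction, while the converse is immediate by specializing $n$ and $m$ to basis vectors.

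With the reduction in place, the last step is purely to substitute the explicit computation of the pairing. Proposition~(\ref{ipcriteria}) identifies $\iprod{\delta_{\germo{k}}}{b\delta_{\germo{l}}}$ with $\sum_{s \in K_x} f_s\big(\theta_k(x)\big)\delta_{\germo{k^{\ast}sl}}$, so the requirement that $\iprod{\delta_{\germo{k}}}{b\delta_{\germo{l}}} \in I$ for all $k,l \in \till_x$ is verbatim the displayed criterion in the statement. Combining this with the reduction above yields the claimed equivalence.

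I do not expect a genuine obstacle here: the substantive content has already been isolated in Proposition~(\ref{ipcriteria}). The only point requiring any care is confirming that restricting to basis vectors loses no information, and that hinges solely on the bilinearity of the form, the linearity of the module action, and the fact that every ideal of the $K$-algebra $KG_x$ is a $K$-subspace. Once these are noted, the proposition is a direct translation of (\ref{ipcriteria}) through Definition~(\ref{InducedIdeal}).
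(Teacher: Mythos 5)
Your proof is correct and follows the same route as the paper, which presents this proposition as an immediate consequence of Proposition~(\ref{ipcriteria}) together with the preceding remark that it suffices to test the condition $\iprod{n}{bm} \in I$ on the basis vectors $\delta_{\germo{k}}, \delta_{\germo{l}}$ generating $M_x$. Your only addition is to spell out the linearity argument (bilinearity of the form, linearity of the action, and $I$ being a $K$-subspace of $KG_x$) that the paper leaves implicit.
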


We now proceed to introduce another fundamental concept in the induction process.
For each $x \in X$, consider the map $\Gamma_{x}: \Lcs \to KG_x$ given by
\begin{equation}\label{IntroduceGamma}
	\Gamma_{x} \left( \sum_{s \in S} f_s \Delta_s \right)
	= \sum_{s \in \tilh_x} f_s(x) \delta_{\germo{s}}.
\end{equation}
%Again, we shall use $\Gamma$ instead of $\Gamma_{x_0}$ when the context guarantees no
%chance of confusion. We shall see that $\Gamma$ is well defined.

We shall show next that it is indeed a well defined map.

\begin{lemma}
	For every $x \in X$, the map $\Gamma_x$ introduced in
	(\ref{IntroduceGamma}) above is a well defined linear map.
\end{lemma}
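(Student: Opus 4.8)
The plan is to define $\Gamma_x$ first at the level of $\mathcal{L}(\mathcal{B}^{\theta})$. Since $\mathcal{L}(\mathcal{B}^{\theta}) = \bigoplus_{s \in S} B_s$ is a genuine direct sum, every element has a \emph{unique} expression $\sum_{s \in S} f_s \bmdelta_s$, so the formula $\sum_s f_s \bmdelta_s \mapsto \sum_{s \in \tilh_x} f_s(x) \delta_{\germo{s}}$ unambiguously determines a $K$-linear map $\mathcal{L}(\mathcal{B}^{\theta}) \to KG_x$ (for $s \in \tilh_x$ the germ $\germo{s}$ is an honest element of the group $G_x$, so $\delta_{\germo{s}} \in KG_x$). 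Since $\Lcs = \mathcal{L}(\mathcal{B}^{\theta})/\mathcal{J}_{\alpha}$, it then suffices to check that this map vanishes on $\mathcal{J}_{\alpha}$, so that it descends to the quotient.

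By (\ref{RedundanceIdeal}) and (\ref{StandingNotation}), the ideal $\mathcal{J}_{\alpha}$ is spanned by the elements $f\bmdelta_s - f\bmdelta_t$ with $s \leq t$ and $f \in \Lc{\ran{s}}$ (recall $j_{t,s}(f\bmdelta_s) = f\bmdelta_t$). Applying the formula gives
$$
\Gamma_x(f\bmdelta_s - f\bmdelta_t) = \bool{s \in \tilh_x} f(x)\, \delta_{\germo{s}} - \bool{t \in \tilh_x} f(x)\, \delta_{\germo{t}},
$$
which I must show is zero. If $f(x) = 0$ this is immediate; otherwise $x \in \supp(f) \subseteq \ran{s}$, and the goal reduces to the claim that, whenever $x \in \ran{s}$, one has $s \in \tilh_x$ if and only if $t \in \tilh_x$, and moreover $\germo{s} = \germo{t}$ when either holds.

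The forward implication is routine: from $s \in \tilh_x \subseteq \till_x$ and $s \leq t$, Lemma (\ref{hereditaryTill}) yields $t \in \till_x$, $\germo{s} = \germo{t}$, and $\theta_t(x) = \theta_s(x) = x$, hence $t \in \tilh_x$. The reverse implication is the step I expect to be the main obstacle, because membership in $\tilh_x$ is a condition on $\dom{s}$ while the support hypothesis only controls $\ran{s}$. The way around this is to pass to adjoints: the hypothesis $x \in \ran{s} = \dom{s^{\ast}}$ gives $s^{\ast} \in \till_x$, and $s \leq t$ gives $s^{\ast} \leq t^{\ast}$, so Lemma (\ref{hereditaryTill}) applies to the pair $s^{\ast} \leq t^{\ast}$ and produces $\theta_{s^{\ast}}(x) = \theta_{t^{\ast}}(x)$. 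Since $t \in \tilh_x$ forces $\theta_{t^{\ast}}(x) = \theta_{t^{\ast}}(\theta_t(x)) = x$, we obtain $\theta_{s^{\ast}}(x) = x$; as $\theta_{s^{\ast}}$ maps $\ran{s}$ into $\dom{s}$, this places $x$ in $\dom{s}$ and gives $\theta_s(x) = \theta_{ss^{\ast}}(x) = x$, i.e. $s \in \tilh_x$, with $\germo{s} = \germo{t}$ following once more from (\ref{hereditaryTill}).

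With the claim established, the displayed difference vanishes in every case, so $\Gamma_x$ annihilates the spanning set of $\mathcal{J}_{\alpha}$ and hence all of $\mathcal{J}_{\alpha}$. Therefore it factors through the quotient $\mathcal{L}(\mathcal{B}^{\theta})/\mathcal{J}_{\alpha} = \Lcs$, yielding a well-defined $K$-linear map $\Gamma_x \colon \Lcs \to KG_x$, as asserted.
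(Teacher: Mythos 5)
Your proof is correct and takes essentially the same approach as the paper's: both define the map on $\mathcal{L}(\mathcal{B}^{\theta})$ via the direct-sum decomposition, check vanishing on the spanning elements $f\bmdelta_s - f\bmdelta_t$ ($s \leq t$) of $\mathcal{J}_{\alpha}$, and settle the only delicate case (exactly one of $s,t$ in $\tilh_x$) by the same adjoint trick of applying Lemma (\ref{hereditaryTill}) to $s^{\ast} \leq t^{\ast}$ combined with the fact that $\supp(f) \subseteq \ran{s}$. The paper organizes the cases by membership of $s$ and derives $f(x)=0$ in the hard case, whereas you condition on $f(x)$ first and prove an equivalence of memberships; these are contrapositive arrangements of the same argument.
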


\begin{proof}
	In fact, we shall show that the map $\Gamma'_x$, defined by
	$$
	\sum_{s \in S} f_s \bmdelta_s \in \mathcal{L}(\mathcal{B}^{\theta})
	\mapsto
	\sum_{s \in \tilh_x} f_s(x) \delta_{\germo{s}} \in KG_x
	$$
	vanishes on $\mathcal{J}_{\alpha}$.
	
	For the task, let $b=f \bmdelta_s -f \bmdelta_t$ lie in
	$\mathcal{J}_{\alpha}$, with $s \leq t$.
	Therefore, there are two possible scenarios for $s$:
	\begin{itemize}
		\item $s \in \tilh_x$: In this case, by (\ref{hereditaryTill}), $t$ also
		lies in $\tilh_x$ and $\germo{s}=\germo{t}$. Hence, $b$ lies in the kernel
		of $\Gamma'_x$.
		\item $s \notin \tilh_x$: In this case, again we have two distinct scenarios:
		
		If $s \in \till_x$, by (\ref{hereditaryTill}), $t \in \till_x$ and
		$\theta_t(x) =\theta_s(x) \neq x$. Hence, again $b$ lies in the kernel
		of $\Gamma'_x$.
		
		Otherwise, if both $s$ and $t$ does not lie in $\tilh_x$, $b$ lies in
		the kernel $\Gamma'_x$, and, if $s$ does not lie in $\tilh_x$ but
		$t$ does lie, we must have $s^\ast \notin \till_x$. Indeed, otherwise,
		since $s^\ast \leq t^\ast$, by (\ref{hereditaryTill}),
		$x = \theta_{t^{\ast}}(x) = \theta_{s^{\ast}}(x) \in \dom{s}$,
		contradicting the initial assumption. But this means that
		$x \notin \ran{s}$ which contains the support of $f$, and so
		$f(x)=0$. Hence, $b$ lies in the kernel of $\Gamma'_x$.
	\end{itemize}
	Therefore, $\Gamma'_x$ factors through $\mathcal{J}_{\alpha}$,
	giving the desired map $\Gamma_x$.
\end{proof}

The next lemma suggests a close relation between the maps $\Gamma_x$ and the
induction process.

\begin{lemma}\label{relationship}
	Let $k,l \in S$, $p \in \Lc{\dom{k}}$, $q \in \Lc{\ran{l}}$ and
	set
	$$u = p \Delta_{k^{\ast}} \text{ and } v=q \Delta_l.$$
	Then, for every $b \in \Lcs$, one has that
	$$\Gamma_x(ubv)
	=
	\left\{
	\begin{array}{c@{}l}
	p(x)q(\theta_l(x)) \iprod{\delta_{\germo{k}}}{b\delta_{\germo{l}}},
	& \quad \text{if } k,l \in \till_x, \\
	\text{0}, & \quad \text{otherwise.}
	\end{array}
	\right.
	$$
	\end{lemma}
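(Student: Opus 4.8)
The plan is to exploit bilinearity to reduce to a single monomial, perform the multiplication explicitly with the product rule, and then read off $\Gamma_x$ through a case analysis on whether $k,l$ lie in $\till_x$. Since both sides of the asserted identity are linear in $b$, I may assume $b = f\Delta_s$ with $s \in S$ and $f \in \Lc{\ran{s}}$. Using the product formula $f\Delta_s\cdot g\Delta_t = f\balpha_s(g)\Delta_{st}$ established earlier in the subsection, I compute
$$
ubv = (p\Delta_{k^\ast})(f\Delta_s)(q\Delta_l) = p\,\balpha_{k^\ast}(f)\,\balpha_{k^\ast s}(q)\,\Delta_{k^\ast s l}.
$$
Writing $h = p\,\balpha_{k^\ast}(f)\,\balpha_{k^\ast s}(q)$, the definition of $\Gamma_x$ then shows that the left-hand side equals $h(x)\,\delta_{\germo{k^\ast s l}}$ whenever $k^\ast s l \in \tilh_x$, and equals $0$ otherwise.

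Next I dispose of the degenerate cases, which should match the ``otherwise'' branch of the statement. If $k \notin \till_x$, then $x \notin \dom{k}$; since $p \in \Lc{\dom{k}}$ is supported in $\dom{k}$, we get $p(x)=0$, hence $h(x)=0$ and both sides vanish. If instead $l \notin \till_x$, then $x \notin \dom{l}$, and because evaluating $\theta_{k^\ast s l}$ at $x$ requires applying $\theta_l$ first, one has $\dom{(k^\ast s l)} \subseteq \dom{l}$, forcing $k^\ast s l \notin \tilh_x$; so again both sides vanish.

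It remains to handle the case $k,l \in \till_x$, and this is where the real work lies. Comparing with Proposition (\ref{ipcriteria}) applied to the single term $b=f\Delta_s$, the right-hand side is $p(x)\,q(\theta_l(x))\,f(\theta_k(x))\,\delta_{\germo{k^\ast s l}}$ when $k^\ast s l \in \tilh_x$ and is $0$ otherwise; since $\Gamma_x$ annihilates germs outside $\tilh_x$, both sides are automatically zero when $k^\ast s l \notin \tilh_x$, so it suffices to prove $h(x) = p(x)\,f(\theta_k(x))\,q(\theta_l(x))$ under the hypothesis $k^\ast s l \in \tilh_x$. As $x \in \dom{k} = \ran{k^\ast}$, the defining formula for $\balpha$ gives $\balpha_{k^\ast}(f)(x) = f(\theta_k(x))$ directly.

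The delicate factor is $\balpha_{k^\ast s}(q)(x)$. Assuming $k^\ast s l \in \tilh_x$, I note that $x = \theta_{k^\ast s l}(x) \in \ran{k^\ast s l} \subseteq \ran{k^\ast s}$, so the $\balpha$ formula applies and yields $\balpha_{k^\ast s}(q)(x) = q(\theta_{s^\ast k}(x))$; the task is then to identify $\theta_{s^\ast k}(x)$ with $\theta_l(x)$. For this I use that $\theta_{k^\ast s l}(x)=x$ gives $\theta_{s l}(x)=\theta_k(x)$ (apply $\theta_k$ and recall $\theta_{k^\ast}=\theta_k^{-1}$), whence
$$
\theta_{s^\ast k}(x) = \theta_{s^\ast}\big(\theta_k(x)\big) = \theta_{s^\ast}\big(\theta_{sl}(x)\big) = \theta_{s^\ast s}\big(\theta_l(x)\big) = \theta_l(x),
$$
the last equality holding because $\theta_l(x) \in \dom{s}$ (as $\theta_{sl}(x)$ is defined) and $\theta_{s^\ast s}$ is the identity on $\dom{s}$. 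Hence $h(x) = p(x)\,f(\theta_k(x))\,q(\theta_l(x))$, matching the right-hand side and completing the argument. The main obstacle throughout is precisely this domain-tracking: justifying that $x$ lies in the ranges needed to apply the $\balpha$ formulas and verifying the germ identity $\theta_{s^\ast k}(x)=\theta_l(x)$.
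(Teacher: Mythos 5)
Your proof is correct and takes essentially the same route as the paper's: expand $ubv$ via the product formula to get $p\,\balpha_{k^\ast}(f)\,\balpha_{k^\ast s}(q)\,\Delta_{k^\ast s l}$, evaluate $\Gamma_x$, derive the key identity $\theta_{s^\ast k}(x)=\theta_l(x)$ from $k^\ast s l \in \tilh_x$, and identify the result with $\iprod{\delta_{\germo{k}}}{b\delta_{\germo{l}}}$ via Proposition (\ref{ipcriteria}). Your reduction to a monomial and explicit handling of the degenerate cases $k \notin \till_x$, $l \notin \till_x$ are only a cosmetic reorganization of the paper's boolean-value bookkeeping.
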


\begin{proof}
	Writing $b=\sum_{s \in S} f_s \Delta_s$, we have
	$$
	ubv
	= \sum_{s \in S} p \Delta_{k^{\ast}} \cdot f_s \Delta_s \cdot q \Delta_l
	= \sum_{s \in S} p \balpha_{k^{\ast}}(f_s) \Delta_{k^{\ast}s} \cdot q \Delta_l
	= \sum_{s \in S} p \balpha_{k^{\ast}}(f_s) \balpha_{k^{\ast}s}(q) \Delta_{k^{\ast}sl}.
	$$
	Hence, by setting $K_x = \Set{s \in S}{k^{\ast}sl \in \tilh_x}$, we obtain
	$$
	\Gamma_x(ubv)
	=
	\sum_{s \in K_x} p(x) \bool{k \in \till_x} f_s(\theta_k(x))
				\bool{s^{\ast}k \in \till_x} q(\theta_{s^{\ast}k}(x)) \delta_{\germo{k^{\ast}sl}}
				= \ldots
	$$
	Notice that, if $s \in K_x$, then $\theta_{k^{\ast}sl}(x)=x$. By, applying $\theta_{s^{\ast}k}$
	in both sides of the last equality, we obtain
	$\theta_l(x) = \theta_{s^{\ast}k}(x)$.
	Hence, the above equals
	$$
	\ldots = \bool{k,l \in \till_x} p(x) q(\theta_l(x))
				\sum_{s \in K_x} f_s(\theta_k(x)) \delta_{\germo{k^{\ast}sl}}
	= \bool{k,l \in \till_x} p(x) q(\theta_l(x))
				\iprod{\delta_{\germo{k}}}{b \delta_{\germo{l}}},
	$$
	as desired.
\end{proof}

We now spell out an alternative definition of $\ind_x(I)$ in terms of $\Gamma_x$.

\begin{proposition}\label{IndGamma}
	If $I \trianglelefteq KG_x$ is an ideal, then
	$$\ind_x(I) = \Set{b \in \Lcs}{\Gamma_x(ubv) \in I \text{ for all } u,v \in \Lcs}.$$
\end{proposition}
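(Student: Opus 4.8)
The plan is to prove the two set inclusions separately, in each case using Lemma (\ref{relationship}) as the bridge: that lemma expresses $\Gamma_x(ubv)$, for $u = p\Delta_{k^{\ast}}$ and $v = q\Delta_l$, directly in terms of the bilinear form $\iprod{\delta_{\germo{k}}}{b\delta_{\germo{l}}}$ that defines $\ind_x(I)$ in (\ref{InducedIdeal}). Before starting, I would record the reduction that, since $M_x$ has basis $\{\delta_{\germo{s}}\}_{s\in\till_x}$, the form $\iprod{\cdot}{\cdot}$ is bilinear, and $I$ is a $K$-subspace, the condition $b\in\ind_x(I)$ is equivalent to the a priori weaker requirement that $\iprod{\delta_{\germo{k}}}{b\delta_{\germo{l}}}\in I$ for all $k,l\in\till_x$. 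This reduces both directions to statements about these basic matrix coefficients.

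For the inclusion $\ind_x(I)\subseteq\{b:\Gamma_x(ubv)\in I\ \forall u,v\}$, I would take $b\in\ind_x(I)$ and arbitrary $u,v\in\Lcs$, writing $u=\sum_i p_i\Delta_{k_i^{\ast}}$ and $v=\sum_j q_j\Delta_{l_j}$ with coefficients in the appropriate ideals (here one must keep straight that $\Lc{\ran{k_i^{\ast}}}=\Lc{\dom{k_i}}$, matching the hypotheses of the lemma). By linearity of $\Gamma_x$ and bilinearity of the product, $\Gamma_x(ubv)$ expands as a finite sum of terms $\Gamma_x(p_i\Delta_{k_i^{\ast}}\,b\,q_j\Delta_{l_j})$, each of which Lemma (\ref{relationship}) evaluates as either $0$ or the scalar multiple $p_i(x)q_j(\theta_{l_j}(x))\,\iprod{\delta_{\germo{k_i}}}{b\delta_{\germo{l_j}}}$. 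Since $b\in\ind_x(I)$ forces each such inner product into $I$, and $I$ is a subspace, every term and hence the whole sum lies in $I$.

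For the reverse inclusion, I would assume $\Gamma_x(ubv)\in I$ for all $u,v$ and fix $k,l\in\till_x$, the goal being to produce $u,v$ that isolate exactly $\iprod{\delta_{\germo{k}}}{b\delta_{\germo{l}}}$. This is the one genuinely topological point, and the step I expect to be the main obstacle: I need test functions with prescribed values at single points. Since $x\in\dom{k}$ is clopen, I would pick a compact open neighborhood of $x$ inside $\dom{k}$ and let $p\in\Lc{\dom{k}}$ be its indicator, so $p(x)=1$; similarly I would choose $q\in\Lc{\ran{l}}$ with $q(\theta_l(x))=1$, using that $\theta_l(x)\in\ran{l}$. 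Such choices exist precisely because $X$ is locally compact, Hausdorff and totally disconnected with clopen domains. Setting $u=p\Delta_{k^{\ast}}$ and $v=q\Delta_l$, Lemma (\ref{relationship}) collapses to $\Gamma_x(ubv)=\iprod{\delta_{\germo{k}}}{b\delta_{\germo{l}}}$, which lies in $I$ by hypothesis. As $k,l$ were arbitrary, the reduction noted at the outset yields $b\in\ind_x(I)$, completing the proof.
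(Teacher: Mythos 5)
Your proof is correct and takes essentially the same route as the paper's: both directions rest on Lemma (\ref{relationship}), combined with the reduction of membership in $\ind_x(I)$ to the basic coefficients $\iprod{\delta_{\germo{k}}}{b\delta_{\germo{l}}}$ for $k,l \in \till_x$, and with the same choice of test functions $p(x)=1$, $q(\theta_l(x))=1$ to isolate those coefficients. The only difference is cosmetic: you spell out the decomposition of arbitrary $u,v$ into elementary terms $p_i\Delta_{k_i^{\ast}}$, $q_j\Delta_{l_j}$ and the existence of the test functions, steps the paper leaves implicit.
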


\begin{proof}
Notice that, it is enough to prove that, for any $b \in \Lcs$, the following are equivalent:
\begin{enumerate}[(i)]
	\item $\Gamma_x(ubv) \in I$ for all $u,v \in \Lcs$;
	\item $\iprod{\delta_{\germo{k}}}{b \delta_{\germo{l}}} \in I$
	for all $k,l \in \till_x$.
\end{enumerate}
(i) $\Rightarrow$ (ii): Let $k,l \in \till_x$ and choose functions
$p \in \Lc{\dom{k}}$ and
$q \in \Lc{\ran{l}}$ such that $p(x)=1$ and $q(\theta_l(x))=1$. Letting
$u=p \Delta_{k^{\ast}}$ and $v= q \Delta_l$, by Lemma
(\ref{relationship}) we thus have
$$
I \ni \Gamma_x(ubv)
= p(x)q(\theta_l(x)) \iprod{\delta_{\germo{s}}}{b \delta_{\germo{l}}}
= \iprod{\delta_{\germo{s}}}{b \delta_{\germo{l}}}.
$$
(ii) $\Rightarrow$ (i): Conversely, it is enough to prove (i) for
$u=p \Delta_{k^{\ast}}$ and $v=q \Delta_l$, where $k$ and $l$ are
arbitrary elements in $S$.
By Lemma (\ref{relationship}), we have
$$
\Gamma_x(ubv)
= p(x) q(\theta_l(x)) \iprod{\delta_{\germo{k}}}{f \delta_{\germo{l}}} \in I,
$$
if $k$ and $l$ lie in $\till_x$, or
$$
\Gamma_x(ubv) = 0 \in I,
$$
otherwise, thus proving (i) in either case.
\end{proof}

%The above description may be used to dispel any doubt about the fact that
%$\ind_x(I)$ is always
%a two-sided ideal.

\subsection{Admissible Ideals}

In this section we explore the relationship between induced ideals in $\Lcs$ and the ideals
of the isotropy group algebra they came from. In this context,
we introduce the concept of an
admissible ideal. Roughly speaking,
the admissible ideals are the ones which actually play a relevant role in the induction process.

For that task, again $\Gamma_x$ will play a relevant role and we begin by expelling out
an important behavior of $\Gamma_x$.

\begin{proposition}\label{quasihomo}
	Let $t \in \tilh_x$ and $\varphi \in \Lc{\ran{t}}$. Then, setting $a=\varphi \Delta_t$, we have that
	$$\Gamma_x(ab)=\Gamma_x(a)\Gamma_x(b)
	\text{ and } \Gamma_x(ba)=\Gamma_x(b)\Gamma_x(a),$$
	for every $b \in \Lcs$.
\end{proposition}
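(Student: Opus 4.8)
The plan is to verify both identities by direct computation, expanding each side in a chosen representative $b = \sum_{s \in S} f_s \Delta_s$ and matching coefficients term by term; since $\Gamma_x$ is linear and well defined on the quotient (by the preceding lemma), it suffices to work with such a representative. First I record the two basic facts that drive everything. Since $t \in \tilh_x$ we have $x \in \dom{t}$ and $\theta_t(x) = x$, whence also $t^\ast \in \tilh_x$ and $\theta_{t^\ast}(x) = x$; moreover $x = \theta_t(x) \in \ran{t}$, so $\varphi(x)$ is meaningful and $\Gamma_x(a) = \varphi(x)\delta_{\germo{t}}$, the only surviving term in the defining sum for $a = \varphi \Delta_t$.

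The key combinatorial observation is the following: for $t \in \tilh_x$ and any $s \in S$, one has $ts \in \tilh_x$ if and only if $s \in \tilh_x$ (and symmetrically $st \in \tilh_x$ iff $s \in \tilh_x$). The forward direction follows by applying $\theta_{t^\ast}$: if $ts \in \tilh_x$ then $x \in \dom{(ts)}$ forces $\theta_s(x)$ to be defined with $\theta_s(x) \in \dom{t} = X_{t^\ast t}$, and the computation $\theta_s(x) = \theta_{t^\ast t}(\theta_s(x)) = \theta_{t^\ast}(\theta_{ts}(x)) = \theta_{t^\ast}(x) = x$ shows $s \in \tilh_x$. The reverse direction is immediate from $\theta_{ts}(x) = \theta_t(\theta_s(x)) = \theta_t(x) = x$ together with $\theta_s(x) = x \in \dom{t}$.

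With this in hand, I expand $ab = \sum_s \varphi\,\balpha_t(f_s)\,\Delta_{ts}$ using the product formula $f \Delta_s \cdot g \Delta_r = f\,\balpha_s(g)\,\Delta_{sr}$, and apply $\Gamma_x$. By the observation the surviving terms are exactly those with $s \in \tilh_x$, and for such $s$ the coefficient simplifies: writing $\balpha_t(f_s) = \alpha_t(f_s 1_{t^\ast t})$ and $\alpha_t(h) = h \circ \theta_{t^\ast}$, and using $\theta_{t^\ast}(x) = x$ together with $x \in \dom{t} = X_{t^\ast t}$, one gets $\balpha_t(f_s)(x) = f_s(x)$, so the coefficient of $\delta_{\germo{ts}}$ is $\varphi(x) f_s(x)$. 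On the other side, $\Gamma_x(a)\Gamma_x(b) = \varphi(x)\delta_{\germo{t}} \cdot \sum_{s \in \tilh_x} f_s(x)\delta_{\germo{s}}$, and since $\theta_s(x) = x$ the group law of $G_x$ from Proposition (\ref{operation}) gives $\delta_{\germo{t}}\delta_{\germo{s}} = \delta_{\germo{ts}}$; the two expressions coincide. The identity $\Gamma_x(ba) = \Gamma_x(b)\Gamma_x(a)$ then follows by the entirely symmetric computation, expanding $ba = \sum_s f_s\,\balpha_s(\varphi)\,\Delta_{st}$ and using $\balpha_s(\varphi)(x) = \varphi(x)$ for $s \in \tilh_x$.

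The main obstacle is not any single computation but the bookkeeping: one must be sure that the only terms $\Gamma_x$ retains after expanding $ab$ (resp. $ba$) are those indexed by $s \in \tilh_x$ — this is precisely the combinatorial equivalence above — and that the germ identity $\germo{ts} = \germo{t}\,\germo{s}$ used to match with the product in $KG_x$ is legitimate, which it is exactly because $s \in \tilh_x$ forces $\theta_s(x) = x$, so that the multiplication of Proposition (\ref{operation}) applies with $y = \theta_s(x) = x$.
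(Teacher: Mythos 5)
Your proof is correct and takes essentially the same route as the paper's: expand the product via the formula $f\Delta_s \cdot g\Delta_r = f\,\balpha_s(g)\,\Delta_{sr}$, use the equivalence (for $t \in \tilh_x$) that $st \in \tilh_x$ iff $s \in \tilh_x$ (resp. $ts$), evaluate $\balpha$ at $x$ using $\theta_{t^{\ast}}(x)=x$, and match the result against the product $\delta_{\germo{t}}\delta_{\germo{s}} = \delta_{\germo{ts}}$ in $KG_x$. The only differences are cosmetic: the paper computes $\Gamma_x(ba)$ explicitly and dismisses $\Gamma_x(ab)$ as similar while you do the reverse, and you spell out the combinatorial equivalence that the paper merely asserts.
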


\begin{proof}
Let $b=\sum_{s \in S} f_s \Delta_s$ and notice that, $st$ lies in $\tilh_x$
if and only if $s$ lies in $\tilh_x$, since we already have $t \in \tilh_x$.
Then, by a simple computation, we get
\begin{align*}
	\Gamma_x(ba)
	& = \Gamma \left( \sum_{s \in S} f_s \balpha_s(\varphi) \Delta_{st} \right)
	= \sum_{s \in \tilh_x} f_s(x) \restr{\balpha_s(\varphi)}{x} \delta_{\germo{st}} \\
	& = \sum_{s \in \tilh_x} f_s(x) \varphi(x) \delta_{\germo{st}}
	= \left( \sum_{s \in \tilh_x} f_s(x) \delta_{\germo{s}} \right) \varphi(x) \delta_{\germo{t}}
	= \Gamma_x(b) \Gamma_x(a).
\end{align*}
Similarly, we can show $\Gamma_x(ab)=\Gamma_x(a)\Gamma_x(b)$, concluding the proof.
\end{proof}

\begin{proposition}\label{GammaIdeal}
	Let $J \trianglelefteq \Lcs$ be an ideal. Then, $\Gamma_x(J)$ is an
	ideal in $KG_x$. 
\end{proposition}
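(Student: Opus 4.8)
The plan is to deduce the ideal property from the near-multiplicativity of $\Gamma_x$ established in Proposition (\ref{quasihomo}), after first checking that every group element can be lifted to a monomial to which that proposition applies. Since $\Gamma_x$ is linear and $J$ is in particular a $K$-subspace, the image $\Gamma_x(J)$ is automatically a subspace of $KG_x$; so the only thing left to prove is closure under left and right multiplication by $KG_x$. Because the group elements $g \in G_x$ span $KG_x$ and $\Gamma_x(J)$ is a subspace, it will suffice to show that $g\gamma$ and $\gamma g$ lie in $\Gamma_x(J)$ for every $g \in G_x$ and every $\gamma = \Gamma_x(b)$ with $b \in J$.

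So I fix such $g$ and $\gamma$, and write $g = \germo{t}$ for some $t \in \tilh_x$. The key preliminary step is to realize $g$ as $\Gamma_x$ of a monomial of the form $\varphi\Delta_t$. From $t \in \tilh_x$ I have $x \in \dom{t}$ and $\theta_t(x) = x$, and hence $x = \theta_t(x) \in \ran{t}$. Since $\ran{t}$ is clopen and $X$ is locally compact, Hausdorff and totally disconnected, I can choose a compact open set $U$ with $x \in U \subseteq \ran{t}$; then $\varphi := 1_U$ belongs to $\Lc{\ran{t}}$ and satisfies $\varphi(x) = 1$. Setting $a := \varphi\Delta_t$, evaluating the defining formula (\ref{IntroduceGamma}) gives $\Gamma_x(a) = \varphi(x)\delta_{\germo{t}} = g$.

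To finish, I combine the two ingredients. As $J$ is a two-sided ideal and $a \in \Lcs$, both $ab$ and $ba$ lie in $J$, so $\Gamma_x(ab),\Gamma_x(ba) \in \Gamma_x(J)$. On the other hand, since $t \in \tilh_x$ and $\varphi \in \Lc{\ran{t}}$, Proposition (\ref{quasihomo}) applies to $a$ and yields
\begin{equation*}
g\gamma = \Gamma_x(a)\Gamma_x(b) = \Gamma_x(ab)
\qquad\text{and}\qquad
\gamma g = \Gamma_x(b)\Gamma_x(a) = \Gamma_x(ba),
\end{equation*}
so $g\gamma,\gamma g \in \Gamma_x(J)$, as required. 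Spelling this out for all spanning elements $g$ and extending by linearity then shows that $\Gamma_x(J)$ is a two-sided ideal. I expect the one genuinely non-formal point to be the construction of the lift $a = \varphi\Delta_t$, i.e.\ producing $\varphi \in \Lc{\ran{t}}$ with $\varphi(x)=1$; this is exactly where $x \in \ran{t}$ (forced by $t \in \tilh_x$) and the totally disconnected, locally compact structure of $X$ are used. Everything after that is a formal consequence of linearity, the ideal property of $J$, and Proposition (\ref{quasihomo}).
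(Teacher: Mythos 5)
Your proof is correct and follows essentially the same route as the paper: lift a group element $\germo{t}$ to a monomial $\varphi\Delta_t$ with $\varphi(x)=1$, use Proposition (\ref{quasihomo}) together with the ideal property of $J$, and conclude by linearity. The only difference is that you explicitly justify the existence of $\varphi$ (via $x\in\ran{t}$ and the totally disconnected structure of $X$) and treat both left and right multiplication in full, where the paper simply chooses such a $\varphi$ and says ``similarly'' for the other side.
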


\begin{proof}
Let $a \in \Gamma_x(J)$ and $c=\delta_{\germo{t}} \in KG_x$ for some $t \in \tilh_x$.
Then, there exists $b \in J$
such that $\Gamma_x(b)=a$. Notice that, by choosing $\varphi \in \Lc{\ran{t}}$ such that
$\varphi(x)=1$, we have
$$
ac
= a \varphi(x) \delta_{\germo{t}}
= \Gamma_x(b) \Gamma_x(\varphi \Delta_t)
\overset{(\ref{quasihomo})}{=} \Gamma_x(b \cdot \varphi \Delta_t) \in \Gamma_x(J).
$$
By linearity, we deduce that $ac \in \Gamma_x(J)$ for arbitrary $c \in KG_x$ and
similarly, we can show that $ca \in \Gamma_x(J)$.
\end{proof}

We then have the following proposition.

\begin{proposition}\label{youaretheone}
	Let $I$ be an ideal in $KG_x$, and put $I'=\Gamma_x(\ind_x(I))$. Then
	\begin{enumerate}[(i)]
		\item $I'$ is an ideal of $KG_x$;
		\item $I' \subseteq I$;
		\item $\ind_x(I')=\ind_x(I)$.
	\end{enumerate}
\end{proposition}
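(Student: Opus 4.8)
The plan is to establish the three items in order, as each one feeds into the next. For item (i), I would simply invoke Proposition (\ref{GammaIdeal}): since $\ind_x(I)$ is a two-sided ideal of $\Lcs$ (as noted right after Definition (\ref{InducedIdeal})), its image $\Gamma_x(\ind_x(I))$ under $\Gamma_x$ is an ideal of $KG_x$, which is exactly $I'$. This is immediate and requires no further work.

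For item (ii), the goal is to show $\Gamma_x(\ind_x(I)) \subseteq I$. The natural approach is to take an arbitrary $b \in \ind_x(I)$ and show $\Gamma_x(b) \in I$. Here I would exploit the connection between $\Gamma_x$ and the bilinear form established in Lemma (\ref{relationship}). The cleanest route is to relate $\Gamma_x(b)$ directly to an inner product of the form $\iprod{\delta_{\germo{k}}}{b\,\delta_{\germo{l}}}$. Concretely, if $e$ is any idempotent in $\till_x$ (so $\germo{e}$ is the identity of $G_x$), then taking $k = l = e$ in Lemma (\ref{relationship}) with suitable $p,q$ normalized to have value $1$ at the relevant points should yield $\Gamma_x(b)$ itself — or at least its ``diagonal'' piece — as $\iprod{\delta_{\germo{e}}}{b\,\delta_{\germo{e}}}$. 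Since $b \in \ind_x(I)$ means precisely $\iprod{n}{bm} \in I$ for all $n,m \in M_x$ (Definition (\ref{InducedIdeal})), this forces $\Gamma_x(b) \in I$. I expect a short computation comparing the defining formula (\ref{IntroduceGamma}) for $\Gamma_x$ with the inner-product expression from Proposition (\ref{ipcriteria}) to pin down the exact identity needed.

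For item (iii), I want $\ind_x(I') = \ind_x(I)$. One inclusion is easy: since $I' \subseteq I$ by item (ii), monotonicity of induction gives $\ind_x(I') \subseteq \ind_x(I)$. The reverse inclusion $\ind_x(I) \subseteq \ind_x(I')$ is the substantive part. I would take $b \in \ind_x(I)$ and verify the membership criterion for $\ind_x(I')$, namely (via Proposition (\ref{IndGamma})) that $\Gamma_x(ubv) \in I'$ for all $u,v \in \Lcs$. But $I' = \Gamma_x(\ind_x(I))$ by definition, so it suffices to show $ubv \in \ind_x(I)$ whenever $b \in \ind_x(I)$ — and this is immediate because $\ind_x(I)$ is a two-sided ideal. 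Thus $\Gamma_x(ubv) \in \Gamma_x(\ind_x(I)) = I'$, giving $b \in \ind_x(I')$.

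The main obstacle I anticipate is item (ii): correctly identifying $\Gamma_x(b)$ as an inner product $\iprod{\delta_{\germo{k}}}{b\,\delta_{\germo{l}}}$ for an appropriate choice of $k,l \in \till_x$. The formula (\ref{IntroduceGamma}) sums over $s \in \tilh_x$ the terms $f_s(x)\,\delta_{\germo{s}}$, while Proposition (\ref{ipcriteria}) gives $\iprod{\delta_{\germo{k}}}{b\,\delta_{\germo{l}}} = \sum_{s \in K_x} f_s(\theta_k(x))\,\delta_{\germo{k^{\ast}sl}}$; matching these up, taking $k$ and $l$ to be idempotents in $\till_x$ with $\theta_k(x)=x$, should collapse the general formula onto precisely the $\Gamma_x(b)$ expression. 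Once that identification is in hand, items (ii) and (iii) both follow quickly from the two-sided ideal property and the prior propositions.
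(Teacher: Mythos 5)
Your proposal is correct, and for items (i) and (iii) it coincides with the paper's own proof: (i) is an immediate citation of Proposition (\ref{GammaIdeal}), and (iii) combines monotonicity of induction with the observation that $ubv \in \ind_x(I)$ forces $\Gamma_x(ubv) \in \Gamma_x(\ind_x(I)) = I'$, which is word for word the paper's argument.

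Item (ii) is where you genuinely diverge. The paper sandwiches $b$ inside $\Lcs$: it picks an idempotent $e \in \tilh_x$ and $\varphi \in \Lc{X_e}$ with $\varphi(x)=1$, sets $d=\varphi\Delta_e$, notes that $\Gamma_x(d)=\delta_{\germo{e}}$ is the identity of $KG_x$, and computes $a = \Gamma_x(d)\,a\,\Gamma_x(d) = \Gamma_x(dbd) \in I$ using the quasi-homomorphism property (\ref{quasihomo}) together with the characterization (\ref{IndGamma}). You instead sandwich $b$ inside $M_x$, claiming $\Gamma_x(b) = \iprod{\delta_{\germo{e}}}{b\,\delta_{\germo{e}}}$ and invoking the bilinear-form definition (\ref{InducedIdeal}) directly. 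Your claimed identity is in fact exact, so the hedge about ``at least its diagonal piece'' is unnecessary: for any $s \in S$ one has $ese = s(s^{\ast}ese) \leq s$ with $s^{\ast}ese$ idempotent, so Lemma (\ref{hereditaryTill}) shows that $ese \in \tilh_x$ if and only if $s \in \tilh_x$, in which case $\germo{ese}=\germo{s}$; substituting this into the formula of (\ref{ipcriteria}) with $k=l=e$ collapses the sum to $\sum_{s \in \tilh_x} f_s(x)\,\delta_{\germo{s}} = \Gamma_x(b)$. This is the short computation you flagged but did not carry out; with it supplied, your route is complete. What the two routes buy is slightly different: yours is more self-contained, working straight from the definition of $\ind_x(I)$ and bypassing both (\ref{quasihomo}) and (\ref{IndGamma}), whereas the paper's keeps the entire argument inside the $\Gamma_x$-calculus, which is precisely the machinery it reuses in item (iii) and later in (\ref{GammaAdmissible}) and (\ref{IntersectionDescription}).
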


\begin{proof}
	\begin{enumerate}[(i)]
		\item Follows from (\ref{GammaIdeal}).
		\item Given $a \in I'$, let $b \in \ind_x(I)$ such that $\Gamma_x(b)=a$.
		Notice that, if we choose an idempotent
		$e \in \tilh_x$ and $\varphi \in \Lc{X_e}$
		such that $\varphi(x)=1$ and then setting $d=\varphi \Delta_e$ we have
		$$
		a
		= \delta_{\germo{e}} a \delta_{\germo{e}}
		= \Gamma_x(d) \Gamma_x(b) \Gamma_x(d)
		\overset{(\ref{quasihomo})}{=} \Gamma_x(dbd)
		\in I,
		$$
		by (\ref{IndGamma}).
		\item The inclusion $\ind_x(I') \subseteq \ind_x(I)$ follows from (ii).
		On the other hand, if $b \in \ind_x(I)$, then $ubv \in \ind_x(I)$ for all
		$u,v \in \Lcs$.
		Hence,
		$$\Gamma_x(ubv) \in \Gamma_x(\ind_x(I)) = I'.$$
		By (\ref{IndGamma}), we conclude that $b \in \ind_x(I')$, as wanted.
	\end{enumerate}
\end{proof}

Notice that, if $I$ is an ideal of $KG_x$, then $I$ and
$\Gamma_x(\ind_x(I))$ induce the same ideal.
This motivates the following definition, which intends to identify the ideals that play
a relevant role in the induction process.

\begin{definition}\label{admissibleideal}
	An ideal $I \trianglelefteq KG_x$ is said to be \emph{admissible} if $\Gamma_x(\ind_x(I))=I$.
\end{definition}

In this setting, we have.

\begin{corollary}\label{AdmissibleInjectivity}
	For every ideal $I \trianglelefteq KG_x$, there exists an unique admissible ideal $I' \subseteq I$,
	such that $\ind_x(I')=\ind_x(I)$.
\end{corollary}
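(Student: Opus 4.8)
The plan is to let Proposition (\ref{youaretheone}) do essentially all of the work. Given an ideal $I \trianglelefteq KG_x$, I would define the candidate $I' := \Gamma_x(\ind_x(I))$ and then verify that it has all three of the required properties: it is an ideal contained in $I$, it induces the same ideal as $I$, and it is admissible.

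The containment and the equality of induced ideals come for free. Parts (ii) and (iii) of Proposition (\ref{youaretheone}) give exactly $I' \subseteq I$ and $\ind_x(I') = \ind_x(I)$, while part (i) tells us that $I'$ is an ideal of $KG_x$. So the only genuinely new point to check is that $I'$ is \emph{admissible} in the sense of Definition (\ref{admissibleideal}), i.e. that $\Gamma_x(\ind_x(I')) = I'$. But this is immediate from what we already have: using $\ind_x(I') = \ind_x(I)$ together with the definition of $I'$,
\[
\Gamma_x(\ind_x(I')) = \Gamma_x(\ind_x(I)) = I',
\]
so $I'$ is admissible, which establishes existence.

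For uniqueness, I would suppose $I''$ is any admissible ideal satisfying $I'' \subseteq I$ and $\ind_x(I'') = \ind_x(I)$. Admissibility of $I''$ means precisely that $\Gamma_x(\ind_x(I'')) = I''$, and substituting $\ind_x(I'') = \ind_x(I)$ yields
\[
I'' = \Gamma_x(\ind_x(I'')) = \Gamma_x(\ind_x(I)) = I',
\]
whence $I'' = I'$. (In fact the hypothesis $I'' \subseteq I$ is not even needed for this step: admissibility together with the equality of induced ideals already pins down $I''$.)

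I do not expect any real obstacle here, since all the substantive content is already contained in Proposition (\ref{youaretheone}); the corollary is just the observation that $I \mapsto \Gamma_x(\ind_x(I))$ is an idempotent retraction onto the admissible ideals that preserves the induced ideal. The only thing to be careful about is to keep the roles of $\Gamma_x$ and $\ind_x$ straight and, when proving uniqueness, to invoke admissibility in the correct direction.
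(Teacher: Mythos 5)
Your proposal is correct and follows essentially the same route as the paper: both set $I'=\Gamma_x(\ind_x(I))$, deduce containment and equality of induced ideals from Proposition (\ref{youaretheone}), verify admissibility via $\Gamma_x(\ind_x(I'))=\Gamma_x(\ind_x(I))=I'$, and prove uniqueness by applying admissibility of any competitor $I''$ together with $\ind_x(I'')=\ind_x(I)$. Your parenthetical observation that the hypothesis $I''\subseteq I$ is not needed for uniqueness is also consistent with the paper, whose uniqueness step likewise uses only admissibility and equality of the induced ideals.
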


\begin{proof}
	Set $I'=\Gamma_x(\ind_x(I))$. By (\ref{youaretheone}.iii), we have $\ind_x(I')=\ind_x(I)$.
	Moreover,
	$$\Gamma_x(\ind_x(I')) = \Gamma_x(\ind_x(I)) = I',$$
	so $I'$ is admissible. Finally, if $I'$ and $I''$ are two admissible
	ideals inducing the same ideal
	of $\Lcs$, then
	$$I' = \Gamma_x(\ind_x(I')) = \Gamma_x(\ind_x(I''))=I''.$$
\end{proof}

We already have two examples of induced ideals.

\begin{proposition}
	The trivial ideals of $KG_x$ are admissible.
\end{proposition}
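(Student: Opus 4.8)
The plan is to handle the two trivial ideals $\{0\}$ and $KG_x$ separately, and in both cases the work turns out to be minimal once one recalls the containment $\Gamma_x(\ind_x(I)) \subseteq I$ established in (\ref{youaretheone}.ii). Throughout I write $I' = \Gamma_x(\ind_x(I))$, so that admissibility of $I$ in the sense of (\ref{admissibleideal}) amounts exactly to the reverse inclusion $I \subseteq I'$; the forward inclusion is for free.

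For the zero ideal I would simply invoke (\ref{youaretheone}.ii), which gives $\Gamma_x(\ind_x(\{0\})) \subseteq \{0\}$. Since $\Gamma_x(\ind_x(\{0\}))$ is an ideal of $KG_x$ (hence contains $0$), the two sides coincide, so $\{0\}$ is admissible with no further computation.

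For the full ideal $I = KG_x$, the first observation is that $\ind_x(KG_x) = \Lcs$. This is immediate from the defining formula in (\ref{InducedIdeal}): the bilinear form $\iprod{\cdot}{\cdot}$ takes all of its values inside $KG_x$, so the requirement $\iprod{n}{bm} \in KG_x$ for all $n,m \in M_x$ is met by every $b \in \Lcs$. Consequently the claim reduces to showing that $\Gamma_x$ is surjective onto $KG_x$. Since the elements $\delta_{\germo{t}}$ with $t \in \tilh_x$ span $KG_x$, it suffices to exhibit, for each such $t$, a preimage under $\Gamma_x$. Because $t \in \tilh_x$ we have $x \in \dom{t} = \ran{t}$, and as $X$ is locally compact, Hausdorff and totally disconnected I can choose $f \in \Lc{\ran{t}}$ with $f(x) = 1$, namely the characteristic function of a compact open neighborhood of $x$ contained in $\ran{t}$. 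The defining formula (\ref{IntroduceGamma}) then yields $\Gamma_x(f \Delta_t) = f(x)\,\delta_{\germo{t}} = \delta_{\germo{t}}$, so $\Gamma_x$ is onto and $\Gamma_x(\ind_x(KG_x)) = \Gamma_x(\Lcs) = KG_x$.

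I do not anticipate any genuine obstacle. The only step deserving a moment's care is the surjectivity of $\Gamma_x$, and that rests entirely on the existence of locally constant, compactly supported functions taking a prescribed value at $x$, which the standing hypotheses on $X$ guarantee. The general containment $\Gamma_x(\ind_x(I)) \subseteq I$ then disposes of everything else.
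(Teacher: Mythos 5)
Your proof is correct and is essentially the paper's own argument: the zero ideal is dispatched by (\ref{youaretheone}.ii), and $KG_x$ by the chain $\Gamma_x(\ind_x(KG_x)) = \Gamma_x(\Lcs) = KG_x$ --- the paper simply asserts these two equalities, while you verify them (the membership condition defining $\ind_x(KG_x)$ is vacuous, and $\Gamma_x$ is surjective via characteristic functions of compact open neighborhoods). One small inaccuracy: for $t \in \tilh_x$ you write $x \in \dom{t} = \ran{t}$, but $\dom{t}$ and $\ran{t}$ need not coincide as sets; the fact you actually use is just $x \in \ran{t}$, which holds because $x = \theta_t(x) \in \ran{t}$.
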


\begin{proof}
	Notice that
	$$
	\left\{ 0 \right\}
	\subseteq \Gamma_x(\ind_x( \left\{ 0 \right\})) 
	\overset{(\ref{youaretheone}.ii)}{\subseteq}
	\left\{ 0 \right\}
	$$
	so $\left\{ 0 \right\}$ is admissible.
	On the other hand, we have
	$$\Gamma_x(\ind_x(KG_x)) = \Gamma_x(\Lcs) = KG_x,$$
	so $KG_x$ is admissible.
\end{proof}

Once we have studied the relationship of $I$ and $\Gamma_x(\ind_x(I))$,
in the case $I$ is
an ideal of $KG_x$, we may ask ourselves about the relationship of $J$ and $\ind_x(\Gamma_x(J))$,
in the case $J$ is an ideal in $\Lcs$.

\begin{proposition}\label{largestideal}\hspace{1cm}
	\begin{enumerate}[(i)]
		\item If $J \trianglelefteq \Lcs$, then $J \subseteq \ind_x(\Gamma_x(J))$.
		\item If $I \trianglelefteq KG_x$, then $\ind_x(I)$ is the largest
		among the ideals
		$J \trianglelefteq \Lcs$ satisfying $\Gamma_x(J) \subseteq I$.
	\end{enumerate}
\end{proposition}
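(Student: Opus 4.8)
The plan is to prove both items using the characterization of $\ind_x(I)$ furnished by Proposition (\ref{IndGamma}), together with the fact from Proposition (\ref{GammaIdeal}) that $\Gamma_x$ carries ideals of $\Lcs$ to ideals of $KG_x$. Neither item requires any new computation; they are formal consequences of the machinery already assembled.

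For item (i), I would first invoke Proposition (\ref{GammaIdeal}) to guarantee that $\Gamma_x(J)$ is an ideal of $KG_x$, so that $\ind_x(\Gamma_x(J))$ is even defined. Then, fixing $b \in J$, I would apply the membership criterion of (\ref{IndGamma}): it suffices to check that $\Gamma_x(ubv) \in \Gamma_x(J)$ for all $u,v \in \Lcs$. But since $J$ is a two-sided ideal and $b \in J$, the element $ubv$ lies in $J$, whence $\Gamma_x(ubv) \in \Gamma_x(J)$ immediately. This gives $b \in \ind_x(\Gamma_x(J))$, and since $b$ was arbitrary, $J \subseteq \ind_x(\Gamma_x(J))$.

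For item (ii), I would argue in two steps. First, $\ind_x(I)$ is itself a competitor in the family, since Proposition (\ref{youaretheone}.ii) yields exactly $\Gamma_x(\ind_x(I)) \subseteq I$. Second, to see that it is the largest such, let $J \trianglelefteq \Lcs$ be any ideal with $\Gamma_x(J) \subseteq I$. Applying item (i) gives $J \subseteq \ind_x(\Gamma_x(J))$, and then the monotonicity of the induction process established earlier (that $\Gamma_x(J) \subseteq I$ forces $\ind_x(\Gamma_x(J)) \subseteq \ind_x(I)$) yields $J \subseteq \ind_x(I)$. Combining the two steps shows that $\ind_x(I)$ lies in the family and dominates every member of it, which is precisely the asserted maximality.

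I do not expect a genuine obstacle here, as the argument is short and entirely formal. The only points demanding slight care are keeping the direction of the inclusions straight in (ii)---arguing the membership of $\ind_x(I)$ in the family and its maximality as two separate claims---and confirming at the outset of (i) that $\Gamma_x(J)$ is genuinely an ideal, so that induction from it is meaningful; this last point is exactly what (\ref{GammaIdeal}) supplies.
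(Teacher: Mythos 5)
Your proof is correct and follows essentially the same route as the paper: item (i) via the membership criterion of (\ref{IndGamma}) applied to $ubv \in J$, and item (ii) via (\ref{youaretheone}.ii) together with the chain $J \subseteq \ind_x(\Gamma_x(J)) \subseteq \ind_x(I)$. Your extra care in citing (\ref{GammaIdeal}) so that $\ind_x(\Gamma_x(J))$ is even defined, and in naming the monotonicity of $\ind_x$ explicitly, only makes explicit what the paper leaves implicit.
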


\begin{proof}\hspace{1cm}
	\begin{enumerate}[(i)]
		\item If $b \in J$, then for every $u,v \in \Lcs$, $ubv \in J$ and so
		$$\Gamma_x(ubv) \in \Gamma_x(J),$$
		from where we deduce, by (\ref{IndGamma}), that $b \in \ind_x(\Gamma_x(J))$.
		\item By (\ref{youaretheone}.ii), $\Gamma_x(\ind_x(I)) \subseteq I$. Moreover,
		if $J \trianglelefteq \Lcs$ satisfies $\Gamma_x(J) \subseteq I$, then
		$$J \subseteq \ind_x(\Gamma_x(J)) \subseteq \ind_x(I).$$
	\end{enumerate}
\end{proof}

Finally, $\Gamma_x$ always leads to admissible ideals.

\begin{proposition}\label{GammaAdmissible}
	For any ideal $J \trianglelefteq \Lcs$, $\Gamma_x(J)$ is an admissible
	ideal of $KG_x$.
\end{proposition}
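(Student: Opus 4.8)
The plan is to reduce the statement to an equality of ideals in $KG_x$ that is already packaged by earlier results. Writing $I := \Gamma_x(J)$, the first step is to observe that $I$ is genuinely an ideal of $KG_x$, which is exactly the content of Proposition (\ref{GammaIdeal}). This matters because the notion of admissibility in Definition (\ref{admissibleideal}) is phrased for ideals of $KG_x$, so only once we know $I$ is an ideal does the goal make sense: we must verify that $\Gamma_x(\ind_x(I)) = I$, i.e. that $\Gamma_x(\ind_x(\Gamma_x(J))) = \Gamma_x(J)$.

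The strategy for this equality is a two-sided sandwich, proving both inclusions separately from two different earlier propositions. For the inclusion $I \subseteq \Gamma_x(\ind_x(I))$, I would start from Proposition (\ref{largestideal}.i), which gives $J \subseteq \ind_x(\Gamma_x(J)) = \ind_x(I)$, and then apply $\Gamma_x$ (using that $\Gamma_x$ is a monotone, i.e. inclusion-preserving, linear map) to obtain
$$
I = \Gamma_x(J) \subseteq \Gamma_x(\ind_x(I)).
$$
For the reverse inclusion $\Gamma_x(\ind_x(I)) \subseteq I$, I would simply invoke Proposition (\ref{youaretheone}.ii) applied to the ideal $I$ itself: with $I' := \Gamma_x(\ind_x(I))$, that proposition asserts exactly $I' \subseteq I$. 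Combining the two inclusions yields $\Gamma_x(\ind_x(I)) = I$, which is precisely the admissibility of $I = \Gamma_x(J)$.

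I do not expect a genuine obstacle here, since all the analytic work has been done in the construction of $\Gamma_x$ and in Propositions (\ref{youaretheone}) and (\ref{largestideal}); the present statement is a formal consequence obtained by assembling them. The one point requiring attention is the order of dependence: (\ref{youaretheone}.ii) is stated for an arbitrary ideal of $KG_x$, so it is legitimate to feed it the ideal $I = \Gamma_x(J)$ only after Proposition (\ref{GammaIdeal}) has certified that $\Gamma_x(J)$ is an ideal. Once that is in place, the two inclusions close immediately, and the proof is complete.
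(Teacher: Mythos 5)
Your proof is correct and takes essentially the same route as the paper: both arguments sandwich $\Gamma_x(J)$ by combining $J \subseteq \ind_x(\Gamma_x(J))$ (then applying $\Gamma_x$, which preserves inclusions) with $\Gamma_x(\ind_x(\Gamma_x(J))) \subseteq \Gamma_x(J)$ from (\ref{youaretheone}.ii). In fact your attribution of the first inclusion is more accurate than the paper's own: the paper cites (\ref{youaretheone}.i) for $J \subseteq \ind_x(\Gamma_x(J))$, which is a slip for (\ref{largestideal}.i), the result you correctly invoke, and your explicit appeal to (\ref{GammaIdeal}) to legitimize feeding $\Gamma_x(J)$ into (\ref{youaretheone}) is a point the paper leaves implicit.
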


\begin{proof}
	By (\ref{youaretheone}.i), $J \subseteq \ind_x(\Gamma_x(J))$. Hence,
	$$
	\Gamma_x(J)
	\subseteq \Gamma_x(\ind_x(\Gamma_x(J)))
	\overset{(\ref{youaretheone}.ii)}{\subseteq} \Gamma_x(J).
	$$
	Therefore, $\Gamma_x(J)$ is admissible.
\end{proof}

\markboth{}{Representation}
\section{Effros-Hahn like Theorem}

In this section we shall prove that any ideal
(always meaning two-sided ideal)
of $\Lcs$ is the intersection of ideals induced from isotropy subgroups.
Our methods will largely rely on representation theory, as we shall see.

\subsection{Representations}
\label{representation}

\textbf{From this point on, we will fix an arbitrary ideal
$J \trianglelefteq \Lcs$ which, in view of
(\ref{nondegrep}) and (\ref{localunits}), we may assume it is
the kernel of a likewise fixed non-degenerate representation
$$\pi: \Lcs \to L(V).$$}

By (\ref{disintegration0}), we may disintegrate it to
a covariant representation $(\pi_0,\sigma)$ of
$(\theta, S, X)$ such that
$$\pi(f \Delta_s) = \pi_0 (f) \sigma_s.$$

By the comment immediately after Proposition (\ref{LcImmerse}),
we may identify $\Lc{X}$ as a subalgebra of $\Lcs$ and, in this fashion,
we can also interpret $\pi_0$ as the restriction of $\pi$ to $\Lc{X}$.
Hence, by an abuse of notation,
we shall also use $\pi$ to denote $\pi_0$ from now on.
The context should be enough to
distinguish between the initial representation $\pi$ of $\Lcs$
and the representation $\pi$ of $\Lc{X}$ composing the covariant
representation $(\pi,\sigma)$ resulted from the disintegration of $\pi$. 

Notice that the definition of induced ideals requires that a point of $X$ to be chosen
in advance, so we must begin to see our representation $\pi$ from the point of view of a chosen
point in $X$, a process which will eventually lead to a \emph{discretization} of $\pi$.

For each $x \in X$, let
$$I_x=\Set{f \in \Lc{X}}{f(x)=0}$$
which is clearly an ideal in $\Lc{X}$. Consequently,
$$Z_x := \Span\left\{ \pi(I_x)V \right\}$$
is invariant under $\Lc{X}$, so there is a well defined representation $\pi_x$ of $\Lc{X}$ on
$V_x := V/Z_x$ making the following diagram
\[
\begin{tikzcd}
V \arrow{r}{\pi(f)} \arrow[swap]{d}{q_x} & V \arrow{d}{q_x} \\
V_x \arrow[swap]{r}{\pi_x(f)} & V_x
\end{tikzcd}
\]
to commute for every $f \in \Lc{X}$.

The next proposition is an indication that the localization process is bearing fruits.

\begin{proposition}\label{eigenvalue}
	Let $x \in X$ and $f \in \Lc{X}$. Then, for every $\eta \in V_x$, we have
	$$\pi_x(f) \eta = f(x) \eta.$$
\end{proposition}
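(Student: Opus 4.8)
The plan is to reduce the claim to showing that, for every $\xi \in V$, the vector $\pi(f)\xi - f(x)\xi$ lies in $Z_x$. Indeed, writing a general $\eta \in V_x$ as $\eta = q_x(\xi)$ and using the defining relation $q_x \circ \pi(f) = \pi_x(f) \circ q_x$ of the commuting diagram, I get $\pi_x(f)\eta = q_x(\pi(f)\xi)$, whereas $f(x)\eta = q_x(f(x)\xi)$; so the two agree precisely when $\pi(f)\xi - f(x)\xi \in \ker q_x = Z_x$.

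The main obstacle is that $\Lc{X}$ is non-unital (unless $X$ is compact), so one cannot simply subtract $f(x)$ times a global unit. This is circumvented by combining local units with non-degeneracy. Since $\pi$ --- that is, the representation of $\Lc{X}$ coming from the disintegration, which is non-degenerate by Definition (\ref{covrep}) --- is non-degenerate, I would first write $\xi = \sum_{i=1}^{n} \pi(g_i)\zeta_i$ with $g_i \in \Lc{X}$ and $\zeta_i \in V$. Setting $C = \bigcup_{i} \supp(g_i)$, which is compact open because $X$ is totally disconnected, the characteristic function $e = 1_C$ belongs to $\Lc{X}$, is idempotent, and satisfies $e g_i = g_i$ for all $i$; hence $\pi(e)\xi = \sum_{i} \pi(e g_i)\zeta_i = \xi$.

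With this local unit in hand, I would compute
\[
\pi(f)\xi - f(x)\xi
= \pi(f)\pi(e)\xi - f(x)\pi(e)\xi
= \pi\big((f - f(x))\,e\big)\xi.
\]
The function $h := (f - f(x))\,e = f\,1_C - f(x)\,1_C$ lies in $\Lc{X}$ and satisfies $h(x) = (f(x)-f(x))\,1_C(x) = 0$, so $h \in I_x$. Therefore $\pi(f)\xi - f(x)\xi = \pi(h)\xi \in \pi(I_x)V \subseteq Z_x$, which is exactly what is needed. Applying $q_x$ then yields $\pi_x(f)\eta = f(x)\eta$, completing the argument.
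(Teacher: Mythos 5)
Your proof is correct and follows essentially the same route as the paper: both exploit non-degeneracy of $\pi$ to produce a compact-open cutoff $1_C$, observe that $f\,1_C - f(x)\,1_C$ lies in $I_x$, and conclude modulo $Z_x$. The only cosmetic difference is that you build a local unit $e$ for the whole vector $\xi$ (so you never need $x \in C$), whereas the paper reduces to spanning vectors $q_x(\pi(\varphi)\xi)$ and cuts off $\varphi$ with a set containing $\supp(\varphi)\cup\{x\}$; both variants are equally valid.
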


\begin{proof}
	Since $\pi$ is non-degenerate, it is enough to verify for $\eta = q_x(\pi(\varphi)\xi)$.
	Let $C$ be a compact open set containing $\supp(\varphi) \cup \left\{ x \right\}$ and notice that
	$1_C \varphi = \varphi$. Furthermore, $f-f(x)1_C$ lies in $I_x$ and hence
	$$
	f \varphi
	= \big(f - f(x)1_C + f(x)1_C \big)\varphi
	= (f - f(x)1_C)\varphi + f(x)\varphi
	\overset{\Mod I_x}{\equiv} f(x)\varphi.
	$$
	We thus obtain
	$$ \pi(f\varphi) \xi
	\overset{\Mod Z_x}{\equiv} \pi(f(x)\varphi) \xi
	= f(x) \pi(\varphi) \xi.
	$$
	Therefore,
	\begin{align*}
	\pi_x(f)\eta & = \pi_x(f)q_x(\pi(\varphi)\xi) = q_x(\pi(f)\pi(\varphi) \xi)
	= q_x(\pi(f \varphi)\xi) \\
	& = q_x(f(x)\pi(\varphi)\xi) = f(x)q_x(\pi(\varphi)\xi) = f(x) \eta,
	\end{align*}
	concluding the proof.
\end{proof}

Combining the definition of $\pi_x$ with the result above, we get the following useful formula
\begin{equation}\label{joineigenvalue}
	q_x(\pi(f)\xi) = \pi_x(f)q_x(\xi) = f(x) q_x(\xi)
\end{equation}
for all $x \in X$, $f \in \Lc{X}$ and $\xi \in V$.

Now, we shall work with the homomorphism $\sigma$ and, for a moment,
let the maps $\pi_x$ for aside.

\begin{proposition}
	If $x \in \dom{s}$, then $\sigma_s(Z_x) \subseteq Z_{\theta_s(x)}$.
	Furthermore, there exists a linear mapping
	$$\mu_s^x: V_x \to V_{\theta_s(x)},$$
	such that, for all $\xi \in V$
	$$\mu_s^x(q_x(\xi)) = q_{\theta_s(x)}(\sigma_s(\xi)).$$
\end{proposition}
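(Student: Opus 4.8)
The plan is to reduce the containment $\sigma_s(Z_x) \subseteq Z_{\theta_s(x)}$ to a statement about the spanning vectors of $Z_x$ and then exploit the commutation relation between $\sigma$ and $\pi$ supplied by Lemma (\ref{helpful2}). Since $Z_x$ is spanned by the vectors $\pi(f)\xi$ with $f \in I_x$ and $\xi \in V$, by linearity it suffices to show that $\sigma_s(\pi(f)\xi) \in Z_{\theta_s(x)}$ whenever $f(x)=0$ and $\xi \in V$. Applying Lemma (\ref{helpful2}) I would rewrite
$$\sigma_s \pi(f)\xi = \pi(\balpha_s(f))\sigma_s(\xi),$$
so the whole problem reduces to showing that $\balpha_s(f)$ lies in $I_{\theta_s(x)}$, that is, that $\balpha_s(f)$ vanishes at the point $y:=\theta_s(x)$.

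The key computation, and the step I expect to carry the real content, is the evaluation $\balpha_s(f)(y)=f(x)$. To see this, recall that $\balpha_s(f)=\alpha_s(f\,1_{s^\ast s})$ and that $\alpha_s(h)=h\circ\theta_{s^\ast}$ on $\Lc{\dom{s}}$. Since $x\in\dom{s}$ we have $y=\theta_s(x)\in\ran{s}$; moreover, idempotents of $S$ act as partial identities, so $\theta_{s^\ast s}$ is the identity on its domain $\dom{s}=X_{s^\ast s}$, whence $\theta_{s^\ast}(y)=\theta_{s^\ast}(\theta_s(x))=\theta_{s^\ast s}(x)=x$. Therefore
$$\balpha_s(f)(y)=(f\,1_{s^\ast s})(\theta_{s^\ast}(y))=(f\,1_{s^\ast s})(x)=f(x)=0,$$
using that $1_{s^\ast s}(x)=1$ because $x\in X_{s^\ast s}$. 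This shows $\balpha_s(f)\in I_{\theta_s(x)}$, hence $\pi(\balpha_s(f))\sigma_s(\xi)\in Z_{\theta_s(x)}$, and establishes $\sigma_s(Z_x)\subseteq Z_{\theta_s(x)}$.

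With the containment in hand, the construction of $\mu_s^x$ is a routine quotient argument, so the genuine obstacle is really the computation above. The composite linear map $q_{\theta_s(x)}\circ\sigma_s:V\to V_{\theta_s(x)}$ satisfies $Z_x\subseteq\ker(q_{\theta_s(x)}\circ\sigma_s)$, precisely because $\sigma_s(Z_x)\subseteq Z_{\theta_s(x)}=\ker q_{\theta_s(x)}$. Hence it factors uniquely through the quotient map $q_x:V\to V_x$, yielding a well-defined linear map $\mu_s^x:V_x\to V_{\theta_s(x)}$ with $\mu_s^x(q_x(\xi))=q_{\theta_s(x)}(\sigma_s(\xi))$ for all $\xi\in V$. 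Linearity is immediate since every map in sight is linear, and well-definedness is exactly the containment proved in the previous step.
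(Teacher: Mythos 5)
Your proposal is correct and follows essentially the same route as the paper: reduce to spanning vectors $\pi(f)\xi$ with $f \in I_x$, apply Lemma (\ref{helpful2}) to rewrite $\sigma_s\pi(f)$ as $\pi(\balpha_s(f))\sigma_s$, and check that $\balpha_s(f)$ vanishes at $\theta_s(x)$, after which $\mu_s^x$ is obtained by factoring through the quotient. Your write-up is merely more explicit than the paper's on two points it leaves implicit, namely the evaluation $\balpha_s(f)(\theta_s(x)) = f(x)$ via $1_{s^\ast s}$ and the standard quotient argument for the second assertion.
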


\begin{proof}
	Let $\xi \in Z_x$ be a vector of the form $\xi = \pi(\varphi) \eta$,
	where $\varphi \in I_x$ and $\eta \in V$. In such case,
	$$
	\sigma_s(\xi)
	= \sigma_s \pi(\varphi) \eta
	\overset{(\ref{helpful2})}{=} \pi(\bar{\alpha}_s(\varphi)) \sigma_s \eta.
	$$
	Noticing that
	$$
	\restr{\balpha_s(\varphi)}{\theta_s(x)}
	= \varphi(\theta_{s^{\ast}}(\theta_s(x)))
	= \varphi(x)=0,
	$$
	we see that $\balpha_s(\varphi)$ lies in $I_{\theta_s(x)}$. Hence,
	$\sigma_s(\xi)$ lies in $Z_{\theta_s(x)}$. The result then follows
	by linearity and the second part is an immediate consequence of the first.
\end{proof}

We shall explore some properties of the maps $\mu_s^x$.

\begin{proposition}\label{mueident}
	Let $e \in E(S)$ and $x \in X_e$, then $\mu_e^x$ is the identity map in $V_x$.
\end{proposition}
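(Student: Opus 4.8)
The plan is to unwind the definition of $\mu_e^x$ and reduce the claim to a single identity about the quotient map $q_x$. Since $e$ is idempotent, the map $\theta_e$ is the identity on its domain $X_e$, so $\theta_e(x)=x$; consequently $\mu_e^x$ is genuinely an endomorphism of $V_x$, and its defining relation reads $\mu_e^x(q_x(\xi)) = q_x(\sigma_e(\xi))$ for every $\xi \in V$. As $q_x$ is surjective, it therefore suffices to prove that $q_x(\sigma_e(\xi)) = q_x(\xi)$ for all $\xi \in V$.

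The key device is to produce a function that is harmless for $q_x$ yet absorbs $\sigma_e$. Since $X$ is totally disconnected and $X_e$ is clopen and contains $x$, I would pick a compact open set $C$ with $x \in C \subseteq X_e$ and set $\varphi = 1_C \in \Lc{X_e}$, so that $\varphi(x)=1$. Two facts about $\varphi$ then drive the computation: first, by formula (\ref{joineigenvalue}) together with $\varphi(x)=1$, multiplying any vector by $\pi(\varphi)$ leaves its image under $q_x$ unchanged; second, because $\varphi \in \Lc{X_e}$, Lemma (\ref{helpful}) yields $\pi(\varphi)\sigma_e = \pi(\varphi)$.

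Combining these, for an arbitrary $\xi \in V$ I would compute
\[
\mu_e^x(q_x(\xi))
= q_x(\sigma_e(\xi))
\overset{(\ref{joineigenvalue})}{=} q_x(\pi(\varphi)\sigma_e(\xi))
\overset{(\ref{helpful})}{=} q_x(\pi(\varphi)\xi)
\overset{(\ref{joineigenvalue})}{=} q_x(\xi),
\]
where the first application of (\ref{joineigenvalue}) is to the vector $\sigma_e(\xi)$ and the last to $\xi$, both using $\varphi(x)=1$. Since every element of $V_x$ is of the form $q_x(\xi)$, this shows $\mu_e^x$ is the identity. There is no real obstacle here: the only points deserving a word of care are the observation that $\theta_e(x)=x$, which is what makes the target space equal to $V_x$, and the existence of the localizing function $\varphi$, which is precisely where total disconnectedness of $X$ is used.
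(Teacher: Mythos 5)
Your proof is correct and follows essentially the same route as the paper: choose $\varphi \in \Lc{X_e}$ with $\varphi(x)=1$, use (\ref{joineigenvalue}) to insert $\pi(\varphi)$ without changing the image under $q_x$, absorb $\sigma_e$ via Lemma (\ref{helpful}), and apply (\ref{joineigenvalue}) once more. The only cosmetic difference is that you spell out the existence of $\varphi$ as a characteristic function of a compact open neighborhood and explicitly note $\theta_e(x)=x$, both of which the paper leaves implicit.
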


\begin{proof}
Choose $\varphi \in \Lc{X_e}$ such
that $\varphi(x)=1$. Then,
\begin{equation}\label{GonnaBeAlright}
	q_x(\pi(\varphi)\eta)
	\overset{(\ref{joineigenvalue})}{=} \varphi(x) q_x(\eta)
	= q_x(\eta)
\end{equation}
for all $\eta \in V$.

Let $\xi \in V$, then using (\ref{GonnaBeAlright})
for $\eta = \sigma_e(\xi)$, we have
\begin{equation*}
	\mu_e^x(q_x(\xi))
	= q_x(\sigma_e(\xi))
	\overset{(\ref{GonnaBeAlright})}{=} q_x(\pi(\varphi) \sigma_e \xi)
	\overset{(\ref{helpful})}{=} q_x(\pi(\varphi)\xi)
	\overset{(\ref{joineigenvalue})}{=} \varphi(x) q_x(\xi)
	= q_x(\xi),
\end{equation*}
concluding the proof.
\end{proof}

The maps $\mu_s^x$ obey the following functorial property.

\begin{proposition}\label{mufunctoriality}
	If $x \in X_{s^{\ast}t^{\ast}ts}$, then the composition
	\[
	\begin{tikzcd}
	V_x \arrow{r}{\mu_s^x} & V_{\theta_s(x)} \arrow{r}{\mu_t^{\theta_s(x)}} & V_{\theta_{ts}(x)} \\
	\end{tikzcd}
	\]
	coincides with $\mu_{ts}^x$.
\end{proposition}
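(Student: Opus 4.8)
The plan is to check equality of the two maps by evaluating them on an arbitrary class $q_x(\xi) \in V_x$ and invoking the defining relation $\mu_r^y(q_y(\eta)) = q_{\theta_r(y)}(\sigma_r(\eta))$ twice, together with the fact that $\theta$ and $\sigma$ are homomorphisms. Before that, though, I would make sure that every map appearing in the statement is actually defined and that its codomain is the one written.

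To settle the domains, I would first observe that the hypothesis $x \in X_{s^{\ast}t^{\ast}ts}$ says exactly that $x$ lies in $\dom{(ts)}$, the domain of $\theta_{ts}$, because $(ts)^{\ast}(ts) = s^{\ast}t^{\ast}ts$. Since $\theta$ is a semigroup homomorphism into $\mathcal{I}(X)$, the partial bijection $\theta_{ts}$ is the composite $\theta_t \circ \theta_s$ formed on the largest domain where it makes sense, namely $\Set{y \in \dom{s}}{\theta_s(y) \in \dom{t}}$. Hence $x \in \dom{(ts)}$ delivers all at once the facts $x \in \dom{s}$, $\theta_s(x) \in \dom{t}$, and $\theta_{ts}(x) = \theta_t(\theta_s(x))$. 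The first two guarantee that $\mu_s^x$, $\mu_t^{\theta_s(x)}$, and $\mu_{ts}^x$ are all defined, while the third confirms that $V_{\theta_t(\theta_s(x))}$ and $V_{\theta_{ts}(x)}$ are the same space, so the composite and $\mu_{ts}^x$ have a common codomain.

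With this in place, the verification is a two-line computation: for $\xi \in V$,
\begin{align*}
\mu_t^{\theta_s(x)}\big(\mu_s^x(q_x(\xi))\big)
&= \mu_t^{\theta_s(x)}\big(q_{\theta_s(x)}(\sigma_s(\xi))\big)
= q_{\theta_t(\theta_s(x))}\big(\sigma_t(\sigma_s(\xi))\big) \\
&= q_{\theta_{ts}(x)}\big(\sigma_{ts}(\xi)\big)
= \mu_{ts}^x(q_x(\xi)),
\end{align*}
where the passage to the third expression uses $\theta_t \circ \theta_s = \theta_{ts}$ on $\dom{(ts)}$ together with the homomorphism property $\sigma_t \sigma_s = \sigma_{ts}$, and the final equality is the definition of $\mu_{ts}^x$. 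Since $q_x$ is surjective, the classes $q_x(\xi)$ exhaust $V_x$, and therefore $\mu_t^{\theta_s(x)} \circ \mu_s^x = \mu_{ts}^x$ as maps on $V_x$.

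I do not expect any genuine obstacle here; the algebraic identity is immediate from the two homomorphism properties, and the only step needing care is the bookkeeping that pins $x$ into the domain of $\theta_s$ and, after applying $\theta_s$, into the domain of $\theta_t$. This proposition is the functorial companion of the identity statement already established in (\ref{mueident}).
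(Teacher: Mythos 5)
Your proof is correct and follows essentially the same route as the paper's: identify that $x \in X_{s^{\ast}t^{\ast}ts}$ forces $x \in \dom{s}$ and $\theta_s(x) \in \dom{t}$, then compute on classes $q_x(\xi)$ using the defining relation of the $\mu$ maps together with the homomorphism properties of $\theta$ and $\sigma$. Your extra care in spelling out the domain bookkeeping and the surjectivity of $q_x$ is implicit in the paper's argument but adds nothing substantively different.
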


\begin{proof}
	First notice that, since $x \in X_{s^{\ast}t^{\ast}ts}$, we must
	have $x \in \dom{s}$
	and $\theta_s(x) \in \dom{t}$. Hence, for $\xi \in V$, we have
	\begin{align*}
	\mu_t^{\theta_s(x)} \Big( \mu_s^x \big( q_x(\xi) \big) \Big)
	& = \mu_t^{\theta_s(x)} \Big( q_{\theta_s(x)} \big( \sigma_s(\xi) \big) \Big)
	= q_{\theta_t (\theta_s(x))} \Big( \sigma_t \sigma_s(\xi) \Big) \\
	& = q_{\theta_{ts}(x)} \big( \sigma_{ts}(\xi) \big)
	=\mu_{ts}^x \big( q_x(\xi) \big),
	\end{align*}
	proving the statement.
\end{proof}

Let us now consider the representation of $\Lc{X}$ on the cartesian product
$\Pi_{x \in X} V_x$ given by
$$\Pi = \prod_{x \in X} \pi_x.$$
Thus, if $f \in \Lc{X}$, and
$\eta=(\eta_x)_{x \in X} \in \prod_{x \in X} V_x$, we have
$$(\Pi(f)\eta)_x = \pi_x(f)\eta_x \overset{(\ref{eigenvalue})}{=} f(x)\eta_x$$
for all $x \in X$.

Hence, $\Pi(f)$ is the block diagonal operator, acting on each $V_x$ as
scalar multiplication by $f(x)$.

Also, for each $s \in S$, consider the linear operator $U_s$ on $\prod_{x \in X} V_x$, given by
$$U_s(\eta)_x = \bool{x \in \ran{s}} \mu_s^{\theta_{s^{\ast}}(x)} (\eta_{\theta_{s^{\ast}}(x)})$$
for all $\eta=(\eta_x)_{x \in X} \in \prod_{x \in X} V_x$.

\begin{proposition}\label{Ufunctoriality}
	Identifying $V_x$ as a subspace of
	$\prod_{x \in X}V_x$, in the natural way, we have:
	\begin{enumerate}[(i)]
		\item if $x \notin \dom{s}$, then $U_s$ vanishes on $V_x$;
		\item if $x \in \dom{s}$, then $U_s$ coincides with $\mu_s^x$ and hence maps
		$V_x$ to $V_{\theta_s(x)}$;
		\item if $x \in \dom{s}$, then $U_s$ maps
		$V_x$ bijectively onto $V_{\theta_s(x)}$;
		\item if $x \in X_{s^{\ast}t^{\ast}ts}$, then the composition
		\[ \begin{tikzcd}
		V_x \arrow{r}{U_s} & V_{\theta_s(x)} \arrow{r}{U_t} & V_{\theta_{ts}(x)} \\
		\end{tikzcd}
		\]
		coincides with $U_{ts}$ on $V_x$;
		\item $U$ is a semigroup homomorphism.
	\end{enumerate}
\end{proposition}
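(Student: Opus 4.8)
The plan is to exploit the identification of each $V_x$ with the subspace of $\prod_{y\in X}V_y$ consisting of those tuples supported at the coordinate $x$, together with the fact that $\theta_s\colon\dom{s}\to\ran{s}$ and $\theta_{s^{\ast}}\colon\ran{s}\to\dom{s}$ are mutually inverse homeomorphisms. For $\eta\in V_x$ viewed inside the product, the coordinate $U_s(\eta)_y=\bool{y\in\ran{s}}\mu_s^{\theta_{s^{\ast}}(y)}(\eta_{\theta_{s^{\ast}}(y)})$ is nonzero only when $y\in\ran{s}$ and $\theta_{s^{\ast}}(y)=x$. If $x\notin\dom{s}$ this never happens, since $\theta_{s^{\ast}}(y)\in\dom{s}$ for $y\in\ran{s}$; this proves (i). If $x\in\dom{s}$, the only such $y$ is $y=\theta_s(x)$, at which $U_s(\eta)_{\theta_s(x)}=\mu_s^{x}(\eta)$; hence $U_s$ restricts on $V_x$ to $\mu_s^x\colon V_x\to V_{\theta_s(x)}$, giving (ii).

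For (iii), I would show that $\mu_s^x$ is invertible with inverse $\mu_{s^{\ast}}^{\theta_s(x)}$. Applying (\ref{mufunctoriality}) with $t=s^{\ast}$ --- whose hypothesis $x\in X_{s^{\ast}ss^{\ast}s}=\dom{s}$ holds because $s^{\ast}ss^{\ast}s=s^{\ast}s$ --- yields $\mu_{s^{\ast}}^{\theta_s(x)}\circ\mu_s^x=\mu_{s^{\ast}s}^x$, which is the identity on $V_x$ by (\ref{mueident}), as $s^{\ast}s\in E(S)$ and $x\in\dom{s}=X_{s^{\ast}s}$. The symmetric computation at the point $\theta_s(x)\in\ran{s}=X_{ss^{\ast}}$ gives $\mu_s^x\circ\mu_{s^{\ast}}^{\theta_s(x)}=\mu_{ss^{\ast}}^{\theta_s(x)}=\mathrm{id}$. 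Thus $\mu_s^x$, and therefore $U_s$ restricted to $V_x$, is a bijection onto $V_{\theta_s(x)}$. Item (iv) is then immediate: for $x\in X_{s^{\ast}t^{\ast}ts}=\dom{ts}$ we have $x\in\dom{s}$ and $\theta_s(x)\in\dom{t}$, so by (ii) the composite $V_x\xrightarrow{U_s}V_{\theta_s(x)}\xrightarrow{U_t}V_{\theta_{ts}(x)}$ equals $\mu_t^{\theta_s(x)}\circ\mu_s^x$, which coincides with $\mu_{ts}^x=U_{ts}|_{V_x}$ by (\ref{mufunctoriality}).

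Finally, for (v) I would verify $U_tU_s=U_{ts}$ coordinatewise on the full product. Writing $w=\theta_{t^{\ast}}(z)$ and $v=\theta_{s^{\ast}}(w)$, a direct unwinding of the definition gives $(U_tU_s\eta)_z=\bool{z\in\ran{t}}\bool{w\in\ran{s}}\,\mu_t^{w}\big(\mu_s^{v}(\eta_{v})\big)$, while $(U_{ts}\eta)_z=\bool{z\in\ran{ts}}\,\mu_{ts}^{\theta_{(ts)^{\ast}}(z)}(\eta_{\theta_{(ts)^{\ast}}(z)})$. The main obstacle is reconciling these two expressions, and this is where the inverse-semigroup bookkeeping is essential. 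I would first show that the compound condition ``$z\in\ran{t}$ and $\theta_{t^{\ast}}(z)\in\ran{s}$'' is precisely the domain of $\theta_{s^{\ast}}\circ\theta_{t^{\ast}}=\theta_{(ts)^{\ast}}$, namely $z\in\ran{ts}$, and that on this set $v=\theta_{s^{\ast}}(\theta_{t^{\ast}}(z))=\theta_{(ts)^{\ast}}(z)$. Since $v\in\dom{s}$ and $w=\theta_s(v)\in\dom{t}$, we have $v\in X_{s^{\ast}t^{\ast}ts}$, so (\ref{mufunctoriality}) applies at the base point $v$ to give $\mu_t^{w}\circ\mu_s^{v}=\mu_t^{\theta_s(v)}\circ\mu_s^v=\mu_{ts}^{v}$. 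Substituting this identifies the two coordinates, proving $U_tU_s=U_{ts}$ and hence that $U$ is a semigroup homomorphism.
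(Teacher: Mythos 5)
Your proof is correct, and for items (i)--(iv) it follows essentially the same route as the paper: unwind the definition of $U_s$ to get (i) and (ii), invert $\mu_s^x$ via $\mu_{s^{\ast}}^{\theta_s(x)}$ using (\ref{mufunctoriality}) and (\ref{mueident}) for (iii), and apply (\ref{mufunctoriality}) directly for (iv). The real difference is in (v). The paper proves $U_t \circ U_s = U_{ts}$ only on each subspace $V_x$ (splitting into the cases $x \in X_{s^{\ast}t^{\ast}ts}$, handled by (iv), and $x \notin X_{s^{\ast}t^{\ast}ts}$, where both sides vanish by (i)); since the subspaces $V_x$ span only $\bigoplus_{x\in X} V_x$ and not the full product $\prod_{x \in X} V_x$ on which the operators act, that argument implicitly relies on the observation that operators of this block form are determined by their restrictions to the $V_x$. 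Your verification instead computes both $(U_tU_s\eta)_z$ and $(U_{ts}\eta)_z$ coordinatewise for an arbitrary $\eta$ in the product, reducing the identity to the set-theoretic fact that $z \in \ran{ts}$ if and only if $z \in \ran{t}$ and $\theta_{t^{\ast}}(z) \in \ran{s}$, followed by one application of (\ref{mufunctoriality}) at the base point $v = \theta_{(ts)^{\ast}}(z)$. This is slightly longer but airtight on the full product, and it sidesteps the spanning subtlety entirely; it also avoids the small slip in the paper's case split, where the condition for $x \notin X_{s^{\ast}t^{\ast}ts}$ should read $x \notin \dom{s}$ or $\theta_s(x) \notin \dom{t}$.
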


\begin{proof}
	Items (i) and (ii) are easy to see, while item (iv) follows immediately
	by (\ref{mufunctoriality}).
	For item (iii), is is enough to notice that, by (iv),
	the restriction of
	$U_{s^{\ast}}$ to $V_{\theta_s(x)}$ is the inverse of $U_s$
	restricted to $V_x$. Finally, in order to prove (v), notice that,
	$U_t \circ U_s = U_{ts}$ on $V_x$ for every $x \in X_{s^{\ast}t^{\ast}ts}$
	by item (iv),
	and, for $ x \notin X_{s^{\ast}t^{\ast}ts}$, we must have
	$x \in \dom{s}$ or $\theta_s(x) \notin \dom{t}$. In either case,
	$U_t \circ U_s$ vanishes in $V_x$ as $U_{ts}$ does by item (i).
\end{proof}

Unfortunately, since we are considering the product of the $V_x$
instead of the direct sum, the pair $(\Pi, U)$ may not be a covariant
representation of the system $(\theta, S, X)$. However,
we can get around this problem with bare hands. The reason why we insist
in maintain the product of the $V_x$ will be clear later. 

\begin{proposition}
	The pair $(\Pi, U)$ can be integrated to a representation $\Pi \times U$
	of $\Lcs$ on $\prod_{x \in X} V_x$ such that
	$(\Pi \times U)(f \Delta_s) = \Pi(f) U_s$.
\end{proposition}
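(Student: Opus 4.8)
The plan is to avoid the integration machinery of Corollary (\ref{integration2})—which is not directly available here, since $\Pi$ acts on the product $\prod_{x \in X} V_x$ rather than the direct sum and is therefore not non-degenerate—and instead to appeal to the universal property (\ref{UniversalProp}). To this end I would define, for each $s \in S$, the linear map $\pi_s : B_s \to L\!\left(\prod_{x \in X} V_x\right)$ by $\pi_s(f\bmdelta_s) = \Pi(f)U_s$ for $f \in \Lc{\ran{s}}$, and show that $\Pi=\{\pi_s\}_{s \in S}$ is a representation of the semi-direct product bundle $\mathcal{B}^{\theta}$. The sought operator is then the homomorphism $\Pi \times U$ produced by (\ref{UniversalProp}), and the identity $(\Pi\times U)(f\Delta_s)=\Pi(f)U_s$ is immediate from $\pi_s = (\Pi\times U)\circ \pi_s^u$.

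The computational heart of the argument is a covariance identity playing the role of Lemma (\ref{helpful2}), namely $U_s\,\Pi(g) = \Pi(\balpha_s(g))\,U_s$ for all $s \in S$ and $g \in \Lc{X}$. I would verify this coordinatewise: evaluating the left-hand side at $\eta = (\eta_x)_{x}$ gives, in coordinate $x$, the vector $\bool{x \in \ran{s}}\,g(\theta_{s^{\ast}}(x))\,\mu_s^{\theta_{s^{\ast}}(x)}(\eta_{\theta_{s^{\ast}}(x)})$, while the right-hand side gives $\balpha_s(g)(x)\,\bool{x \in \ran{s}}\,\mu_s^{\theta_{s^{\ast}}(x)}(\eta_{\theta_{s^{\ast}}(x)})$. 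These agree because for $x \in \ran{s}$ one has $\theta_{s^{\ast}}(x) \in \dom{s}$ and hence $\balpha_s(g)(x) = (g\,1_{s^{\ast}s})(\theta_{s^{\ast}}(x)) = g(\theta_{s^{\ast}}(x))$ by definition of $\balpha_s$. With this in hand, axiom (i) of a bundle representation closes: using the product formula $f\Delta_s\cdot g\Delta_t = f\balpha_s(g)\Delta_{st}$, the fact that $U$ is a semigroup homomorphism (Proposition (\ref{Ufunctoriality}.v)), and the multiplicativity of $\Pi$, I obtain $\pi_s(f\bmdelta_s)\pi_t(g\bmdelta_t) = \Pi(f)U_s\Pi(g)U_t = \Pi(f\balpha_s(g))U_{st} = \pi_{st}\big(\mu_{s,t}(f\bmdelta_s\otimes g\bmdelta_t)\big)$.

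For axiom (ii), the compatibility $\pi_t\circ j_{t,s} = \pi_s$ whenever $s \leq t$, I would first record the analogue of Lemma (\ref{helpful}): for an idempotent $e$ the operator $U_e$ is by Proposition (\ref{mueident}) the diagonal projection $(\eta_x)_x \mapsto (\bool{x \in X_e}\,\eta_x)_x$, so that $\Pi(f)U_e = \Pi(f)$ whenever $f \in \Lc{X_e}$. Given $s \leq t$ and $f \in \Lc{\ran{s}} = \Lc{X_{ss^{\ast}}}$, this yields $\Pi(f)U_t = \Pi(f)U_{ss^{\ast}}U_t = \Pi(f)U_{ss^{\ast}t} = \Pi(f)U_s$, where I again use that $U$ is a homomorphism together with the inverse-semigroup identity $ss^{\ast}t = s$, valid precisely because $s \leq t$. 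Since $j_{t,s}(f\bmdelta_s) = f\bmdelta_t$, this is exactly $\pi_t(j_{t,s}(f\bmdelta_s)) = \pi_s(f\bmdelta_s)$.

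The only delicate point is the covariance identity of the second paragraph, where one must keep careful track of the boolean indicators and the domains—in particular that $\theta_{s^{\ast}}(x)$ is defined exactly when $x \in \ran{s}$, that it then lands in $\dom{s}$, and that $\balpha_s$ restricts correctly to the support. Everything else is a formal consequence of the functoriality and homomorphism properties of $U$ already established in Proposition (\ref{Ufunctoriality}); once the identity is verified the two representation axioms close at once and (\ref{UniversalProp}) delivers the homomorphism $\Pi \times U$ with the stated formula.
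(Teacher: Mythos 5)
Your proposal is correct and follows essentially the same route as the paper: both arguments show that $\pi_s(f \bmdelta_s) = \Pi(f)U_s$ defines a (possibly degenerate) representation of the semi-direct product bundle $\mathcal{B}^{\theta}$ and then obtain $\Pi \times U$ from the universal property, the only difference being that you take the one-sided identity $U_s \Pi(g) = \Pi(\balpha_s(g))U_s$ (the analogue of Lemma (\ref{helpful2})) as the coordinatewise-verified workhorse, whereas the paper verifies the conjugation identity $U_s \Pi(f) U_{s^{\ast}} = \Pi(\alpha_s(f))$ together with the analogue of Lemma (\ref{helpful}) and then re-runs the proof of Proposition (\ref{integration}). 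Your handling of axiom (ii) via $\Pi(f)U_{ss^{\ast}}U_t = \Pi(f)U_{ss^{\ast}t} = \Pi(f)U_s$ is exactly the computation the paper reuses from that proof, so the two arguments differ only in organization, not in substance.
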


\begin{proof}
	We already know that $U$ is a semigroup homomorphism and
	$\Pi$ is a representation of $\Lc{X}$ on $\prod_{x \in X} V_x$,
	possibly degenerate.
	Furthermore, the pair $(\Pi, U)$ may not satisfy condition
	(ii) of (\ref{covrep}). However, it still satisfies condition (i).
	Indeed, for $s \in S$, $f \in \Lc{\dom{s}}$ and
	$\eta \in \prod_{x \in X} V_x$, we have
	for all $x \in X$
	\begin{align*}
		U_s \Pi(f) U_{s^{\ast}}(\eta)_x
		& = \bool{x \in \ran{s}} \mu_s^{\theta_{s^{\ast}}(x)}
			\bigg(\Pi(f)U_{s^{\ast}}(\eta)_{\theta_{s^{\ast}}(x)} \bigg)
		= \bool{x \in \ran{s}} \mu_s^{\theta_{s^{\ast}}(x)} 
		 	\bigg( f\big( \theta_{s^{\ast}}(x) \big)
				U_{s^{\ast}}(\eta)_{\theta_{s^{\ast}}(x)} \bigg) \\
		& = \bool{x \in \ran{s}} f\big( \theta_{s^{\ast}}(x) \big)
			 \mu_s^{\theta_{s^{\ast}}(x)}
			\mu_{s^{\ast}}^{\theta_s(x)} (\eta_x)
			\overset{(\ref{mufunctoriality})}{=}
				\bool{x \in \ran{s}} \restr{\alpha_s(f)}{x} \eta_x \\
		& = \restr{\alpha_s(f)}{x} \eta_x
		= \Big( \Pi(\alpha_s(f))\eta \Big)_x.
	\end{align*}
	Now, let $e \in E(s)$, $f \in \Lc{X_e}$ and $\xi \in \prod_{x \in X} V_x$.
	Then, let $\eta$ be the vector in $\prod_{x \in X} V_x$ such that
	$\eta_x = f(x) \xi_x$ and notice that
	$$
	U_e(\eta)_x
	= \bool{x \in X_e} \mu_e^x \Big( f(x) \xi_x \Big)
	= \bool{x \in X_e} f(x) \xi_x
	= f(x) \xi_x
	= \Big( \Pi(f) \xi \Big)_x.
	$$
	Hence, $\eta$ is a vector such that $U_e(\eta) = \Pi(f) \xi$.
	Then, we can replace the use of condition (ii) of Definition
	(\ref{covrep}) by this argument in the proof of Lemma
	(\ref{helpful}) to obtain
	$$U_e \Pi(f) = \Pi(f) = \Pi(f) U_e.$$
	Replacing now the use of Lemma (\ref{helpful}) by the equality above
	in the proof of
	(\ref{integration})
	we obtain a representation of the semi-direct product bundle
	$\mathcal{B}^{\theta}$, possibly degenerate, which can be further
	integrated to a representation $\Pi \times U$ of $\Lcs$ on
	$\prod_{x \in X} V_x$ such that $(\Pi \times U)(f \Delta_s) = \Pi(f) U_s$.
	\end{proof}

\begin{definition}
	The representation $\discr$ above will be referred as the $\emph{discretization}$ of
	the initially given representation $\pi$.
\end{definition}

The reader may ask himself or herself why we have not
considered the discretized
representation
acting on the direct sum of the $V_x$, instead of their product.
The map which will be introduced in the next proposition
is the main reason for that
since it is an important tool to establish a relation
between the null space of the original representation and
the null space
of its
discretized form as we shall see.
%
%The map which will be introduced in the next proposition will be a very %important tool
%to show that the original representation $\pi$ and its discretization %$\discr$ have the
%same null space. It is the main reason why we do not consider, at first, the discretized
%representation acting in the direct sum of the $V_x$, instead of their %product.

\begin{proposition}\label{Qcovariant}
	The mapping
	$$Q: \xi \in V \mapsto (q_x(\xi))_{x \in X} \in \prod_{x \in X} V_x,$$
	is injective and equivariant
	\footnote{Recall that a linear map $T: E \to F$ between vector spaces
	$E$ and $F$ is equivariant relative to representations
	$\pi_E$ and $\pi_F$ of an algebra $A$ on $E$ and $F$, respectively,
	if it intertwines $\pi_E$ and $\pi_F$, meaning that
	$T \circ \pi_E(a) = \pi_F(a) \circ T$ for every $a$ in $A$.}
	relative to the corresponding representations of $\Lcs$ on $V$
	and on $\prod_{x \in X} V_x$, respectively.
\end{proposition}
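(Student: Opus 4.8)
The plan is to treat injectivity and equivariance separately. Since each $q_x$ is linear, $Q$ is linear, so equivariance need only be checked on the generators $f\Delta_s$ of $\Lcs$; and injectivity is exactly the assertion that $\bigcap_{x\in X}Z_x=\{0\}$, because $Q\xi=0$ means $q_x(\xi)=0$, i.e. $\xi\in Z_x$, for every $x$.

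For equivariance I would fix $s\in S$, $f\in\Lc{\ran{s}}$ and $\xi\in V$, and compare the $x$-th coordinates of $Q\big(\pi(f\Delta_s)\xi\big)$ and $(\Pi\times U)(f\Delta_s)(Q\xi)$. Using $\pi(f\Delta_s)=\pi(f)\sigma_s$ together with (\ref{joineigenvalue}), the left-hand coordinate is $q_x(\pi(f)\sigma_s\xi)=f(x)\,q_x(\sigma_s\xi)$, which vanishes unless $x\in\ran{s}$ because $f$ is supported in $\ran{s}$; and for $x\in\ran{s}$, writing $x=\theta_s(\theta_{s^\ast}(x))$ with $\theta_{s^\ast}(x)\in\dom{s}$, the defining property of $\mu_s^{\theta_{s^\ast}(x)}$ turns this into $f(x)\,\mu_s^{\theta_{s^\ast}(x)}\big(q_{\theta_{s^\ast}(x)}(\xi)\big)$. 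On the other side, by the definitions of $\Pi(f)$ and $U_s$ the right-hand coordinate is $f(x)\,\bool{x\in\ran{s}}\,\mu_s^{\theta_{s^\ast}(x)}\big(q_{\theta_{s^\ast}(x)}(\xi)\big)$, which matches term for term; equivariance then follows by linearity over the generators.

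The injective part is the heart of the matter. Assume $q_x(\xi)=0$ for all $x\in X$. By non-degeneracy of $\pi$ on $\Lc{X}$ I would first pick a compact open set $C$ with $\pi(1_C)\xi=\xi$, obtained by writing $\xi=\sum_i\pi(f_i)\eta_i$ and taking $C\supseteq\bigcup_i\supp(f_i)$ compact open. The local step is that, for each $x\in C$, the relation $\xi\in Z_x=\Span\{\pi(I_x)V\}$ lets me write $\xi=\sum_j\pi(h_j)\eta_j$ with each $h_j$ locally constant and $h_j(x)=0$; each $h_j$ then vanishes on a compact open neighbourhood of $x$, and the finite intersection of these is a compact open neighbourhood $C_x\ni x$ with $1_{C_x}h_j=0$ for every $j$, so that $\pi(1_{C_x})\xi=0$. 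The global step is a compactness argument in the spirit of (\ref{localunits}): the cover $\{C_x\}_{x\in C}$ of the compact set $C$ admits a finite subcover, which I disjointify into compact open pieces $D_1,\dots,D_m\subseteq C$ (finite Boolean combinations of compact opens are compact open since $X$ is Hausdorff and totally disconnected), each $D_i$ sitting inside some $C_x$, whence $\pi(1_{D_i})\xi=0$ and therefore $\xi=\pi(1_C)\xi=\sum_i\pi(1_{D_i})\xi=0$. The main obstacle is precisely this passage from the pointwise vanishing $q_x(\xi)=0$ to the global vanishing of $\xi$; it is exactly what forced the discretization to be built on the \emph{product} of the spaces $V_x$ rather than their direct sum.
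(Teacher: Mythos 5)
Your proposal is correct and follows essentially the same route as the paper's own proof: the equivariance check is the same coordinate-wise computation using (\ref{joineigenvalue}) and the definitions of $\Pi(f)$, $U_s$ and $\mu_s^x$, and the injectivity argument is the same localization-plus-compactness scheme (a compact open $C$ with $\pi(1_C)\xi=\xi$ from non-degeneracy, compact open neighbourhoods $C_x$ with $\pi(1_{C_x})\xi=0$ from $\xi\in Z_x$, then a finite subcover disjointified into compact open pieces). The only differences are cosmetic, e.g.\ the paper defines the disjoint pieces explicitly as $E_k = D\cap C_{x_k}\setminus\bigcup_{i<k}C_{x_i}$ where you simply invoke disjointification.
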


\begin{proof}
	Let $s \in S$, and $f \in \Lc{\ran{s}}$. Then, for every $\xi $ in $V$, and every
	$x \in X$, we have
	\begin{align*}
	\big( (\Pi \times U)(f \Delta_s) Q(\xi) \big)_x
	& = \big( \Pi(f) U_s Q(\xi) \big)_x
	= f(x) \big( U_s Q(\xi) \big)_x \\
	& = f(x) \bool{x \in \ran{s}} \mu_s^{\theta_{s^{\ast}(x)}}
				\big( Q(\xi)_{\theta_{s^{\ast}(x)}} \big)
	= f(x) \mu_s^{\theta_{s^{\ast}(x)}} \big( q_{\theta_{s^{\ast}}(x)}(\xi) \big) \\
	& = f(x) q_x(\sigma_s(\xi) \big)
	= q_x \big( \pi(f) \sigma_s(\xi) \big)
	= Q \big( \pi(f\Delta_s)\xi \big)_x.
	\end{align*}
	This proves that $Q$ is covariant.	
	In order to prove that $Q$ is injective, suppose that $Q(\xi)=0$, for some
	$\xi $ in $V$. Since $\pi$ is non-degenerate, there exists $f_i \in \Lc{X}$ and $\xi_i \in V$
	such that
	$$
	\xi = \sum_{i=1}^n \pi(f_i) \xi_i,
	$$
	Define
	$$
	D = \bigcup_{i=1}^n \supp(f_i),
	$$
	so $D$ is a compact open subset of $X$ and we have
	\begin{equation}\label{XiSuppD}
	\xi
	= \sum_{i=1}^n \pi(1_D f_i) \xi_i
	= \pi (1_D) \sum_{i=1}^n \pi(f_i)\xi_i
	= \pi(1_D) \xi.
	\end{equation}
	
	Now, for each $x$ in $X$, we have $q_x(\xi)=0$ by hypothesis. Thus,
	$\xi$ lies in $Z_x$ and so we may write
	$$
	\xi = \sum_{i=1}^{n_x} \pi(f^{(x)}_i) \xi^{(x)}_i,
	$$
	where $\xi^{(x)}_i \in V$ and $f^{(x)}_i \in I_x$. Since
	there are finitely many
	$f^{(x)}_i$, each of which locally constant, there exists a
	compact open neighborhood $C_x$ of $x$ where all of the $f^{(x)}_i$ vanish.
	Moreover,
	\begin{equation}\label{LocalValueZero}
		\pi(1_{C_x})\xi
		= \sum_{i=1}^{n_x} \pi(1_{C_x} f^{(x)}_i)\xi^{(x)}_i
		= 0.
	\end{equation}
	
	Finally, $\left\{C_x \right\}_{x \in X}$ is an open cover of $D$, and hence
	we may find a finite set
	$\left\{x_1,\ldots,x_p \right\} \subseteq X$, such that $D \subseteq \bigcup_{i=1}^p C_{x_i}$.  Putting
	$$
	E_k = D \cap C_{x_k} \setminus \bigcup_{i=1}^{k-1} C_{x_i},
	$$
	for $k=1,\ldots ,p$, it is easy to see that the $E_k$ are pairwise disjoint
	compact open sets, whose
	union coincides with $D$. Observing that $E_k \subseteq C_{x_k}$, we then have
	$$
	\xi
	\overset{(\ref{XiSuppD})}{=} \pi(1_D)\xi
	= \sum_{k=1}^p \pi(1_{E_k}) \xi
	= \sum_{k=1}^p \pi(1_{E_k} 1_{C_{x_k}})\xi
	= \sum_{k=1}^p \pi(1_{E_k}) \pi(1_{C_{x_k}})\xi
	\overset{(\ref{LocalValueZero})}{=} 0.
	$$
	This proves that $Q$ is injective.
\end{proof}

Thus, we have an immediate consequence.

\begin{corollary}\label{FirstNullSpace}
	The null space of $\discr$ is contained in the null space of $\pi$.
\end{corollary}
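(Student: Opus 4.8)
The plan is to read this corollary off directly from Proposition (\ref{Qcovariant}); essentially all of the real work has already been done there, and what remains is a one-line argument combining equivariance with injectivity of the map $Q$. So the strategy is: take an element $b$ lying in the null space of $\discr$, meaning $(\Pi \times U)(b) = 0$ as an operator on $\prod_{x \in X} V_x$, and aim to show that $b$ annihilates $V$ under $\pi$, i.e. that $\pi(b) = 0$, which places $b$ in the null space of $\pi$.

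The key step is to exploit the equivariance of $Q$ applied to the element $b$. Since $Q$ intertwines $\pi$ and $\discr$, for every $\xi \in V$ one has
\[
Q\big(\pi(b)\xi\big) = (\Pi \times U)(b)\, Q(\xi) = 0,
\]
the last equality holding because $(\Pi \times U)(b) = 0$. As $Q$ is injective by (\ref{Qcovariant}), this forces $\pi(b)\xi = 0$ for every $\xi \in V$, whence $\pi(b) = 0$ and $b$ indeed belongs to the null space of $\pi$. Since $b$ was an arbitrary element of the null space of $\discr$, the desired containment follows.

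I expect there to be essentially no obstacle at this stage: the entire difficulty was front-loaded into Proposition (\ref{Qcovariant}), where the simultaneous injectivity and equivariance of $Q$ were established (the former relying on the non-degeneracy of $\pi$ together with the covering/compactness argument). Once those two properties are granted, the inclusion of null spaces is purely formal. The only point I would be careful about is the direction of the intertwining relation, namely $Q \circ \pi(a) = (\Pi \times U)(a) \circ Q$ for all $a \in \Lcs$; this is exactly what makes the vanishing of $(\Pi \times U)(b)$ on the image of $Q$ — which is all that equivariance by itself delivers — suffice, by injectivity, to conclude the vanishing of $\pi(b)$ on all of $V$.
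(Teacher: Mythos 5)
Your proof is correct and follows exactly the paper's own argument: take $b$ in the null space of $\discr$, use the equivariance of $Q$ to write $Q\big(\pi(b)\xi\big) = (\discr)(b)\,Q(\xi) = 0$ for all $\xi \in V$, and conclude $\pi(b) = 0$ from the injectivity of $Q$ established in (\ref{Qcovariant}). Nothing is missing; the intertwining relation is applied in the right direction, just as in the paper.
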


\begin{proof}
	Let $b \in \ker(\discr)$. By (\ref{Qcovariant}), we have
	$$0 = (\discr)(b) Q(\xi) = Q \big( \pi(b)\xi \big)$$
	for all $\xi \in V$. Again by (\ref{Qcovariant}), $Q$ is injective and, hence,
	$$\pi(b)\xi = 0$$
	for all $\xi \in V$, that is, $b \in \ker(\pi)$.
\end{proof}

From now on, we shall consider the subspace
$$\bigoplus_{x \in X} V_x \subseteq \prod_{x \in X} V_x,$$
consisting of the vectors with finitely many nonzero coordinates. It is easy to see that this subspace
is invariant under $\Pi(f)$ for all $f \in \Lc{X}$, as well as under $U_s$ for all $s \in S$.
Consequently, it is also invariant under $\discr$.

\begin{proposition}\label{SecondNullSpace}
	The null space of the representation obtained by restricting $\discr$ to
	$\bigoplus_{x \in X} V_x$ coincides with the null space of $\discr$ itself.
\end{proposition}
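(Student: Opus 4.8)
The plan is to prove the two inclusions of null spaces separately. Write $W = \bigoplus_{x \in X} V_x \subseteq \prod_{x \in X} V_x$ and let $\rho$ denote the restriction of $\discr$ to $W$, which makes sense since $W$ is invariant under $\discr$. The inclusion $\ker(\discr) \subseteq \ker(\rho)$ is immediate, for an operator annihilating the whole product $\prod_{x \in X} V_x$ certainly annihilates its subspace $W$. Hence the entire content of the statement lies in the reverse inclusion $\ker(\rho) \subseteq \ker(\discr)$, and this is where I would concentrate.

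The key observation I would exploit is that $(\discr)(b)$ acts \emph{locally} on coordinates: each output coordinate depends on only finitely many input coordinates. To make this precise, write $b = \sum_{s \in F} f_s \Delta_s$ for a finite set $F \subseteq S$. Combining $(\discr)(f \Delta_s) = \Pi(f) U_s$ with the coordinatewise formulas for $\Pi$ and $U_s$, one obtains, for every $\eta = (\eta_x)_{x \in X} \in \prod_{x \in X} V_x$ and every $x \in X$,
$$
\big( (\discr)(b) \eta \big)_x
= \sum_{s \in F} f_s(x)\, \bool{x \in \ran{s}}\, \mu_s^{\theta_{s^{\ast}}(x)}\big( \eta_{\theta_{s^{\ast}}(x)} \big).
$$
Thus the $x$-coordinate of $(\discr)(b)\eta$ depends only on the coordinates $\eta_y$ with $y$ in the \emph{finite} set $Y_x := \Set{\theta_{s^{\ast}}(x)}{s \in F,\ x \in \ran{s}}$.

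With this in hand, suppose $b \in \ker(\rho)$ and fix an arbitrary $\eta \in \prod_{x \in X} V_x$; I must show $(\discr)(b)\eta = 0$, that is, that its $x_0$-coordinate vanishes for every $x_0 \in X$. Fixing $x_0$, I would let $\eta'$ be the vector agreeing with $\eta$ on the finite set $Y_{x_0}$ and equal to zero in every other coordinate, so that $\eta' \in W$. By the displayed formula the $x_0$-coordinate only involves coordinates indexed by $Y_{x_0}$, whence $\big((\discr)(b)\eta'\big)_{x_0} = \big((\discr)(b)\eta\big)_{x_0}$. Since $b$ annihilates $W$ we have $(\discr)(b)\eta' = 0$, so $\big((\discr)(b)\eta\big)_{x_0} = 0$. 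As $x_0$ and $\eta$ were arbitrary, $b \in \ker(\discr)$, giving the reverse inclusion.

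I do not expect a genuine obstacle here: once the local-dependence formula is recorded, the argument is a routine truncation. The only point requiring a little care is to confirm that the displayed coordinatewise expression for $(\discr)(b)$ is correct and independent of the chosen representative $b = \sum_{s \in F} f_s \Delta_s$. This is guaranteed because $\discr$ is a genuine homomorphism, so $(\discr)(b)$ is well defined regardless of the representative, and by linearity the verification reduces to the generators $f \Delta_s$, for which $(\discr)(f \Delta_s) = \Pi(f) U_s$ holds by construction.
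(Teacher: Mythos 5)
Your proof is correct and follows essentially the same route as the paper: both compute the coordinatewise formula $\big((\discr)(b)\eta\big)_x = \sum_{s} f_s(x)\,\bool{x \in \ran{s}}\,\mu_s^{\theta_{s^{\ast}}(x)}\big(\eta_{\theta_{s^{\ast}}(x)}\big)$ and then replace $\eta$ by its truncation $\eta'$ supported on the finite set of relevant coordinates, which lies in $\bigoplus_{x \in X} V_x$ and has the same image coordinate at the chosen point. Your explicit remarks on the trivial inclusion and on representative-independence are fine but add nothing beyond the paper's argument.
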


\begin{proof}
	Suppose that $(\discr)(b)$ vanishes on $\bigoplus_{x \in X} V_x$
	for some $b = \sum_{s \in S} f_s \Delta_s \in \Lcs$.
	Let $y \in X$ and $\eta = (\eta_x)_{x \in X} \in \prod_{x \in X} V_x$
	and notice that
	$$
	((\discr)(b)\eta)_y
	= \sum_{s \in S} (\Pi(f_s)U_s \eta)_y
	= \sum_{s \in S} f_s(y) \bool{y \in X_{ss^{\ast}}} \mu^{\theta_{s^{\ast}}(y)}_s(\eta_{\theta_{s^{\ast}}(y)}).
	$$
	Let $\eta'=(\eta'_x)_{x \in X}$ be the vector defined by either $\eta'_x = \eta_x$ if $x=\theta_{s^{\ast}}(y)$
	for some $s \in S$ such that $y \in \ran{s}$ and $f_s \neq 0$, or $\eta'_x=0$ otherwise. Then,
	it is clear that $\eta' \in \bigoplus_{x \in X} V_x$ and
	$$((\discr)(b)\eta)_y = ((\discr)(b)\eta')_y = 0.$$
	Since $y \in X$ and $\eta \in \prod_{x \in X} V_x$ are arbitrary, we deduce that $(\discr)(b)=0$, concluding the argument.
\end{proof}

Regarding the space $\bigoplus_{x \in X} V_x$ where $\discr$ acts,
we will identify each $V_x$ as a subspace of  $\bigoplus_{x \in X} V_x$, in the usual way.
Thus, given $\xi \in V$, we shall think of $q_x(\xi )$ as the element
of $\bigoplus_{x \in X} V_x$ whose coordinates all vanish,
except for the $x^{\text{th}}$ coordinate which takes on the value $q_x(\xi)$.
In this fashion, notice that
\begin{equation}\label{computeeasy}
	\begin{array}{rcl}
	\Pi(f) q_x(\xi ) = & \pi_x(f)q_x(\xi ) & = q_x \big( \pi(f) \xi \big ), \\
	U_s \big( q_x(\xi ) \big ) = & \bool{x \in \dom{s}} \mu_s^x \big( q_x( \xi )\big ) & = \bool{x \in \dom{s}}
	q_{\theta_s(x)}(\sigma_s \xi),
	\end{array}
\end{equation}
for all $f\in \Lc{X}$, $s \in S$, $x \in X$, and $\xi \in V$.

Since $\bigoplus_{x \in X} V_x$ is spanned by the union of the
$V_x$, each of which is the range of the corresponding $q_x$,
the formulas above determine the action of $\Pi(f)$ and
$U_s$ on the whole space $\bigoplus_{x \in X} V_x$. So, by
combining them, we are able to give the following concrete
description
of the restriction of $\discr$ to $\bigoplus_{x \in X} V_x$.

\begin{proposition}\label{ExpressionForPixU}
	Let $b = \sum_{s \in S} f_s \Delta_s$ in $\Lcs$. Then, for
	all $x \in X$
	and $\xi \in V$, we have that
	$$
	(\discr)(b)q_x(\xi )
	= \sum_{s \in S} \bool{x \in \dom{s}}
			q_{\theta_s(x)} \big( \pi(f_s)\sigma_s \xi \big).
	$$
\end{proposition}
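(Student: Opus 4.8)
The plan is to reduce the whole computation to the two formulas collected in (\ref{computeeasy}) together with the linearity of the integrated representation $\discr$. First I would expand $\discr$ on $b = \sum_{s \in S} f_s \Delta_s$ using linearity and the defining property $(\discr)(f_s \Delta_s) = \Pi(f_s) U_s$, so that
$$
(\discr)(b) q_x(\xi) = \sum_{s \in S} \Pi(f_s) U_s\, q_x(\xi).
$$
This turns the problem into evaluating a single summand $\Pi(f_s) U_s\, q_x(\xi)$ for each $s \in S$ and then summing.

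Next, for each fixed $s$ I would apply the second identity of (\ref{computeeasy}) to rewrite
$$
U_s\, q_x(\xi) = \bool{x \in \dom{s}}\, q_{\theta_s(x)}(\sigma_s \xi),
$$
which lands inside the summand $V_{\theta_s(x)}$. Finally, I would apply the first identity of (\ref{computeeasy}), but now evaluated at the point $\theta_s(x)$ in place of $x$, to obtain
$$
\Pi(f_s)\, q_{\theta_s(x)}(\sigma_s \xi) = q_{\theta_s(x)}\big( \pi(f_s) \sigma_s \xi \big).
$$
Chaining these two steps and summing over $s$ yields the claimed formula.

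The only point requiring a moment of care — though it is not a genuine obstacle — is the bookkeeping of the boolean factor. When $x \notin \dom{s}$, the factor $\bool{x \in \dom{s}}$ annihilates the term already at the level of $U_s\, q_x(\xi)$, so $\Pi(f_s)$ is never applied to an ill-defined expression; and when $x \in \dom{s}$, the point $\theta_s(x)$ is well defined and lies in $\ran{s}$, so evaluating the first formula of (\ref{computeeasy}) at $\theta_s(x)$ is legitimate. Since $\bigoplus_{x \in X} V_x$ is spanned by the ranges of the various $q_x$, this computation on the generators determines the action completely, and the proposition follows.
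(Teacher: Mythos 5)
Your proposal is correct and follows essentially the same route as the paper's proof: expand $(\discr)(b)$ by linearity, apply the second formula of (\ref{computeeasy}) to handle $U_s$, then apply the first formula of (\ref{computeeasy}) at the point $\theta_s(x)$ to commute $\Pi(f_s)$ inside $q_{\theta_s(x)}$. Your extra remark on the boolean bookkeeping is a sound (if implicit in the paper) observation, and nothing further is needed.
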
 

\begin{proof}
	By (\ref{computeeasy}), the proof reduces to a direct computation:
	\begin{align*}
	(\discr)(b)q_x(\xi )
	& = \sum_{s \in S} \Pi(f_s)U_s \big( q_x(\xi ) \big)
	= \sum_{s \in S} \Pi(f_s) \bool{ x \in \dom{s} }
				q_{\theta_s(x)} (\sigma_s \xi) \\
	& = \sum_{s \in S} \bool{x \in \dom{s}}
				q_{\theta_s(x)} \big( \pi(f_s) \sigma_s \xi \big).
	\end{align*}
\end{proof}

We are going to describe now the \emph{matrix entries} of the
operator $(\discr)(b)$ acting on $\bigoplus_{x \in X} V_x$.
That is, for each $x$ and $y$ in $X$, we want an expression for
the $y^{\text{th}}$ component of the vector obtained by applying
$(\discr)(b)$ to any given vector in $V_x$, say of the form
$q_x(\xi)$, where $\xi \in V$.

It is clear that the desired expression is the $y^{\text{th}}$
component of the expression
given in (\ref{ExpressionForPixU}),
which is in turn given by the partial sum corresponding to
the terms for which $\theta_s(x)=y$. So, we have
\begin{equation}\label{MatrixEntries}
	\big( (\discr)(b) q_x(\xi ) \big)_y
	= \sum_{s \in S, \theta_s(x)=y} q_{\theta_s(x)} \big( \pi(f_s) \sigma_s \xi \big)
	= q_y \bigg(\sum_{s \in S, \theta_s(x)=y} \pi(f_s) \sigma_s \xi \bigg).
\end{equation}

We are going to prove now that the restriction of the discretized representation $\discr$
to $\bigoplus_{x \in X} V_x$ has the same null space as the original representation
$\pi$ has. But first, recall that in (\ref{FirstNullSpace}) and (\ref{SecondNullSpace})
we already proved the following relations among the null spaces:
\begin{equation}\label{SoFarKernelInclusions}
	\ker( \pi ) \supseteq \ker(\discr) = \ker \big( \discr |_{\oplus_{x\in X}V_x} \big).	
\end{equation}

We are going to show now that equality in fact holds throughout.

\begin{theorem}\label{ThirdNullSpace}
	The null space of the representation obtained by restricting $\discr$
	to $\bigoplus_{x \in X} V_x$ coincides with the null space of $\pi$.	
\end{theorem}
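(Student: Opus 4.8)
The plan is to use the chain (\ref{SoFarKernelInclusions}), which already gives $\ker(\pi)\supseteq\ker\big(\discr|_{\bigoplus_{x}V_x}\big)$, and to supply the reverse inclusion. So it suffices to show that every $b\in\ker(\pi)$ annihilates $\bigoplus_{x\in X}V_x$. Since this space is spanned by the ranges of the maps $q_x$, I would fix $x\in X$ and $\xi\in V$ and prove that $(\discr)(b)q_x(\xi)=0$. Writing $b=\sum_{s\in F}f_s\Delta_s$ with $F$ finite, formula (\ref{ExpressionForPixU}) shows this vector lies in $\bigoplus_{x}V_x$, and (\ref{MatrixEntries}) identifies its $y^{\text{th}}$ component as $q_y\big(\sum_{s\in S_y}\pi(f_s)\sigma_s\xi\big)$, where $S_y=\{s\in F:x\in\dom{s},\ \theta_s(x)=y\}$. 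Only finitely many $y$ give a nonzero component, so the problem reduces to showing that this single partial sum vanishes after applying $q_y$, for each relevant $y$.

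The heart of the argument is a separation step isolating precisely the indices in $S_y$. Using that $X$ is Hausdorff, locally compact and totally disconnected and that $F$ is finite, I would choose a compact-open neighbourhood $W$ of $x$ such that $W\subseteq\dom{s}$ whenever $x\in\dom{s}$, $W\cap\dom{s}=\emptyset$ whenever $x\notin\dom{s}$, and $\theta_s(W)\cap\theta_{s'}(W)=\emptyset$ for every pair $s,s'\in F$ with $\theta_s(x)\neq\theta_{s'}(x)$ (the last being finitely many conditions, each solvable by continuity and Hausdorffness). Each $\theta_s(W)$ is then compact-open, and I would set $Y:=\bigcap_{s\in S_y}\theta_s(W)\ \cap\ \bigcap_{s'}\big(X\setminus\theta_{s'}(W)\big)$, the second intersection running over those $s'\in F$ with $x\in\dom{s'}$ and $\theta_{s'}(x)\neq y$. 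Then $Y$ is a compact-open neighbourhood of $y$ satisfying $Y\subseteq\theta_s(W)$ for $s\in S_y$ and $Y\cap\theta_{s'}(W)=\emptyset$ for the excluded $s'$.

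With $W$ and $Y$ in hand I would exploit $\pi(b)=0$. Using Lemma (\ref{helpful2}) together with $\balpha_s(1_W)=1_{\theta_s(W\cap\dom{s})}$, so that $\sigma_s\pi(1_W)=\pi(1_{\theta_s(W\cap\dom{s})})\sigma_s$, one computes
\[
0=\pi(1_Y)\pi(b)\pi(1_W)=\sum_{s\in F,\ x\in\dom{s}}\pi\big(1_Y\,f_s\,1_{\theta_s(W)}\big)\sigma_s ,
\]
the terms with $x\notin\dom{s}$ dropping out because $W\cap\dom{s}=\emptyset$. For the surviving terms $W\subseteq\dom{s}$, hence $1_Y1_{\theta_s(W)}=1_Y$ when $s\in S_y$ and $1_Y1_{\theta_s(W)}=0$ otherwise, so the sum collapses to $\sum_{s\in S_y}\pi(1_Yf_s)\sigma_s=0$. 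Applying this operator to $\xi$, then $q_y$, and using (\ref{joineigenvalue}) with $y\in Y$ so that $(1_Yf_s)(y)=f_s(y)=f_s(\theta_s(x))$, I obtain
\[
0=q_y\Big(\sum_{s\in S_y}\pi(1_Yf_s)\sigma_s\xi\Big)=\sum_{s\in S_y}f_s\big(\theta_s(x)\big)\,q_y(\sigma_s\xi),
\]
which is exactly the $y^{\text{th}}$ matrix entry of $(\discr)(b)q_x(\xi)$ obtained from (\ref{MatrixEntries}) via (\ref{joineigenvalue}). Since $y$ is arbitrary, $(\discr)(b)q_x(\xi)=0$, giving the reverse inclusion and hence equality of the null spaces.

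The step I expect to be the main obstacle is the construction of $W$ and $Y$: one must simultaneously separate the distinct images $\theta_s(x)$ by the sets $\theta_s(W)$ and arrange that $Y$ selects $S_y$ exactly. This is where total disconnectedness is used (to keep every set compact-open, so that the relevant characteristic functions lie in $\Lc{X}$) and where Hausdorffness provides the separation. Everything after the collapse of the sum is a routine application of (\ref{joineigenvalue}).
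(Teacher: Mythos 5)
Your proof is correct, and it follows the same outer skeleton as the paper's: start from (\ref{SoFarKernelInclusions}), take $b\in\ker(\pi)$, and show the matrix entries (\ref{MatrixEntries}) all vanish. Where you diverge is in the localization mechanism. The paper fixes $x,y$, splits the support of $b$ into $\Lambda_1$ ($y\notin\ran{s}$), $\Lambda_2$ ($y\in\ran{s}$, $\theta_{s^{\ast}}(y)\neq x$) and $\Lambda_3=S_y$, and then perturbs the \emph{vector}: it picks a single $\varphi\in\Lc{X}$ with $\varphi(x)=1$ and $\varphi(\theta_{s^{\ast}}(y))=0$ for $s\in\Lambda_2$, replaces $\xi$ by $\xi'=\pi(\varphi)\xi$, and kills the unwanted terms only \emph{after} applying $q_y$, term by term, via the eigenvalue formula (\ref{joineigenvalue}); since $\xi$ is replaced by $\xi'$ with $q_x(\xi)=q_x(\xi')$, this forces the paper to record the well-definedness observation (\ref{WellDef}). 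You instead compress the \emph{operator}: $0=\pi(1_Y)\pi(b)\pi(1_W)$, pushing $1_W$ through $\sigma_s$ by Lemma (\ref{helpful2}), so that the products $1_Y\,1_{\theta_s(W)}$ annihilate the unwanted terms before any vector is involved, yielding the operator identity $\sum_{s\in S_y}\pi(1_Yf_s)\sigma_s=0$; no analogue of (\ref{WellDef}) is needed and $\xi$ is never perturbed. The price is a heavier separation construction (your $W$ and $Y$ require disjointness of the finitely many images $\theta_s(W)$, and the condition $W\cap\dom{s}=\emptyset$ for $x\notin\dom{s}$ quietly uses that domains are \emph{clopen}, i.e.\ ampleness, plus the basis of compact-opens), whereas the paper's $\varphi$ only needs pointwise conditions at finitely many points. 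The one corner case you leave implicit — $y$ with $S_y=\emptyset$, where your $Y$ need not contain $y$ — is harmless, since there the matrix entry is an empty sum and vanishes trivially; it would be worth one sentence to say so.
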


\begin{proof}
	It is important to emphasize that,
	since $(\Pi \times U)(b)$ is well defined on each $V_x$,
	then so is the right-hand-side in (\ref{MatrixEntries}).
	Precisely, if $\xi$ and $\xi'$ are elements of $V$ such
	that $q_x(\xi) = q_x(\xi')$, then
	\begin{equation}\label{WellDef}
		q_y \bigg( \sum_{s \in S, \theta_s(x)=y}
				\pi(f_s) \sigma_s \xi \bigg)
		= q_y \bigg( \sum_{s \in S, \theta_s(x)=y}
				\pi (f_s) \sigma_s \xi' \bigg).
	\end{equation}
	
	By (\ref{SoFarKernelInclusions}),
	if $b$ is in the null space of $\pi$, it is enough to prove that
	$(\Pi \times U)(b)$ vanishes on $\bigoplus_{x \in X} V_x$, which
	amounts to prove that its matrix
	entries given by (\ref{MatrixEntries}) vanish for
	all $x$ and $y$ in $X$.
	
	Let $b =\sum_{s \in S} f_s \Delta_s$ and $\Lambda \subseteq S$
	be the subset consisting of those $s$ for which $f_s \neq 0$,
	and notice that $\Lambda$
	decomposes as the disjoint union of the following subsets:
	\begin{align*}
	& \Lambda_1 = \Set{s \in \Lambda}{y \notin \ran{s}}, \\
	& \Lambda_2 = \Set{s \in \Lambda}{y \in \ran{s} ,\ \theta_{s^{\ast}}(y) \neq x}, \\
	& \Lambda_3 = \Set{s \in \Lambda}{y \in \ran{s} ,\ \theta_{s^{\ast}}(y)=x}.
	\end{align*}

	From our hypothesis that $\pi(b)=0$, we have that, for every $\eta$ in $V$,
	\begin{equation}\label{HypothesisPiVanishes}
		0 = \pi(b) \eta
		= \sum_{s \in \Lambda} \pi(f_s\Delta_s)\eta
		= \sum_{s \in \Lambda} \pi(f_s) \sigma_s \eta.
	\end{equation}
	
	Comparing this expression with the last part of
	(\ref{MatrixEntries}), we are summing over all of $\Lambda$,
	while only the terms corresponding to $\Lambda_3$ are being
	considered there. In order to fix it,
	notice that $x$ does not lie in the finite set
	$\Set{\theta_{s^{\ast}}(y)}{s \in \Lambda_2}$, so we may
	choose $\varphi \in \Lc{X}$
	such that $\varphi(x)=1$ and
	$\varphi \big( \theta_{s^{\ast}}(y) \big)=0$
	for all $s \in \Lambda_2$.
	
	Let $\xi':=\pi(\varphi)\xi$ and notice that
	$$
	q_y \Big( \pi(f_s) \sigma_s \xi' \Big)
	= q_y \Big( \pi(f_s) \sigma_s \pi(\varphi)\xi \Big)
	= q_y \Big( \pi(f_s \balpha_s(\varphi)) \sigma_s \xi \Big)
	\overset{(\ref{joineigenvalue})}{=} f_s(y) \balpha_s(\varphi)|_y \, q_y(\sigma_s \xi).
	$$
	If $s \in \Lambda_1$, then the fact that $f_s$ is supported on
	$\ran{s}$ implies
	that $f_s(y)=0$, so the above expression vanishes. Moreover,
	if $s \in \Lambda_2$, then
	$$
	\balpha_s(\varphi)|_y = \varphi \big( \theta_{s^{\ast}}(y) \big) = 0,
	$$
	and the above expression vanishes again.
	From this we conclude that, for all $s \in \Lambda_1 \cup \Lambda_2$, we have
	\begin{equation}\label{MeanwhileClaim}
	q_y \Big(\pi(f_s)\sigma_s\xi' \Big) = 0.
	\end{equation}
	
	By noticing that
	$$
	q_x \Big( \pi(\varphi)\xi \Big)
	\overset{(\ref{joineigenvalue})}{=} \varphi(x) q_x(\xi) = q_x(\xi),
	$$
	and combining (\ref{MeanwhileClaim}) with (\ref{HypothesisPiVanishes}), we then have
	\begin{align*}
	0
	& = q_y \bigg( \sum_{s \in \Lambda} \pi(f_s) \sigma_s \xi' \bigg)
	= q_y \bigg( \sum_{s \in \Lambda_1} \pi(f_s)\sigma_s \xi' \bigg)
	+ q_y \bigg( \sum_{s \in \Lambda_2} \pi(f_s) \sigma_s\xi' \bigg)
	+ q_y \bigg( \sum_{s \in \Lambda_3} \pi(f_s) \sigma_s\xi' \bigg) \\
	& = q_y \bigg( \sum_{s \in \Lambda_3} \pi(f_s) \sigma_s\xi' \bigg)
	\overset{(\ref{WellDef})}{=}
			q_y \bigg( \sum_{s \in \Lambda_3} \pi(f_s) \sigma_s\xi \bigg)
	\overset{(\ref{MatrixEntries})}{=} \Big( (\Pi \times U)(b) q_x(\xi) \Big)_y.
	\end{align*}
	This shows that $(\Pi \times U)(b)$ vanishes on $\bigoplus_{x \in X} V_x$,
	and hence
	the proof is concluded.
\end{proof}

This result is fundamental for our study of ideals
in $\Lcs$. The method we shall adopt will be to start with any ideal
$J \trianglelefteq \Lcs$, and then use (\ref{nondegrep}) and
(\ref{localunits}) to find a representation $\pi$, as above, such
that $\ker(\pi)=J$. By (\ref{ThirdNullSpace}) we may replace $\pi$ by
$\discr$ acting on $\bigoplus_{x \in X} V_x$, without affecting null
spaces, and it will turn out that the latter decomposes as a direct
sum of very straightforward
sub-representations, which we will now describe.

\begin{proposition}\label{OrbitInvariant}
	Given any $x$ in $X$, we have that
	$$
	\bigoplus_{y \in \orb(x)} V_y
	$$
	is invariant under $\discr$.
\end{proposition}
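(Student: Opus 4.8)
The plan is to reduce the statement to a direct reading of the concrete formula for the restriction of $\discr$ to $\bigoplus_{x \in X} V_x$ established in Proposition (\ref{ExpressionForPixU}). Since each $V_y$ is the range of the surjection $q_y$, and since under the natural identifications the subspace $\bigoplus_{y \in \orb(x)} V_y$ is spanned by the union of the copies of $V_y$ with $y \in \orb(x)$, it suffices to show that $(\discr)(b)$ carries every vector of the form $q_z(\xi)$, with $z \in \orb(x)$ and $\xi \in V$, back into $\bigoplus_{y \in \orb(x)} V_y$, for every $b \in \Lcs$.

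So I would fix $z \in \orb(x)$ and $\xi \in V$, write $b = \sum_{s \in S} f_s \Delta_s$, and apply (\ref{ExpressionForPixU}) to obtain
$$
(\discr)(b) q_z(\xi)
= \sum_{s \in S} \bool{z \in \dom{s}}
		q_{\theta_s(z)} \big( \pi(f_s) \sigma_s \xi \big).
$$
Every nonzero summand here lies in the copy of $V_{\theta_s(z)}$ for some $s$ with $z \in \dom{s}$, so the entire problem collapses to the single point that $\theta_s(z) \in \orb(x)$ whenever $z \in \orb(x)$ and $z \in \dom{s}$; that is, that orbits are invariant under the action.

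This last fact is where the only genuine work lies, and it follows from $\theta$ being a semigroup homomorphism into $\mathcal{I}(X)$. Writing $z = \theta_t(x)$ with $x \in \dom{t}$, I would take the product $st$ and use that $\theta_{st} = \theta_s \circ \theta_t$ as partial bijections, composed on the largest domain where the composite is defined. Since $x \in \dom{t}$ and $\theta_t(x) = z \in \dom{s}$, the point $x$ lies in the domain of $\theta_s \circ \theta_t$, hence in $\dom{(st)}$, and $\theta_s(z) = \theta_s(\theta_t(x)) = \theta_{st}(x) \in \orb(x)$. Feeding this back into the displayed expansion shows that $(\discr)(b) q_z(\xi) \in \bigoplus_{y \in \orb(x)} V_y$, which is exactly what is needed. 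The main obstacle is thus the elementary bookkeeping identifying $\theta_s(z)$ as a point of $\orb(x)$; once that is in place, the invariance is immediate from (\ref{ExpressionForPixU}).
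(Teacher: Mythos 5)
Your proof is correct and is essentially the paper's own argument: the paper cites (\ref{Ufunctoriality}.ii) for the action of $U_s$ together with the diagonal action of $\Pi(f)$ on each $V_y$, which is exactly what the formula of (\ref{ExpressionForPixU}) that you invoke packages into a single expression. The only difference is that you spell out the orbit-closure fact that $\theta_s(z)\in\orb(x)$ whenever $z\in\orb(x)$ and $z\in\dom{s}$ (via $\theta_{st}=\theta_s\circ\theta_t$ in $\mathcal{I}(X)$), a point the paper's two-line proof leaves implicit.
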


\begin{proof}
	By (\ref{Ufunctoriality}.ii), for every $s \in S$, this
	space is invariant under $U_s$.
	It is also invariant under $\Pi(f)$, for every $f \in \Lc{X}$,
	since in fact each $V_y$ has this property.
	The invariance under $\discr$ then follows.
\end{proof}

We shall now study the representation obtained by restricting
$\discr$ to the invariant space mentioned above.

\begin{definition} \label{DefineRepOrbit}
	Given $x$ in $X$, we shall denote the invariant subspace referred to in
	(\ref{OrbitInvariant}) by $W_x$, while the representation of $\Lcs$
	obtained by restricting $\discr$ to $W_x$ will be denoted by $\rho_x$.
\end{definition}

If $R \subseteq X$ is a system of representatives for the orbit
relation in $X$, namely, if $R$ contains exactly one point of
each orbit relative to the action of $S$ on $X$, notice that
$$
\bigoplus_{y \in X} V_y = \bigoplus_{x \in R} W_x,
$$
while the restriction of $\discr$ to $\bigoplus_{y \in X} V_y$ is equivalent to
$\bigoplus_{x \in R} \rho_x$.

Before we state the main result of this section we should recall that right
after the proof of (\ref{localunits}) we fixed an arbitrary ideal
$J \trianglelefteq \Lcs$, which incidentally has been forgotten ever since.

\begin{theorem}\label{JIsIntersRho}
	Let $J$ be an arbitrary ideal of $\Lcs$, and let $\pi$ be a non-degenerate
	representation of $\Lcs$, such that $J=\ker(\pi)$. Considering the
	representations $\rho_x$ constructed above, we have
	$$
	J = \bigcap_{x \in R} \ker(\rho_x),
	$$
	where $R \subseteq X$ is any system of representatives for the orbit relation in $X$.
\end{theorem}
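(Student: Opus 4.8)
The plan is to assemble pieces that are already in place rather than to establish anything genuinely new; all the real work has been done in the preceding results. The crucial input is Theorem (\ref{ThirdNullSpace}), which guarantees that passing from $\pi$ to the restriction of $\discr$ to $\bigoplus_{x \in X} V_x$ leaves the null space unchanged. Thus the first step is simply to record that $J = \ker(\pi)$ equals the null space of $\discr$ acting on $\bigoplus_{x \in X} V_x$.

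Next I would invoke the orbit decomposition. Since $R$ contains exactly one representative of each orbit and the orbits partition $X$, the space $\bigoplus_{y \in X} V_y$ splits as $\bigoplus_{x \in R} W_x$, where $W_x = \bigoplus_{y \in \orb(x)} V_y$; and by Proposition (\ref{OrbitInvariant}) each $W_x$ is invariant under $\discr$. Consequently the restriction of $\discr$ to $\bigoplus_{y \in X} V_y$ is precisely the direct sum $\bigoplus_{x \in R} \rho_x$ of the subrepresentations $\rho_x$ introduced in (\ref{DefineRepOrbit}), exactly as noted in the paragraph preceding the statement.

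The remaining — and only conceptual — step is the observation that the null space of a direct sum of representations is the intersection of the null spaces of the summands. Concretely, because each $W_x$ is invariant, any $b \in \Lcs$ acts on $\bigoplus_{x \in R} W_x$ coordinatewise, so $(\discr)(b)$ annihilates the whole space if and only if it annihilates every $W_x$, that is, if and only if $b \in \ker(\rho_x)$ for all $x \in R$. Combining this with the two previous paragraphs yields
$$
J = \ker \big( \discr|_{\bigoplus_{y \in X} V_y} \big)
= \ker \bigg( \bigoplus_{x \in R} \rho_x \bigg)
= \bigcap_{x \in R} \ker(\rho_x),
$$
as claimed.

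I do not anticipate a real obstacle here: the difficulty was entirely absorbed into Theorem (\ref{ThirdNullSpace}), which shows that discretization preserves null spaces, so the present statement is essentially bookkeeping around that fact. The only point deserving care is that the splitting $\bigoplus_{y \in X} V_y = \bigoplus_{x \in R} W_x$ is a genuine direct sum of \emph{invariant} subspaces, which is what allows kernels to distribute over it; this is precisely what Proposition (\ref{OrbitInvariant}) supplies.
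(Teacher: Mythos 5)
Your proposal is correct and follows essentially the same route as the paper's proof: invoke Theorem (\ref{ThirdNullSpace}) to identify $J$ with the null space of $\discr$ restricted to $\bigoplus_{x \in X} V_x$, decompose that space as $\bigoplus_{x \in R} W_x$ using the orbit partition and Proposition (\ref{OrbitInvariant}), and conclude that the kernel of the direct sum of the $\rho_x$ is the intersection of their kernels. The only difference is that you spell out the last step, which the paper dismisses as evident.
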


\begin{proof}
	The null space of $\pi$ coincides with the null space of the restriction of $\discr$
	to $\bigoplus_{x \in X} V_x$ by (\ref{ThirdNullSpace}). Since the latter
	representation is equivalent to the direct sum of the $\rho_x$, as seen above,
	the conclusion is evident.
\end{proof}

\subsection{The representations $\rho_x$}

In this section we are going to maintain all standing hypothesis of the previous section,
such as the ideal $J \trianglelefteq \Lcs$ and the representation
$\pi: \Lcs \to L(V)$ fixed there.

The usefulness of Theorem (\ref{JIsIntersRho}) in describing $J$
relies in our ability to describe the ideals $\ker(\rho_x)$
mentioned there. The good news is that the representations $\rho_x$ are
induced from representations of isotropy group algebras.
The main goal of this section is to prove that this is indeed the case.

Initially, notice that, if $x \in X$ and $s,t \in \till_x$ are
such that
$\germ{s}{x} = \germ{t}{x}$, then there exists $e \in E(S)$ such
that $x \in X_e$ and $se=te$. Hence, for $\eta \in V_x$,
we have
\begin{align}\label{welldefined}
	U_s(\eta)
	& = \mu_s^x(\eta)
	\overset{(\ref{mueident})}{=} \mu_s^x(\mu_e^x(\eta))
	\overset{(\ref{mufunctoriality})}{=} \mu_{se}^x (\eta) \nonumber \\
	& = \mu_{te}^x(\eta) 
	\overset{(\ref{mufunctoriality})}{=} \mu_t^x(\mu_e^x(\eta))
	\overset{(\ref{mueident})}{=} \mu_t^x(\eta)
	= U_t(\eta).
\end{align}

Our next result refers to the behavior of the operators $U_s$
when $\germo{s}$ lies in $G_x$.

\begin{proposition} \label{EnterKhModule}
	Fixing $x$ in $X$, let $G_x$ be the isotropy group of $x$. Then, for each
	$\germo{s}$ in $G_x$, we have that $V_x$ is invariant under $U_s$. 
	Moreover, the restriction of $U_s$ to $V_x$ is an invertible operator
	and the correspondence
	$$
	\germo{s} \in G_x \mapsto \restr{U_s}{V_x} \in GL(V_x)
	$$
	is a group representation.
	\end{proposition}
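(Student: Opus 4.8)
The plan is to read off all three assertions directly from the properties of the operators $U_s$ already established in (\ref{Ufunctoriality}), together with the representative-independence recorded in (\ref{welldefined}) just above; essentially all the real work has already been packaged into those statements. First I would fix a germ $\germo{s}$ in $G_x$, which by the definition in (\ref{LHOrb0}) means $x \in \dom{s}$ and $\theta_s(x) = x$. The invariance of $V_x$ under $U_s$ is then immediate from (\ref{Ufunctoriality}.ii): since $x \in \dom{s}$, the operator $U_s$ coincides with $\mu_s^x$ on $V_x$ and carries $V_x$ into $V_{\theta_s(x)}$, which equals $V_x$ because $\theta_s(x)=x$. For invertibility, (\ref{Ufunctoriality}.iii) asserts that $U_s$ maps $V_x$ bijectively onto $V_{\theta_s(x)} = V_x$, so $\restr{U_s}{V_x}$ is a bijective linear endomorphism of $V_x$, hence an element of $GL(V_x)$.

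Next I would check that the assignment $\germo{s} \mapsto \restr{U_s}{V_x}$ descends to a well-defined map on $G_x$, i.e.\ that it does not depend on the chosen representative $s$ of the germ $\germo{s}$. This is exactly the content of (\ref{welldefined}): whenever $s,t \in \till_x$ satisfy $\germo{s}=\germo{t}$, one has $U_s(\eta)=U_t(\eta)$ for every $\eta \in V_x$, so $\restr{U_s}{V_x}=\restr{U_t}{V_x}$. Thus setting $\Phi(\germo{s}):=\restr{U_s}{V_x}$ yields a genuine map $\Phi: G_x \to GL(V_x)$.

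Finally I would verify that $\Phi$ is a group homomorphism. Given $\germo{s},\germo{t}\in G_x$, their product in $G_x$ is $\germo{ts}$, formed via (\ref{operation}) with $y=\theta_s(x)=x$. To invoke (\ref{Ufunctoriality}.iv) I must confirm $x \in X_{s^{\ast}t^{\ast}ts}$; but this domain consists precisely of the points $z\in\dom{s}$ with $\theta_s(z)\in\dom{t}$, and indeed $x\in\dom{s}$ while $\theta_s(x)=x\in\dom{t}$, both because $\germo{s},\germo{t}\in G_x$. Then (\ref{Ufunctoriality}.iv), together with $\theta_s(x)=x$ and $\theta_{ts}(x)=x$, gives $\restr{U_{ts}}{V_x}=\restr{U_t}{V_x}\circ\restr{U_s}{V_x}$, that is $\Phi(\germo{ts})=\Phi(\germo{t})\Phi(\germo{s})$. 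The identity of $G_x$, represented by any idempotent $e\in\till_x$, is sent to the identity operator, since $\restr{U_e}{V_x}=\mu_e^x$ is the identity of $V_x$ by (\ref{mueident}) and (\ref{Ufunctoriality}.ii). Hence $\Phi$ is a group representation of $G_x$ on $V_x$.

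No step presents a genuine obstacle once the preceding machinery is in place; the only points demanding care are the representative-independence of $\Phi$, which is settled by (\ref{welldefined}), and the bookkeeping check that the domain condition $x\in X_{s^{\ast}t^{\ast}ts}$ holds so that the functoriality of (\ref{Ufunctoriality}.iv) may be applied and the group-operation convention $\germo{t}\cdot\germo{s}=\germo{ts}$ matches the composition order $U_t\circ U_s=U_{ts}$.
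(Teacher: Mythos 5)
Your proof is correct and takes essentially the same route as the paper: the paper's own proof consists of two sentences, citing (\ref{welldefined}) for representative-independence and (\ref{Ufunctoriality}) for invariance, invertibility, and multiplicativity, which is exactly the machinery you invoke. Your write-up merely makes explicit the bookkeeping (the domain check $x \in X_{s^{\ast}t^{\ast}ts}$ and the matching of composition order with the group operation $\germo{t}\cdot\germo{s}=\germo{ts}$) that the paper leaves to the reader.
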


\begin{proof}
	Its well definiteness follows from (\ref{welldefined}). The
	remaining statements are
	immediate consequence of (\ref{Ufunctoriality}).
\end{proof}

The representation of $G_x$ on $V_x$ referred to in the above
Proposition may be integrated to a representation of $KG_x$, which
in turn makes $V_x$ into a left $KG_x$-module. Applying the
machinery of Section \ref{induction}, we may then form the induced
module $M_x \otimes V_x$, as in (\ref{InducedModule}),
which we may also view as a representation of $\Lcs$ on $M_x \otimes V_x$.

\begin{theorem}\label{RhoIsInduced}
For each $x$ in $X$, we have that $\rho_x$ is equivalent to the
representation induced from the left $KG_x$-module $V_x$, as
described above.
\end{theorem}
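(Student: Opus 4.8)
The theorem asserts that $\rho_x$ (the restriction of the discretized representation $\Pi \times U$ to $W_x = \bigoplus_{y \in \orb(x)} V_y$) is equivalent to the induced representation $\ind_x(V_x) = M_x \otimes_{KG_x} V_x$. Both are representations of $\Lcs$ on vector spaces, so the goal is to exhibit an explicit $\Lcs$-equivariant linear isomorphism between them.

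Let me think about what the two spaces look like and how to build the map.

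$W_x = \bigoplus_{y \in \orb(x)} V_y$. Each point $y \in \orb(x)$ is $\theta_s(x)$ for some germ $\germo{s} \in L_x$. The basis of $M_x$ is indexed by $L_x = \{\germo{s} : x \in \dom{s}\}$, and the map $\germo{s} \mapsto \theta_s(x)$ sends $L_x$ onto $\orb(x)$, with fibers being the left $G_x$-cosets (two germs $\germo{s}, \germo{t}$ have $\theta_s(x)=\theta_t(x)$ iff $s^*t \in \tilde{G}_x$). So $M_x$ is essentially a free module recording, for each point $y$ in the orbit, all the germs carrying $x$ to $y$; tensoring over $KG_x$ with $V_x$ quotients out this $G_x$-redundancy. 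Meanwhile $V_y = U_s V_x$ for any germ $\germo{s}$ with $\theta_s(x)=y$, by (\ref{Ufunctoriality}.iii). This strongly suggests the map.
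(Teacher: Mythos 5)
Your plan is headed in exactly the direction of the paper's proof: the paper also builds an explicit $\Lcs$-equivariant isomorphism between $W_x=\bigoplus_{y\in\orb(x)}V_y$ and $M_x\otimes_{KG_x}V_x$, exploiting precisely the coset structure you describe. But your proposal stops at the point where the proof actually begins: you say the setup ``strongly suggests the map'' and never define it, let alone verify anything about it. The paper defines $T:M_x\times V_x\to W_x$ by $T\bigl(\sum_{\germo{s}}c_{\germo{s}}\delta_{\germo{s}},\xi\bigr)=\sum_{\germo{s}}c_{\germo{s}}U_s(\xi)$ and must then check several nontrivial points, none of which appear in your proposal: (a) well-definedness, i.e.\ that $U_s(\xi)$ for $\xi\in V_x$ depends only on the germ $\germo{s}$ and not on the representative $s$ (this is equation (\ref{welldefined}), which itself relies on (\ref{mueident}) and (\ref{mufunctoriality})); (b) that $T$ is $KG_x$-balanced, $T(\delta_{\germo{s}}\delta_{\germo{t}},\xi)=T(\delta_{\germo{s}},\delta_{\germo{t}}\cdot\xi)$, which is what allows it to descend to a map $\tau$ on the tensor product $M_x\otimes_{KG_x}V_x$ at all; (c) bijectivity, which the paper proves by explicitly constructing an inverse $\upsilon$ using a total system of representatives $R_x\subseteq L_x$ of left $G_x$-classes, $\upsilon_y(\xi)=\delta_{\germo{r}}\otimes U_{r^{\ast}}(\xi)$ for $y=\theta_r(x)$, and checking both composites are the identity; and (d) equivariance, i.e.\
$\tau\bigl((f\Delta_t)\delta_{\germo{s}}\otimes\xi\bigr)=\rho_x(f\Delta_t)\tau(\delta_{\germo{s}}\otimes\xi)$,
whose verification needs the case analysis on whether $ts$ lies in $\till_x$, the fact from (\ref{Ufunctoriality}) that $U_{ts}$ vanishes on $V_x$ otherwise, and the fact from (\ref{eigenvalue}) that $\Pi(f)$ acts on $V_{\theta_{ts}(x)}$ as the scalar $f(\theta_{ts}(x))$.

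Each of these steps is where the content of the theorem lives --- in particular (b) and (d) are what make the module structures on both sides genuinely match, and (a) is a real issue because $U$ is defined on semigroup elements, not on germs. As written, your proposal is a correct identification of the strategy but not a proof; to complete it you would need to carry out all four verifications above.
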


\begin{proof}
	Recalling from (\ref{DefineRepOrbit}) that $\rho_x$ acts on
	$$
	W_x = \bigoplus_{y \in \orb (x)} V_y,
	$$
	and that $M_x$ is a right $KG_x$-module, and viewing $V_x$ as a left
	$KG_x$-module via the representation mentioned in (\ref{EnterKhModule}),
	we claim that $T: M_x \times V_x \to W_x$ given by
	$$
	T \bigg( \sum_{ \germo{s} \in L_x} c_{\germo{s}} \delta_{\germo{s}},\xi \bigg)
	= \sum_{\germo{s} \in L_x} c_{\germo{s}} U_s(\xi)
	$$
	is a well-defined, balanced, bilinear map.
	
	Indeed, it is well defined by (\ref{welldefined}) and clearly bilinear.
	Moreover, for every $\germo{s} \in L_x$,
	$\germo{t} \in G_x$ and $\xi$ in $V_x$, we have
	$$
	T(\delta_{\germo{s}}\delta_{\germo{t}},\xi )
	= T(\delta_{\germo{st}}, \xi)
	= U_{st}(\xi)
	\overset{(\ref{Ufunctoriality})}{=} U_s \big( U_t(\xi) \big)
	= T \big(\delta_{\germo{s}}, U_t(\xi) \big)
	= T(\delta_{\germo{s}}, \delta_{\germo{t}} \cdot \xi).
	$$
	Therefore, there exists a unique linear map $\tau :M_x \otimes V_x \to W_x$,
	such that $\tau(\delta_{\germo{s}} \otimes \xi) = U_s(\xi)$. We shall
	next prove that $\tau$
	is an isomorphism by exhibiting an inverse for it.
	
	With this in mind, let $R_x \subseteq L_x$ be a total system of
	representatives for left $G_x$-classes. Thus, if $y$ is in the orbit of
	$x$, there exists a
	unique $\germo{r} \in R_x$ such that $\theta_r(x)=y$, so that
	$U_{r^{\ast}}$ maps $V_y$
	onto $V_x$, by (\ref{Ufunctoriality}). We therefore let
	$$
	\upsilon_y: V_y \to M_x \otimes V_x
	$$
	be given by $\upsilon_y(\xi) = \delta_{\germo{r}} \otimes U_{r^{\ast}}(\xi)$,
	for every
	$\xi $ in $V_y$. Putting all of the $\upsilon_y$ together, let
	$$
	\upsilon: W_x = \bigoplus_{y \in \orb(x)} V_y \longrightarrow M_x \otimes V_x
	$$
	be the unique linear map coinciding with $\upsilon_y$ on $V_y$,
	for every $y$ in $\orb(x)$.

	We claim that $\upsilon$ is the inverse of $\tau$. To see this,
	let $\germo{s}$
	be any element in
	$L_x$, and let $\xi$ be picked in $V_x$ arbitrarily. Let $\germo{r} \in R_x$
	be such that $\theta_s(x) = \theta_r(x)$. Then, $r^{\ast}s$ lies in
	$\tilh_x$ and $U_s(\xi) \in V_y$, where $y:= \theta_s(x) = \theta_r(x)$.
	We then have
	\begin{align*}
		\upsilon \bigg( \tau(\delta_{\germo{s}} \otimes \xi) \bigg)
		& = \upsilon \big( U_s(\xi) \big)
		= \delta_{\germo{r}} \otimes U_{r^{\ast}} \big( U_s(\xi) \big)
		= \delta_{\germo{r}} \otimes U_{r^{\ast}s}(\xi)
		= \delta_{\germo{r}} \otimes \delta_{\germo{r^{\ast}s}} \cdot \xi \\
		& = \delta_{\germo{r}} \delta_{\germo{r^{\ast}s}} \otimes \xi
		= \delta_{\germo{rr^{\ast}s}} \otimes \xi
		= \delta_{\germo{s}} \otimes \xi.
	\end{align*}
	On the other hand,
	given any $y$ in $\orb(x)$ and $\xi \in V_y$, write $y=\theta_r(x)$,
	for $\germo{r} \in R_x$, and notice that
	$$\tau \big(\upsilon (\xi) \big)
	=\tau \bigg(\delta_{\germo{r}} \otimes U_{r^{\ast}}(\xi)\bigg)
	= U_r \big(U_{r^{\ast}} (\xi) \big)
	= U_{rr^{\ast}}(\xi)
	= \xi.
	$$
	
	Therefore $\tau$ is indeed an isomorphism between the $K$-vector spaces
	$M_x \otimes V_x$ and
	$W_x$. We will next prove that $\tau$ is equivariant for the respective
	actions of
	$\Lcs$, which amounts to say that it is linear as a map between left
	$\Lcs$-modules.
	For this, given $t \in S$, and $f \in \Lc{\ran{t}}$, we must prove that
	\begin{equation}\label{ThisIsCovariance}
		\tau \bigg( (f \Delta_t) \delta_{\germo{s}} \otimes \xi \bigg)
		= \rho_x(f \Delta_t) \bigg( \tau(\delta_{\germo{s}} \otimes \xi ) \bigg),
	\end{equation}
	for all $\germo{s} \in L_x$ and all $\xi \in V_x$.
	
	Notice that, if $\germo{s} \in L_x$ and $\xi \in V_x$, then
	the left-hand side of (\ref{ThisIsCovariance}) equals
	$$
	\tau \bigg( (f \Delta_t ) \delta_{\germo{s}} \otimes \xi \bigg)
	= \bool{ts \in \till_x} f \big(\theta_{ts}(x) \big)
			\tau \big( \delta_{\germo{ts}} \otimes \xi \big)
	= \bool{ts \in \till_x} f \big(\theta_{ts}(x) \big) U_{ts}(\xi)
	$$
	while the right-hand side becomes
	\begin{equation}\label{RhofDgOnTau}
		\rho_x(f \Delta_t) \bigg( \tau( \delta_{\germo{s}} \otimes \xi) \bigg)
		= \Pi(f) U_t U_s(\xi)
		= \Pi(f) U_{ts}(\xi).
	\end{equation}
	Since $\xi$ lies in $V_x$, recall from (\ref{Ufunctoriality})
	that $U_{ts}$ vanishes on $V_x$, unless $ts$
	lies $\till_x$,
	in which case $U_{ts}$ maps $V_x$ bijectively onto
	$V_{\theta_{ts}(x)}$. Hence, (\ref{RhofDgOnTau}) becomes 
	$$
	\rho_x(f \Delta_t) \bigg( \tau( \delta_{\germo{s}} \otimes \xi) \bigg)
	= \Pi(f) U_{ts}(\xi)
	= \bool{ts \in \till_x} f \big( \theta_{ts}(x) \big) U_{ts}(\xi)
	$$
	because, $\Pi(f)$ acts on $V_{\theta_{ts}(x)}$ by scalar
	multiplication by
	$f \big( \theta_{ts}(x) \big)$,
	according to (\ref{eigenvalue}).
	
	This proves (\ref{ThisIsCovariance}), so $\tau$ is indeed covariant.
\end{proof}

Summarizing what we have done so far, the following is the main
result of this work.

\begin{theorem}\label{MainResult}
	Let $(\theta, S, X)$ be an ample system and $\Lcs$ be the corresponding
	crossed product algebra over a field $K$.
	Then, every ideal $J \trianglelefteq \Lcs$ is the intersection of
	ideals induced from isotropy groups.
\end{theorem}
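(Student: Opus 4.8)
The plan is to assemble the apparatus built in the previous sections; almost all the substantive work is already done, and what remains is to recognize the kernels $\ker(\rho_x)$ as induced ideals. First I would invoke Corollary (\ref{AllIdealsReps})---itself a consequence of (\ref{nondegrep}) and (\ref{localunits})---to realize the given ideal $J \trianglelefteq \Lcs$ as the null space of a non-degenerate representation $\pi \colon \Lcs \to L(V)$. With such a $\pi$ in hand, Theorem (\ref{JIsIntersRho}) immediately expresses $J$ as the intersection $J = \bigcap_{x \in R} \ker(\rho_x)$, where $R \subseteq X$ is any system of representatives for the orbit relation and the $\rho_x$ are the subrepresentations of the discretization constructed in (\ref{DefineRepOrbit}). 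Thus the theorem reduces to showing that each $\ker(\rho_x)$ is an ideal induced from the isotropy group $KG_x$.

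For this identification I would appeal to Theorem (\ref{RhoIsInduced}), which asserts that $\rho_x$ is equivalent to the representation of $\Lcs$ on the induced module $M_x \otimes V_x$, where $V_x$ carries the left $KG_x$-module structure exhibited in Proposition (\ref{EnterKhModule}). Since equivalent representations have identical null spaces, $\ker(\rho_x)$ coincides with the annihilator of $M_x \otimes V_x$ in $\Lcs$. Letting $I_x \trianglelefteq KG_x$ denote the annihilator of the module $V_x$, Proposition (\ref{AnnInducedVsIndIdeal}) then identifies this annihilator with the induced ideal $\ind_x(I_x)$. Hence $\ker(\rho_x) = \ind_x(I_x)$, and combining this with the decomposition above yields
\[
J = \bigcap_{x \in R} \ind_x(I_x),
\]
displaying $J$ as an intersection of ideals induced from the isotropy groups $G_x$, as required.

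In truth there is no serious obstacle left at this stage: the two genuinely hard steps have already been surmounted, namely Theorem (\ref{ThirdNullSpace}), which shows that the discretized representation restricted to $\bigoplus_{x \in X} V_x$ has the same null space as $\pi$, and Theorem (\ref{RhoIsInduced}), which produces the explicit equivalence between $\rho_x$ and the induced representation. The only point demanding a little care in the final assembly is the transition from the module-theoretic language of Theorems (\ref{JIsIntersRho}) and (\ref{RhoIsInduced}) to the ideal-theoretic statement of the conclusion: one must observe that the null space of a representation and the annihilator of the corresponding module are one and the same, so that (\ref{AnnInducedVsIndIdeal}) applies verbatim and each orbit summand contributes precisely one induced ideal to the intersection.
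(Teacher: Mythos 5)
Your proposal is correct and follows essentially the same route as the paper's own proof: realize $J$ as the kernel of a non-degenerate representation via (\ref{nondegrep}) and (\ref{localunits}), decompose it as $\bigcap_{x \in R} \ker(\rho_x)$ by (\ref{JIsIntersRho}), identify each $\rho_x$ with an induced representation by (\ref{RhoIsInduced}), and conclude via (\ref{AnnInducedVsIndIdeal}) that each $\ker(\rho_x)$ is an induced ideal. Your closing remark that null spaces of equivalent representations coincide, so that the annihilator identification applies verbatim, is exactly the (implicit) glue in the paper's argument.
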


\begin{proof}
	Let $R \subseteq X$ be a system of representatives for the orbit relation on $X$.
	Using (\ref{JIsIntersRho}) we may write $J$ as the intersection of the null
	spaces of the $\rho_x$, for $x$ in $R$, while (\ref{RhoIsInduced})
	tells us that $\rho_x$ is equivalent to the representation induced from a representation
	of the isotropy group at $x$. The null space of $\rho_x$ is therefore induced from an ideal
	in the group algebra of the said isotropy group by (\ref{AnnInducedVsIndIdeal}),
	whence the result.
\end{proof}

Next proposition goes in the way of describing explicitly a
given ideal $J \trianglelefteq \Lcs$ as the intersection of induced
ideals.

\begin{proposition}\label{IntersectionDescription}
	Under the assumptions of (\ref{MainResult}), choose a system $R$
	of representatives for the orbit relation on $X$. For each $x$ in
	$R$, let $G_x$ be the isotropy group at $x$, and let
	$$
	\Gamma_x:\Lcs \to KG_x
	$$
	be as in (\ref{IntroduceGamma}). Then, given any ideal
	$J \trianglelefteq \Lcs$ we have that $\Gamma_x(J)$ is an
	admissible ideal of $KG_x$, and
	$$
	J = \bigcap_{x \in R} \ind_x(\Gamma_x(J)).
	$$	
\end{proposition}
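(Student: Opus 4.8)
The plan is to dispatch the admissibility claim immediately and then prove the displayed identity by a sandwiching argument between $J$ and the kernels $\ker(\rho_x)$ produced in the proof of (\ref{MainResult}). The fact that $\Gamma_x(J)$ is an admissible ideal of $KG_x$ for every $x$ is precisely (\ref{GammaAdmissible}), so only the equality $J = \bigcap_{x \in R}\ind_x(\Gamma_x(J))$ requires work.

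First I would record the easy lower bound: by (\ref{largestideal}.i) one has $J \subseteq \ind_x(\Gamma_x(J))$ for every $x \in X$, in particular for every $x \in R$. For the matching upper bound, I would return to the fixed non-degenerate representation $\pi$ with $\ker(\pi)=J$ and the representations $\rho_x$ built from it. By (\ref{JIsIntersRho}) we have $J = \bigcap_{x \in R}\ker(\rho_x)$, and by (\ref{RhoIsInduced}) together with (\ref{AnnInducedVsIndIdeal}) each $\ker(\rho_x)$ equals $\ind_x(I_x)$, where $I_x$ denotes the annihilator of the $KG_x$-module $V_x$. The key observation is then that $J \subseteq \ker(\rho_x) = \ind_x(I_x)$, so applying $\Gamma_x$ and invoking (\ref{youaretheone}.ii) gives $\Gamma_x(J) \subseteq \Gamma_x(\ind_x(I_x)) \subseteq I_x$; since induction is monotone (immediate from Definition (\ref{InducedIdeal})), this forces $\ind_x(\Gamma_x(J)) \subseteq \ind_x(I_x) = \ker(\rho_x)$.

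Combining the two bounds, for each $x \in R$ one has the chain
\[
J \subseteq \ind_x(\Gamma_x(J)) \subseteq \ker(\rho_x),
\]
and intersecting over $R$ yields
\[
J \subseteq \bigcap_{x \in R}\ind_x(\Gamma_x(J)) \subseteq \bigcap_{x \in R}\ker(\rho_x) = J,
\]
so equality holds throughout, which is the assertion. The only conceptual subtlety, and the point I would be most careful about, is that the explicit admissible ideal $\Gamma_x(J)$ need not equal the representation-theoretic annihilator $I_x$; in general one has only the inclusion $\Gamma_x(J) \subseteq I_x$. This single inclusion, however, is exactly what monotonicity of induction converts into the upper bound $\ind_x(\Gamma_x(J)) \subseteq \ker(\rho_x)$, while (\ref{largestideal}.i) supplies the lower bound, so no further estimate is needed and there is no genuinely hard computation.
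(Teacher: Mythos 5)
Your proof is correct and takes essentially the same route as the paper: admissibility is quoted from (\ref{GammaAdmissible}), the lower bound $J \subseteq \ind_x(\Gamma_x(J))$ comes from (\ref{largestideal}), and the upper bound is obtained by writing $J = \bigcap_{x \in R} \ker(\rho_x) = \bigcap_{x \in R} \ind_x(I_x)$ and using (\ref{youaretheone}.ii) together with monotonicity of induction to get $\ind_x(\Gamma_x(J)) \subseteq \ind_x(I_x)$. The only cosmetic difference is that you invoke (\ref{largestideal}.i) directly for the lower bound, whereas the paper phrases the same fact through the \emph{largest ideal} characterization in (\ref{largestideal}.ii).
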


\begin{proof}
	Let ${I'}_x:=\Gamma_x(J)$. That each ${I'}_x$ is an
	admissible ideal follows at once from (\ref{GammaAdmissible}).	
	For each $x$ in $R$, let $I_x$ be the null space of the
	representation $\rho_x$
	referred to in the proof of (\ref{MainResult}), so that
	$$
	J = \bigcap_{x \in R} \ind_x(I_x).
	$$
	Observe that for each $x \in R$, we have
	$$
	{I'}_x
	= \Gamma_x(J)
	= \Gamma_x \bigg( \bigcap_{y \in R} \ind_y({{I}_y}) \bigg)
	\subseteq \Gamma_x \Big( \ind_x(I_x) \Big)
	\overset{(\ref{youaretheone})}{\subseteq} I_x.
	$$
	Consequently
	$\ind_x({I'}_x) \subseteq \ind_x({I}_x)$, whence
	$$
	\bigcap_{x \in R} \ind_x({I'}_x)
	\subseteq \bigcap_{x \in R} \ind_x({I}_x)
	= J.
	$$

	On the other hand, we have by (\ref{largestideal}) that $\ind_x({I'}_x)$
	is the largest among the ideals of $\Lcs$ mapping into ${I'}_x$
	under $\Gamma_x$. Since
	$\Gamma_x(J)={I'}_x$, by definition, we have that $J$ is among
	such ideals, so
	$J \subseteq \ind_x({I'}_x)$, and then
	$$
	J \subseteq \bigcap_{x \in R} \ind_x({I'}_x),
	$$
	concluding the proof.
\end{proof}

\markboth{}{Steinberg}
\section{Application for Steinberg algebras}

In this section, we prove that every Steinberg algebra associated with an
ample groupoid can be realized as an inverse semigroup crossed product algebra
of the form $\Lcs$.
For the task, we first show that the Steinberg algebra associated with the
groupoid of germs of an ample dynamical
system is isomorphic to the crossed product
algebra as a consequence of the theory we have developed so far.
Then, combining this with an Exel's result in \cite{ExelCombinatorial},
we get the promised result.

We assume the reader is familiar with the notion of topological groupoids
and in particular with its basic notations: a groupoid is usually denoted
by $\G$, its unit space by $\G^{(0)}$, and the set of composable pairs by
$\G^{(2)}$. The source and range maps are denoted by $d$ and $r$,
respectively.

An \emph{\'etale} groupoid is a topological groupoid $\G$, whose unit
space $\G^{(0)}$ is locally compact and Hausdorff in the relative topology,
and such that the range map $r: \G \to \G^{(0)}$ is a local homeomorphism
\cite{ExelCombinatorial}.

A very important class of \'etale groupoids is that of \emph{ample}
groupoids \cite{patersonbook}. An \'etale groupoid is called ample if the
compact bisections form a basis for its topology, where a bisection is an
open subset $U \subseteq \G$ such that the restrictions of $d$ and $r$ to
$U$ are injective.

If $\G$ is an ample groupoid, then the Steinberg algebra $\A{\G}$ is defined
as the space of all $K$-valued functions on $\G$ spanned by functions
$f:\G \to K$ such that:
\begin{itemize}
	\item There is an open Hausdorff subspace $V$ in $\G$ so that $f$ vanishes
	outside $V$; and
	\item $\restr{f}{V}$ is locally constant with compact support;
\end{itemize}
with pointwise sum and convolution product.

Note that if $\G$ is not Hausdorff, then $\A{\G}$ will contain discontinuous
functions. The reader is referred to \cite{Lisa} and \cite{Steinberg} for
detailed treatment in the subject.

\subsection{Induction process for Steinberg algebras}

\textbf{From now on, we fix an ample groupoid $\G$ and its
associated Steinberg algebra $\A{\G}$.}

In \cite{Steinberg}, Steinberg also develops a theory of induction of modules
from isotropy groups.

For a point $x \in \G^{(0)}$ he considers:

\begin{equation}\label{LHOrb}
\begin{array}{rcl}
\mL_x & := & \Set{\gamma \in \G}{d(\gamma)=x}, \\
\mH_x & := & \Set{\gamma \in \G}{d(\gamma)=r(\gamma)=x}, \\
\orbG(x) & := & \Set{r(\gamma)}{\gamma \in \mL_x}.
\end{array}
\end{equation}

Moreover, he considers $\mM_x$ as the free $K$-module with basis $\mL_x$. Since
$\mL_x \mH_x \subseteq \mL_x$, there is a natural right $K\mH_x$-module structure
on $\mM_x$. Moreover, $\mM_x$ is $\A{\G}$-$K\mH_x$-bimodule, where the left
structure is such that
\begin{equation}\label{AkAction}
	f \cdot \delta_{\nu}
	= \sum_{\gamma \in L} f(\gamma \nu^{-1}) \delta_{\gamma}.
\end{equation}

In this fashion, if $x \in \G^{(0)}$ and $V$ is a left $K\mH_x$-module, then the
left $\A{\G}$-module induced by $V$ is defined by
$$\ind_{x}(V) := \mM_x \otimes_{K\mH_x} V.$$

We strongly encourage \cite{Steinberg} for more details in the subject.

To introduce the notion of an induced ideal, we first talk about a map that
will play a crucial role in the road to our ambitions. This is a version
of $\Gamma_x$ to the actual context. For each $x \in \G^{(0)}$, consider
the map $\mGamma_x: \A{\G} \to K\mH_x$ given by
\begin{equation}\label{IntroduceGammaGG}
\mGamma_x \left( f \right) = \sum_{\gamma \in \mH_x} f(\gamma) \delta_{\gamma}.
\end{equation}

We then have the following proposition.

\begin{proposition}\label{quasihomoG}
	Let $x \in \G^{(0)}$ and let $U$ be a compact open bisection such that
	$\mH_x \cap U \neq \emptyset$. Then, for every $f \in \A{\G}$, we have
	$$
	\mGamma_x(uf)=\mGamma_x(u)\mGamma_x(f)
	\quad \text{ and } \quad
	\mGamma_x(fu)=\mGamma_x(f)\mGamma_x(u),
	$$
	where $u$ stands for the characteristic function of $U$.
\end{proposition}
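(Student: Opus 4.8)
The plan is to use the bisection property of $U$ to collapse each convolution sum defining $uf$ and $fu$ down to a single surviving term. First I would note that, since $U$ is a bisection, the range map $r$ is injective on $U$; as every element of $\mH_x$ has range $x$, this shows $U \cap \mH_x$ contains at most one element, and the hypothesis $\mH_x \cap U \neq \emptyset$ then yields a \emph{unique} $\tau \in U \cap \mH_x$. An immediate consequence is
$$
\mGamma_x(u) = \sum_{\gamma \in \mH_x} u(\gamma)\, \delta_{\gamma} = \delta_{\tau},
$$
since $\tau$ is the only element of $\mH_x$ on which $u$ does not vanish.

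Next I would compute $\mGamma_x(uf)$ from the convolution formula. Writing $(uf)(\gamma) = \sum_{r(\alpha)=r(\gamma)} u(\alpha)\, f(\alpha^{-1}\gamma)$ and fixing $\gamma \in \mH_x$, so that $r(\gamma)=x$, the factor $u(\alpha)$ is nonzero only when $\alpha \in U$ and $r(\alpha)=x$; by injectivity of $r$ on $U$ the unique such $\alpha$ is $\tau$, whence $(uf)(\gamma) = f(\tau^{-1}\gamma)$. Because $\tau \in \mH_x$ and $\mH_x$ is a group, the substitution $\gamma \mapsto \tau\gamma$ permutes $\mH_x$, so
$$
\mGamma_x(uf) = \sum_{\gamma \in \mH_x} f(\tau^{-1}\gamma)\, \delta_{\gamma}
= \delta_{\tau} \sum_{\gamma \in \mH_x} f(\gamma)\, \delta_{\gamma}
= \mGamma_x(u)\, \mGamma_x(f),
$$
using that $\delta_{\tau} \delta_{\gamma} = \delta_{\tau\gamma}$ in $K\mH_x$.

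The identity $\mGamma_x(fu) = \mGamma_x(f)\mGamma_x(u)$ follows by the symmetric argument, now invoking injectivity of the source map $d$ on $U$: for $\gamma \in \mH_x$ one has $d(\gamma)=x$, and in $(fu)(\gamma) = \sum_{d(\beta)=d(\gamma)} f(\gamma\beta^{-1})\, u(\beta)$ the only surviving index is $\beta=\tau$ (as $d(\tau)=x$), giving $(fu)(\gamma)=f(\gamma\tau^{-1})$; reindexing $\gamma \mapsto \gamma\tau$ then produces $\mGamma_x(f)\,\delta_{\tau} = \mGamma_x(f)\mGamma_x(u)$.

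I expect no serious obstacle here; the only point deserving care is finiteness and well-definedness of the sums in the possibly non-Hausdorff setting. This turns out to be automatic: since $u$ is the characteristic function of a single bisection, each inner convolution sum has at most one nonzero term, so one never needs to invoke the finer support structure of $f$. The one piece of bookkeeping to verify explicitly is that $\tau^{-1}\gamma$ and $\gamma\tau^{-1}$ remain in $\mH_x$, which is immediate since $\mH_x$ is a group containing $\tau$.
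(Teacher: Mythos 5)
Your proof is correct and follows essentially the same route as the paper's: both identify the unique element of $U$ lying over $x$ (forced by injectivity of $d$, respectively $r$, on the bisection), use it to collapse the convolution sum to a single term, and then reindex the sum over the group $\mH_x$ to recognize the product in $K\mH_x$. The only cosmetic difference is that you treat $uf$ explicitly and dismiss $fu$ as symmetric, while the paper does the reverse, working with the unique element of $\mL_x \cap U$ rather than of $\mH_x \cap U$ (these are the same element).
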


\begin{proof}
	Notice that, since $U$ is a bisection such that
	$\mH_x \cap U \neq \emptyset$, there exists an unique element
	$\nu \in \mL_x \cap U$. Hence, if $\gamma \in \mH_x$,
	we have
	$$
	fu(\gamma)
	= \sum_{\mu \in \mL_x} f(\gamma \mu^{-1}) u(\mu)
	= f(\gamma \nu^{-1})
	$$
	and, therefore,
	$$
	\mGamma_x(fu)
	= \sum_{\gamma \in \mH_x} (fu)(\gamma) \delta_{\gamma}
	= \sum_{\gamma \in \mH_x} f(\gamma \nu^{-1}) \delta_{\gamma}.
	$$
	On the other hand,
	$$
	\mGamma_x(f)\mGamma_x(u)
	= \bigg( \sum_{ \gamma \in \mH_x} f(\gamma) \delta_{\gamma} \bigg) \delta_{\nu}
	= \sum_{ \gamma \in \mH_x} f(\gamma) \delta_{\gamma \nu}
	= \sum_{\gamma \in \mH_x} f(\gamma \nu^{-1}) \delta_{\gamma}.
	$$
	Similarly, we can show $\mGamma_x(uf)=\mGamma_x(u)\mGamma_x(f)$, concluding
	the proof.
\end{proof}

\begin{proposition}\label{GammaIdealG}
	Let $J \trianglelefteq \A{\G}$ be an ideal
	and $x \in \G^{(0)}$. Then, $\mGamma_x(J)$ is an ideal
	in $K\mH_x$. 
\end{proposition}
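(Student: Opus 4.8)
The plan is to imitate verbatim the proof of Proposition (\ref{GammaIdeal}), replacing the role played there by the quasi-multiplicativity Proposition (\ref{quasihomo}) with its groupoid counterpart (\ref{quasihomoG}). First I would observe that the map $\mGamma_x$ is manifestly $K$-linear in $f$, so that $\mGamma_x(J)$ is automatically a $K$-linear subspace of $K\mH_x$; the entire content of the statement is therefore the closure of $\mGamma_x(J)$ under left and right multiplication by $K\mH_x$. Since $\{\delta_{\nu}\}_{\nu \in \mH_x}$ is a $K$-basis of the group algebra $K\mH_x$, by bilinearity it suffices to check closure under multiplication by a single basis element $\delta_{\nu}$ with $\nu \in \mH_x$.

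Next I would fix $a \in \mGamma_x(J)$ and pick $f \in J$ with $\mGamma_x(f) = a$. Given $\nu \in \mH_x$, ampleness of $\G$ furnishes a compact open bisection $U$ with $\nu \in U$, and its characteristic function $u := 1_U$ lies in $\A{\G}$. Because $\mH_x \subseteq \mL_x$ and $d|_U$ is injective, $\nu$ is the unique element of $\mL_x \cap U$, hence of $\mH_x \cap U$, which yields $\mGamma_x(u) = \delta_{\nu}$. Proposition (\ref{quasihomoG}) then gives
\[
a\,\delta_{\nu} = \mGamma_x(f)\,\mGamma_x(u) = \mGamma_x(fu), \qquad \delta_{\nu}\,a = \mGamma_x(u)\,\mGamma_x(f) = \mGamma_x(uf).
\]
Since $J$ is an ideal and $u \in \A{\G}$, both $fu$ and $uf$ belong to $J$, so $a\,\delta_{\nu}$ and $\delta_{\nu}\,a$ lie in $\mGamma_x(J)$. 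Extending by linearity over all of $K\mH_x$ shows that $\mGamma_x(J)$ absorbs multiplication on both sides, so it is a two-sided ideal.

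The only genuinely delicate point is the identity $\mGamma_x(u) = \delta_{\nu}$, i.e. the assertion that $U$ can be chosen so that $\nu$ is the \emph{sole} element of $\mH_x$ meeting $U$; this is exactly the bisection hypothesis under which (\ref{quasihomoG}) is stated, and it is precisely where ampleness (the compact open bisections forming a basis for the topology of $\G$) is used. One should also confirm that $1_U \in \A{\G}$, which holds because $U$ is a compact open Hausdorff bisection. Everything else is a formal transcription of the inverse-semigroup argument, so I expect no further obstacles.
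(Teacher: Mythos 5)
Your proposal is correct and follows essentially the same route as the paper's own proof: choose a compact open bisection $U$ containing the group element, note that $\mGamma_x(1_U) = \delta_{\nu}$, apply Proposition (\ref{quasihomoG}), use that $J$ absorbs multiplication by $1_U$, and conclude by linearity. The only difference is that you spell out details the paper leaves implicit (uniqueness of $\nu$ in $\mH_x \cap U$ and membership of $1_U$ in $\A{\G}$), which is harmless.
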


\begin{proof}
	Let $a \in \mGamma_x(J)$ and $b=\delta_{\gamma} \in K\mH_x$ for some
	$\gamma \in \mH_x$. Then, there exists $f \in J$ such that $\mGamma_x(f)=a$.
	Notice that, by choosing a compact open bisection $U$ containing $\gamma$,
	we have
	$$
	ab
	= a\delta_{\gamma}
	= \mGamma_x(f)\mGamma_x(1_U)
	\overset{(\ref{quasihomoG})}{=} \mGamma_x(f 1_U) \in \mGamma_x(J).
	$$
	By linearity, we deduce that $ab \in \mGamma_x(J)$ for arbitrary
	$b \in K\mH_x$ and similarly, we can show that $ba \in \mGamma_x(J)$.
\end{proof}

The next definition should not be strange to the reader at this point.
Indeed, we shall see that, if $I$ is the annihilator of $V$ in
$K\mH_x$, then $\Ind{x}{I}$, as defined above, is the annihilator of
$\mM_x  \otimes V$ in $\A{\G}$. Actually, we could verify it right
now. However, since it shall become clear soon, we choose to spare the
work.

\begin{definition}\label{InducedIdealG}
	Let $x \in \G^{(0)}$. Given any ideal $I \trianglelefteq K\mH_x$,
	we define
	$$
	\Ind{x}{I}
	:= \Set{f \in \A{\G}}{\mGamma_x(ufv) \in I, \forall u,v \in \A{\G}},$$
	and call it the \emph{ideal induced by} $I$.
\end{definition}

%%%%%%%%%%%%%%%%%%%%%%%%%%%%%%%%%%%%%%%%%%%%%%%%%%%%%%%%%%%%%%%%%%%%%%%%%%%%%%%%
%%%%%%%%%%%%%%%%%%%%%%%%%%%%%%%%%%%%%%%%%%%%%%%%%%%%%%%%%%%%%%%%%%%%%%%%%%%%%%%%
%%%%%%%%%%%%%%%%%%%%%%%%%%%%%%%%%%%%%%%%%%%%%%%%%%%%%%%%%%%%%%%%%%%%%%%%%%%%%%%%
%%%%%%%%%%%%%%%%%%%%%%%%%%%%%%%%%%%%%%%%%%%%%%%%%%%%%%%%%%%%%%%%%%%%%%%%%%%%%%%%
%%%%%%%%%%%%%%%%%%%%%%%%%%%%%%%%%%%%%%%%%%%%%%%%%%%%%%%%%%%%%%%%%%%%%%%%%%%%%%%%

\subsection{Universal property for the Steinberg algebra associated with a
	groupoid of germs}

In section 4 of \cite{ExelCombinatorial}, Exel introduced the groupoid of
germs associated with an action of an inverse semigroup on a locally compact
Hausdorff topological space. For the convenience of the reader and to
introduce some notations, we review briefly this theory. However, the
interested reader is strongly encouraged to read \cite{ExelCombinatorial}
for more details in the subject.

For the moment, let $(\theta,S,X)$ be a topological dynamical system. The
groupoid of germs, which we denote $S \ltimes_{\theta} X$ (or simply
$S \ltimes X$ when the action is implicit in the context), as a set, is the quotient
of the set
$$\left\{ (s,x) \in S \times X : x \in \dom{s} \right\}$$
by the equivalence relation that identifies two pairs $(s,x)$ and $(t,y)$
if and only if $x=y$ and there exists an idempotent $e \in E(S)$ such that
$x \in X_e$ and $se=te$. We denote by $\left[ s,x \right]$ the equivalence
class of $(s,x)$ and call it the germ of $s$ at $x$. The inversion is given
by $\left[ s,x \right]^{-1}=\left[s^{\ast}, \theta_s(x) \right]$ and the
multiplication is given by defining $\left[ s,x \right] . \left[ t,y \right]$
if and only if $x=\theta_t(y)$, in which case the product is
$\left[ st,y \right]$.

A basis for the topology of $S \ltimes X$ is given by
$\Theta(s,U) = \left\{ \germ{s}{x} \in S \ltimes X : x \in U \right\}$
where $s \in S$ and $U \subset \dom{s}$ is an open set. Furthermore, the
map $x \in U \mapsto \left[s,x \right] \in \Theta(s,U)$ is a homeomorphism,
where $\Theta(s,U)$ carries the topology induced from $S \ltimes X$.

The unit space is formed by elements $\left[ e,x \right]$ with $e \in E(S)$
and $x \in X_e$, and the map
\begin{equation}\label{Xunitspace}
	\germ{e}{x} \mapsto x
\end{equation}
gives a homeomorphism
between the unit space of $S \ltimes X$ and $X$. So, from
now on, we identify the unit space with $X$ and, with such an
identification, we have that $d(\left[ s,x \right])=x$ and
$r(\left[ s,x \right])=\theta_s(x)$ are the domain and range maps,
respectively.

In this setting, $S \ltimes X$ is an \'etale groupoid and, for
each $s \in S$ and each open subset $U$ of $\dom{s}$, $\Theta(s,U)$ is a
bisection. We will use the shorthand notation $\Theta_s$ for the bisection
$\Theta(s,\dom{s})$.

Furthermore, if $(\theta,S,X)$ is an ample dynamical system, the
unit space of $S \ltimes X$ is totally disconnected and, hence,
the collection of all
compact bisections forms a basis for the topology of $S \ltimes X$,
according to
Proposition (4.1) of \cite{ExelAmple}. This amounts to say that
$S \ltimes X$ is an ample groupoid.
Therefore, we can build the Steinberg algebra $\A{S \ltimes X}$
associated with $S \ltimes X$.

It worths to mention that the groupoid of germs does not need be Hausdorff.
The interested reader is referred to \cite{ExelPardo} for a
characterization of Hausdorffness for the groupoid of germs.

\textbf{From now on, we fix an ample dynamical system $(\theta,S,X)$
and the Steinberg algebra $\A{S \ltimes X}$ associated with the
groupoid of germs $S \ltimes X$.}

We will denote by $d_s$ and $r_s$ the restrictions of the source and range
maps to $\Theta_s$, respectively. The maps $d_s$ and $r_s$ are homeomorphisms
onto their images $\dom{s}$ and $\ran{s}$, respectively. Notice that, if
$\varphi \in \Lc{\ran{s}}$, then the composition $\varphi \circ r_s$ is a
compactly supported locally constant function on $\Theta_s$. So, we shall
also see $\varphi \circ r_s$ as a function in $\A{S \ltimes X}$ by extending
them to
be zero outside $\Theta_s$, which we shall denote by $\varphi \varDelta_s$.

For each $s \in S$ and each $f \in \Lc{\dom{s}}$ we will denote by $\alpha_s(f)$
the element of $\Lc{\ran{s}}$ given by
$$\restr{\alpha_s(f)}{x} = f( \theta_{s^{\ast}}(x) ) \text{ for all } x \in X.$$

In this context, we have the result.

\begin{proposition}\label{CompatibleProduct}
	Given $f \in \Lc{\ran{s}}$ and $g \in \Lc{\ran{t}}$, we have
	$$
	f \varDelta_s \ast g \varDelta_t
	= \alpha_s(\alpha_{s^\ast}(f)g) \varDelta_{st}.
	$$
\end{proposition}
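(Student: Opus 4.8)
The plan is to verify the identity pointwise, treating both sides as $K$-valued functions on the groupoid of germs $S \ltimes X$. Recall that the convolution product in $\A{S \ltimes X}$ is
$$(F \ast G)(\gamma) = \sum_{\alpha\beta = \gamma} F(\alpha)G(\beta),$$
the sum ranging over all factorizations of $\gamma$ into composable pairs. Since $f\varDelta_s$ and $g\varDelta_t$ vanish off the bisections $\Theta_s$ and $\Theta_t$, respectively, the only contributing factorizations have $\alpha \in \Theta_s$ and $\beta \in \Theta_t$, so the product is supported on $\Theta_s\Theta_t \subseteq \Theta_{st}$. Thus it suffices to evaluate at an arbitrary germ $\germ{st}{z}$ with $z \in \dom{st}$ and to note that both sides vanish on germs outside $\Theta_{st}$.

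First I would pin down the relevant factorization. Fixing $\gamma = \germ{st}{z}$, since $\Theta_t$ is a bisection and $d(\gamma)=z$, there is exactly one $\beta \in \Theta_t$ with $d(\beta)=z$, namely $\beta = \germ{t}{z}$. Setting $\alpha = \gamma\beta^{-1} = \germ{st}{z}\germ{t^{\ast}}{\theta_t(z)}$ and carrying out the germ multiplication gives $\alpha = \germ{stt^{\ast}}{\theta_t(z)}$; since $stt^{\ast}\leq s$ and $\theta_t(z) \in \ran{t}$, the hereditary behavior recorded in (\ref{hereditaryTill}) identifies $\alpha = \germ{s}{\theta_t(z)} \in \Theta_s$. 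Evaluating, and using $(f\varDelta_s)(\germ{s}{w}) = f(r_s(\germ{s}{w})) = f(\theta_s(w))$ with $w = \theta_t(z)$, we obtain
$$(f\varDelta_s \ast g\varDelta_t)(\germ{st}{z}) = (f\varDelta_s)(\germ{s}{\theta_t(z)})\,(g\varDelta_t)(\germ{t}{z}) = f\big(\theta_{st}(z)\big)\,g\big(\theta_t(z)\big).$$

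It then remains to evaluate the right-hand side at the same germ. By definition $(\alpha_s(\alpha_{s^{\ast}}(f)g)\varDelta_{st})(\germ{st}{z}) = \alpha_s(\alpha_{s^{\ast}}(f)g)(\theta_{st}(z))$, and unwinding $\alpha_s$ turns this into $(\alpha_{s^{\ast}}(f)g)(\theta_{s^{\ast}}(\theta_{st}(z)))$. Since $z \in \dom{st}$ forces $\theta_t(z) \in \dom{s}$, we get $\theta_{s^{\ast}}(\theta_{st}(z)) = \theta_{s^{\ast}}(\theta_s(\theta_t(z))) = \theta_t(z)$, whence $\alpha_{s^{\ast}}(f)(\theta_t(z)) = f(\theta_s(\theta_t(z))) = f(\theta_{st}(z))$. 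Collecting terms yields $f(\theta_{st}(z))\,g(\theta_t(z))$, which agrees with the value found for the left-hand side; as both functions also vanish off $\Theta_{st}$, they coincide.

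I expect the main obstacle to be the careful domain bookkeeping underlying the two germ identifications $\alpha = \germ{s}{\theta_t(z)}$ and $\theta_{s^{\ast}}(\theta_{st}(z)) = \theta_t(z)$. These depend on the precise description of $\dom{st}$ as $X_{t^{\ast}s^{\ast}st}$ and on the commutation of idempotents in $S$, and they are exactly the spots where the non-Hausdorff nature of $S \ltimes X$ could in principle introduce subtleties; everything else is routine unwinding of the definitions of $\varDelta_s$ and $\alpha_s$.
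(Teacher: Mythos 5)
Your proof is correct and follows essentially the same route as the paper's: both arguments verify the identity pointwise, observing that the convolution is supported in $\Theta_{st}$ and that its value at a germ $\germ{st}{z}$ reduces to $f\big(\theta_{st}(z)\big)g\big(\theta_t(z)\big)$, which matches the right-hand side. The only cosmetic difference is that the paper first proves the set equality $\Theta_s\Theta_t=\Theta_{st}$ to exhibit the contributing factorization, whereas you obtain the same unique factorization $\germ{s}{\theta_t(z)}\,\germ{t}{z}$ directly from the bisection property of $\Theta_t$ together with Lemma (\ref{hereditaryTill}).
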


\begin{proof}
	Let's prove initially that $\Theta_s \Theta_t = \Theta_{st}$. Indeed, it
	is clear that $\Theta_s \Theta_t \subseteq \Theta_{st}$. Conversely, let
	$\germ{st}{y} \in \Theta_{st}$ and notice that we must have
	$$
	y \in X_{(st)^{\ast}st}
	= X_{t^{\ast}s^{\ast}st}
	= \theta_{t^{\ast}}(\dom{s} \cap \ran{t}).
	$$
	This means that there exists $x \in \dom{s} \cap \ran{t}$ such that
	$y = \theta_{t^{\ast}}(x)$. In particular, $x \in \dom{s}$ and
	$y \in \ran{t}$, which implies that $\germ{s}{x} \in \Theta_s$ and
	$\germ{t}{y} \in \Theta_t$. Then, since $x = \theta_t(y)$, we have
	$$
	\germ{st}{y}
	= \germ{s}{x} \germ{t}{y} \in \Theta_s \Theta_t,
	$$
	concluding the initial assumption.
	
	Since $f \in \Lc{\ran{s}}$ and $g \in \Lc{\ran{t}}$, we have
	$$
	\supp (f \varDelta_s \ast g \varDelta_t)
	\subseteq \Theta_s \Theta_t
	= \Theta_{st}.
	$$
	
	Notice now that
	\begin{align*}
		\bigg( f \varDelta_s \ast g \varDelta_t \bigg) (\germ{st}{y})
		& = (f \varDelta_s)(\germ{s}{x}) (g \varDelta_t)(\germ{t}{y})
		= f \big( \theta_s(x) \big) g \big( \theta_t(y) \big) \\
		& = f \big( \theta_{st}(y) \big) g \big( \theta_t(y) \big)
		= \bigg( \alpha_s \big( \alpha_{s^{\ast}}(f)g \big)
							\varDelta_{st} \bigg)(\germ{st}{y}).
	\end{align*}
\end{proof}

This proposition is important to establish the isomorphism between
the crossed product algebra and the Steinberg algebra associated with
the groupoid of germs, which is our aim now.

Before we proceed, for the sake of understanding, let's take a pause
to interpret the objects of last subsection in the context of the
groupoid of germs $S \ltimes X$.

Laying the groundwork, from the point of view of a groupoid of germs,
having in mind the identification done in (\ref{Xunitspace}), notice
that the sets defined in (\ref{LHOrb}) can be interpreted as
\begin{equation}\label{LHOrb2}
\begin{array}{rcl}
\mL_x & := & \Set{\germ{s}{x}}{x \in \dom{s}} \\
\mH_x & := & \Set{\germ{s}{x}}{x \in \dom{s} \text{ and } \theta_s(x)=x} \\
\orbG(x) & := & \Set{\theta_s(x)}{x \in \dom{s}}
\end{array}
\end{equation}

In relation to the definitions in (\ref{LHOrb0}), notice that the map
$$\germ{s}{x} \in \mL_x \mapsto \germ{s}{x} \in L_x$$
is a bijection which restricts to an isomorphism between $\mH_x$ and $G_x$.
Hence, the isotropy in each sense coincide.

From now on, we identify these objects as well as the isotropy group algebra
and, hence, we abolish the bold notation in (\ref{LHOrb2}).

We now have the tools to the establish an isomorphism between the inverse
semigroup crossed product algebra associated with an ample system
$(\theta,S,X)$
and the Steinberg algebra associated with the respective groupoid of germs
$S \ltimes X$. The reader may compare with Theorem 5.4 of
\cite{CordeiroBeuter}.

\begin{theorem}\label{isomorphism1}
	Let $(\theta,S,X)$ be an ample dynamical system, $\alpha$ the action
	of $S$ on $\Lc{X}$ given by (\ref{inducedaction}) and $S \ltimes X$ the
	associated groupoid of germs. Then 	$\Lc{X} \rtimes_{\alpha} S$ is
	isomorphic to  $\A{S \ltimes X}$.
\end{theorem}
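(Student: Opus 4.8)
The plan is to build the isomorphism in the natural direction, sending $f\Delta_s\mapsto f\varDelta_s$, and to obtain it from the universal property of the crossed product. First I would define, for each $s\in S$, the $K$-linear map $\pi_s\colon B_s\to\A{S\ltimes X}$ by $\pi_s(f\bmdelta_s)=f\varDelta_s$ for $f\in\Lc{\ran{s}}$, and check that $\Pi=\{\pi_s\}_{s\in S}$ is a representation of the semi-direct product bundle $\mathcal{B}^{\theta}$. The multiplicativity axiom is exactly Proposition (\ref{CompatibleProduct}), since $\mu_{s,t}(f\bmdelta_s\otimes g\bmdelta_t)=\alpha_s(\alpha_{s^{\ast}}(f)g)\bmdelta_{st}$ while $f\varDelta_s\ast g\varDelta_t=\alpha_s(\alpha_{s^{\ast}}(f)g)\varDelta_{st}$. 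The compatibility axiom $\pi_t\circ j_{t,s}=\pi_s$ for $s\leq t$ amounts to the identity $f\varDelta_s=f\varDelta_t$ whenever $f\in\Lc{\ran{s}}$, which I would verify pointwise on germs: by Lemma (\ref{hereditaryTill}) one has $\germ{s}{x}=\germ{t}{x}$ and $\theta_s(x)=\theta_t(x)$ for $x\in\dom{s}$, while for $x\in\dom{t}\setminus\dom{s}$ the point $\theta_t(x)$ falls outside $\ran{s}$, which contains $\supp(f)$, so both sides vanish. Proposition (\ref{UniversalProp}) then furnishes a unique homomorphism $\Phi\colon\Lc{X}\rtimes_{\alpha}S\to\A{S\ltimes X}$ with $\Phi(f\Delta_s)=f\varDelta_s$.

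For surjectivity, I would note that the image of $\Phi$ contains every $1_{\theta_s(U)}\varDelta_s=1_{\Theta(s,U)}$ with $U\subseteq\dom{s}$ compact open, and that the compact-open bisections $\Theta(s,U)$ form a basis of $S\ltimes X$, each contained in the Hausdorff open set $\Theta_s$. Since $\A{S\ltimes X}$ is spanned by indicator functions of such basic compact-open bisections, $\Phi$ is onto.

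The heart of the argument, and the step I expect to be the main obstacle, is injectivity; here I would lean on the induction machinery rather than struggle with a normal form in the crossed product. The key bridge is the identity $\mGamma_x\circ\Phi=\Gamma_x$ for every $x\in X$, under the identification $G_x\cong\mH_x$ of isotropy data from (\ref{LHOrb2}). This I would establish by comparing (\ref{IntroduceGamma}) and (\ref{IntroduceGammaGG}): grouping the terms of $\Phi\big(\sum_s f_s\Delta_s\big)$ according to their germ $\germ{s}{x}$ and observing that $\mGamma_x$ only retains germs lying in the isotropy, for which $\theta_s(x)=x$, makes both sides equal term by term. Granting this, for any $b\in\ker\Phi$ and all $u,v\in\Lc{X}\rtimes_{\alpha}S$ we obtain $\Gamma_x(ubv)=\mGamma_x(\Phi(ubv))=0$, so Proposition (\ref{IndGamma}) yields $b\in\ind_x(\{0\})$ for every $x$; hence $\ker\Phi\subseteq\bigcap_{x}\ind_x(\{0\})$.

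It remains to see that this intersection is trivial. By Corollary (\ref{AllIdealsReps}) the algebra $\Lc{X}\rtimes_{\alpha}S$ admits a faithful non-degenerate representation, and applying Theorems (\ref{JIsIntersRho}) and (\ref{RhoIsInduced}) together with Proposition (\ref{AnnInducedVsIndIdeal}) to it expresses $\{0\}$ as $\bigcap_{x\in R}\ind_x(I_x)$, where $R$ is a system of orbit representatives and each $I_x\trianglelefteq KG_x$ contains $\{0\}$. Monotonicity of induction then forces $\bigcap_{x\in R}\ind_x(\{0\})\subseteq\bigcap_{x\in R}\ind_x(I_x)=\{0\}$, and since $b\in\ind_x(\{0\})$ was secured for all $x$, in particular for $x\in R$, we conclude $\ker\Phi=\{0\}$. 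Thus $\Phi$ is the desired isomorphism. The delicate points I would watch are the bookkeeping in $\mGamma_x\circ\Phi=\Gamma_x$, keeping germ classes rather than group elements straight, and confirming that passing to orbit representatives costs nothing, which is exactly why deriving membership in $\ind_x(\{0\})$ for every $x$ at the previous stage is worthwhile.
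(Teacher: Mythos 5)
Your proposal is correct and follows essentially the same route as the paper: the same representation $\pi_s(f\bmdelta_s)=f\varDelta_s$ of $\mathcal{B}^{\theta}$ via (\ref{CompatibleProduct}), the universal property (\ref{UniversalProp}) to get $\Phi$, the commuting identity $\mGamma_x\circ\Phi=\Gamma_x$, and the Effros--Hahn machinery to force $\ker\Phi=\{0\}$. The only differences are cosmetic: you spell out the surjectivity the paper leaves implicit, and you unpack Proposition (\ref{IntersectionDescription}) into its ingredients ((\ref{IndGamma}), (\ref{JIsIntersRho}), (\ref{RhoIsInduced}), (\ref{AnnInducedVsIndIdeal}) and monotonicity) instead of citing it directly.
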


\begin{proof}
	Let $\mathcal{B}^{\theta}$ be the semi-direct product
	bundle associated with $(\theta,S,X)$, as in (\ref{Triple}). Consider,
	for each $s \in S$, the map
	$$
	\pi_s:
	f \bmdelta_s \in B_s \mapsto f \varDelta_s \in \A{S \ltimes X}.
	$$
	Then, by (\ref{CompatibleProduct}), $\left\{ \pi_s \right\}_{s \in S}$ is
	a pre-representation of the semi-direct product bundle
	$\mathcal{B}^{\theta}$ in
	$\A{S \ltimes X}$. Furthermore, if $s \leq t$ in $S$ and $f$ lies in
	$\Lc{\ran{s}}$,
	then it is easy to see that $f \varDelta_t$ vanishes outside $\Theta_s$
	and, then, coincides with $f \varDelta_s$. This amounts to say that
	$\left\{ \pi_s \right\}_{s \in S}$ is a representation of
	$\mathcal{B}^{\theta}$ in $\A{S \ltimes X}$.
	
	By Proposition (\ref{UniversalProp}), there exists an epimorphism
	$\Phi: \Lc{X} \rtimes_{\alpha} S \to \A{S \ltimes X}$ such that
	$\Phi(f \Delta_s) = f \varDelta_s$.
	
	To show that $\Phi$ is injective, we claim first that the diagram
	\begin{equation}\label{diagram}
	\begin{tikzcd}
	\Lcs \arrow{r}{\Phi} \arrow[swap]{dr}{\Gamma_x} & \A{S \ltimes X}
	\arrow{d}{\mGamma_x} \\
	& KG_x
	\end{tikzcd}
	\end{equation}
	is commutative for every $x \in X$. Indeed, let $f \in \Lc{\ran{s}}$
	and notice that, since $f \varDelta_s$ is a function supported on
	$\Theta_s$ and the latter is a bisection, there exists at most one
	element in $\Theta_s \cap G_x$.
	Actually, if $x \in \dom{s}$ and $\theta_s(x)=x$ we must have
	$\Theta_s \cap G_x = \left\{ \germ{s}{x} \right\}$ and, otherwise, we
	must have $\Theta_s \cap G_x = \emptyset$. Therefore,
	$$
	(\mGamma_x \circ \Phi)(f \Delta_s)
	= \mGamma_x(f \varDelta_s)
	= [x \in \dom{s}, \theta_s(x)=x] f(x) \delta_{\germ{s}{x}}
	= [s \in \tilh_x] f(x) \delta_{\germ{s}{x}}
	= \Gamma_x(f \Delta_s).
	$$
	This concludes the diagram commutativity.
		
	Let $J \trianglelefteq \Lcs$ be the kernel of $\Phi$. Then, by the
	commutativity of the diagram we have
	$$\Gamma_x(J) = (\mGamma_x \circ \Phi) (J) = \mGamma_x(0) = 0,$$
	for every $x \in X$. Hence, by (\ref{IntersectionDescription})
	$$
	J
	= \bigcap_{x \in X} \ind_x \big( \Gamma_x(J) \big)
	= \bigcap_{x \in X} \ind_x \big( 0 \big)
	= \bigcap_{x \in X} \ind_x \big( \Gamma_x(0) \big)
	= 0,
	$$
	concluding the proof.
\end{proof}

There are some immediate consequences of this result.
First, note that $\A{S \ltimes X}$ inherits the universal property of $\Lcs$,
which we spell out. The reader is invited to compare with Theorem 4.27
of \cite{Steinberg}. On the one hand, Steinberg demands the groupoid of
germs
$S \ltimes X$ to be Hausdorff, on the other hand, the assumption
that $K$ is a field is relaxed by considering algebras over
a commutative ring with identity.

\begin{proposition}
	Let $(\theta, S , X)$ be an ample dynamical system and
	$S \ltimes X$ its associated groupoid of germs. Then,
	for any covariant representation $(\pi,\sigma)$ of
	$(\theta,S,X)$, there exists a non-degenerate representation
	$\pi \times \sigma$ of $\A{S \ltimes X}$ such that
	$$(\pi \times \sigma)(f \varDelta_s) = \pi(f) \sigma_s$$
	Furthermore, the mapping
	$$ (\pi,\sigma) \mapsto \pi \times \sigma $$
	gives a bijection between covariant representations
	of $(\theta, S, X)$ and non-degenerate representations
	of $\A{S \ltimes X}$.
\end{proposition}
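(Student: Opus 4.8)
The plan is to transport the integration/disintegration correspondence already established for $\Lcs$ across the isomorphism $\Phi\colon \Lcs \to \A{S \ltimes X}$ of Theorem (\ref{isomorphism1}). Recall that $\Phi$ is an algebra isomorphism satisfying $\Phi(f \Delta_s) = f \varDelta_s$ for all $s \in S$ and $f \in \Lc{\ran{s}}$; consequently its inverse $\Phi^{-1}\colon \A{S \ltimes X} \to \Lcs$ is a homomorphism with $\Phi^{-1}(f \varDelta_s) = f \Delta_s$, and the elements $f \varDelta_s = \Phi(f \Delta_s)$ generate $\A{S \ltimes X}$ since the $f \Delta_s$ generate $\Lcs$.

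First I would construct the representation. Given a covariant representation $(\pi,\sigma)$ of $(\theta,S,X)$, Corollary (\ref{integration2}) supplies a non-degenerate representation of $\Lcs$, which I temporarily write $\widehat{\pi \times \sigma}\colon \Lcs \to L(V)$, characterized by $(\widehat{\pi \times \sigma})(f \Delta_s) = \pi(f) \sigma_s$. I then set
\[
\pi \times \sigma := (\widehat{\pi \times \sigma}) \circ \Phi^{-1}\colon \A{S \ltimes X} \to L(V).
\]
As a composition of homomorphisms this is a representation, and it satisfies $(\pi \times \sigma)(f \varDelta_s) = (\widehat{\pi \times \sigma})(f \Delta_s) = \pi(f)\sigma_s$, as demanded. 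Non-degeneracy transfers because $\Phi^{-1}$ is surjective onto $\Lcs$: the set $\{(\pi \times \sigma)(a)\xi : a \in \A{S \ltimes X}\}$ equals $\{(\widehat{\pi \times \sigma})(b)\xi : b \in \Lcs\}$, whose span is already all of $V$.

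Next I would establish the bijection by composing two known bijections. The assignment $(\pi,\sigma) \mapsto \widehat{\pi \times \sigma}$ is a bijection between covariant representations of $(\theta,S,X)$ and non-degenerate representations of $\Lcs$: existence of the integrated form is Corollary (\ref{integration2}), and uniqueness of the disintegration is Corollary (\ref{disintegration0}). On the other hand, precomposition with $\Phi$ is a bijection $\Psi \mapsto \Psi \circ \Phi$ from non-degenerate representations of $\A{S \ltimes X}$ to those of $\Lcs$, with inverse $\Theta \mapsto \Theta \circ \Phi^{-1}$; non-degeneracy is preserved in both directions precisely because $\Phi$ and $\Phi^{-1}$ are surjective. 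The map $(\pi,\sigma) \mapsto \pi \times \sigma$ of the statement is the composite of these two, hence a bijection.

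If one prefers an explicit verification, surjectivity and injectivity both follow by a short transport argument. Given a non-degenerate representation $\Psi$ of $\A{S \ltimes X}$, the representation $\Psi \circ \Phi$ of $\Lcs$ disintegrates, by Corollary (\ref{disintegration0}), to a unique covariant pair $(\pi,\sigma)$; since the $f \varDelta_s$ generate $\A{S \ltimes X}$ and $\Psi(f \varDelta_s) = (\Psi \circ \Phi)(f \Delta_s) = \pi(f)\sigma_s = (\pi \times \sigma)(f \varDelta_s)$, the two representations agree on generators, so $\Psi = \pi \times \sigma$. For injectivity, $\pi \times \sigma = \pi' \times \sigma'$ yields $\widehat{\pi \times \sigma} = \widehat{\pi' \times \sigma'}$ after precomposing with $\Phi$, whence $(\pi,\sigma) = (\pi',\sigma')$ by uniqueness of disintegration. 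I do not expect a genuine obstacle: the whole argument is a transport of structure, and the only points deserving care are checking that non-degeneracy survives composition with $\Phi^{\pm 1}$ and that the $f \varDelta_s$ generate $\A{S \ltimes X}$, so that equality on generators suffices.
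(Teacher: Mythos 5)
Your proposal is correct and is essentially the paper's own argument spelled out in detail: the paper's proof simply says to combine Theorem (\ref{isomorphism1}) with Corollaries (\ref{disintegration0}) and (\ref{integration2}), which is exactly your transport-of-structure argument along the isomorphism $\Phi$. The extra care you take (non-degeneracy surviving composition with $\Phi^{\pm 1}$, and equality on the spanning elements $f\varDelta_s$) is precisely what "joining" those results implicitly requires.
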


\begin{proof}
	Just join (\ref{isomorphism1}), (\ref{disintegration0})
	and (\ref{integration2}).
\end{proof}

Furthermore, by the commutativity of diagram (\ref{diagram}), we see that
$\Phi$ maps $\Ind{x}{I}$ in the sense of Definition (\ref{InducedIdeal})
onto $\Ind{x}{I}$ in the sense of Definition (\ref{InducedIdealG}).

Moreover, $\Phi$ is compatible with the actions of $\Lcs$ and $\A{S \ltimes X}$
on $M_x$ in the sense that, the left $\A{\G}$-module
structure on $M_x$ induced
by $\Phi$ from (\ref{LcsAction}) is exactly the same structure as defined in \eqref{AkAction}. This immediately gives the following two
propositions.

\begin{proposition}
	Let $x \in \mathcal{G}^{(0)}$. If $I$ is the annihilator of
	$V$ in $KG_x$, then $\Ind{x}{I}$ is
	the annihilator of $M_x \otimes V$ in $\A{S \ltimes X}$.
\end{proposition}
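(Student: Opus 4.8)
The plan is to transport the corresponding result for the inverse semigroup crossed product, Proposition (\ref{AnnInducedVsIndIdeal}), across the isomorphism $\Phi \colon \Lcs \to \A{S \ltimes X}$ of Theorem (\ref{isomorphism1}). The two facts recorded immediately before the statement supply everything needed: that $\Phi$ carries $\ind_x(I)$ in the sense of Definition (\ref{InducedIdeal}) onto $\Ind{x}{I}$ in the sense of Definition (\ref{InducedIdealG}), and that, after pulling back along $\Phi$, the left $\A{S \ltimes X}$-module structure of (\ref{AkAction}) on $M_x$ agrees with the left $\Lcs$-module structure of (\ref{LcsAction}).

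First I would invoke Proposition (\ref{AnnInducedVsIndIdeal}) to identify the annihilator of $M_x \otimes V$, regarded as a left $\Lcs$-module, with $\ind_x(I)$ in the sense of Definition (\ref{InducedIdeal}). Next I would exploit the compatibility of the module structures: for every $b \in \Lcs$ and every $m \in M_x$ one has $b \cdot m = \Phi(b) \cdot m$, the left-hand side being the action of (\ref{LcsAction}) and the right-hand side that of (\ref{AkAction}). Because $\Phi$ leaves the right $KG_x$-action untouched, the balanced tensor products $M_x \otimes_{KG_x} V$ computed on either side are the same $K$-vector space, and the identity above passes to this tensor product. Since $\Phi$ is bijective, it follows that an element $\beta \in \A{S \ltimes X}$ annihilates $M_x \otimes V$ if and only if its unique preimage $b \in \Lcs$ does; equivalently, the annihilator of $M_x \otimes V$ in $\A{S \ltimes X}$ is the image under $\Phi$ of its annihilator in $\Lcs$.

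Chaining these steps, the annihilator of $M_x \otimes V$ in $\A{S \ltimes X}$ equals $\Phi\big(\ind_x(I)\big)$, which by the first recorded fact is precisely $\Ind{x}{I}$ in the sense of Definition (\ref{InducedIdealG}). The only point deserving a moment's attention is the descent of the compatibility $b \cdot m = \Phi(b) \cdot m$ to $M_x \otimes_{KG_x} V$; but since the right actions coincide and the left actions correspond under $\Phi$, this is automatic, so no genuine obstacle arises. The statement is thus essentially bookkeeping built on the structural identifications already established.
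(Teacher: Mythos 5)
Your proposal is correct and follows exactly the route the paper intends: the paper records precisely the two facts you use (that $\Phi$ carries $\ind_x(I)$ in the sense of Definition (\ref{InducedIdeal}) onto $\Ind{x}{I}$ in the sense of Definition (\ref{InducedIdealG}), and that the two module structures on $M_x$ agree under $\Phi$) and then declares the proposition immediate, which is just the bookkeeping you spell out via Proposition (\ref{AnnInducedVsIndIdeal}) and the bijectivity of $\Phi$. No gap; your write-up simply makes explicit what the paper leaves implicit.
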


This proposition makes sense to Definition (\ref{InducedIdealG}),
while the next one translates the main result for inverse
semigroup crossed product algebras to Steinberg algebras
(associated with groupoid of germs). But first, let us bring
the definition of admissible ideal from
(\ref{admissibleideal}) for the current context.

\begin{definition}\label{admissibleidealG}
	An ideal $I \trianglelefteq KG_x$ is said to be \emph{admissible} if $\Gamma_x(\ind_x(I))=I$.
\end{definition}

Then, we finally have the desired result.

\begin{proposition}
	
	Let $(\theta, S, X)$ be an ample system and $S \ltimes X$
	the associated groupoid of germs. 
	Choose a system $R$
	of representatives for the orbit relation on $X$. For each $x$ in
	$R$, let $G_x$ be the isotropy group at $x$, and let
	$$
	\Gamma_x:\A{S \ltimes X} \to KG_x
	$$
	be as in (\ref{IntroduceGammaGG}). Then, given any ideal
	$J \trianglelefteq \A{S \ltimes X}$ we have that
	$\Gamma_x(J)$ is an
	admissible ideal of $KG_x$, and
	$$
	J = \bigcap_{x \in R} \ind_x(\Gamma_x(J)).
	$$	
\end{proposition}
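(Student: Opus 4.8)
The plan is to deduce this statement directly from its crossed-product counterpart, Proposition (\ref{IntersectionDescription}), by transporting every object through the isomorphism $\Phi \colon \Lcs \to \A{S \ltimes X}$ of Theorem (\ref{isomorphism1}). First I would put $J' := \Phi^{-1}(J)$, which is an ideal of $\Lcs$ since $\Phi$ is an algebra isomorphism. Two compatibilities already recorded in the text do all the work: the commutativity of diagram (\ref{diagram}), namely $\mGamma_x \circ \Phi = \Gamma_x$, where $\Gamma_x$ is the map (\ref{IntroduceGamma}) on $\Lcs$ and $\mGamma_x$ is the map (\ref{IntroduceGammaGG}) on $\A{S \ltimes X}$; and the remark following Theorem (\ref{isomorphism1}), which says that $\Phi$ carries $\ind_x(I)$ in the sense of Definition (\ref{InducedIdeal}) onto $\ind_x(I)$ in the sense of Definition (\ref{InducedIdealG}), for every ideal $I \trianglelefteq KG_x$.

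Granting these, the admissibility claim is quick. From the commuting diagram, $\mGamma_x(J) = \mGamma_x(\Phi(J')) = \Gamma_x(J')$, and the latter is an admissible ideal of $KG_x$ by Proposition (\ref{GammaAdmissible}). Since admissibility is phrased solely through the pair $(\Gamma_x, \ind_x)$, and these are intertwined by $\Phi$ across the two algebras, the two a priori distinct notions of admissibility --- Definition (\ref{admissibleideal}) and Definition (\ref{admissibleidealG}) --- describe the same ideals of $KG_x$; hence $\mGamma_x(J)$, written $\Gamma_x(J)$ in the identified notation of the statement, is admissible.

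For the intersection formula I would apply Proposition (\ref{IntersectionDescription}) to $J' \trianglelefteq \Lcs$. Note that under the identification (\ref{LHOrb2}) the orbits of the action agree with those of $S \ltimes X$, so a single system $R$ of orbit representatives serves both pictures. This gives $J' = \bigcap_{x \in R} \ind_x(\Gamma_x(J'))$ inside $\Lcs$. Applying $\Phi$, which as a bijection commutes with arbitrary intersections, and using the two compatibilities to rewrite $\Phi\big(\ind_x(\Gamma_x(J'))\big) = \ind_x\big(\mGamma_x(J)\big)$, I obtain
$$
J = \Phi(J') = \bigcap_{x \in R} \Phi\big( \ind_x(\Gamma_x(J')) \big) = \bigcap_{x \in R} \ind_x\big( \mGamma_x(J) \big),
$$
which is exactly the asserted equality once $\mGamma_x$ is written $\Gamma_x$.

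No analytic difficulty arises here, since the substance has been pushed into Proposition (\ref{IntersectionDescription}) and Theorem (\ref{isomorphism1}). The step I expect to demand the most care --- and which I regard as the crux --- is the notational bookkeeping between the two parallel settings: confirming that diagram (\ref{diagram}) genuinely identifies the crossed-product $\Gamma_x$ with the Steinberg $\mGamma_x$, that the induced-ideal maps of Definitions (\ref{InducedIdeal}) and (\ref{InducedIdealG}) match under $\Phi$ (so that the two admissibility definitions coincide), and that the orbit representatives transfer unchanged. Once these identifications are made explicit, the conclusion is purely formal.
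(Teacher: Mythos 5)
Your proposal is correct and is essentially the paper's own argument: the paper derives this proposition as an immediate consequence of Proposition (\ref{IntersectionDescription}), transported through the isomorphism $\Phi$ of Theorem (\ref{isomorphism1}) via the commutativity of diagram (\ref{diagram}) and the observation that $\Phi$ carries induced ideals in the sense of Definition (\ref{InducedIdeal}) onto induced ideals in the sense of Definition (\ref{InducedIdealG}). Your explicit bookkeeping (defining $J' = \Phi^{-1}(J)$, checking that the two admissibility notions coincide, and noting that orbit representatives transfer unchanged) simply spells out the details the paper leaves implicit.
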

	
%%%%%%%%%%%%%%%%%%%%%%%%%%%%%%%%%%%%%%%%%%%%%%%%%%%%%%%%%%%%%%%%%%%%%%%%%%%%%%%%%
%%%%%%%%%%%%%%%%%%%%%%%%%%%%%%%%%%%%%%%%%%%%%%%%%%%%%%%%%%%%%%%%%%%%%%%%%%%%%%%%%
%%%%%%%%%%%%%%%%%%%%%%%%%%%%%%%%%%%%%%%%%%%%%%%%%%%%%%%%%%%%%%%%%%%%%%%%%%%%%%%%%
%%%%%%%%%%%%%%%%%%%%%%%%%%%%%%%%%%%%%%%%%%%%%%%%%%%%%%%%%%%%%%%%%%%%%%%%%%%%%%%%%

\subsection{Steinberg Algebras as crossed products}

In section 5 of \cite{ExelCombinatorial}, Exel presents an example of an inverse
semigroup action which is intrinsic to every \'etale groupoid.  We therefore fix
an \'etale groupoid $\G$ from now on and denote by $\SG$ the set of all bisections
in $\G$. It is well known that $\SG$ is an inverse semigroup under the operations
$$
UV = \left\{ uv \text{ : } u \in U, v \in V, (u,v) \in \G^{(2)} \right\}
\text{ and }
U^{\ast} = \left\{ u^{-1} : u \in U \right\},
$$
The idempotent semilattice of $\SG$ consist
precisely of the open subsets of $\G^{(0)}$.

Moreover, for any bisection $U$, its source $d(U)$ and range
$r(U)$ are
open subsets of
$\mathcal{G}^{(0)}$ and the maps
$d_U : U \to d(U)$ and $r_U : U \to d(U)$,
obtained by restricting $d$ and $r$, respectively, are homeomorphisms.
Hence, we can define a topological action
$\theta: \SG \to \mathcal{G}^{(0)}$ such that,
for each $U \in \SG$,
\begin{equation}\label{intrinsicaction}
\begin{array}{rccc}
\theta_U: & d(U) & \longrightarrow & r(U)\\
& x & \longmapsto & r_U \Big( d_U^{-1}(x) \Big).
\end{array}
\end{equation}
Notice that $\theta_U(x)=y$, if and only if there
exists some $u \in U$ such that $d(u)=x$ and $r(u)=y$.
%Thus, if we
%view $\theta_U$ as a set of ordered pairs, according to the technical
%definition of functions, we have
%$$ 
%\theta_U = \big\{ \big( d(u),r(u) \big) \text{ : } u \in U \big\}.
%$$

Additionally, given any 
*-subsemigroup $S \subseteq \SG$, we may restrict $\theta$ to $S$, thus obtaining a
semigroup homomorphism
$$
\restr{\theta}{S}: S \to \mathcal{I}(X)
$$
which is an action of $S$ on $X$, provided (\ref{topaction}.\ref{nondegenerate})
can be verified.
The next result gives sufficient conditions for the groupoid of
germs for such an action to be equal to $\G$. 

\begin{proposition}\label{TwoGroupoids}
	Let $\G$ be an \'etale groupoid and let $S$ be a *-subsemigroup of
	$\SG$ such that
	\begin{enumerate}[(i)]
		\item $\G = \bigcup_{U \in S} U$, and
		\item for every $U,V \in S$, and every $u \in U \cap V$, there exists
		$W \in S$, such that $u \in W \subseteq U \cap V$.
	\end{enumerate}
	Then $\restr{\theta}{S}$ is an action of $S$ on $X=\G^{(0)}$,
	and the groupoid of germs for $\theta|_S$ is isomorphic to $\G$.
\end{proposition}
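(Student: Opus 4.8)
The plan is to treat the two assertions separately. That $\restr{\theta}{S}$ is an action in the sense of (\ref{topaction}) is the routine part: as a restriction of the semigroup homomorphism $\theta\colon \SG \to \mathcal{I}(X)$ defined in (\ref{intrinsicaction}) it is again a homomorphism, and each $\theta_U$ is a homeomorphism between the open sets $d(U)$ and $r(U)$, so the domains are open and $\theta_U$ is continuous. The only condition needing comment is (\ref{topaction}.\ref{nondegenerate}), namely that $X$ equals the union of the domains $d(U)$; but any unit $x \in \G^{(0)} = X$ lies in some $U \in S$ by hypothesis (i), and since $d(x) = x$ this gives $x \in d(U)$, whence the domains cover $X$.

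For the isomorphism I would define $\phi\colon S \ltimes X \to \G$ by sending a germ $\germ{U}{x}$ (with $x \in d(U)$) to the unique element $d_U^{-1}(x) \in U$ with source $x$, which exists since $U$ is a bisection. This is well defined: if $\germ{U}{x} = \germ{V}{x}$ there is $e \in E(S)$ with $x \in X_e$ and $Ue = Ve$, and since $Ue = \{u \in U : d(u) \in X_e\}$ the element $d_U^{-1}(x)$ lies in $Ue = Ve$ and hence equals $d_V^{-1}(x)$. Checking that $\phi$ is a groupoid homomorphism is a direct verification: it carries the units $\germ{e}{x}$ to $x$, consistently with the identification (\ref{Xunitspace}); it respects inversion because $d_{U^{\ast}}^{-1}(\theta_U(x)) = (d_U^{-1}(x))^{-1}$; and it respects the partial product, since whenever $x = \theta_V(y)$ the elements $u = d_U^{-1}(x)$ and $v = d_V^{-1}(y)$ are composable with $uv = d_{UV}^{-1}(y)$. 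Surjectivity is immediate from (i): any $u \in \G$ lies in some $U \in S$ and equals $\phi(\germ{U}{d(u)})$.

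The step I expect to be the crux, and the one place where hypothesis (ii) is genuinely needed, is injectivity. Suppose $\phi(\germ{U}{x}) = \phi(\germ{V}{y})$, and call this common value $u$; then $x = d(u) = y$ and $u \in U \cap V$. By (ii) I would choose $W \in S$ with $u \in W \subseteq U \cap V$ and set $e := W^{\ast}W \in E(S)$, which as a subset of $\G^{(0)}$ is just $d(W)$. Then $x = d(u) \in d(W) = X_e$, and I claim $Ue = Ve$: given $u' \in U$ with $d(u') \in d(W)$, pick $w \in W$ with $d(w) = d(u')$; since $w \in W \subseteq U$ and $U$ is a bisection, $w = u'$, so $u' \in W \subseteq V$, giving $Ue \subseteq Ve$, and the reverse inclusion is symmetric. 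Hence $\germ{U}{x} = \germ{V}{y}$ and $\phi$ is injective.

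Finally I would check that $\phi$ is a homeomorphism, so that it is an isomorphism of topological groupoids, by exhibiting its inverse explicitly. Define $\psi\colon \G \to S \ltimes X$ by $\psi(u) = \germ{U}{d(u)}$ for any $U \in S$ containing $u$, which is well defined by the injectivity argument above. On each bisection $U \in S$ the restriction $\psi|_U$ factors as the homeomorphism $d_U\colon U \to d(U)$ followed by the homeomorphism $x \mapsto \germ{U}{x}$ onto $\Theta_U$, so $\psi|_U$ is continuous; since the $U \in S$ cover $\G$ by (i), $\psi$ is continuous. Symmetrically, on each basic open set $\Theta_U$ the restriction of $\phi$ is continuous, and as these cover $S \ltimes X$, so is $\phi$. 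Thus $\phi$ and $\psi = \phi^{-1}$ are mutually inverse continuous groupoid homomorphisms, which gives the desired isomorphism $S \ltimes X \cong \G$.
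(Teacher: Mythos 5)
Your proof is correct, but it is worth noting how it relates to the paper: the paper does not actually prove this proposition at all --- its ``proof'' is a single citation to Proposition (5.4) of \cite{ExelCombinatorial}, so your argument is a self-contained reconstruction of what is outsourced there. Your construction is the natural one: send a germ $\germ{U}{x}$ to the unique arrow $d_U^{-1}(x) \in U$, and all the verifications go through --- well-definedness via the computation $Ue = \{u' \in U : d(u') \in X_e\}$ for an idempotent $e \subseteq \G^{(0)}$, the homomorphism and surjectivity checks, and crucially injectivity, where you correctly isolate the role of hypothesis (ii): taking $W \in S$ with $u \in W \subseteq U \cap V$ and $e = W^{\ast}W = d(W)$, the bisection property of $U$ and $V$ forces $Ue = Ve$, which is exactly the germ relation. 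The topological part is also handled properly, since the bisections $U \in S$ are open and cover $\G$, the sets $\Theta_U$ are open and cover $S \ltimes X$, and on these pieces $\phi$ and $\psi$ factor through the homeomorphisms $d_U$ and $x \mapsto \germ{U}{x}$. What your approach buys is independence from the external reference (and it makes visible that Hausdorffness of $\G$ is nowhere needed); what the paper's citation buys is brevity, at the cost of leaving the key mechanism --- that (ii) is precisely what collapses distinct bisections through a common arrow to the same germ --- invisible to the reader.
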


\begin{proof}
	Proposition (5.4) of \cite{ExelCombinatorial}.
\end{proof}

An interesting consequence of the above proposition is that, if $\G$ is an
ample groupoid, then the set of compact bisections of $\G$ are in the hypotheses
of (\ref{TwoGroupoids}). Hence, the groupoid of germs obtained by the restriction
of the action $(\theta,S,X)$ above to the *-subsemigroup of compact bisections is
isomorphic to the original groupoid $\G$. Moreover, this restriction forms an
ample action (with domains compact).

We could even add the unit space to the *-subsemigroup of compact bisections
and it would still satisfy the hypotheses of (\ref{TwoGroupoids}). In this
situation, the restriction of the action would still form an ample action and
the semigroup involved would have a unit. Indeed, this action can
be interpreted as the unitization of the former in the case that
$\G^{(0)}$ is not compact.

We shall denote by $S^a$ the *-subsemigroup of $\SG$ formed by
the compact bisections of $\G$.
The importance of the comments above arises
from the next proposition.

\begin{proposition}\label{SteinvsCrossedproduct}
	Let $\G$ be an ample groupoid and let $S$ be a *-subsemigroup of $\SG$
	satisfying 	the
	hypotheses of (\ref{TwoGroupoids}) and such that the restriction $\theta$
	to $S$ 	of the action of $\SG$ on $\G^{(0)}$ given by
	(\ref{intrinsicaction}) is ample. 	If $\alpha$ is the induced action of
	$S$ on $\Lc{\G^{(0)}}$, as in (\ref{inducedaction}), then
	$$\A{\G} \simeq \Lc{\G^{(0)}} \rtimes_{\alpha} S.$$
\end{proposition}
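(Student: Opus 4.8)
The plan is to combine the two principal results already available: the identification in Proposition (\ref{TwoGroupoids}) of $\G$ with the groupoid of germs of $\theta|_S$, and the isomorphism in Theorem (\ref{isomorphism1}) between an inverse semigroup crossed product and the Steinberg algebra of the associated groupoid of germs. First I would record that $(\theta|_S, S, \G^{(0)})$ is genuinely an ample dynamical system: since $\G$ is ample, its unit space $\G^{(0)}$ is locally compact, Hausdorff and totally disconnected; the hypotheses of (\ref{TwoGroupoids}) guarantee that $\theta|_S$ is an action; and it is ample by assumption. Thus both (\ref{TwoGroupoids}) and (\ref{isomorphism1}) apply with $X = \G^{(0)}$.

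Next I would invoke Proposition (\ref{TwoGroupoids}) to obtain a topological groupoid isomorphism $\psi: S \ltimes \G^{(0)} \to \G$. The one step requiring genuine verification---and the only mild obstacle---is that such an isomorphism of ample groupoids induces an isomorphism of the corresponding Steinberg algebras. This is the expected functoriality: precomposition $f \mapsto f \circ \psi$ carries a $K$-valued function on $\G$ that is locally constant with compact support on some open Hausdorff subspace $V$ to one with the same properties on $\psi^{-1}(V)$, because $\psi$ is a homeomorphism; and it sends the convolution product on $\A{\G}$ to that on $\A{S \ltimes \G^{(0)}}$, because $\psi$ respects composition and inversion and restricts to homeomorphisms on bisections. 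Since its inverse is pullback along $\psi^{-1}$, the map $f \mapsto f \circ \psi$ is an algebra isomorphism from $\A{\G}$ onto $\A{S \ltimes \G^{(0)}}$.

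Finally, Theorem (\ref{isomorphism1}), applied to the ample system $(\theta|_S, S, \G^{(0)})$ with the induced action $\alpha$ on $\Lc{\G^{(0)}}$, yields an isomorphism
$$
\Lc{\G^{(0)}} \rtimes_{\alpha} S \simeq \A{S \ltimes \G^{(0)}}.
$$
Composing this with the isomorphism $\A{\G} \simeq \A{S \ltimes \G^{(0)}}$ established in the previous paragraph produces the desired isomorphism $\A{\G} \simeq \Lc{\G^{(0)}} \rtimes_{\alpha} S$. No substantive work beyond the routine functoriality check is required, since every other ingredient has already been proved.
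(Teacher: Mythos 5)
Your proposal is correct and takes essentially the same route as the paper: the paper's proof is precisely the composition of Theorem (\ref{isomorphism1}), giving $\A{S \ltimes \G^{(0)}} \simeq \Lc{\G^{(0)}} \rtimes_{\alpha} S$, with the groupoid isomorphism $S \ltimes \G^{(0)} \simeq \G$ from Proposition (\ref{TwoGroupoids}). The only difference is that you spell out the functoriality step (pullback along the groupoid isomorphism yields an isomorphism of Steinberg algebras), which the paper leaves implicit.
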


\begin{proof}
	Let $S \ltimes \G^{(0)}$ be the groupoid of germs for the given
	action of $S$ on $\G^{(0)}$. Applying (\ref{isomorphism1}), we conclude that
	$$\A{S \ltimes \G^{(0)}} \simeq \Lc{\G^{(0)}} \rtimes_{\alpha} S.$$
	By (\ref{TwoGroupoids}), $S \ltimes \G^{(0)} \simeq \G$. Hence,
	$$\A{\G} \simeq \Lc{\G^{(0)}} \rtimes_{\alpha} S,$$
	as desired.
\end{proof}

The reader is invited to compare this result with Corollary 5.6
of \cite{CordeiroBeuter} and Theorem 5.2 of \cite{Beuter}.
Summarizing, every Steinberg algebra associated to an ample groupoid can
be viewed as an inverse semigroup crossed product algebra. In
particular, by the comments immediately before Proposition
(\ref{SteinvsCrossedproduct}),
one may always choose $S$ to be $S^a$ or even $S^a$ added by
the unit space if it is desired to deal only with inverse semigroups
with a unit.

\thispagestyle{plain}
\addcontentsline{toc}{chapter}{References}
\markboth{}{References}

\bibliographystyle{abnt}
\bibliography{CENTRAL_FILE}

\end{document}